\newlist{todolist}{itemize}{2}
\setlist[todolist]{label=$\square$}
\newtheorem{theorem}{Theorem}[section]
\newtheorem{proposition}[theorem]{Proposition}
\newtheorem*{acknowledgment}{Acknowledgment}
\newtheorem{problem}[theorem]{Problem}
\newtheorem{lemma}[theorem]{Lemma}
\theoremstyle{definition}
\newtheorem{definition}{Definition}[section]
\newtheorem{remark}[definition]{Remark}
\newtheorem{claim}[definition]{Claim}
\theoremstyle{definition}
\newcommand{\N}{\mathbb{N}}
\newcommand{\R}{\mathbb{R}}
\newcommand{\Rn}{\mathbb{R}^n}
\newcommand{\void}{\varnothing}
\newcommand{\diam}{\text{diam}}
\newcommand{\supp}{\text{supp}}
\newcommand{\dist}{\text{dist}}
\newcommand{\norm}[1]{\|#1\|}
\newcommand{\set}[1]{\left\{#1\right\}}
\newcommand{\abs}[1]{\left|#1\right|}
\newcommand{\p}{\mathcal{P}}
\newcommand{\D}{\partial}
\renewcommand{\d}{\partial}
\newcommand{\da}{\partial^\alpha}
\newcommand{\ma}{{m-\abs{\alpha}}}
\renewcommand{\P}{\mathcal{P}}
\newcommand{\G}{\Gamma}
\newcommand{\ring}{\mathcal{R}}
\newcommand{\pbar}{\overline{\mathcal{P}}}
\newcommand{\dmax}{\delta_{\text{max}}}
\newcommand{\Gbar}{\overline{\Gamma}}
\newcommand{\dm}{\delta_{\text{max}}}
\newcommand{\Gb}{\overline{\Gamma}}
\newcommand{\dbar}{\overline{d}}
\newcommand{\ksh}{{k^\sharp}}
\newcommand{\oP}{\overline{P}}
\newcommand{\cmw}{C^{m,\omega}}
\newcommand{\kb}{{\overline{k}}}
\newcommand{\eps}{\epsilon}
\newcommand{\LA}[1]{\refstepcounter{equation}\text{(\theequation)}\label{#1}}
\numberwithin{equation}{section}
\begin{document}

\title{Smooth Selection for Infinite Sets}
\author{Fushuai Jiang}
	\address{Department of Mathematics, UC Davis
		One Shields Ave
		Davis, CA 95616}
	\email{fsjiang@math.ucdavis.edu}

	\author{Garving K. Luli}
	\address{Department of Mathematics, UC     Davis
		One Shields Ave
		Davis, CA 95616}
	\email{kluli@math.ucdavis.edu}

	\author{Kevin O'Neill}
	\address{Applied Mathematics Program, Yale University
		51 Prospect Street
		New Haven, CT 06511}
	\email{kevin.oneill@yale.edu}
\maketitle

\begin{abstract}
% Suppose we are given a compact set $E \subset \mathbb{R}^n$, a convex set $K(x) \subset \mathbb{R}^d$ at each $x\in E$, and a constant $M$. How we can tell if a there is a $C^m$ function $\vec{F}$ such that $\vec{F}(x) \in K(x)$ for each $x \in E$ and $\|\vec{F}\|_{C^m} \leq C(m,n,d) M$? For finite sets $E$ this problem was settled in \cite{FIL16} but for infinite sets $E$  it was left as an open problem. In this paper, we settle the open problem as an application of a general theorem that we prove regarding the finite stabilization of Glaeser refinement for a general shape field. We also discuss other applications of our results, including solving $C^m$ solutions for a system of inequalities with convex constraints. 

Whitney's extension problem asks the following: Given a compact set $E\subset\mathbb{R}^n$ and a function $f:E\to \mathbb{R}$, how can we tell whether there exists $F\in C^m(\mathbb{R}^n)$ such that $F=f$ on $E$? A 2006 theorem of Charles Fefferman \cite{F06} answers this question in its full generality.

In this paper, we establish a version of this theorem adapted for variants of the Whitney extension problem, including nonnegative extensions and the smooth selection problems. Among other things, we generalize the Finiteness Principle for smooth selection by Fefferman-Israel-Luli \cite{FIL16} to the setting of infinite sets. 

Our main result is stated in terms of the iterated Glaeser refinement of a bundle formed by taking potential Taylor polynomials at each point of $E$. In particular, we show that such bundles (and any bundles with closed, convex fibers) stabilize after a bounded number of Glaeser refinements, thus strengthening the previous results of Glaeser, Bierstone-Milman-Paw{\l}ucki, and Fefferman which only hold for bundles with affine fibers.
\end{abstract}

\section{Introduction}

% In this paper, we extend the techniques of \cite{F06} for solving Whitney's extension problem to the study of $C^m$ selection for infinite sets, relying on the setting of shape fields from \cite{FIL16}. Our result allows us to study constrained interpolation
% problems that include solving for nonnegative smooth solutions to a system
% of linear equations.

Let $m,n,d$ be positive integers. We write $C^{m}(\mathbb{R}^{n},\mathbb{R}%
^{d})$ to denote the space of all functions $\vec{F}:=( F_{1},\cdots
,F_{d}) :\mathbb{R}^{n}\rightarrow \mathbb{R}^{d}$ whose derivatives $%
\partial ^{\alpha }\vec{F}=( \partial ^{\alpha }F_{1},\cdots ,\partial
^{\alpha }F_{d}) $ (for all $|\alpha |\leq m$) are continuous and
bounded on $\mathbb{R}^{n}$. We equip $C^{m}(\mathbb{R}^{n},\mathbb{R}^{d})$
with the usual norm
\begin{equation*}
\Vert {\vec{F}}\Vert _{C^{m}(\mathbb{R}^{n},\mathbb{R}^{d})}:=\max
_{\substack{ |\alpha |\leq m,  \\ 1\leq j\leq d}}\sup_{x\in \mathbb{R}%
^{n}}| \partial ^{\alpha }F_{j}(x)|
\end{equation*}%
and seminorm 
\begin{equation*}
\Vert {\vec{F}}\Vert _{\dot{C}^{m}(\mathbb{R}^{n},\mathbb{R}^{d})}:=\max
_{\substack{ |\alpha |= m,  \\ 1\leq j\leq d}}\sup_{x\in \mathbb{R}%
^{n}}| \partial ^{\alpha }F_{j}(x)| .
\end{equation*}%

When $d=1$, we write $C^{m}(\mathbb{R}^{n})$ instead of $C^{m}(\mathbb{R}%
^{n},\mathbb{R}^{d})$.

Whitney's extension problem \cite{W34-2} asks, given a compact set $E\subset\mathbb{R}^n$ and a function $f:E\to \mathbb{R}$, how can we tell whether there exists $F\in C^m(\mathbb{R}^n)$ such that $F|_E=f$? The following nonnegative variation of Whitney's extension problem has attracted some recent attention (see \cite{FIL16,FIL16+,JL20,JL20-Ext,JL20-Alg}).

\begin{problem}[Nonnegative extension]\label{prob:nonnegative interpolation} Given integers $m\geq 0, n \geq 1$, an arbitrary subset $E \subset \mathbb{R}^n$, $f:E\rightarrow [0,\infty)$, how can we tell if there exists $F\in C^m(\mathbb{R}^n)$ such that $F \geq 0$ and $F|_E = f$?\end{problem}

Simple examples, such as $E=\{0\}\cup\{\frac{1}{n}:n\in\N\}$ with $f(x)=x$ and $m\ge1$, demonstrate that the existence of a nonnegative $C^m$ extension does not follow merely from nonnegative $f$ and the existence of a (not necessarily nonnegative) $C^m$ extension. Thus, a solution to Problem \ref{prob:nonnegative interpolation} must address an obstacle absent from the classical Whitney extension problem. Naturally, this obstacle extends to the further generalizations explored in this paper.

Problem \ref{prob:nonnegative interpolation} can be viewed as a special case of the following generalized problem (see \cite{fefferman2021c2} for recent progress on the analogous problem for finite sets):

\begin{problem}[Extension with restricted range]
\label{prob:range restriction} Given extended real numbers $\lambda_1, \lambda_2$ with $%
-\infty \leq \lambda_1 \leq \lambda_2 \leq \infty$, $E \subset \Rn$, and $f : E \to [\lambda_1,\lambda_2] \cap \R$, how can we tell if there exists $F
\in C^m(\Rn)$ such that $\lambda_1 \leq F \leq \lambda_2$ and $F|_E =f$?
\end{problem}

% The authors in \cite{FIL16+} solved Problem \ref{prob:nonnegative interpolation} for finite sets $E$ by proving a finiteness principle (see Theorem \ref{thm:old_finitenessprinciple} below for an instance of the principle). However, the finiteness principle is false in general for infinite sets $E$. One of the goals of the present paper is to provide a solution to Problem \ref{prob:nonnegative interpolation} for infinite sets $E$.

Our solution to Problems \ref{prob:nonnegative interpolation} and \ref{prob:range restriction} will result from the solution to an even more generalized problem. In order to present our solution to this problem, we need to introduce some background definitions and recall some facts.

If $\vec{F}\in C^{m}( \mathbb{R}^{n},\mathbb{R}^{d}) ,$ we write $J_{x}\vec{F}=( J_{x}F_{1},\cdots ,J_{x}F_{d}) $ (the ``$m$-th
jet") to denote the $m$-th degree Taylor polynomial of $\vec{F}$ at $x$. 
Let $\mathcal{P}$ denote the vector space of polynomials in $n$ real variables and of degree at most $M$.
Given $x\in\R^n$, we give $\mathcal{P}$ the structure of a ring, denoted $\mathcal{R}_x$, by defining multiplication $\odot_x$ as $R\odot _{x}R'=J_{x}(RR')$.  We write $
\vec{\mathcal{R}}_{x}$ for the $\mathcal{R}_x$-module of $m$-jets of functions $\vec{F}\in C^{m}(\mathbb{R}^{n},\mathbb{R}^{d}) $ at $x\in\mathbb{R}^{n}$. As a vector space, $\vec{\mathcal{R}}_{x}$ can be identified with $\vec{\mathcal{P}}:=\underbrace{\mathcal{P}\oplus\cdots\oplus\mathcal{P}}_{d\text{ copies}}$, the
space of $d$-tuples of real $m$-th degree polynomials on $\mathbb{R}^{n}$. The $\mathcal{R}_x$-multiplication on $\vec{\mathcal{R}}_x$ is given by $R\odot _{x}\vec{P}=\left(R\odot _{x}P_{1},\cdots ,R\odot _{x}P_{d}\right) $, where $R\in\mathcal{R}_x$ and $\vec{P}=(P_1,...,P_d)$.

We write $c(m,n,d), C(m,n,d),$ etc., to denote constants that depend on $%
m,n,d$; they may denote different constants in different appearances.

If $S$ is a finite set, we write $\#(S)$ to denote the number of elements in
$S$.

A problem that is closely related to Problems \ref{prob:nonnegative interpolation} and \ref{prob:range restriction} is the following 

\begin{problem}[Smooth Selection]
\label{prob:convex selection}Let $E\subset \mathbb{R}^{n}$ be an arbitrary set. For
each $x\in E$, let $\mathcal{K}(x) \subset \mathbb{R}^{d}$ be closed and
convex. How can we tell whether there exists a function $\vec{F}\in C^{m}(\Rn,\R^d) $ such that $\vec{F}(x) \in \mathcal{K}(x) $ for all $x\in E$?
\end{problem}

The pioneer papers \cite{Shv08,FIL16,F05-Sh} studied the analogous version of Problem \ref{prob:convex selection} for finite sets $E$.  In particular, for finite sets $E\subset \mathbb{R}^{n}$, the authors in \cite{FIL16} gave an answer to the finite set version of the problem by means of proving a finiteness principle. See also \cite{FShv18,Shv21-core, Shv21-alg,Shv84,Shv01,Shv02,Shv04} for related Lipschitz selection problems and \cite{FIL16+,JL20,JL20-Alg,JL20-Ext} for related problems of nonnegative interpolation.

% \begin{theorem}\label{thm:old_finitenessprinciple}
% For large enough $k^{\#}=k( m,n,d) $ and $C^{\#}(
% m,n,d) $, the following hold. Let $E\subset \mathbb{R}^{n}$ be finite.
% For each $x\in E$, let $K(x) \subset \mathbb{R}^{d}$ be convex.
% Suppose that for each $S\subset E$ with $\#( S) \leq k^{\#}$,
% there exists $F^{S}\in C^{m}( \mathbb{R}^{n},\mathbb{R}^{d}) $
% with $\| F^{S}\|_{C^m(\Rn,\R^d) }\leq 1$ such that $F^{S}( x) \in K( x) $
% for all $x\in S$. Then there exists $F\in C^{m}( \mathbb{R}^{n},\mathbb{%
% R}^{d}) $ with norm $\left\Vert F\right\Vert _{C^{m}( \mathbb{R}%
% ^{n},\mathbb{R}^{d}) }\leq C^{\#}$, such that $F(x) \in
% K(x) $ for all $x\in E$.
% \end{theorem}

The $C^1$ case of nonnegative extension (Problem \ref{prob:nonnegative interpolation}) was addressed in \cite{Black19}, though little other work has directly addressed the infinite set versions of the constrained extension problems addressed here.

In this paper, we modify the approach of \cite{F06} to answer Problem \ref{prob:convex selection} for infinite sets $E\subset \mathbb{R}^n$.

Before we present our solution to the above problems, we would like to point out another closely related problem for which our result can provide an answer:

\begin{problem}[Generalized Brenner-Epstein-Hochster-Koll\'ar Problem]
\label{prob:linear system} %Let $\mathbb{X}$ be a space of $\mathbb{R}^s$%
% -valued continuous functions on a topological space $E_0$.
Suppose we are
given real-valued functions $\phi_1,\cdots,\phi_d$, an $\R^s$-valued
function $\phi$ on $\R^n$, and closed convex subsets $K_1(x),\cdots,K_d(x)
\subset \mathbb{R}^s$ for each $x \in E\subset\R^n$. How can we decide whether there
exist $f_1,\cdots, f_d \in C^m(\R^n,\R^s)$ such that
\begin{equation}
\sum_{i=1}^d \phi_if_i \leq \phi \text{ on } E \label{system_eqs}
\end{equation}
and
\begin{equation}
f_i(x) \in K_i(x), \text{ for } 1\leq i \leq d, x \in E?
\label{system_constraints}
\end{equation}
\end{problem}

The pioneer papers \cite{EH18,FK13,FL14} studied Problem \ref{prob:linear system} where the inequalities in \eqref{system_eqs} are replaced by equations and 
% for $\mathbb{X}=C^{m}(\mathbb{R}^{n}),C^{m,\omega }(\mathbb{R}^{n})$ 
there are no
convex constraints \eqref{system_constraints}. Note that the inequalities in \eqref{system_eqs} may be used to restrict the output of the $f_i$ to convex sets depending on $x$, provided said convex sets are the intersection of a bounded number of half-spaces.

In order to present our solutions to Problems \ref{prob:range restriction} (\textit{a fortiori} \ref{prob:nonnegative interpolation}), \ref{prob:convex selection}, and \ref{prob:linear
system}, we introduce some basic definitions.

Let $E\subset \mathbb{R}^{n}$ be a compact set. A \underline{bundle} over $E$
is a family $\mathcal{H}=(H(x))_{x\in E}$ of (possibly empty) subsets $H(x)\subset \vec{\mathcal{P}}$, parameterized by the points $x\in E$. We refer to each $H(x)$, $x\in E$, as a \underline{fiber}. A \underline{$C^m$ section} (or ``section'' for short) of a bundle $\mathcal{H} = (H(x))_{x \in E}$ is a $C^m$ function $\vec{F}:\R^n\to\R^d$ such that $J_x \vec{F} \in H(x)$ for each $x \in E$. The problem of finding a $C^m$ extension satisfying certain conditions reduces to the problem of finding a section of a given bundle (see below).

We say a bundle $(H(x))_{x\in E}$ is \underline{convex} if $H(x)$ is convex for each $x\in E$. (We note that prior literature has required each fiber of a bundle to be an affine space; we drop the requirement to handle nonlinear problems.)

\label{def.Glaeser} Fix integers $m\ge0, n,d\ge 1$. Let $(H(x))_{x \in E}$
be a bundle and let ${k^\sharp}$ be a positive integer depending only on $m,n,d$.
For each $x_0 \in E$, we define the \underline{Glaeser refinement} of $%
H(x_0) $, denoted by $\tilde{H}(x_0)$, according to the following rule:

\begin{itemize}
\item[\LA{GR}] Let $\vec{P}_{0}\in \vec{\mathcal{P}}$. We say that $\vec{P}%
_{0}\in \tilde{H}(x_{0})$ if and only if given $\epsilon >0$, there exists $%
\delta >0$ such that, for any $x_{1},\cdots ,x_{{k^{\sharp }}}\in E\cap
B(x_{0},\delta )$, there exist $\vec{P}_{1},\cdots ,\vec{P}_{{k^{\sharp }}%
}\in \vec{\mathcal{P}}$, with $\vec{P}_{j}\in H(x_{j})$ for $j=0,1,\cdots ,{%
k^{\sharp }}$ and $\left\vert \partial ^{\alpha }(\vec{P}_{i}-\vec{P}%
_{j})(x_{j})\right\vert \leq \epsilon \left\vert x_{i}-x_{j}\right\vert ^{{%
m-\left\vert \alpha \right\vert }}$ for $\left\vert \alpha \right\vert \leq
m $, $0\leq i,j\leq {k^{\sharp }},1\leq l\leq d$.
\end{itemize}

The Glaeser refinement is used, essentially, to toss polynomials which cannot satisfy the conditions of Taylor's theorem with the polynomials remaining at nearby fibers.

We say that a bundle $(H(x))_{x\in E}$ is \underline{Glaeser stable} if $%
H(x) $ is its own Glaeser refinement for each $x\in E$.

We will often produce repeated Glaeser refinements of the same bundle, in
which case $(H_{0}(x))_{x\in E}$ will be used to denote the original bundle and $%
(H_{k+1}(x))_{x\in E}$ will be used to denote $(\tilde{H_{k}}(x))_{x\in E}$ for $k\in \N$.

The Glaeser refinement is a procedure first suggested in 1958 by G. Glaeser\cite{G58} for determining whether a function $f$ defined on an arbitrary subset $E\subset \mathbb{R}^{n}$ can be extended to a $C^{1}( \mathbb{R}^{n}) $ function. In their studies of extension problems in $C^{m}( \mathbb{R}^{n}) $ for subanalytic sets, Bierstone, Milman and Paw{\l}ucki \cite{BMP03} introduced the notion of iterated paratangent bundles, which is analogous to the Glaeser refinement. Our version of the Glaeser refinement is a vector-valued version of the one used by C. Fefferman for extending a function $f$ defined on an arbitrary subset $E\subset \mathbb{R}^{n}$ to a $C^{m}(\mathbb{R}^{n}) $ function. We also apply our Glaeser refinement to a more general class of bundles than in \cite{F06}.

Here is an outline for the rest of the introduction. First, we briefly
recall Fefferman's solution \cite{F06} to the Whitney's extension problem for $C^{m}$ and then we will point out the major hurdles in adapting the machinery to prove our main theorems (see Theorems \ref{thm:main theorem}, \ref{thm:heart of the matter} below). We will then state our main results and explain our proof in broad strokes.

Given a function $f:E\rightarrow \mathbb{R}^{n}$ on an arbitrary subset $E\subset \mathbb{R}^{n}$, to determine whether $f$ can be extended to a function $F\in C^{m}( \mathbb{R}^{n}) $ -- This is called the \textquotedblleft Whitney Extension Problem for $C^{m}$\textquotedblright -- C. Fefferman \cite{F06}
associated an affine subspace $H(x) \subset \mathcal{P}$
for each $x\in E$, with the following crucial property:

\begin{itemize}
\item If $F\in C^{m}\left( \mathbb{R}^{n}\right) $ and $F=f$ on $E,$ then $%
J_{x}\left( F\right) \in H(x) $ for all $x\in E$.
\end{itemize}

Evidently, if $H(x) =\emptyset $ for any $x\in E$, then $f$
cannot be extended to a $C^{m}$ function $F$. Fefferman's solution \cite{F06} to the
Whitney's extension problem for $C^{m}$ can be summarized as follows

\begin{enumerate}
\item We start with the trivial holding space $H_{0}(x) =\left\{
P\in \mathcal{P}:P(x) =f(x) \right\} $ for all $x\in
E$.

\item We produce a list of holding spaces $H_{0}(x) \supset
H_{1}(x) \supset H_{2}(x) \supset \cdots $ by
repeatedly taking the Glaeser refinement (see Definition \ref{def.Glaeser}).

\item After at most $L=2\dim \mathcal{P}+1$, we have $H_{L}(x)
=H_{l}(x) $ for all $l\geq L$ and all $x\in E$. (In other words, $(H_L(x))_{x\in E}$ is Glaeser stable.)

\item $f$ extends to a $C^{m}$ function on $\mathbb{R}^{n}$ if and only if $%
H_{L}(x) $ is non-empty for all $x\in E$.
\end{enumerate}

Fefferman's proof of the solution relies on the affine space structure of the $H(x)$ (and breaks down for more general sets) in three crucial ways:

\begin{enumerate}
    \item The proof that iterated Glaeser refinements terminate in a Glaeser stable bundle after a finite number of steps (Step 3) in \cite{F06} (adapted from \cite{BMP03}) relies on the affine space structure of $H_{l}$ and an induction on the dimensions. It uses the fact that if $W\subset V$ is a subspace with $\dim W=\dim V$, then $W=V$, which is false for the natural notion of dimension for convex sets.
    \item The proof that the $\delta$ in the definition of Glaeser stability may be chosen uniformly in $x$ and $P$ (with some adjustment) relies on the finite representation of $H(x)$ via a linear basis. General convex sets may not be represented in terms of a finite basis.
    \item The set $E$ is partitioned according to the dimension of the $H(x)$ as affine spaces. If one does this by dimension of a convex set, it must be checked this dimension does not change under natural set operations.
\end{enumerate}

Our circumstances do not guarantee the presence of affine spaces. For instance, considering Problem \ref{prob:nonnegative interpolation} we see that $x^2$ and $2x^2$ are potential Taylor polynomials at $x=0$, while $-x^2$ is not.

% Prior proofs of Step 3 above require $H_{l}(x) $ to be affine spaces for all $%
% l=0,1,... $. Specifically, the proof that iterated Glaeser refinements terminate in a Glaeser stable bundle after a finite number of steps in \cite{F06}
% (adapted from \cite{BMP03}) relies on the vector space structure of $H_{l}$ and an
% induction on the dimensions. However, in order to apply this procedure to
% answer Problem \ref{prob:convex selection}, we must also allow $H_{l}\left(
% x\right) $ to be convex sets which are not necessarily affine spaces.

To establish the termination of iterated Glaeser refinements of bundles $(H(x))_{x \in E}$ whose fibers $H(x)$ are allowed to be general convex sets, we recognize the following: The Glaeser refinement depends on a local procedure (see Definition \ref{def.Glaeser}). More precisely, in order to check if $\vec{P}_{0}\in \tilde{H}(x_{0})$, we need to consider only polynomials $\vec{P}_{i}$ that are close to $\vec{P}_{0}$. Furthermore, convex subsets of $\mathbb{R}^n$ have a notion of dimension which allows us to view them as affine spaces locally. To formalize this idea, we introduce the \textit{local dimension} in Definition \ref{def:local dimension}.

To find uniform $\delta$ as dictated by (2) above, we pass from the infinite to the finite via compactness of $E$ and the $H(x)$, truncating the latter as necessary. An open cover of neighborhoods in $E\times\p$ must be carefully constructed so that each of these neighborhoods allows for uniform choice of $\delta$ (for given $\epsilon$) and for the necessary quantitative analysis to be conducted. This delicate construction is done in Section \ref{sec:uniform}. Sorting the elements $x$ of $E$ by $\dim H(x)$ is initially easy; any convex set naturally caries a dimension. However, we must show that the truncations described above do not change the dimension and ruin the partition. This is done in Section \ref{sec:convex}.

To state our main theorems, we need to introduce a few terms.

 Let $E\subset \mathbb{R}^{n}$ be an arbitrary set. For each $%
x\in E$ and $M\geq 0$, let $\Gamma \left( x,M\right) \subset \mathcal{\vec{P}%
}$ be a (possibly empty) convex set. We say $\Gamma =(\Gamma \left(
x,M\right) )_{x\in E,M\geq 0}$ is a \underline{shape field} if for all $x\in
E$ and $0\leq M^{\prime }\leq M<\infty $, we have
$\Gamma (x,M^{\prime })\subset \Gamma \left( x,M\right)$.
We say a shape field $\Gamma =(\Gamma \left( x,M\right) )_{x\in E,M\geq 0}$
is \underline{closed} if $\Gamma (x,M)$ is closed for each $x\in E$ and $%
M\geq 0$.

Any shape field $(\Gamma(x,M))_{x \in E, M \geq 0}$ on a compact set $E$
gives rise to a convex bundle $(\Gamma(x))_{x \in E}$ if we set
\begin{equation*}
\Gamma(x) = \bigcup_{M\geq 0}\Gamma(x,M) \text{ for each } x \in E.
\end{equation*}
We may then define the Glaeser refinement of a shape field $(\Gamma(x,M))_{x\in E, M \geq 0}$ to be $(\tilde{\Gamma}(x,M))_{x\in E, M \geq 0}$, where $\tilde{\Gamma}(x,M) = \tilde{\Gamma}(x)\cap \Gamma(x,M)$ with $\tilde{\Gamma}(x)$ being the Glaeser refinement of $\Gamma(x)$.

We will also define what it means for a shape field to be regular (see Definition \ref{def.regular}) or $(C,\dmax)$-convex for $\dmax>0$ (see Definition \ref{def.Cd-convex}), though in the interest of postponing technicalities this will be presented in Section \ref{sec:applications}. For now, we simply note that regularity ensures $\Gamma(x,M)$ ``varies continuously'' in $x$ and $M$, while $(C,\dmax)$ convexity allows us to combine functions via partition of unity as in \cite{FIL16}. Both properties will be satisfied by reasonable examples of shape fields, in particular, the examples arising from Problems \ref{prob:nonnegative interpolation}-\ref{prob:linear system}.

We are now ready to state our main theorems.

\begin{theorem}[Qualitative Main Theorem]
\label{thm:main theorem} Let $m\ge0,n,d\ge1$ be integers. There exists
$k^\sharp=k^\sharp(m,n,d)$ in \eqref{GR} such that the following holds.

Let $E\subset \mathbb{R}^{n}$ be a closed set. Let $%
(\Gamma (x))_{x\in E}$ be a bundle arising from a closed regular $(C,1)$%
-convex shape field. Then the iterated Glaeser refinement of $(\Gamma(x))_{x\in E}$ terminates in a Glaeser stable bundle $(\Gamma^*(x))_{x\in E}$ in at most $2\dim\vec{\mathcal{P}}+1$ steps and $(\Gamma (x))_{x\in E}$ has a section if and if every fiber of $(\Gamma^*(x))_{x\in E}$ is nonempty.
\end{theorem}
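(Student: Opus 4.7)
My plan is to follow the structural template of Fefferman's \cite{F06} proof for affine bundles, replacing global affine-space arguments with a local-dimension framework adapted to convex fibers. Three gaps must be closed relative to the affine case, as the introduction flags: convex sets lack a single global dimension that is well-behaved under Glaeser refinement; the parameter $\delta$ in \eqref{GR} depends a priori on both $x_0$ and the candidate $\vec{P}_0$; and any stratification of $E$ by fiber-dimension must be compatible with the truncations one performs to pass from the infinite to the finite.

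I would address the first gap by working with a notion of local dimension $\dim_{\mathrm{loc}}(x,\vec{P})$, defined as the affine-hull dimension of $\Gamma(x)\cap B(\vec{P},r)$ for all sufficiently small $r>0$, which makes each convex fiber look locally affine near any of its points. The key observation is that a single Glaeser refinement either leaves a neighborhood of $(x,\vec{P})$ in the total space of the bundle pointwise fixed, or strictly decreases $\dim_{\mathrm{loc}}$ there. Since $\dim_{\mathrm{loc}}\in\{0,1,\ldots,\dim\vec{\mathcal{P}}\}$, iterating Fefferman's up/down dimension bookkeeping at most $2\dim\vec{\mathcal{P}}+1$ times forces global stability, exactly as in the affine case.

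To close the second and third gaps I would invoke compactness as follows. Regularity of the shape field supplies continuous dependence of $\Gamma(x,M)$ on its arguments; using this, I would cover the (pre-truncated) total space, obtained by intersecting each fiber with a large closed ball, by finitely many neighborhoods $U_\alpha$ on which (a) $\dim_{\mathrm{loc}}$ is constant, (b) truncations $\Gamma(x)\cap B(\vec{P},r)$ have controlled geometry, and (c) for each $\epsilon>0$ a single $\delta=\delta_\alpha(\epsilon)$ suffices uniformly in \eqref{GR}. The delicate engineering of this cover is carried out in Section \ref{sec:uniform}; its feasibility rests on the convex-geometric input from Section \ref{sec:convex} that ball-intersection truncation preserves $\dim_{\mathrm{loc}}$ at interior points, so that the stratification is stable under the reduction.

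For the second assertion, the ``only if'' direction is routine: applying Taylor's theorem to any $C^m$ section $\vec{F}$ with $\vec{P}_j=J_{x_j}\vec{F}$ verifies \eqref{GR} at every refinement step, so by induction on $k$ one has $J_x\vec{F}\in \Gamma^*(x)$ for all $x\in E$. The ``if'' direction is the generalized finiteness principle: assuming each $\Gamma^*(x)$ is nonempty, one Whitney-decomposes $\mathbb{R}^n\setminus E$ into cubes, uses Glaeser stability together with a compactness extraction to select a mutually compatible jet from each fiber, and stitches these together via a Whitney partition of unity. Here $(C,1)$-convexity plays exactly the role it does in \cite{FIL16}, guaranteeing that convex combinations of candidate jets remain legal in the shape field, while regularity yields the continuity needed to control errors. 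The main obstacle, as flagged above, will be establishing the truncation-invariance of $\dim_{\mathrm{loc}}$ and the associated uniform-$\delta$ cover, since this is the hinge that lets the finite-dimensional dimension induction and the classical partition-of-unity gluing both apply in the general convex setting.
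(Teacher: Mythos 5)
Your proposal tracks the paper's own roadmap (much of the opening paragraph paraphrases the introduction's gap analysis), and the termination half is pointed in the right direction: the paper also introduces a local dimension at $(x,\vec{P})$ for convex fibers (Definition \ref{def:local dimension}) and runs an induction on a dimension threshold. However, the specific mechanism you state --- ``a single Glaeser refinement either leaves a neighborhood of $(x,\vec{P})$ pointwise fixed, or strictly decreases $\dim_{\mathrm{loc}}$ there'' --- is not what holds and would give the wrong step count if it did. A convex fiber can shrink under refinement without losing local dimension; this is exactly why the paper proves the more delicate statement (Lemma \ref{lemma:dimension terminates}) that once $\dim_{\vec{P}}H_{2k+1}(x)\geq D-k$ the dimension is frozen for all $l\geq 2k+1$, via a parity argument comparing steps $2k+1$, $2k+2$, $2k+3$ together with a closure lemma (Lemma \ref{lemma:closure same}). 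That pairing is where the $2D+1$ (rather than $D+1$) comes from, and your invariant claim does not reproduce it.

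The larger gap is the constructive (``if'') direction, which you compress into a single vague sentence. What you describe --- Whitney-decompose $\mathbb{R}^n\setminus E$, select compatible jets by compactness, glue by partition of unity --- is not the argument and would not close. The paper's proof (a) reduces $d\geq 1$ to $d=1$ by the gradient trick of \cite{FL14} in Section \ref{sec:gradient trick}; (b) stratifies $E$ not by a single dimension but by the two-parameter signature $\bigl(\dim H(x),\,\dim[\ker\pi_x\cap H(x)]\bigr)$ (Definition \ref{def.stratum}), so the stratification interacts correctly with the projection to $(m-1)$-jets; (c) on the lowest stratum $E_1$, manufactures a regular modulus of continuity $\omega$ from a uniform-$\delta$ covering argument (Section \ref{sec:uniform}, using Helly's theorem and the Finiteness Lemma \ref{lemma:finiteness}) and then applies the $\dot{C}^{m,\omega}$ finiteness principle for shape fields of \cite{FIL16} (Theorem \ref{thm:shape fields fin prin}) to obtain a function $\tilde F$ with controlled jets on $E_1$; (d) Whitney-decomposes $\mathbb{R}^n\setminus E_1$ (not $\mathbb{R}^n\setminus E$), subtracts $\tilde F$ so that the problem on each $E\cap Q_\nu^*$ has strictly fewer strata; and (e) inducts on the stratum count, rescaling on each cube and gluing with the Whitney partition of unity using $(C,1)$-convexity (Lemma \ref{lemma:shape field convexity for more polynomials}). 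None of this skeleton appears in your outline --- the FIL16 finiteness principle, the gradient trick, Helly, the signature stratification, and the stratum-count induction are not incidental technicalities but the load-bearing structure that turns ``nonempty refined fibers'' into an actual $C^m$ section.
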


We can further strengthen Theorem \ref{thm:main theorem} to have better control
on the derivatives. In fact, this step is required for our proof strategy. We need the following definition for this purpose.

Given a shape field $\Gamma =(\Gamma
(x,M))_{x\in E,M\geq 0}$ with nonempty fibers, we define
\begin{equation}\label{def:norm of a bundle} 
    \norm{\Gamma}:= \inf\set{
    M \geq 0 :\,
  \begin{matrix*}[l]
  \text{For all $x_1, \cdots, x_\ksh \in E$, there exist }\\
  \text{$\vec{P}_i \in \Gamma(x_i,M)$ for $1\leq i\leq \ksh$ such that }\\
  \text{$\abs{\da(\vec{P}_i-\vec{P}_j)(x_i)}\leq M\abs{x_i-x_j}^{m-\abs{\alpha}}$, $1 \leq i,j \leq \ksh$.}
  \end{matrix*}
}\,.
\end{equation}

The finiteness of $\|\Gamma\|$ will be established in Lemma \ref{lemma:finiteness}. (Specifically, we prove this for the scalar-valued case in Section \ref{sec:prelim} and reduce to this case in Section \ref{sec:gradient trick}.)

\begin{theorem}[Quantitative Main Theorem]
\label{thm:heart of the matter} Let $m\ge0,n,d\ge1$ be integers. There exists
$k^\sharp=k^\sharp(m,n,d)$ in \eqref{GR} and \eqref{def:norm of a bundle} such that the following holds:

Fix $C_{w}>0$. Let $Q_{0}\subset \mathcal{\mathbb{R}}^{n}$ be a cube of
length $3$ and $E\subset Q_{0}$ be compact. Let $(\Gamma (x,M))_{x\in
E,M\geq 0}$ be a closed regular $(C_{w},1)$-convex shape field. Write
\begin{equation}
\Gamma (x)=\bigcup_{M\geq 0}\Gamma (x,M).
\end{equation}%
Then the following hold.

\begin{enumerate}
\item The repeated Glaeser refinement of $(\Gamma (x))_{x\in E}$ terminates
in a Glaeser stable bundle $(\Gamma ^{\ast }(x))_{x\in E}$ after at most $%
2D+1$ steps, where $D=D(m,n,d)=\dim \vec{\mathcal{P}}$.

\item If $\Gamma ^{\ast }(x)$ is nonempty for all $x\in E$, then there
exists $\vec{F}\in C^{m}(\mathcal{\mathbb{R}}^{n},\mathcal{\mathbb{R}}^{d})$
such that
\begin{equation}
\Vert \vec{F}\Vert _{\dot{C}^{m}(\mathcal{\mathbb{R}}^{n},\mathcal{\mathbb{R}%
}^{d})}\leq C\Vert \Gamma ^{\ast }\Vert
\end{equation}%
and
\begin{equation}
J_{x}\vec{F}\in \Gamma (x,C\Vert \Gamma ^{\ast }\Vert )\text{ for all }x\in
E.
\end{equation}
\end{enumerate}

Here the constant $C$ depends only on $m,n,d,C_{w}$.
\end{theorem}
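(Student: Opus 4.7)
The plan is to follow the architecture of Fefferman's solution \cite{F06} to the classical Whitney problem, but with two essential modifications dictated by the fact that the fibers $\Gamma(x)$ are convex rather than affine: replace ``dimension of an affine space'' by an appropriate \emph{local dimension} of a convex set, and replace ``finite basis representation'' by a compactness-plus-truncation argument to get uniform choices of $\delta$ in the Glaeser refinement. The argument naturally splits into (1) termination of iterated Glaeser refinements in $\leq 2D+1$ steps, and (2) the existence of a section together with the derivative and membership bounds.

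For part (1), I would introduce the local dimension $\dim_{\vec P}(H(x))$ of the convex fiber $H(x)$ at a candidate polynomial $\vec P$, defined as the dimension of the affine hull of $H(x)\cap B(\vec P,r)$ for all sufficiently small $r$. The key structural lemma is that when a point $(x_0,\vec P_0)$ is \emph{not} Glaeser stable, after one refinement the local dimension at the surviving points strictly drops (near $\vec P_0$), while at points where the refinement already matches, it is preserved. Because the local dimension is bounded above by $D=\dim \vec{\mathcal P}$, a Bierstone--Milman--Paw\l ucki-type sorting argument, partitioning $E$ by local dimension, gives termination after at most $2D+1$ refinements. The delicate point here, and the place where \cite{F06}'s affine argument breaks, is showing that local dimension is stable under the truncations $\Gamma(x,M)$ needed to secure compactness of fibers; this is exactly the content promised in Section~\ref{sec:convex}, and I would invoke it as a black box.

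For the uniformity of $\delta$ in the definition of Glaeser stability -- needed so that the stable bundle $\Gamma^*$ is actually the object one can work with quantitatively -- I would combine compactness of $E$ with compactness of $\Gamma(x,M)$ (closed and bounded by the $(C_w,1)$-convexity/truncation), then build a finite open cover of $E\times \vec{\mathcal P}$ on which a single $\delta$ works for a given $\epsilon$. This is the construction deferred to Section~\ref{sec:uniform}. With uniform $\delta$ in hand, the finiteness of $\|\Gamma^*\|$ from Lemma~\ref{lemma:finiteness} ensures that the $k^\sharp$-point compatibility data in \eqref{def:norm of a bundle} can be realized at scale $M\asymp \|\Gamma^*\|$.

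For part (2), assuming $\Gamma^*(x)\neq \emptyset$ everywhere, I would construct $\vec F$ by a Calder\'on--Zygmund / Whitney decomposition of $Q_0$ into a family of dyadic cubes $\{Q_\nu\}$ whose sidelengths are comparable to their distance from $E$, together with a Whitney partition of unity $\{\theta_\nu\}$ subordinate to a mild dilation. On each $Q_\nu$ one picks a representative $\vec P_\nu \in \Gamma^*(x_\nu,C\|\Gamma^*\|)$ for some nearby $x_\nu\in E$; Glaeser stability at scale $\delta$ provides the required compatibility $|\partial^\alpha(\vec P_\nu - \vec P_{\nu'})(x_\nu)| \leq C\|\Gamma^*\|\,|x_\nu-x_{\nu'}|^{m-|\alpha|}$ for neighboring cubes. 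The $(C_w,1)$-convexity of the shape field is then used exactly as in \cite{FIL16} to combine the $\vec P_\nu$ into a common element of $\Gamma(x,C\|\Gamma^*\|)$ at each $x\in E$, ensuring the range constraint survives partition-of-unity summation; regularity gives the continuity needed to pass from cube-wise to pointwise $x\in E$ membership. Summing $\vec F=\sum_\nu \theta_\nu \vec P_\nu$ yields the derivative bound via standard Whitney estimates.

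The main obstacle I expect is part (1): specifically proving that one Glaeser refinement strictly decreases local dimension at unstable points, and that this local dimension is a well-defined, robust invariant under the compactness-enforced truncations. This is where the whole adaptation from affine to convex geometry concentrates, and it is the step most at risk of having hidden pathologies (e.g.\ a convex set whose truncation has smaller affine hull around a boundary point). Once this is established, the $2D+1$ bound and the construction of $\vec F$ follow by adapting the known templates in \cite{F06} and \cite{FIL16}.
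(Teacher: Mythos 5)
Your plan for part (1), the termination in at most $2D+1$ steps, is aligned with the paper in spirit: the paper introduces exactly the local dimension you describe (Definition~\ref{def:local dimension}) and proves termination by a decreasing-dimension induction (Lemma~\ref{lemma:dimension terminates}), handling truncations and relative interiors via Grassmannian compactness and Lemma~\ref{lemma:closure same}. One small correction: the paper does not partition $E$ by local dimension for the termination argument; that stratification appears only in part (2).

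For part (2) your construction is missing the central mechanism and, as written, would not produce a section. You propose a single Whitney decomposition of $Q_0$, with one representative jet $\vec P_\nu\in\Gamma^*(x_\nu,C\|\Gamma^*\|)$ per cube, summed against a partition of unity. This guarantees $J_x\vec F\in\Gamma(x,\cdot)$ only at the sampled points $x_\nu$, not at every $x\in E$: unlike the classical Whitney problem, you are not handed a compatible family of jets on all of $E$; one must first \emph{select} a map $x\mapsto P^x\in\Gamma(x)$ over all of $E$ satisfying Whitney-compatible estimates. That selection is precisely what the $\dot C^{m,\omega}$-Finiteness Principle for Shape Fields (Theorem~\ref{thm:shape fields fin prin}) supplies, and to invoke it you must first build a regular modulus of continuity $\omega$. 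The paper constructs $\omega$ from a \emph{uniform} choice of $\delta$ in the Glaeser stability condition (Section~\ref{sec:uniform}), but that uniformity is only achievable on a single stratum $E_1$ where $\dim\Gamma(x)$ and $\dim\pi_x\Gamma(x)$ are constant, which is what makes Lemma~\ref{lemma:linear perturbation} and the Helly-theorem argument of Lemma~\ref{lemma:8.1} go through. This forces the actual proof into an induction on the number of strata: apply the Finiteness Principle on the lowest stratum $E_1$ to get $\tilde F$; subtract $\tilde F$; decompose $\R^n\setminus E_1$ into Whitney cubes; apply the induction hypothesis (rescaled) on each $E\cap Q_\nu^*$, which has fewer strata; and patch. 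Your proposal also omits the reduction to $d=1$ via the gradient trick of Section~\ref{sec:gradient trick}, though that is a convenience rather than a gap. The substantive missing ingredients are the FIL16-type finiteness principle and the stratification/induction on strata that makes its hypotheses attainable on each piece.
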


In Section \ref{sec:applications}, we will apply Theorem \ref{thm:heart of the matter} to present
our solutions to Problems \ref{prob:convex selection},
\ref{prob:range restriction}, and \ref{prob:linear system}. In Section \ref{sec:termination lemma}, we will prove the first part of Theorem \ref{thm:heart of the matter}, using the aforementioned concept of local dimension. Section \ref{sec:gradient trick} contains a reduction to the case $d=1$, using the gradient trick as in \cite{FL14}.

In Sections \ref{sec:fin princ} through \ref{sec:main proof}, we prove the second half of Theorem \ref{thm:heart of the matter}. We show that for given $\eps>0$, we may find a uniform $\delta$ in the statement that $(\Gamma^*(x))_{x\in E}$ is Glaeser stable and use this to construct a modulus of continuity for which we may apply the $C^{m,\omega}$ shape fields finiteness principle from \cite{FIL16}. The proof that $\delta$ may be taken uniformly will rely on the dimension of $\Gamma(x)$ being the same for all $x\in E$, forcing us to decompose $E$ into smaller sets (strata) for which this property holds. The solutions are then patched together via a partition of unity. The process is formally conducted via induction on the number of strata.

% The differences between our work and that of \cite{F06} stem from the fact that our $\Gamma(x)$'s need not be vector spaces. To handle this issue, we prove a Linear Perturbation Lemma (Lemma \ref{lemma:linear perturbation}) which keeps the resulting vector in the convex hull of the preceding vectors. We also make compactness a key element of the quest for uniform $\delta$; the analogous idea in \cite{F06} was to reduce from the infinite to the finite using the idea of a basis of a vector space.

Section \ref{sec:improvement} is not part of the proof of Theorem \ref{thm:heart of the matter}; however, it will provide a drastic decrease in the value of $\ksh$ from that given in the proof of the theorem (greater than $(\dim\vec{\P}+1)^{3\cdot 2^{\dim\vec{\P}}}$) to $\ksh=2^{\dim\vec{P}}$.

This is an overly simplified version of our
long story. The details will be presented in the sections below.

This paper is part of a literature on extension and interpolation, going
back to the seminal works of H. Whitney \cite{W34-1,W34-2,W34-3}. We refer the interested readers
to \cite{FK09-Data-1,FK09-Data-2,F09-Data-3,JL20,FIL16,FIL16+,FI20-book,fefferman2021cm} and references therein for the history and related problems.

\begin{acknowledgment} We are grateful to Charles Fefferman and Pavel Shvartsman for valuable discussions and suggestions. 

The first author is supported by the UC Davis Summer Graduate Student Researcher Award and the Alice Leung Scholarship in Mathematics. The second author
is supported by NSF Grant DMS-1554733 and the UC Davis Chancellor’s Chancellor's Fellowship.
\end{acknowledgment}

\section{Applications}\label{sec:applications}

In this section, we explain how to use Theorem \ref{thm:main theorem} to answer Problems \ref{prob:convex selection}, \ref{prob:range restriction}, and \ref{prob:linear system}.

By specializing to $d = 1$, $E = \Rn$, and
\begin{equation*}
    \mathcal{K}(x) = \begin{cases}\set{f(x)} \text{ for } x \in E_0 \\ [\lambda_1,\lambda_2]\cap \R \text{ for }x \in E\setminus E_0 \end{cases}\,,
\end{equation*}
we see that Problem \ref{prob:convex selection} encompasses Problem \ref{prob:range restriction}. 

By specializing to $D = ds$, $E = E_0$ and 
\begin{equation*}
    \mathcal{K}(x) = \set{(y_1, \cdots, y_d) \in \prod_{i = 1}^d K_i(x) : y_i \in \R^s \text{ for } i = 1, \cdots, d, \text{ and } \sum_{i = 1}^d \phi_i(x)y_i  \leq \phi(x)},
\end{equation*}
we see that Problem \ref{prob:convex selection} encompasses Problem \ref{prob:linear system}.

Thus, to solve Problems \ref{prob:range restriction} and \ref{prob:linear system}, it suffices to solve Problem \ref{prob:convex selection}.

In order to show Theorem \ref{thm:main theorem} applies to Problem \ref{prob:convex selection}, we must provide the definitions missing from the introduction.

\begin{definition}
\label{def.regular} Given a compact set $E\subset \mathbb{R}^{n}$, we say a
shape field $(\Gamma (x,M))_{x\in E,M\geq 0}$, with $\Gamma
(x)=\bigcup_{M\geq 0}\Gamma (x,M)$, is \underline{regular} if the following
hold.

\begin{itemize}
\item[\LA{regularity 1}] Given $M\geq 0, x\in E$,
\begin{equation*}
\Gamma(x,M)\subset \{\vec{P}\in\mathcal{P}:|\partial^\alpha \vec{P}(x)|\le M
\text{ for }|\alpha|\le m\}.
\end{equation*}

\item[\LA{regularity 2}] Given $\epsilon >0,M\geq 0$, there exists $\delta
>0 $ such that for any $x,x^{\prime }\in E$ with $\left\vert x-x^{\prime
}\right\vert \leq \delta $, if $\vec{P}\in \Gamma (x,M)$ and $\vec{P}%
^{\prime }\in \Gamma (x^{\prime })$ with
\begin{equation*}
\abs{\partial^{\alpha }(\vec{P}-\vec{P}')(x)} ,\left\vert \partial ^{\alpha }(\vec{P}-\vec{P}')(x') \right\vert \leq \delta \left\vert x-x'\right\vert ^{m-|\alpha |}\text{ for }|\alpha |\leq m,
\end{equation*}%
then $\vec{P}^{\prime }\in \Gamma (x^{\prime },M+\epsilon )$.

\item[\LA{regularity 3}] Given $\epsilon >0,M\geq 0$, there exists $\delta
>0 $ such that if $\vec{P}\in \Gamma (x,M)$, then for any $\vec{P}^{\prime
}\in \Gamma (x)$ satisfying
\begin{equation*}
|\partial ^{\alpha }(\vec{P}-\vec{P}^{\prime })(x)|<\delta \text{ for }%
|\alpha |\leq m
\end{equation*}%
implies
\begin{equation*}
\vec{P}^{\prime }\in \Gamma (x,M+\epsilon ).
\end{equation*}
\end{itemize}
\end{definition}

Note that the above conditions say nothing about the content of $\Gamma (x)$%
, rather the values of $M$ for which a given $\vec{P}\in \Gamma (x)$ lies in
$\Gamma (x,M)$.

\begin{definition}
\label{def.Cd-convex} Let $\Gamma=\left( \Gamma \left( x,M\right)
\right) _{x\in E,M\geq 0}$ be a shape field. Let $C_{w}>0$ and $\delta
_{\max }\in (0,1]$. We say that $\Gamma$ is \underline{$(C_{w},\delta
_{\max })$-convex}, if the following condition holds.

Let $0<\delta \leq \delta _{\max }$, $x\in E,M\geq 0$, $\vec{P}_{1},\vec{P}%
_{2}\in \mathcal{\vec{P}}$, $Q_{1},Q_{2}\in \mathcal{P}$. Assume that
\begin{align}
    &\vec{P}_{1},\vec{P}_{2}\in \Gamma \left( x,M\right) ;  \label{eq:C-delta 1}\\
    &\left\vert \partial ^{\alpha }\left( \vec{P}_{1}-\vec{P}_{2}\right) \left(
x\right) \right\vert \leq M\delta ^{m-\left\vert \alpha \right\vert }\text{
for }\left\vert \alpha \right\vert \leq m;  \label{eq:C-delta 2}\\
&\left\vert \partial ^{\alpha }Q_{i}(x) \right\vert \leq \delta
^{-\left\vert \alpha \right\vert }\text{ for }\left\vert \alpha \right\vert
\leq m,i=1,2;\text{ and}  \label{eq:C-delta 3}\\
&Q_{1}\odot_x Q_{1}+Q_{2}\odot_x Q_{2}=1.  \label{eq:C-delta 4}
\end{align}
Then
\begin{equation}
\sum_{i=1,2}Q_{i}\odot_x Q_{i}\odot_x \vec{P}_{i}\in \Gamma (x,C_{w}M).
\label{eq:C-delta 5}
\end{equation}
\end{definition}

It is clear that any $(C_{w},\delta _{\max })$-convex shape field is also $%
(C_{w}^{\prime },\delta _{\max }^{\prime })$-convex for any $C_{w}^{\prime
}\geq C_{w}$ and $0<\delta _{\max }^{\prime }\leq \delta _{\max }$.

% \begin{remark}
% \label{remark: whitney convex}
% We also note that $(C_{w},\delta _{\max })$-convexity is a
% natural adaptation of the notion of \textquotedblleft Whitney
% convexity\textquotedblright\ introduced in \cite{F05-J,F06} to address the
% nonlinear structure for jet-selection problems: A subset $\sigma \subset \mathcal{R}_{x}$ is ``Whitney convex with constant $C_{w}$"
% if:
% \begin{itemize}
% \item $\sigma$ is closed, convex, and symmetric.
% \item Suppose $\vec{P}\in \sigma ,$ $R\in \mathcal{R}_{x}$%
% , and $\delta \in (0,1].$ If $\vec{P}$ and $R$ satisfy the estimates
% \begin{equation*}
% \left\vert \partial ^{\alpha }\vec{P}(x) \right\vert \leq
% \delta ^{m-\left\vert \alpha \right\vert }\text{ and }\left\vert \partial
% ^{\alpha }R(x) \right\vert \leq \delta ^{-\left\vert \alpha
% \right\vert }\text{ for all }\left\vert \alpha \right\vert \leq m\text{,}
% \end{equation*}%
% then $R\odot_x\vec{P}\in C_{w}\sigma $.
% \end{itemize}
% It follows immediately from the definition above that if $\set{\sigma(x): x \in E}$ is a family of Whitney convex sets with uniform constant $C_w$, then the shape field $(f(x) + M\sigma(x))_{x\in E, M \geq 0}$ is $(2C_w,1)$-convex for any given family of polynomials $\set{f(x) \in \mathcal{R}_x : x \in E}$.
% \end{remark}

\subsection{Solution to Problem \ref{prob:convex selection}}

\newcommand{\GK}{\Gamma_{\mathcal{K}}}
Using a standard argument involving partition of unity, we may assume that $E$ is compact.

Consider the bundle $(\Gamma_{\mathcal{K}}(x))_{x \in E}$ with fiber 
\begin{equation}
    \Gamma_{\mathcal{K}}(x) = \bigcup_{M \geq 0}\Gamma_{\mathcal{K}}(x,M),
    \label{eq.GK-def-1}
\end{equation}
where
\begin{equation}
    \Gamma_{\mathcal{K}}(x,M) = \set{\vec{P} \in \vec{\P} :\,\, \begin{matrix*}[l]
    \text{$\vec{P}(x) \in \mathcal{K}(x)$ and $\abs{\da\vec{P}(x)}\leq M$ for $\abs{\alpha}\leq m$.}
    \end{matrix*}}.\label{eq.GK-def-2}
\end{equation}
A $C^m$-section of the bundle $(\GK(x))_{x\in E}$ is precisely a $C^m(\Rn,\R^d)$ function $\vec{F}$ such that $\vec{F}(x)\in \mathcal{K}(x)$ for each $x \in E$.

To find a $C^m$-section of $(\GK(x))_{x\in E}$, we use Theorem \ref{thm:heart of the matter}. In particular, we will prove the following.

\begin{lemma}\label{lemma:application}
$ \left(\Gamma_{\mathcal{K}}(x,M)\right)_{x\in E, M \geq 0}$ is a closed, regular, $(C,1)$-convex shape field.
\end{lemma}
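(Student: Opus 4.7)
The plan is to verify, one by one, the properties required by Definitions \ref{def.regular} and \ref{def.Cd-convex}, all of which follow directly from the explicit formula \eqref{eq.GK-def-2}. Shape-field monotonicity ($\GK(x,M')\subset\GK(x,M)$ for $M'\le M$) and closedness of each fiber are immediate: $\mathcal{K}(x)$ is closed by hypothesis, the derivative bounds $|\partial^\alpha \vec{P}(x)|\le M$ cut out a closed subset of $\vec{\mathcal{P}}$ that relaxes as $M$ increases, and \eqref{regularity 1} is literally the definition of $\GK(x,M)$.

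For the regularity conditions \eqref{regularity 2} and \eqref{regularity 3}, the value constraint is automatic: membership of $\vec{P}'$ in $\GK(x')$ (resp.\ $\GK(x)$) already encodes $\vec{P}'(x')\in\mathcal{K}(x')$ (resp.\ $\vec{P}'(x)\in\mathcal{K}(x)$), so only the derivative bound $|\partial^\alpha \vec{P}'(x')|\le M+\epsilon$ remains to be established. I would write $|\partial^\alpha \vec{P}'(x')|\le|\partial^\alpha \vec{P}(x')|+|\partial^\alpha(\vec{P}-\vec{P}')(x')|$ and Taylor-expand $\partial^\alpha \vec{P}$ around $x$ using $|\partial^{\alpha+\beta}\vec{P}(x)|\le M$, which gives $|\partial^\alpha \vec{P}(x')|\le M+C_{m,n}M|x-x'|$ whenever $|x-x'|\le 1$. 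Combined with the closeness hypothesis $|\partial^\alpha(\vec{P}-\vec{P}')(x')|\le\delta|x-x'|^{m-|\alpha|}\le\delta$, choosing $\delta$ small in terms of $\epsilon$ and $M$ yields \eqref{regularity 2}. Condition \eqref{regularity 3} follows from the same triangle inequality, without the Taylor step.

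The main step is $(C_w,1)$-convexity. Given $\vec{P}_1,\vec{P}_2\in\GK(x,M)$ and $Q_1,Q_2$ satisfying \eqref{eq:C-delta 2}--\eqref{eq:C-delta 4} with $0<\delta\le 1$, set $\vec{P}_*=\sum_{i=1,2}Q_i\odot_x Q_i\odot_x\vec{P}_i$. Evaluating at $x$ gives $\vec{P}_*(x)=Q_1(x)^2\vec{P}_1(x)+Q_2(x)^2\vec{P}_2(x)\in\mathcal{K}(x)$ by convexity, since $Q_1(x)^2+Q_2(x)^2=1$. For the derivative bound, I would use \eqref{eq:C-delta 4} to rewrite $\vec{P}_*$, as an $m$-jet at $x$, as $\tfrac12(\vec{P}_1+\vec{P}_2)+\tfrac12 R\odot_x(\vec{P}_1-\vec{P}_2)$ with $R=Q_1\odot_x Q_1-Q_2\odot_x Q_2$. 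Leibniz applied to \eqref{eq:C-delta 3} gives $|\partial^\beta R(x)|\le C\delta^{-|\beta|}$, which multiplies against $|\partial^{\alpha-\beta}(\vec{P}_1-\vec{P}_2)(x)|\le M\delta^{m-|\alpha-\beta|}$ from \eqref{eq:C-delta 2} to contribute at most $CM\delta^{m-|\alpha|}\le CM$ for each $\beta\le\alpha$; adding the trivial bound on $\partial^\alpha(\vec{P}_1+\vec{P}_2)(x)$ gives $|\partial^\alpha \vec{P}_*(x)|\le C_{m,n}M$, hence $\vec{P}_*\in\GK(x,C_{m,n}M)$. The only mildly delicate point is this Leibniz bookkeeping: the positive powers of $\delta$ coming from the closeness of the $\vec{P}_i$ must exactly absorb the negative powers coming from the $Q_i$, a cancellation which hinges on $|\alpha-\beta|+|\beta|=|\alpha|\le m$ together with $\delta\le 1$, and is precisely what permits taking $\delta_{\max}=1$.
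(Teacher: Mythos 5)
Your proof is correct and follows essentially the same path as the paper's: the regularity checks are the same Taylor-expansion and triangle-inequality arguments. For $(C,1)$-convexity, the paper merely asserts that \eqref{eq:Cd-convex-2}--\eqref{eq:Cd-convex-3} give $\abs{\partial^\alpha \vec{P}_*(x)}\le CM$; your symmetric rewrite $\vec{P}_*=\tfrac12(\vec{P}_1+\vec{P}_2)+\tfrac12 R\odot_x(\vec{P}_1-\vec{P}_2)$ with Leibniz makes explicit the $\delta$-power cancellation that the paper leaves implicit (and spells out only in the parallel computation at \eqref{eq.trick-cd-12} in Lemma \ref{lem:gradient trick}).
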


%(see Definitions \ref{def.sf}, \ref{def.regular} and \ref{def.Cd-convex}). 
\begin{proof}

We check each condition one at a time.

\textbf{Shape field.} It is clear that $\left(\Gamma_{\mathcal{K}}(x,M)\right)_{x\in E, M \geq 0}$ is a closed shape field.

\textbf{Regularity.} Now we show that $\left(\Gamma_{\mathcal{K}}(x,M)\right)_{x\in E, M \geq 0}$ is regular as in Definition \ref{def.regular}. In particular, we want to show that $\left(\Gamma_{\mathcal{K}}(x,M)\right)_{x\in E, M \geq 0}$ satisfies conditions \eqref{regularity 1}--\eqref{regularity 3}.

It is clear that $\left(\Gamma_{\mathcal{K}}(x,M)\right)_{x\in E, M \geq 0}$ satisfies condition \eqref{regularity 1}. 

Now we check condition \eqref{regularity 2}. Let $\eps > 0$ and $M \geq 0$ be given. Let $\delta= \delta(\eps,M)$ be a small number to be chosen. Let $x,x' \in E$ with $\abs{x-x'}\leq \delta$, $\vec{P} \in \GK(x,M)$, and $\vec{P}' \in \GK(x') = \bigcup_{M\geq 0}\GK(x',M)$ satisfy
\begin{equation}
    \abs{\da(\vec{P}-\vec{P}')(x)}, \abs{\da(\vec{P}-\vec{P}')(x)} \leq \delta\abs{x-x'}^{m-\abs{\alpha}}
    \text{ for }\abs{\alpha}\leq m
    \label{eq.reg-01}
\end{equation}

Since $\vec{P}' \in \GK(x')$, we have 
\begin{equation}
\vec{P}'(x')\in \mathcal{K}(x').
    \label{eq.reg-010}
\end{equation}

By Taylor's theorem and the fact that $\vec{P} \in \GK(x,M)$ is an $m$-jet, we have
\begin{equation}
    \abs{\da\vec{P}(x')} \leq 
    \begin{cases}
    M(1+C\abs{x-x'}^{m-\abs{\alpha}}) \leq M(1+C\delta^{m-\abs{\alpha}}) &\text{ for $\abs{\alpha}< m$}\\
    M &\text{ for $\abs{\alpha}=m$}
    \end{cases},
    \label{eq.reg-02}
\end{equation}
where $C=C(m,n,d)$.

We see from \eqref{eq.reg-01} and \eqref{eq.reg-02} that 
\begin{equation}
\begin{split}
    \abs{\da\vec{P}'(x')} &\leq \abs{\da(\vec{P}-\vec{P'})(x')} + \abs{\da\vec{P}(x')}  \\
    &\leq 
    \begin{cases}
    M(1+C\delta^{m-\abs{\alpha}}) + \delta^{m+1-\abs{\alpha}}
    &\text{ for $\abs{\alpha}<m$}
     \\
    M+\delta &\text{ for $\abs{\alpha}=m$}
    \end{cases}.
    \end{split}
    \label{eq.reg-03}
\end{equation}

In view of \eqref{eq.reg-010} and \eqref{eq.reg-03} while choosing $\delta$ to be sufficiently small in a manner dependent only on $M$ and $C=C(m,n,d)$, we can conclude that $\vec{P}' \in \GK(x',M+\eps)$.

We now turn to condition \eqref{regularity 3}. 

Let $\eps > 0$ and $M\geq 0$. Let $\delta \in (0,\eps)$. Let $x \in E$, $\vec{P} \in \GK(x,M)$, $\vec{P}' \in \GK(x) = \bigcup_{M\geq 0}\GK(x,M)$. Assume that $\abs{\da(\vec{P}-\vec{P}')(x)} \leq \delta < \eps$. We immediately see that $\vec{P}' \in \GK(x,M+\eps)$. Condition \eqref{regularity 3} is satisfied.

Therefore, $\left(\Gamma_{\mathcal{K}}(x,M)\right)_{x\in E, M \geq 0}$ is regular.

\textbf{$(C,1)$-convexity.} Now we show that $(\GK(x,M))_{x\in E, M \geq 0}$ is $(C,1)$-convex. Fix the following:
\begin{align}
    \label{eq:Cd-convex-1}&\delta \in (0,1],\, x \in E, \, M \geq 0;\\
    \label{eq:Cd-convex-2}&\vec{P}_1, \vec{P}_2 \in \Gamma_{\mathcal{K}}(x,M)
    \text{ with } \abs{\da(\vec{P}_1 - \vec{P}_2)(x)} \leq M\delta^{m-\abs{\alpha}}
    \text{ for }\abs{\alpha}\leq m;\\
    \label{eq:Cd-convex-3}&Q_1, Q_2 \in \P \text{ with }
    \abs{\da Q_i(x)}\leq \delta^{-\abs{\alpha}}
    \text{ and }Q_1\odot_x Q_1 + Q_2\odot_x Q_2 = 1.
\end{align}
We want to show that
\begin{equation}
    \vec{P}:= \sum_{i = 1,2}Q_i\odot_x Q_i\odot_x \vec{P}_i \in \Gamma_{\mathcal{K}}(x,CM). 
    \label{eq:Cd-convex-0}
\end{equation}

Thanks to \eqref{eq:Cd-convex-3}, we have $\vec{P}(x)\in \mathcal{K}(x)$. Thanks to \eqref{eq:Cd-convex-2} and \eqref{eq:Cd-convex-3}, we have $\abs{\da\vec{P}(x)} \leq CM$. Therefore, \eqref{eq:Cd-convex-0} holds.

\end{proof}

\section{Termination Lemma}\label{sec:termination lemma}

Let $D=\dim\vec{\P}$. The purpose of this section is to prove the following lemma, which serves as part 1 of Theorem \ref{thm:heart of the matter}.

\begin{lemma}[Termination of Glaser Refinement]\label{lemma:termination of Glaeser refinement}
If $(H(x))_{x\in E}$ is a convex bundle, then $H_l(x)=H_{2D+1}(x)$ for all $l\ge 2D+1$.
\end{lemma}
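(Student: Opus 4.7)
The plan is to follow the termination argument for affine bundles in \cite{F06} (after \cite{BMP03}), replacing affine dimension by a local dimension suited to convex sets. For a convex set $H \subset \vec{\mathcal{P}}$ and a point $\vec{P} \in H$, let $\dim_{\vec{P}} H$ denote the dimension of the affine hull of $H \cap B(\vec{P}, r)$ for $r > 0$ sufficiently small; by standard convex geometry this value stabilizes as $r \to 0$. Set $\dim_{\vec{P}} H = -1$ when $\vec{P} \notin H$. This local dimension takes values in $\{-1, 0, \ldots, D\}$ and satisfies the monotonicity $\dim_{\vec{P}} H' \leq \dim_{\vec{P}} H$ whenever $\vec{P} \in H' \subseteq H$. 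The formalization will be given in Definition \ref{def:local dimension}.

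The key observation is that the Glaeser refinement \eqref{GR} is entirely local in the test polynomial $\vec{P}_0$: the $\epsilon$--$\delta$ condition only constrains polynomials $\vec{P}_j$ lying close to $\vec{P}_0$ in the jet sense, so intersecting each fiber $H(x)$ with a small neighborhood of $\vec{P}_0$ does not affect whether $\vec{P}_0 \in \tilde{H}(x_0)$. Consequently, in a neighborhood of any candidate $\vec{P}_0$ the bundle may be analyzed through the affine hull of the local piece of each fiber, reducing the Glaeser-refinement question at $\vec{P}_0$ to the affine case already treated in \cite{F06}.

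Given this local-to-affine reduction, I would track the dimension profile $\Phi_l(x,\vec{P}) := \dim_{\vec{P}} H_l(x)$ on $E \times \vec{\mathcal{P}}$. Nestedness gives $\Phi_{l+1} \leq \Phi_l$ pointwise; the heart of the matter is to show that whenever $H_l \neq H_{l+1}$ somewhere in $E$, one further Glaeser iteration forces a strict drop $\Phi_{l+2}(x_\ast, \vec{P}_\ast) < \Phi_l(x_\ast, \vec{P}_\ast)$ at some $(x_\ast, \vec{P}_\ast) \in E \times \vec{\mathcal{P}}$. Granting this ``strict drop after two steps'', the bound follows by counting: $\Phi$ takes values in a set of size $D+2$, so at any fixed $(x, \vec{P})$ the sequence can decrease at most $D+1$ times, and the factor of two then gives stabilization after at most $2D+1$ iterations.

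The main obstacle is the strict-drop step, since this is precisely where convex and affine bundles diverge. In the affine case, any proper refinement immediately drops the affine dimension of some fiber, and a $2D+1$ bound would follow without the factor of two; for convex fibers, however, a single Glaeser refinement can shave off a relatively open face and leave every local dimension unchanged. The extra step in the bound is meant to absorb such ``flat'' refinements: one must argue that because the Glaeser condition couples $k^{\sharp}$ nearby fibers, a flat refinement at stage $l$ propagates into a genuine local-dimension drop at some nearby point after one more iteration. Making this propagation quantitative, by combining the local-to-affine reduction with the affine-case argument of \cite{F06}, is the technical heart of the proof; no regularity or $(C,1)$-convexity is needed, since the statement concerns arbitrary convex bundles.
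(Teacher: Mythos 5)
Your overall strategy --- introduce a local dimension for convex fibers, exploit the locality of the Glaeser refinement in the jet variable, and track how these dimensions evolve under iteration --- is the paper's approach as well. The paper defines the same notion (Definition~\ref{def:local dimension}, where $\dim_{\vec{P}}K = -\infty$ if $\vec{P}\notin K$) and also records the monotonicity and the locality observation you state. So the conceptual framework is on target.

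The gap is in the counting argument. You propose a ``strict drop after two steps'' lemma: if $H_l\neq H_{l+1}$, then $\Phi_{l+2}(x_*,\vec{P}_*)<\Phi_l(x_*,\vec{P}_*)$ at \emph{some} point $(x_*,\vec{P}_*)$. Such a statement is essentially free (take any $\vec{P}$ removed from some fiber and its local dimension collapses from $\geq 0$ to $-\infty$ in a single step), but it does not yield the $2D+1$ bound. The drops at successive stages may occur at completely different pairs $(x_*,\vec{P}_*)$, and the ``at most $D+1$ decreases per fixed point'' bound never closes the loop: a single point only sees one collapse to $-\infty$, after which the counting clock at that point is exhausted, yet the bundle can keep shedding fresh points at every later stage. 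Nothing in the proposal forces the drops to accumulate at a single $(x,\vec{P})$, so there is no contradiction after $2D+1$ steps.

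What the paper actually proves is a \emph{stability} statement rather than a drop statement, and this is where the factor of two genuinely enters. Lemma~\ref{lemma:dimension terminates} shows, by induction on $k$, that if $\dim_{\vec{P}}H_{2k+1}(x)\geq D-k$ then $\dim_{\vec{P}}H_l(x)$ is constant for $l\geq 2k+1$. Taking $k=D$ and noting that any $\vec{P}\in H_{2D+1}(x)$ trivially has local dimension $\geq 0 = D-D$ gives the conclusion. The inductive step requires two pieces of machinery you do not have: a dichotomy lemma (Claim~\ref{claim:for IH}) asserting that near $x$ the fibers $H_l(y)$ either have strictly larger local dimension or agree locally with an affine $(D-k-1)$-plane, and a closure lemma (Lemma~\ref{lemma:closure same}) showing the Glaeser refinement is insensitive to replacing nearby fibers by their closures, which is what lets one pass from ``equal relative interiors'' to genuine equality. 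Your ``local-to-affine reduction'' gestures at the dichotomy but does not prove it, and without Lemma~\ref{lemma:closure same} even the affine-looking case does not quite close, since a convex fiber that locally looks like an affine plane may fail to equal it on the boundary. So you would need to (i) replace the global drop-counting by the per-point induction on $D-k$, (ii) establish the dichotomy, and (iii) establish the closure-insensitivity of Glaeser refinement.
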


The origin of Lemma \ref{lemma:termination of Glaeser refinement} goes back to Glaeser \cite{G58}. Bierstone-Milman and Fefferman also adapted it in their works \cite{BMP03} and \cite{F05-L}.

Before beginning the proof of Lemma \ref{lemma:termination of Glaeser refinement}, we will show that the convexity hypothesis on the holding spaces extends to the Glaeser refinements.

\begin{lemma}\label{lemma:convexity for all}
If $(H_0(x))_{x\in E}$ is a convex bundle, then $(H_l(x))_{x\in E}$ is a convex bundle for all $l \ge 1$. 
\end{lemma}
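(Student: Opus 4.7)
The plan is to proceed by induction on $l$, with the base case $l=0$ given by hypothesis. For the inductive step, it suffices to show that the Glaeser refinement preserves convexity: if $(H(x))_{x\in E}$ is convex, then so is $(\tilde H(x))_{x\in E}$.

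To this end, fix $x_0\in E$ and take $\vec P_0,\vec Q_0\in\tilde H(x_0)$ and $\lambda\in[0,1]$; set $\vec R_0=\lambda\vec P_0+(1-\lambda)\vec Q_0$. Given $\eps>0$, I would apply the defining property \eqref{GR} of the Glaeser refinement to $\vec P_0$ and $\vec Q_0$ separately to produce radii $\delta_P,\delta_Q>0$, then take $\delta=\min\{\delta_P,\delta_Q\}$. For any $x_1,\dots,x_{\ksh}\in E\cap B(x_0,\delta)$, this yields polynomials $\vec P_j\in H(x_j)$ and $\vec Q_j\in H(x_j)$ (for $j=0,1,\dots,\ksh$, with $\vec P_0,\vec Q_0$ being the ones we started with) satisfying the Glaeser estimates with tolerance $\eps$.

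Now define $\vec R_j=\lambda\vec P_j+(1-\lambda)\vec Q_j$ for $j=1,\dots,\ksh$. Convexity of the fiber $H(x_j)$ gives $\vec R_j\in H(x_j)$, and the linearity of the convex combination gives
\begin{equation*}
\vec R_i-\vec R_j=\lambda(\vec P_i-\vec P_j)+(1-\lambda)(\vec Q_i-\vec Q_j),
\end{equation*}
so that
\begin{equation*}
\bigl|\da(\vec R_i-\vec R_j)(x_j)\bigr|\le \lambda\eps|x_i-x_j|^{\ma}+(1-\lambda)\eps|x_i-x_j|^{\ma}=\eps|x_i-x_j|^{\ma}
\end{equation*}
for $|\alpha|\le m$ and $0\le i,j\le\ksh$. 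This verifies \eqref{GR} for $\vec R_0$, so $\vec R_0\in\tilde H(x_0)$, proving convexity of $\tilde H(x_0)$.

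No step looks delicate here; the only thing to be careful about is that the defining condition of $\tilde H$ requires $\vec P_j\in H(x_j)$ for \emph{all} $j=0,1,\dots,\ksh$ (including $j=0$), which is automatic from $\tilde H(x_0)\subset H(x_0)$. The key point is that the Glaeser estimate is affine in the polynomials, so convex combinations respect it, and the convexity of each fiber $H(x_j)$ is precisely what allows us to combine the witnesses for $\vec P_0$ and $\vec Q_0$ into a witness for $\vec R_0$.
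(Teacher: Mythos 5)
Your proof is correct and follows the same route as the paper's: induction on $l$, reducing to showing that a single Glaeser refinement preserves convexity, then taking the convex combination of the witness families for $\vec P_0$ and $\vec Q_0$ and using convexity of the unrefined fibers plus the triangle inequality. The only cosmetic difference is that you explicitly take $\delta = \min\{\delta_P, \delta_Q\}$ where the paper implicitly assumes a common $\delta$ works; this is a sound and in fact slightly more careful phrasing of the same step.
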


\begin{proof}

\newcommand{\vp}{\vec{P}}
\newcommand{\vq}{\vec{Q}}
\newcommand{\vr}{\vec{R}}

We induct on $l \geq 0$. The case $l = 0$ is trivial by assumption.

Let $l\ge 1$. Suppose $H_{l-1}(x)$ is convex for each $x \in E$. We want to show that $H_{l}(x)$ is convex for each $x \in E$.

Fix $x_0 \in E$. Let $\vp_0, \vq_0 \in H_{l}(x_0)$. Let $\theta \in [0,1]$, and set \begin{equation*}
    \vr_0:= (1-\theta)\vp_0 + \theta \vq_0.
\end{equation*}
We want to show that $\vr_0 \in H_{l}(x_0)$.

Let $\epsilon > 0$. Since $\vp_0, \vq_0 \in H_{l}(x_0)$, there exists $\delta > 0$ such that, for any $x_1, \cdots, x_{\ksh} \in E \cap B(x_0,\delta)$, there exist $\vp_1, ... , \vp_{\ksh}, \vq_1, ... , \vq_\ksh$, with 
\begin{equation}\label{eq:tetris1}
    \vp_j, \vq_j \in H_{l-1}(x_j)\text{ for }j = 1, \cdots, \ksh;
\end{equation}
\begin{equation}
    \abs{\D^\alpha(\vp_i - \vp_j)(x_j)} \leq \epsilon\abs{x_i - x_j}^{m-\abs{\alpha}}\text{ for }\abs{\alpha}\leq m, 0 \leq i, j \leq \ksh;
\end{equation}
and
\begin{equation}
    \abs{\D^\alpha(\vq_i - \vq_j)(x_j)} \leq \epsilon\abs{x_i - x_j}^{m-\abs{\alpha}}\text {for }\abs{\alpha}\leq m, 0 \leq i, j \leq \ksh.
\end{equation}

We set 
\begin{equation}\label{eq:tetris2}
    \vr_j := (1-\theta)\vp_j + \theta \vq_j
    \text{ for } j = 1, \cdots, \ksh.
\end{equation}
By the induction hypothesis that $H_{l-1}(x)$ is convex for each $x \in E$, \eqref{eq:tetris1} and \eqref{eq:tetris2} imply $\vr_j \in H_{l-1}(x_j)$ for each $j = 1, \cdots, \ksh$. Moreover, by \eqref{eq:tetris2} and the triangle inequality,
\begin{align}
    \abs{\D^\alpha(\vr_i - \vr_j)(x_j)} 
        &\leq 
        (1-\theta)\abs{\D^\alpha (\vp_i - \vp_j)(x_j) }+\theta \abs{\D^\alpha (\vq_i - \vq_j)(x_j) }\\ &\leq \epsilon\abs{x_i-x_j}^{m-\abs{\alpha}}
\end{align}
for $\abs{\alpha} \leq m$ and $0 \leq i,j \leq \ksh$. 

Hence, $H_l(x_0)$ is convex. Since $x_0,\vp_0,$ and $\vq_0$ were arbitrary, the lemma is proved.
\end{proof}

In \cite{BMP03} and \cite{F06}, the analogous version of Lemma \ref{lemma:termination of Glaeser refinement} for affine holding spaces was proven using an argument which relied on the well-definition of the dimension of vector subspaces. Here, we adapt this argument to the case of non-affine holding spaces so another definition of dimension will be required. 

In this section, we will use the notation $B_{\eta}(\vec{P})$ to denote the open ball of radius $\eta$ in $\vec{\P}$ centered $\vec{P}$ with respect to the metric
\begin{equation}\label{eq:preliminary metric}
    d(\vec{P},\vec{P}'):=\max_{|\alpha|\le m}|\da(\vec{P}-\vec{P}')(0)|.
\end{equation}
$\overline{B}_{\eta}(\vec{P})$ will denote the analogous closed ball in $\vec{\P}$.

Let $(H(x))_{x\in E}$ be a holding space with Glaeser refinement $(\tilde{H}(x))_{x\in E}$. Observe that whenever $x_0,x_1,x_2,...\in E$, $\vec{P}_0\in \tilde{H}(x_0)$ and $x_j\to x_0$ as $n\to\infty$, there exist $\vec{P}_j\in H(x_j)$ such that $d(\vec{P}_j,\vec{P}_0)\to0$. This property, rather than the definition of Glaeser refinement, will be key in the proof of Lemma \ref{lemma:termination of Glaeser refinement}.

\begin{definition}\label{def:local dimension}
Given $K\subset \vec{\P}$ convex and $\vec{P}\in \vec{\P}$, we define the \underline{local dimension of $K$ at $\vec{P}$}, denoted $\dim_{\vec{P}}K$, to be the largest integer $k$ such that the following holds:
\begin{itemize}
    \item[\LA{eq:local dimension definition}] There exists $\eta>0$ such that $K\cap B_\eta(\vec{P})\supset W\cap B_\eta(\vec{P})$ for some $k$-dimensional affine subspace $W\ni \vec{P}$.
\end{itemize}
We take $\dim_{\vec{P}}K=-\infty$ in the case where $\vec{P}\notin K$. Thus, the condition $\dim_{\vec{P}}K\neq-\infty$ implies $\vec{P}\in K$.
\end{definition}

For an example illustrating the notion of local dimension, consider the half-ball $K=\{(x,y)\in\R^2:x^2+y^2\le1, y\ge0\}$. In this case $\dim_{(0,1/2)}K=2$ since $B_{1/10}((0,1/2))\subset K$ and $\dim_{(0,0)}K=1$ since any 2-dimensional ball containing $(0,0)$ must intersect the lower half-plane but the line segment from $(-1/2,0)$ to $(1/2,0)$ is contained in $K$. Furthermore, $\dim_{(1,0)}K=\dim_{(0,1)}K=0$ since in either case, any line segment with the given point as its midpoint must intersect $K^c$.

\begin{remark}\label{remark:on local dimension}
\begin{enumerate}
    \item\label{remark:on local dimension 1} If $\vec{P}$ is in the relative interior of $K$, written $\vec{P}\in \text{int} K$, then we may take $K\cap B_\eta(\vec{P})=W\cap B_\eta(\vec{P})$ in \eqref{eq:local dimension definition}.
    \item\label{remark:on local dimension 2} In the case where $K$ is a closed, convex subset of $\vec{\P}$, then it has a well-defined dimension as a set, often defined as the smallest dimension of an affine subspace containing the set $K$, which we denote $\dim K$. When $\vec{P}\in \text{int} K$, $\dim_{\vec{P}} K=\dim K$; in particular, for a vector subspace $V\subset\vec{\P}$, $\dim_{\vec{P}} V=\dim V$ for any $\vec{P}\in V$.
    \item\label{remark:on local dimension 3} Whenever $K\supset L$, we have $\dim_{\vec{P}}K\ge \dim_{\vec{P}}L$.
\end{enumerate}
\end{remark}

Lemma \ref{lemma:termination of Glaeser refinement} will follow shortly from the following result:

\begin{lemma}\label{lemma:dimension terminates}
Suppose $\left(H(x)\right)_{x\in E}$ is a convex bundle. Let $x\in E$, $\vec{P}\in\vec{\P}$, and $k\ge0$ be an integer. If
\begin{equation}\label{eq:inductive version}
    \dim_{\vec{P}}H_{2k+1}(x)\geq D-k,
\end{equation}
then $\dim_{\vec{P}}H_{l}(x)=\dim_{\vec{P}}H_{2k+1}(x)$ for all $l\ge 2k+1$.
\end{lemma}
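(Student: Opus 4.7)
The plan is to proceed by induction on $k \geq 0$. Applying the conclusion at $k = D$ to any $\vec{P} \in H_{2D+1}(x)$, which automatically satisfies $\dim_{\vec{P}} H_{2D+1}(x) \geq 0 = D - D$, will yield $\vec{P} \in H_l(x)$ for all $l \geq 2D+1$; combined with the chain $H_l(x) \subset H_{2D+1}(x)$ from repeated refinement, this recovers Lemma \ref{lemma:termination of Glaeser refinement}.

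For the base case $k = 0$, the hypothesis forces $\dim_{\vec{P}} H_1(x) = D$, so by Remark \ref{remark:on local dimension}(\ref{remark:on local dimension 1}) some ball $B_\eta(\vec{P})$ lies in $H_1(x)$. I would then run a secondary induction on $l \geq 1$ to show $H_l(x)$ contains a (possibly shrinking) ball around $\vec{P}$. The inductive step goes as follows: to certify that a target $\vec{Q}$ close to $\vec{P}$ belongs to $H_{l+1}(x)$, I must produce polynomials $\vec{Q}_j \in H_l(x_j)$ at nearby $x_j \in E$ satisfying the Taylor condition. Since the secondary induction gives $\vec{P} \in H_l(x)$, the Glaeser condition at $\vec{P}$ yields $\vec{P}_j \in H_{l-1}(x_j)$ close to $\vec{P}$; the ``room'' around $\vec{P}$ inside $H_l(x)$, combined with Lemma \ref{lemma:convexity for all}, should permit these $\vec{P}_j$ to be perturbed into polynomials $\vec{Q}_j \in H_l(x_j)$ matching $\vec{Q}$ to the required order.

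For the inductive step $k - 1 \to k$, suppose $\dim_{\vec{P}} H_{2k+1}(x) = d \geq D - k$, and let $W$ be a $d$-dimensional affine subspace through $\vec{P}$ with $W \cap B_\eta(\vec{P}) \subset H_{2k+1}(x)$. By monotonicity (Remark \ref{remark:on local dimension}(\ref{remark:on local dimension 3})), it suffices to produce $\eta' > 0$ with $W \cap B_{\eta'}(\vec{P}) \subset H_l(x)$ for all $l \geq 2k+1$. I would pick $d+1$ affinely independent polynomials $\vec{Q}^{(0)} = \vec{Q}, \vec{Q}^{(1)}, \ldots, \vec{Q}^{(d)}$ in $W \cap B_\eta(\vec{P})$ and simultaneously apply the Glaeser condition at level $2k+1$ (using that $\ksh$ is taken large enough to test all these polynomials against a single tuple $(x_1, \ldots, x_{\ksh})$) to obtain $\vec{Q}_j^{(i)} \in H_{2k}(x_j)$ satisfying the Taylor compatibilities. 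Convexity of $H_{2k}(x_j)$ (Lemma \ref{lemma:convexity for all}) then furnishes a $d$-dimensional affine piece of $H_{2k}(x_j)$ near each $\vec{Q}_j^{(0)}$, and invoking the inductive hypothesis at the nearby fibers $x_j$ (which, after one further Glaeser step, should promote the local dimension to the $D - (k-1)$ required at level $2k-1$) would propagate the dimension through higher levels and deliver the needed $\vec{Q}_j \in H_{l-1}(x_j)$.

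The main obstacle is the dimension bookkeeping in the inductive step: the Glaeser condition at level $2k+1$ produces polynomials in $H_{2k}(x_j)$, while the inductive hypothesis at $x_j$ requires local dimension $D - (k-1) = d+1$ at level $2k-1$, which is \emph{one more} than the dimension immediately available from the convex hull of the $\vec{Q}_j^{(i)}$. The factor of two in the index $2k+1$ is precisely what absorbs this discrepancy: each ``dimension drop'' consumes two refinement steps, one to transfer the affine structure from $x$ to its neighbors and one to recover the dimension bound there via convexity. Making this bookkeeping rigorous, choosing $\ksh$ to accommodate simultaneous testing of an entire basis of $W$, and carefully controlling the $\epsilon$--$\delta$ dependence when perturbing polynomials through the Glaeser condition, will be the most technical parts of the argument.
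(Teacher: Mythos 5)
Your overall strategy (induction on $k$, reading the base case off a full ball in $H_1(x)$, and the heuristic that the factor of two absorbs one dimension drop per pair of refinements) matches the paper's at a high level, but the concrete mechanisms you propose for both the base case and the inductive step have genuine gaps.

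In the base case, you try to certify $\vec{Q}\in H_{l+1}(x)$ by taking the $\vec{P}_j\in H_{l-1}(x_j)$ that the Glaeser condition produces for $\vec{P}$ and ``perturbing'' them to $\vec{Q}_j\in H_{l}(x_j)$. This does not work as stated: the index must go \emph{down} (there is no reason a point of $H_{l-1}(x_j)$ perturbs into the \emph{smaller} set $H_{l}(x_j)$), and the ``room'' you invoke lives in $H_l(x)$, which tells you nothing about room in $H_l(x_j)$ or $H_{l-1}(x_j)$. The paper instead proves (Claim~\ref{claim:for base case}) that $B_{\eta/2}(\vec{P})\subset H(y)$ for \emph{every} $y$ near $x$, via a separating-point and compactness argument using the convexity of $H(x_n)$, and then propagates this to all $H_l(y)$ by the purely local observation that $H_{l+1}(y)\cap B_r(\vec{P})$ is determined by $H_l(z)\cap B_r(\vec{P})$ for $z$ in a relatively open set. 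Your proposal has no substitute for Claim~\ref{claim:for base case}, which is where the real work is.

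In the inductive step you have correctly identified the central obstacle yourself: you need local dimension $D-(k-1)=d+1$ at level $2k-1$ at the nearby fibers $x_j$ to invoke the inductive hypothesis there, but the simultaneously tested $d+1$ points only give a $d$-dimensional affine piece at $x_j$, so the IH is off by one. The assertion that ``the factor of two absorbs the discrepancy'' is not an argument, and I do not see how to make your plan close this gap. The paper avoids it entirely: it does not try to raise the dimension at the nearby fibers. Instead, Claim~\ref{claim:for IH} establishes a dichotomy for $y$ near $x$: either $\dim[H_l(y)\cap B_\eta(\vec{P})] > D-k-1$ (in which case the IH applies at $y$ because the dimension is already high enough), or $H_l(y)\cap B_{\eta/2}(\vec{P})$ is exactly a $(D-k-1)$-dimensional affine slice (in which case there is no room left to drop, and the refinement at $y$ is locally stable). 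Either way the nearby fibers stabilize in relative interior, and Lemma~\ref{lemma:closure same} is what lets one pass from ``same relative interior'' to ``same Glaeser refinement.'' Your proposal has no analogue of this dichotomy, no Grassmannian-compactness argument to pin down the limiting affine subspace, and no analogue of Lemma~\ref{lemma:closure same}; as written, the inductive step does not close.
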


To see the value of Lemma \ref{lemma:dimension terminates}, let us use it to prove Lemma \ref{lemma:termination of Glaeser refinement}:

\begin{proof}[Proof of Lemma \ref{lemma:termination of Glaeser refinement} via Lemma \ref{lemma:dimension terminates}]

\newcommand{\vp}{\vec{P}}

Let $x\in E$ and $\vp\in H_{2D+1}(x)$. Then, $\dim_{\vec{P}} H_{2D+1}(x)\ge0$ because the zero-dimensional affine space $W=\{\vec{P}\}$ satisfies $B_\eta(\vp)\cap K\supseteq B_\eta(\vp)\cap W$ for any $\eta>0$. 

Taking $k=D$ in \eqref{eq:inductive version}, $\dim_{\vp}H_l(x)=\dim_{\vec{P}}H_{2k+1}(x)\geq0$ for all $l\ge2k+1$. The local dimension is a nonnegative integer if and only if $\vp\in H_l(x)$; therefore, $\vp\in H_l(x)$ for all $l\ge 2D+1\ge 2k+1$.
\end{proof}

Now, it suffices to prove Lemma \ref{lemma:dimension terminates}.

\begin{proof}[Proof of Lemma \ref{lemma:dimension terminates}]

\newcommand{\vp}{\vec{P}}

We prove this result by induction on $k$. By Lemma \ref{lemma:convexity for all}, $H_l(y)$ is convex for all $y\in E$ and $l\ge0$. This convexity will be used throughout the proof.

{\bf Base case ($k=0$):} First, suppose $\dim_{\vp}H_1(x)=D$ for some $x\in E$ and $\vp\in H_1(x)$. In such a case $\dim_{\vp} H(x)=D$ as well by Remark \ref{remark:on local dimension} No. \ref{remark:on local dimension 3}. Pick $\eta$ so $H_{1}(x)\cap B_\eta(\vp)\supset W\cap B_\eta(\vp)$ for some $D$-dimensional affine subspace $W$ running through $\vp$. The only $D$-dimensional affine space in $\vec{\P}$ is $\vec{\P}$ itself, so $B_\eta(\vp)\subset H_1(x)$. 

\begin{claim}\label{claim:for base case}
$B_{\eta/2}(\vp)\subset H(y)$ for all $y$ sufficiently close to $x$.
\end{claim}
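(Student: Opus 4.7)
The plan is to use convexity of $H(y)$ to reduce the desired containment to containment of finitely many simplex vertices, and then to invoke the approximation property of the Glaeser refinement (stated just before Definition \ref{def:local dimension}) to transfer those vertices from $H_1(x) = \tilde{H}(x)$ to $H(y)$ for $y$ near $x$.

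First, as already noted in the paragraph preceding the claim, the hypothesis $\dim_{\vec{P}}H_1(x) = D$ forces $B_\eta(\vec{P}) \subset H_1(x)$, since the unique $D$-dimensional affine subspace of $\vec{\mathcal{P}}$ is $\vec{\mathcal{P}}$ itself. Next, I would choose $D+1$ affinely independent points $\vec{P}_0, \ldots, \vec{P}_D \in B_{3\eta/4}(\vec{P})$ whose convex hull $\Sigma$ contains the closed ball $\overline{B}_{\eta/2}(\vec{P})$ in its interior; each $\vec{P}_i$ then lies in $\tilde{H}(x) = H_1(x)$. The proof would proceed by contradiction: if the claim failed, there would exist a sequence $y_n \in E$ with $y_n \to x$ and $B_{\eta/2}(\vec{P}) \not\subset H(y_n)$. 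Applying the observation above to each vertex $\vec{P}_i$ along this same sequence, I obtain $\vec{P}_i^{(n)} \in H(y_n)$ with $d(\vec{P}_i^{(n)}, \vec{P}_i) \to 0$ as $n \to \infty$, for each $i = 0, \ldots, D$. Since $(H(x))_{x \in E}$ is a convex bundle by hypothesis, the perturbed simplex $\Sigma_n := \operatorname{conv}(\vec{P}_0^{(n)}, \ldots, \vec{P}_D^{(n)})$ is contained in $H(y_n)$, and the proof concludes by showing $\overline{B}_{\eta/2}(\vec{P}) \subset \Sigma_n$ for $n$ large.

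The main obstacle I anticipate is this last containment, a continuity claim to the effect that small perturbations of the vertices of a non-degenerate simplex preserve the containment of $\overline{B}_{\eta/2}(\vec{P})$. I would handle it via barycentric coordinates: each $\vec{Q} \in \overline{B}_{\eta/2}(\vec{P})$ has unique nonnegative coordinates $\lambda_0(\vec{Q}), \ldots, \lambda_D(\vec{Q})$ with $\sum_i \lambda_i(\vec{Q}) = 1$ relative to $\vec{P}_0, \ldots, \vec{P}_D$, and since $\overline{B}_{\eta/2}(\vec{P})$ is a compact subset of $\operatorname{int}(\Sigma)$, these coordinates are bounded uniformly below by some $\lambda_{\min} > 0$. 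The barycentric coordinate map depends continuously (indeed smoothly, on the open locus of non-degenerate tuples) on the simplex vertices, so for $n$ sufficiently large the coordinates of $\vec{Q}$ relative to $\vec{P}_0^{(n)}, \ldots, \vec{P}_D^{(n)}$ remain positive and still sum to one, placing $\vec{Q}$ in $\Sigma_n$ uniformly in $\vec{Q}$. This step is routine but requires that the quantitative dependence be made uniform in $\vec{Q} \in \overline{B}_{\eta/2}(\vec{P})$.
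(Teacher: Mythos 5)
Your proposal takes a genuinely different route from the paper. The paper proves Claim~\ref{claim:for base case} by contradiction via a separation argument: if some $\vec{P}_n \in B_{\eta/2}(\vec{P}) \setminus H(x_n)$, then by the convexity of $H(x_n)$ there is a ``witness'' ball $B_{\eta/4}(\vec{Q}_n) \subset B_\eta(\vec{P})$ disjoint from $H(x_n)$; extracting a subsequential limit $\vec{Q}\in B_\eta(\vec{P})\subset H_1(x)$ of the $\vec{Q}_n$ produces a point of $H_1(x)$ that cannot be approached by members of $H(x_n)$, contradicting the approximation property of the Glaeser refinement. Your constructive simplex-perturbation argument instead propagates finitely many vertices and invokes convexity to fill in the hull; this is exactly the mechanism the paper packages as Lemma~\ref{lemma:linear perturbation} and deploys later (Lemmas~\ref{lemma:5.6 equivalent}, \ref{lemma:8.1}), so your route is arguably more uniform with the rest of Section~\ref{sec:convex} and Section~\ref{sec:uniform}, whereas the paper's local argument is shorter.

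There is, however, a quantitative slip: for $D \geq 2$ no $D$-simplex with vertices in $B_{3\eta/4}(\vec{P})$ can have $\overline{B}_{\eta/2}(\vec{P})$ in its interior. The circumradius-to-inradius ratio of any $D$-simplex is at least $D$ (attained by the regular simplex), so a simplex containing $\overline{B}_{\eta/2}(\vec{P})$ must have some vertex at distance at least $D\eta/2 > 3\eta/4$ from $\vec{P}$; even allowing vertices anywhere in $B_\eta(\vec{P})\subset H_1(x)$ leaves $D\eta/2 > \eta$ for $D\geq 3$. Your construction therefore establishes only $B_{c\eta}(\vec{P}) \subset H(y)$ for a dimensional constant $c = c(D) \in (0,1/2)$, not the stated $B_{\eta/2}(\vec{P}) \subset H(y)$. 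This is harmless to the base case of Lemma~\ref{lemma:dimension terminates} --- the subsequent reasoning (propagation to all $H_l(y)$ and the conclusion $\dim_{\vec{P}} H_l(x) = D$) only needs a ball of some fixed positive radius around $\vec{P}$ --- but as written your proof does not deliver the claim with the radius $\eta/2$ that it asserts, and you should either shrink the inner ball or note explicitly that the weaker radius suffices.
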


\begin{proof}[Proof of Claim \ref{claim:for base case}]

\renewcommand{\vp}{\vec{P}}
\newcommand{\vq}{\vec{Q}}

Suppose the contrary. Then, there is a sequence $x_n\to x$ and $\vp_n\in B_{\eta/2}(\vp)$ such that $\vp_n\notin H(x_n)$.

Consider the set $B_\eta(\vp)\cap H(x_n)$. Since $\vp_n\notin H(x_n)$ and $H(x_n)$ is convex, there exists $\vq_n\in B_\eta(\vp)$ such that $B_{\eta/4}(\vq_n)$ and $H(x_n)\cap B_\eta(\vp)$ are disjoint subsets of $B_\eta(\vp)$. By passing to a subsequence, we may assume there exists $\vq\in B_{\eta}(\vp)$ such that $d(\vq_n,\vq)\to 0$. This contradicts the fact that $\vq\in H_1(x)$, thus proving Claim \ref{claim:for base case}.
\end{proof}

Since $B_{\eta/2}(\vp)\subset H(y)$ for all $y$ sufficiently close to $x$, we see this property also holds for $H_1(y)$ in place of $H(y)$. In fact, let $r>0$; then for $y$ in an open subset of $E$, $H_{l+1}(y)\cap B_r(\vp)$ is determined solely by $H_{l}(z)\cap B_r(\vp)$ for $z$ in that open subset by the definition of Glaeser refinement.

Repeating this logic, we see $B_{\eta/2}(\vp)\subset H_l(y)$ for all $H_l(y)$ with $l\ge 1$ and $y\in E$ sufficiently close to $x$; this includes the case $y=x$. Thus, the local dimension of $H_l(x)$ at $\vp$ remains at $D$ for all $l$, establishing the base case.

{\bf Induction Hypothesis:} Assume
\begin{equation*}
    \dim_{\vp}H_{2j+1}(x)\geq D-j
\end{equation*}
implies $\dim_{\vp}H_{l}(x)=\dim_{\vp}H_{2j+1}(x)$ for all $l\ge 2j+1$ whenever $j\le k$.

Now suppose $\dim_{\vp}H_{2k+3}(x)\geq D-k-1$. If $\dim_{\vp}H_{2k+3}(x)> D-k-1,$ then the conclusion follows from the induction hypothesis. Thus, we may assume
\begin{equation}
    \dim_{\vp}H_{2k+3}(x)=D-k-1.
\end{equation}
Furthermore,
\begin{equation} \dim_{\vp}H_{2k+1}(x)=\dim_{\vp}H_{2k+2}(x)=\dim_{\vp}H_{2k+3}(x)=D-k-1,
\end{equation}
since if $\dim_{\vp}H_{2k+1}(x)>D-k-1$, $\dim_{\vp} H_l(x)$ would have terminated at some number greater than $D-k-1$ starting at some $l<2k+1$ by induction hypothesis. We want to show

\begin{equation}
    \dim_{\vp}H_{l}(x)=D-k-1
\end{equation}
whenever $l\ge 2k+3$.

\begin{claim}\label{claim:for IH}
Let $l=2k+1, 2k+2$. Fix $x\in E$ and $\vp\in H_{l+1}(x)$. Pick $\eta>0$ so that $H_{l+1}(x)\cap B_\eta(P)\supseteq W\cap B_\eta(P)$ for some $(D-k-1)$-dimensional affine subspace $W\ni P$.  For $y$ sufficiently close to $x$, either
\begin{equation}\label{eq:higher dimension}
    \dim [H_l(y)\cap B_\eta(\vp)]>D-k-1
\end{equation}
or
\begin{equation}\label{eq:equals subspace}
H_{l}(y)\cap B_{\eta/2}(\vp)=V_y\cap B_{\eta/2}(\vp)
\end{equation}
for some $(D-k-1)$-dimensional affine subspace $V_y$ of $\vec{\P}$. Furthermore, if $\vp$ is in the relative boundary of $H_{l+1}(x)$, then in particular, \eqref{eq:higher dimension} holds for $y$ sufficiently close to $x$.
\end{claim}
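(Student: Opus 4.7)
The plan is to exploit the sequential characterization of Glaeser refinement stated just before Definition \ref{def:local dimension}: whenever $\vec{Q}\in H_{l+1}(x)$ and $y_j\to x$ in $E$, there exist $\vec{Q}_j\in H_l(y_j)$ with $d(\vec{Q}_j,\vec{Q})\to 0$. Since $W\cap B_\eta(\vec{P})\subset H_{l+1}(x)$, this lets us pull any finite ``skeleton'' of $W$ back into $H_l(y)$ for $y\in E$ close to $x$, and the convexity of $H_l(y)$ (Lemma \ref{lemma:convexity for all}) then fills in the convex hull of that skeleton automatically.

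Concretely, I would first choose finitely many points $\vec{R}_1,\dots,\vec{R}_N\in W\cap \overline{B}_{7\eta/8}(\vec{P})$ whose convex hull $T$ in $W$ contains $W\cap \overline{B}_{3\eta/4}(\vec{P})$; this is possible since $W$ has finite dimension. Applying the sequential Glaeser property finitely many times yields, for any prescribed $\xi>0$, a uniform $\delta>0$ such that for each $y\in E\cap B_\delta(x)$ we may find $\vec{R}_j^y\in H_l(y)$ with $d(\vec{R}_j^y,\vec{R}_j)<\xi$ for $j=1,\dots,N$. For $\xi$ small enough, the $\vec{R}_j^y$ retain the affine independence pattern of the $\vec{R}_j$, and by convexity their convex hull $T_y$ is a $(D-k-1)$-dimensional polytope contained in $H_l(y)\cap B_\eta(\vec{P})$. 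In particular the lower bound $\dim[H_l(y)\cap B_\eta(\vec{P})]\geq D-k-1$ holds automatically for $y\in E\cap B_\delta(x)$.

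Next I would split on the dimension. If $\dim[H_l(y)\cap B_\eta(\vec{P})]>D-k-1$, conclusion \eqref{eq:higher dimension} is immediate. Otherwise the convex set $H_l(y)\cap B_\eta(\vec{P})$ has affine dimension exactly $D-k-1$, hence lies inside a unique $(D-k-1)$-dimensional affine subspace $V_y$, which by the above must equal the affine hull of $T_y$. I would then define the affine isomorphism $\phi_y: V_y\to W$ sending $D-k$ chosen affinely independent vertices $\vec{R}_j^y$ to the corresponding $\vec{R}_j$. Because $d(\vec{R}_j^y,\vec{R}_j)<\xi$, the map $\phi_y$ is uniformly close to the identity on bounded sets; choosing $\xi$ small forces $\phi_y(V_y\cap \overline{B}_{\eta/2}(\vec{P}))\subset W\cap \overline{B}_{5\eta/8}(\vec{P})\subset T$. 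Applying $\phi_y^{-1}$ and using $\phi_y^{-1}(T)=T_y\subset H_l(y)$ yields $V_y\cap \overline{B}_{\eta/2}(\vec{P})\subset H_l(y)$, which combined with the trivial inclusion $H_l(y)\cap B_{\eta/2}(\vec{P})\subset V_y$ gives \eqref{eq:equals subspace}.

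For the ``furthermore'' clause, if $\vec{P}$ lies in the relative boundary of $H_{l+1}(x)$, then $\dim H_{l+1}(x)>\dim_{\vec{P}}H_{l+1}(x)=D-k-1$ (since local dimension equals global dimension only at points of the relative interior), so there exists $\vec{Q}\in H_{l+1}(x)\setminus W$. By convexity, $\vec{S}:=\vec{P}+t(\vec{Q}-\vec{P})\in H_{l+1}(x)\cap B_{\eta/2}(\vec{P})\setminus W$ for all sufficiently small $t>0$. Applying the sequential Glaeser property once more supplies $\vec{S}^y\in H_l(y)$ close to $\vec{S}$, and adjoining $\vec{S}^y$ to a suitable affinely independent subset of the $\vec{R}_j^y$ produces $D-k+1$ affinely independent points inside $H_l(y)\cap B_\eta(\vec{P})$, forcing $\dim[H_l(y)\cap B_\eta(\vec{P})]\geq D-k$. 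I expect the main technical hurdle will be the quantitative calibration in the casework paragraph---ensuring that a single $\xi$ (depending only on $\eta$ and the skeleton) simultaneously preserves affine independence of the $\vec{R}_j^y$ and keeps $\phi_y$ close enough to the identity to guarantee $\phi_y(V_y\cap \overline{B}_{\eta/2}(\vec{P}))\subset T$; everything else reduces to standard finite-dimensional convex geometry together with the sequential Glaeser property.
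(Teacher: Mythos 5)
Your proposal is correct and takes a genuinely different route from the paper's proof of Claim \ref{claim:for IH}. The paper argues by contradiction: it posits a sequence $x_n\to x$ for which both \eqref{eq:higher dimension} and \eqref{eq:equals subspace} fail, lets $V_n$ be a $(D-k-1)$-plane containing $H_l(x_n)\cap B_\eta(\vec{P})$, invokes compactness of the Grassmannian to extract $V_n\to V$, argues $V=W$ using the sequential Glaeser observation, and then (from the failure of \eqref{eq:equals subspace} plus convexity) produces $\vec{Q}_n\in V_n\cap B_\eta(\vec{P})$ with $B_{\eta/4}(\vec{Q}_n)\cap H_l(x_n)=\emptyset$; a subsequential limit $\vec{Q}\in W\subset H_{l+1}(x)$ then contradicts the same sequential observation. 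Your argument is direct rather than by contradiction: you push a finite skeleton of $W$ forward into $H_l(y)$ via the sequential Glaeser property, use convexity of $H_l(y)$ (Lemma \ref{lemma:convexity for all}) to fill in the hull, and read off the dichotomy from $\dim[H_l(y)\cap B_\eta(\vec{P})]$. Both proofs rest on the same two pillars (the sequential Glaeser characterization and fiber convexity), but yours dispenses with converging subsequences in the Grassmannian and makes the quantitative dependence on $\eta$ explicit, at the cost of a little more bookkeeping. One step needs polishing: the assertion $\phi_y^{-1}(T)=T_y$ is only guaranteed if $T$ is a $(D-k-1)$-simplex, i.e. $N=D-k$, since $\phi_y$ is pinned down by only $D-k$ vertices and need not map the remaining $\vec{R}_j^y$ to $\vec{R}_j$; you should either specialize the skeleton to a simplex (with radii adjusted for the inradius/circumradius ratio in dimension $D-k-1$), or bypass $\phi_y$ entirely and invoke Lemma \ref{lemma:linear perturbation}, which is precisely the perturbation-stability statement that turns ``$\vec{R}_j^y$ close to $\vec{R}_j$'' into ``$T_y$ still contains $V_y\cap\overline{B}_{\eta/2}(\vec{P})$.'' Your handling of the boundary case is fine: relative-boundary membership forces $\dim_{\vec{P}}H_{l+1}(x)<\dim H_{l+1}(x)$, which supplies a point off $W$ to adjoin to the skeleton.
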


\begin{proof}[Proof of Claim \ref{claim:for IH}]

Since $l=2k+1,2k+2$, we have $\dim_{\vp}H_{l}(x)=\dim_{\vp}H_{l+1}(x)=D-k-1$.

If $\vp\in\text{int} H_{l+1}(x)$, then suppose the contrary. That is, there exists a sequence $x_n\to x$ such that \eqref{eq:higher dimension} and \eqref{eq:equals subspace} fail for $y=x_n$. In particular, since \eqref{eq:higher dimension} does not hold, $H_{l}(x_n)\cap B_\eta(P)$ is contained in a $(D-k-1)$-dimensional affine space $V_n$. Here, we use Remark \ref{remark:on local dimension} No. \ref{remark:on local dimension 1}.

By compactness of the Grassmannian of $(D-k-1)$-planes in $\vec{\P}$ with intersection in $\overline{B}_{\eta}(\vp)$ and the closure of $\overline{B}_\eta(\vp)$, we may assume $V_n$ converges to a subspace $V$ by passing to a subsequence. 

Since $H_{l+1}(x)\cap B_\eta(\vp)=W\cap B_\eta(\vp)$, $V$ must equal $W$. Else, there exists $\vp'\in H_{l+1}(x)$ which is not the limit of $\vp_n\in H_l(x_n)$, contradicting our prior observation on refinements.

% (If $H_{l+1}\cap B_\eta(P)$ contained another $(D-k-1)$-dimensional subspace, its convexity would imply the local dimension at $P$ were at least $D-k$.)
\newcommand{\vq}{\vec{Q}}

Since \eqref{eq:equals subspace} fails, there exists $\vec{P}'_n\in V_n\cap B_{\eta/2}(\vec{P})$ such that $\vec{P}'_n\notin H_l(x_n)$. Furthermore, by the convexity of $H_l(x_n)$, there exists $\vq_n\in B_{\eta}(\vp)\cap V_n$ such that $\vq_n\notin H_l(x_n)$ and $B_{\eta/4}(\vq_n)\subset B_\eta(\vp)$ is disjoint from $H_{l}(x_n)$.

By passing to another subsequence, we may assume $\vq_n$ converges to some $\vq\in V=W$. However, this contradicts the fact that $\vq\in H_{l+1}(x)$, establishing our claim for $\vp\in\text{int} H_{l+1}(x)$.

If $\vp$ is in the relative boundary of $H_{l+1}(x)$, then $\dim H_{l+1}(x)>D-k-1$. Taking $\vq\in\text{int}H_{l+1}(x)$ such that $\dim_{\vq}H_{l+1}(x)>D-k-1$, we may repeat the above argument to deduce, at the very least, that for $y$ sufficiently close to $x$, $\dim [H_{l}(y)\cap B_\eta(\vp)]>D-k-1$. This completes the proof of Claim \ref{claim:for IH}.
\end{proof}

Consider $y$ sufficiently close to $x$. There are now distinct cases based on which of \eqref{eq:higher dimension} or \eqref{eq:equals subspace} holds for $l=2k+1$ and $l=2k+2$.

First, there is the case that \eqref{eq:higher dimension} holds for $l=2k+1$ or $l=2k+2$, which means $H_j(y)\cap B_\eta(\vec{P})$ and $H_{l}(y)\cap B_\eta(\vp)$ have equal relative interiors for all $j\ge l$ by induction hypothesis.

% If $\dim H_{2k+2}(y)\cap B_\eta(P)>D-k-1$, that implies $\dim H_{2k+1}(y)\cap B_\eta(P)>D-k-1$, as dimension cannot increase under further refinement. So, this case is already addressed.

Else, \eqref{eq:equals subspace} holds for both $l=2k+1,2k+2$, meaning $H_{2k+1}(y)\cap B_{\eta/2}(\vp)=V_y\cap B_{\eta/2}(\vp)$ for some $(D-k-1)$-dimensional affine subspace $V_y$ of $\vec{\P}$ and $H_{2k+2}(y)\cap B_{\eta/2}(\vp)=V_y'\cap B_{\eta/2}(\vp)$ for some $(D-k-1)$-dimensional affine subspace $V_y'$ of $\vec{\P}$. We see that $V_y=V_y'$ since $H_{2k+2}(y)\subset H_{2k+1}(y)$ and $\dim V_y=\dim V_y'$.

In any case, we see that for $y$ sufficiently close to $x$, $H_{2k+1}(y)\cap B_\eta(\vp)$ and $H_{2k+2}(y)\cap B_\eta(\vp)$ have equal relative interiors. Ideally, one would like to show that these sets are completely equal; however, Lemma \ref{lemma:closure same} below will address this disparity.

\begin{lemma}\label{lemma:closure same}
Let $(H(x))_{x\in E}$ be a bundle and fix $y_0\in E$. Fix $\delta_0>0$. Let $\left(\hat{H}(x)\right)_{x\in E}$ denote the bundle made from taking $(H(x))_{x\in E}$ and replacing $H(y)$ by $\overline{H}(y)$ for all $y\in B(y_0,\delta_0)\setminus\{y_0\}$. Then $H_1(y_0)=\hat{H}_1(y_0)$.

In particular, if two bundles have the same relative interiors of their fibers at every $y\in B(y_0,\delta_0)\setminus\{y_0\}$ and $\vp$ is in both their fibers at $y_0$, then $\vp$ is either in the fiber at $y_0$ of both Glaeser refinements or neither.
\end{lemma}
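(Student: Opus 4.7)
The inclusion $H_1(y_0) \subset \hat{H}_1(y_0)$ is immediate since $H(y)\subset \hat{H}(y)$ for every $y \in E$, with equality at $y=y_0$. The plan for the reverse inclusion is a perturbation argument: given $\vec{P}_0 \in \hat{H}_1(y_0)$ and $\epsilon>0$, I invoke \eqref{GR} for $\hat{H}$ with tolerance $\epsilon/2$ to extract some $\delta\in(0,\delta_0]$, and then for any admissible tuple $(x_1,\ldots,x_{\ksh})\in (E\cap B(y_0,\delta))^{\ksh}$ I would produce $H$-witnesses $\vec{P}_j' \in H(x_j)$ by perturbing the given $\hat{H}$-witnesses $\vec{P}_j \in \overline{H(x_j)}$.

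Before perturbing, I reduce to the case that the $x_j$'s are pairwise distinct and none equals $y_0$: whenever $x_i=x_j$ (with the convention $x_0:=y_0$), taking the same polynomial at both indices trivially realizes the matching there, so the general Glaeser condition for $(x_1, \ldots, x_{\ksh})$ follows from its restriction to the distinct-point subtuple. This reduction yields the positive lower bound
\begin{equation*}
\mu := \min\bigl\{|x_i - x_j|:\, 0\le i < j \le \ksh,\ x_i\ne x_j\bigr\} > 0.
\end{equation*}
For each distinct $x_j \neq y_0$ the witness $\vec{P}_j$ lies in $\hat{H}(x_j)=\overline{H(x_j)}$, so I select $\vec{P}_j' \in H(x_j)$ within distance $\eta$ of $\vec{P}_j$ in any fixed norm on the finite-dimensional space $\vec{\P}$, and set $\vec{P}_0':=\vec{P}_0 \in H(y_0)$. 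The triangle inequality together with the equivalence of norms on $\vec{\P}$ restricted to polynomials evaluated on the bounded set $B(y_0,\delta_0)$ then yields
\begin{equation*}
|\partial^\alpha(\vec{P}_i' - \vec{P}_j')(x_j)| \le (\epsilon/2)|x_i - x_j|^{m-|\alpha|} + C\eta,
\end{equation*}
with $C=C(m,n,d,\delta_0)$; choosing $\eta$ small relative to $\epsilon$ and $\mu$ forces the right-hand side below $\epsilon|x_i-x_j|^{m-|\alpha|}$ for every $|\alpha|\le m$ and every distinct pair, establishing $\vec{P}_0 \in H_1(y_0)$.

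For the ``in particular" claim, I would use the standard convex-geometry fact that two nonempty convex subsets of a finite-dimensional space with equal relative interior share the same closure; hence if the bundles $A$ and $B$ have equal relative-interior fibers on $B(y_0,\delta_0)\setminus\{y_0\}$, the corresponding ``closed-off-$y_0$" bundles $\hat{A}$ and $\hat{B}$ of the first part agree on $B(y_0,\delta_0)\setminus\{y_0\}$. Applying the first part gives $A_1(y_0)=\hat{A}_1(y_0)$ and $B_1(y_0)=\hat{B}_1(y_0)$, and after the same reduction to tuples of distinct $x_j \neq y_0$ the $\hat{A}$- and $\hat{B}$-Glaeser conditions at $y_0$ consult only identical fibers, hence coincide whenever $\vec{P} \in A(y_0)\cap B(y_0)$.

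The principal obstacle is stabilizing the matching inequalities under perturbation: without the reduction to distinct points the perturbation amplitude would have to scale with potentially vanishing distances $|x_i-x_j|$, for which no uniform lower bound is available. Once that reduction is in place, the finite-dimensionality of $\vec{\P}$ and the boundedness of $B(y_0,\delta_0)$ supply the uniform constant $C=C(m,n,d,\delta_0)$ that makes the remaining perturbation estimate routine.
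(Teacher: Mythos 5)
Your proof is correct and follows essentially the same perturbation argument as the paper: approximate each $\hat H$-witness by a nearby polynomial in $H(x_j)$ and absorb the error via the triangle inequality and a positive lower bound on pairwise distances. Your explicit reduction to pairwise-distinct $x_j$ is a welcome tidying of the paper's step that takes the perturbation sizes $\eta_j$ small relative to $\min_{i\neq j}|y_i - y_j|$ (which tacitly assumes the tuple has no repeats), and the ``in particular'' clause is handled by the same observation, phrased without the paper's device of replacing the fiber at $y_0$ by $\{\vec{P}\}$.
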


We recall that given $r>0$, $y$ in a relatively open subset of $E$, $H_{l+1}(y)\cap B_r(\vp)$ is determined solely by $H_{l}(z)\cap B_r(\vp)$ for other $z$ in that relatively open subset. Applying Lemma \ref{lemma:closure same} with $y_0$ ranging over $y$ sufficiently close to $x$, we find
\begin{equation}
    \text{int}H_{l}(y)=\text{int}H_{2k+1}(y)\text{ for }l\ge 2k+1.
\end{equation}

Another application of Lemma \ref{lemma:closure same} shows that $H_l(x)\cap B_{\eta/2}(\vp)$ remains constant in $l$ for $l\ge 2k+1$ as well, completing the proof of Lemma \ref{lemma:dimension terminates}.
\end{proof}

At this point, it merely remains to prove Lemma \ref{lemma:closure same}.

\begin{proof}[Proof of Lemma \ref{lemma:closure same}]

\newcommand{\vp}{\vec{P}}

Since $H(x)\subseteq \hat{H}(x)$ for all $x\in E$, we have $H_1(y_0)\subseteq\hat{H}_1(y_0)$.

Suppose $\vp_0\in\hat{H}_1(y_0)$. Let $\epsilon>0$ and pick $0<\delta<\delta_0$ such that for every $y_1,...,y_{k^\sharp}\in B(y_0,\delta)$, there exists $\vp_j\in\hat{H}(y_j)$ for $j=0,,1,...,k^\sharp$ such that

\begin{equation}\label{eq:another Glaeser stability one}
    |\partial^\alpha(\vp_i-\vp_j)(y_j)|\le (\epsilon/2)|y_i-y_j|^{m-|\alpha|}\text{ for }|\alpha|\le m, 0\le i,j\le k^\sharp.
\end{equation}

Now fix a particular choice of $y_1,...,y_{k^\sharp}$ and make a choice of $\vp_j\in\hat{H}(y_j)$ satisfying \eqref{eq:another Glaeser stability one}. If all the $\vp_j$ lie in $H(y_j)$, then we are done. It is possible to choose $\vp_j\notin H(y_j)$; however, we may always choose $\vp_j'\in H(y_j)$ such that $|\partial^\alpha(\vp_j-\vp_j')(x)|\le \eta_j$ for some $\eta_j>0$ and all $x\in E, 0\le|\alpha|\le m$.

By the triangle inequality, for any $|\alpha|\le m, 0\le i,j\le \ksh$,
\begin{align}
    |\da(\vp_i'-\vp_j')(y_j)|&\le|\da(\vp_i'-\vp_i)(y_j)|+|\da(\vp_i-\vp_j)(y_j)|+|\da(\vp_j-\vp_j')(y_j)|\\
    &\le \eta_i+(\epsilon/2)|y_i-y_j|^{m-|\alpha|}+\eta_j\\
    &\le \eps|y_i-y_j|^{m-|\alpha|}
\end{align}
by taking $\eta_j$ ($0\le j\le \ksh$) sufficiently small relative to $\eps$ and $\min_{i\neq j}|y_i-y_j|$.

Since the choices of $\epsilon$ and $y_j$ were arbitrary, this shows $\vec{P}_0\in H_1(y_0)$.

The second conclusion follows from the fact that if two fibers have the same relative interior, then they have the same closure and by considering the holding space $(H'(x))_{x\in E}$, where $H'(x)=H(x)$ for $x\ne y_0$ and $H(y_0)=\{\vec{P}\}$.
\end{proof}

\section{Reduction to the scalar-valued case}\label{sec:gradient trick}

In this section, we show that the validity of Theorem \ref{thm:heart of the matter} for $C^{m+d}(\Rn,\R)$ implies the same for $C^m(\Rn,\R^d)$, thus reducing all future analysis to scalar-valued functions. The main lemma of the section is the following.

\begin{lemma}\label{lemma:dimension reduction}
Let $m,n,d$ be positive integers. Let $\ksh$ be a sufficiently large constant depending only on $m,n,d$.  If Theorem \ref{thm:heart of the matter} holds for $C^{m+1}(\R^{n+d},\R)$ and shape fields of type $(m+1,n+d,1)$, then it also holds for $C^m(\Rn,\R^d)$ and shape fields of type $(m,n,d)$. 
\end{lemma}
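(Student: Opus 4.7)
The plan is to adapt the \emph{gradient trick} of \cite{FL14}: encode a vector-valued interpolation problem in $\R^n$ as a scalar interpolation problem in $\R^{n+d}$ at the cost of one extra derivative. The reduction proceeds by lifting the shape field $\Gamma$ from $E \subset \R^n$ (with fibers in $\vec{\P}$) to a shape field $\tilde{\Gamma}$ on $\tilde{E} := E \times \{0\} \subset \R^{n+d}$ (with fibers consisting of scalar polynomials of degree $\leq m+1$ in $n+d$ variables), applying the assumed scalar case of Theorem \ref{thm:heart of the matter}, and finally recovering a vector section by partial differentiation in the $y$ variables.

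Concretely, for a scalar polynomial $Q$ of degree $\leq m+1$ in variables $(x, y) \in \R^n \times \R^d$, I define the extraction $\vec{Q}^\ast = (Q^\ast_1, \ldots, Q^\ast_d) \in \vec{\P}$ by letting $Q^\ast_i$ be the degree-$m$ Taylor polynomial in $x$ of $x \mapsto \partial_{y_i}Q(x, 0)$. Conversely, each $\vec{P} = (P_1, \ldots, P_d) \in \vec{\P}$ admits a canonical lift $\Phi_{\vec{P}}(x, y) := \sum_{i=1}^d y_i P_i(x)$, and $(\Phi_{\vec{P}})^\ast = \vec{P}$. The lifted shape field is
$$\tilde{\Gamma}((x, 0), M) := \bigl\{Q \in \mathcal{P}^{m+1}_{n+d} : \vec{Q}^\ast \in \Gamma(x, M) \text{ and } |\da Q(x, 0)| \leq A M \text{ for } |\alpha| \leq m+1 \bigr\}$$
for an appropriate constant $A = A(m, n, d)$ enforcing regularity in the lifted setting.

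The proof then has four steps: (i) verify that $\tilde{\Gamma}$ is a closed, regular, $(C_w', 1)$-convex shape field on $\tilde{E}$, which is routine in view of the linearity of the extraction map and the size bound; (ii) show that for each $\ell \geq 0$, $\vec{P} \in \Gamma_\ell(x_0)$ if and only if there exists $Q \in \tilde{\Gamma}_\ell((x_0, 0))$ with $\vec{Q}^\ast = \vec{P}$, where $\Gamma_\ell$ and $\tilde{\Gamma}_\ell$ denote iterated Glaeser refinements; (iii) apply the assumed scalar theorem to $\tilde{\Gamma}$ to obtain a section $G \in C^{m+1}(\R^{n+d})$; and (iv) set $F_i(x) := \partial_{y_i} G(x, 0)$ to obtain $\vec{F} \in C^m(\R^n, \R^d)$. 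The norm estimate follows from $\|\vec{F}\|_{\dot{C}^m} \leq \|G\|_{\dot{C}^{m+1}}$, together with a companion bound $\|\tilde{\Gamma}^\ast\| \leq C \|\Gamma^\ast\|$ established alongside step (ii).

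The main obstacle is step (ii), the correspondence of iterated Glaeser refinements. The forward direction (scalar to vector) is essentially immediate, since the Glaeser test points in $\tilde{E}$ have the form $(x_j, 0)$ with $x_j \in E$, so that $|(x_i, 0) - (x_j, 0)| = |x_i - x_j|$, and the extraction map is linear and continuous. The reverse direction is more delicate: given a Glaeser-stable configuration $\vec{P}_0, \ldots, \vec{P}_\ksh$ of vector polynomials, one must lift each $\vec{P}_j$ to a scalar polynomial $Q_j$ at $(x_j, 0)$ satisfying the (GR) condition in $\R^{n+d}$ together with the auxiliary size bound $|\da Q_j(x_j, 0)| \leq A M$. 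Taking the canonical lifts $Q_j = \Phi_{\vec{P}_j}$ automatically matches the components of (GR) involving at least one $y$-derivative, while components involving two or more $y$-derivatives vanish identically; only the purely $x$-derivative components at $y=0$ remain to check, and these vanish trivially because $\Phi_{\vec{P}_j}(x, 0) \equiv 0$. Choosing $\ksh = \ksh(m+1, n+d, 1)$ (the constant from the scalar case) large enough to simultaneously control the vector setting of type $(m, n, d)$ completes the reduction.
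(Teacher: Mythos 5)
You correctly identify the gradient trick of \cite{FL14} as the engine of the reduction, and your overall scaffolding (lift $\Gamma$ to a scalar shape field on $E\times\{0\}\subset\R^{n+d}$, apply the scalar theorem, pull back via $\nabla_v$) matches the paper's Lemma \ref{lem:gradient trick} and the proof of Lemma \ref{lemma:dimension reduction}. However, your definition of the lifted shape field $\tilde\Gamma((x,0),M)$ omits the constraint $Q(\,\cdot\,,0)\equiv 0$ that the paper imposes in \eqref{eq.Ghat-def}, and this omission breaks your step (i): the $(C_w',1)$-convexity of $\tilde\Gamma$ is not ``routine'' with your larger fibers. To see the problem, consider the extraction of a Whitney-convex combination $\sum_i R_i\odot R_i\odot Q_i$ where the partition-of-unity jets $R_i\in\P^{m+1}(\R^{n+d},\R)$ are permitted to depend on $v$. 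The Leibniz rule gives
\begin{equation*}
  \nabla_v\big|_{v=0}\Bigl(\sum_i R_i\odot R_i\odot Q_i\Bigr)
  = \sum_i \bigl(R_i\odot R_i\bigr)(\,\cdot\,,0)\,\nabla_v\big|_{v=0}Q_i
  \;+\; \sum_i \bigl[\nabla_v\big|_{v=0}(R_i\odot R_i)\bigr]\,Q_i(\,\cdot\,,0).
\end{equation*}
The first sum lies in $\Gamma(x,CM)$ by the $(C_w,1)$-convexity of the original $\Gamma$, but the second sum is a translation by something that has no reason to be in (or even parallel to) $\Gamma(x,\cdot)$, since $\Gamma(x,\cdot)$ is only convex, not a linear subspace. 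Without forcing $Q_i(\,\cdot\,,0)\equiv 0$, this spurious term does not vanish, and $(C_w',1)$-convexity fails in general. The paper's constraint exists precisely to kill this term; see the computation around \eqref{eq.trick-cd-10} and \eqref{eq.trick-cd-13}.

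A second, smaller point: your step (ii) asserts a level-by-level equivalence $\vec{P}\in\Gamma_\ell(x_0)\Leftrightarrow\exists\,Q\in\tilde\Gamma_\ell((x_0,0))$ with $\vec{Q}^\ast=\vec{P}$. This is a stronger claim than the argument needs, and the induction you sketch actually requires the sharper statement that the \emph{canonical} lift $\Phi_{\vec P}$ lies in $\tilde\Gamma_\ell((x_0,0))$ (mere existence of some $Q$ with $\vec{Q}^\ast=\vec{P}$ at level $\ell$ does not feed the level-$(\ell+1)$ step). The paper sidesteps this entirely: it defines $\widehat\Gamma^\ast$ directly as the lift of the terminal refinement $\Gamma^\ast$, proves only that $\widehat\Gamma^\ast$ is Glaeser stable, and then observes that monotonicity of Glaeser refinement forces $\widehat\Gamma^\ast$ to sit inside the terminal refinement of $\widehat\Gamma$, which is enough to guarantee nonempty fibers and the norm estimate. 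You should either adopt that softer argument or strengthen the inductive hypothesis in step (ii). In either case, the first gap (the missing $Q(\,\cdot\,,0)\equiv 0$ constraint and the consequent failure of the convexity check) is the substantive one and must be repaired before the reduction is valid.
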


We use a gradient trick inspired by \cite{FL14}.

To begin with, we put more emphasis on the degree of regularity and dimension: For positive integers $s_1, s_2, s_3$, we write $\P^{s_1}(\R^{s_2}, \R^{s_3})$ to denote the vector space of $s_3$-tuples of polynomials with degree no greater than $s_1$ on $\R^{s_2}$. We write $\odot_x^{s_1,s_2}$ to denote the ring multiplication of $s_1$-jets at $x \in \R^{s_2}$, and denote this ring by $\ring_x^{s_1, s_2}$. As before, we use the same notation to denote the action of $\ring_x^{s_1, s_2}$ on the module $\P^{s_1}(\R^{s_2}, \R^{s_3})$.

For the rest of the section, we use $(x,v) = (x_1, \cdots, x_n, v_1, \cdots, v_d)$ to denote a vector in $\R^{n+d}$. We write $\nabla_v$ to denote the operator $(\d_{v_1}, \cdots, \d_{v_d})$. Note that if $P$ is an $(m+1)$-jet (of a real-valued function) on $\R^{n+d}$, then $P(\,\cdot\,,0)$ is an $(m+1)$-jet (of a real-valued function) on $\Rn$, and $\nabla_v\big|_{v=0} P$ is an $m$-jet (of an $\R^d$-valued function) on $\Rn$.

We say a shape field $(\Gamma(x,M))_{x\in E, M \geq 0}$ is \underline{of type $(s_1,s_2,s_3)$} if $\Gamma(x,M) \subset\P^{s_1}(\R^{s_2}, \R^{s_3}) $ for all $x \in E$ and $M \geq 0$.

\newcommand{\Ghat}{\widehat{\Gamma}}
\begin{lemma}\label{lem:gradient trick}
    Let $E \subset \Rn$ be a compact set. Let $(\Gamma(x,M))_{x\in E, M \geq 0}$ be a regular $(C_w,\dm)$-convex shape field of type $(m,n,d)$. For each $x \in E$ and $M \geq 0$, define
    \begin{equation}
        \Ghat((x,0),M) = 
        \set{
        \widehat{P}\in \P^{m+1}(\R^{n+d},\R) : \,
        \begin{matrix*}[l]
            \abs{\da\widehat{P}(x,0)} \leq M \text{ for }\alpha \in \mathbb{N}_0^{n+d}, \abs{\alpha}\leq m+1,\\
            \widehat{P}(\,\cdot\,,0)\equiv 0\text{, and }\nabla_v\big|_{v=0}\widehat{P}\in \Gamma(x,M)
        \end{matrix*}
        }.
        \label{eq.Ghat-def}
    \end{equation}

There exists $C = C(m,n,d,C_w)$ such that    
    \begin{enumerate}
    \item $(\Ghat((x,0),M))_{(x,0)\in E\times \set{0}, M \geq 0}$ is a regular $(C,\dm)$-convex shape field of type $(m+1,n+d,1)$ on $E\times\set{0} \subset \R^{n+d}$.
    \item $C^{-1}\norm{\Gamma}\leq \norm{\widehat{\Gamma}} \leq C\norm{\Gamma}$, with $\norm{\cdot}$ as in Definition \ref{def:norm of a bundle}.
    \end{enumerate}
    
\end{lemma}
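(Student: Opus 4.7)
The plan is to verify each of the four defining properties of $\widehat{\Gamma}$---shape field of type $(m+1,n+d,1)$, closedness, regularity, and $(C,\dm)$-convexity---together with the two-sided norm comparison, by translating every condition on a jet $\widehat{P}\in \P^{m+1}(\R^{n+d},\R)$ into matching conditions on the pair $P:=\nabla_v|_{v=0}\widehat{P}\in \P^m(\R^n,\R^d)$ and $\widehat{P}(\cdot,0)\in \P^{m+1}(\R^n,\R)$, and every cutoff $\widehat{Q}$ into its restriction $Q:=\widehat{Q}(\cdot,0)$. In each case we then invoke the corresponding property of $\Gamma$.

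The shape-field, closedness, and regularity verifications are essentially formal. Monotonicity in $M$ and closedness of each fiber follow at once from the analogous properties of $\Gamma$, together with the observation that the three defining constraints on $\widehat{P}$ are each closed and monotone in $M$. Condition \eqref{regularity 1} is built into the definition of $\widehat{\Gamma}$. For \eqref{regularity 2} and \eqref{regularity 3}, the hypothesized jet-closeness of $\widehat{P},\widehat{P}'$ at $(x,0)$ and $(x',0)$ transfers to jet-closeness of $P,P'$ in $\P^m(\R^n,\R^d)$ with only an absolute constant loss; the conclusion on the $v$-gradient comes from the corresponding regularity of $\Gamma$, while the remaining bounds $\abs{\da\widehat{P}'(x',0)}\leq M+\eps$ for $\abs{\alpha}\leq m+1$ come from Taylor expansion of $\widehat{P}$ from $(x,0)$ to $(x',0)$, and $\widehat{P}'(\cdot,0)\equiv 0$ is automatic.

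The main technical step is $(C,\dm)$-convexity. Given $\widehat{P}_1,\widehat{P}_2\in \widehat{\Gamma}((x,0),M)$ and cutoffs $\widehat{Q}_1,\widehat{Q}_2$ satisfying the hypotheses of Definition \ref{def.Cd-convex} at scale $\delta$, set $P_i:=\nabla_v|_{v=0}\widehat{P}_i$ and $Q_i:=\widehat{Q}_i(\cdot,0)$. Restricting $\sum_i \widehat{Q}_i\odot_{(x,0)}\widehat{Q}_i=1$ to $\{v=0\}$ and truncating to degree $m$ at $x$ yields $\sum_i Q_i\odot_x Q_i=1$ in $\ring_x^{m,n}$; the derivative and difference bounds for $(\widehat{Q}_i,\widehat{P}_i)$ pass to $(Q_i,P_i)$ at the same scale $\delta$ with constant loss, noting in particular that $\abs{\da\partial_{v_j}(\widehat{P}_1-\widehat{P}_2)(x,0)}\leq M\delta^{m+1-(|\alpha|+1)}=M\delta^{m-|\alpha|}$. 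The $(C_w,\dm)$-convexity of $\Gamma$ then gives $\sum_i Q_i\odot_x Q_i\odot_x P_i\in \Gamma(x,C_wM)$. Setting $\widehat{P}:=\sum_i \widehat{Q}_i\odot_{(x,0)}\widehat{Q}_i\odot_{(x,0)}\widehat{P}_i$, the vanishing $\widehat{P}(\cdot,0)\equiv 0$ is automatic from $\widehat{P}_i(\cdot,0)\equiv 0$; the bound $\abs{\da\widehat{P}(x,0)}\leq CM$ for $\abs{\alpha}\leq m+1$ is obtained via the rewrite $\widehat{P}=\widehat{Q}_1\odot\widehat{Q}_1\odot(\widehat{P}_1-\widehat{P}_2)+\widehat{P}_2$ (using $\widehat{Q}_1^2+\widehat{Q}_2^2=1$) followed by Leibniz; and the decisive identity
\begin{equation*}
\nabla_v\big|_{v=0}\widehat{P}=\sum_i Q_i\odot_x Q_i\odot_x P_i
\end{equation*}
holds because the cross-terms $2\widehat{Q}_i(\partial_{v_j}\widehat{Q}_i)\widehat{P}_i$ produced by the product rule vanish identically on $\{v=0\}$ thanks to $\widehat{P}_i(\cdot,0)\equiv 0$.

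The norm comparison then follows by mutual lifting and restriction. For $\norm{\widehat{\Gamma}}\leq C\norm{\Gamma}$, given $P_i\in \Gamma(x_i,M)$ satisfying the Whitney compatibility in \eqref{def:norm of a bundle}, define the lifts $\widehat{P}_i(x',v):=\sum_{j=1}^d v_j (P_i)_j(x')$, which manifestly lie in $\widehat{\Gamma}((x_i,0),CM)$ and inherit the Whitney condition with a constant loss. Conversely, for $\norm{\Gamma}\leq C\norm{\widehat{\Gamma}}$ one sets $P_i:=\nabla_v|_{v=0}\widehat{P}_i$ and transfers the Whitney condition. The principal obstacle throughout the argument is the bookkeeping in the $(C,\dm)$-convexity step: one must track carefully the interplay between the multiplication $\odot_{(x,0)}^{m+1,n+d}$ and the induced multiplication $\odot_x^{m,n}$ under the $v$-gradient operation, and crucially exploit $\widehat{P}_i(\cdot,0)\equiv 0$ to eliminate the cross terms in the product rule that would otherwise obstruct the identification of $\nabla_v|_{v=0}\widehat{P}$ with a genuine element of $\Gamma(x,CM)$.
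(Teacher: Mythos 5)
Your proposal is correct and follows essentially the same route as the paper's proof. The reductions for the shape-field, regularity, and norm-comparison claims are carried out in the same way (restricting multi-indices $\alpha\in\mathbb{N}_0^{n+d}$ with $\abs{\alpha}\le m+1$ to the components $\d_x^\beta\d_{v_j}$ with $\abs{\beta}\le m$, and lifting $\vec{P}_i$ to $\widehat{P}_i(x',v)=\sum_j v_j(P_i)_j(x')$), and your central observation for the $(C,\dm)$-convexity step --- that the product-rule cross terms $2\widehat{Q}_i(\partial_{v_j}\widehat{Q}_i)\widehat{P}_i$ vanish on $\{v=0\}$ because $\widehat{P}_i(\cdot,0)\equiv 0$, so that $\nabla_v\big|_{v=0}\widehat{P}$ is exactly the convex combination $\sum_i Q_i\odot_x Q_i\odot_x P_i$ --- is precisely the identity the paper records in its display analogous to \eqref{eq.trick-cd-13}.
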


As a corollary to part 2 of the above lemma, the Finiteness Lemma (Lemma \ref{lemma:finiteness}) for vector-valued functions immediately follows from the lemma for scalar-valued functions.

\begin{proof}
\newcommand{\Phat}{\widehat{P}}

We begin with the first statement. It is clear that $(\Ghat((x,0),M))_{(x,0)\in E\times \set{0}, M \geq 0}$ is a shape field of type $(m+1,n+d,1)$.

We now prove that $(\Ghat((x,0),M))_{(x,0)\in E\times \set{0}, M \geq 0}$ is regular.

Condition \eqref{regularity 1} is clearly satisfied, thanks to the first constraint in \eqref{eq.Ghat-def}. 

We prove condition \eqref{regularity 2}. Let $\eps > 0$ and $M \geq 0$. Let $\eta = \eta(\eps,M) > 0$ be a small number to be chosen. Let $(x,0), (x',0) \in E\times \set{0}$ with $\abs{(x,0)-(x',0)}\leq \eta$. Let $\Phat \in \Ghat((x,0),M)$ and $\Phat \in \Ghat(x',0) = \bigcup_{M'\geq 0}\Ghat((x',0),M')$, with 
\begin{equation}
    \abs{\da(\Phat - \Phat')(x,0)}\,,\,\abs{\da(\Phat - \Phat')(x',0)} \leq \delta\abs{x-x'}^{m+1-\abs{\alpha}}
    \text{ for }\alpha \in\mathbb{N}_0^{n+d}, \abs{\alpha}\leq m+1.
    \label{eq.trick-01}
\end{equation}

Set $\vec{P} = \nabla_v\big|_{v=0}\Phat$ and $\vec{P}' = \nabla_v\big|_{v=0}\Phat'$. By the definition of $\Ghat$, $\vec{P}\in \G(x,M)$ and $\vec{P}' \in \G(x')$. In view of \eqref{eq.trick-01}, we have
\begin{equation}
    \abs{\d^\beta(\vec{P}-\vec{P}')(x)}\,,\,\abs{\d^\beta(\vec{P}-\vec{P}')(x')} \leq C\eta\abs{x-x'}^{m-\abs{\alpha}}
    \text{ for }\alpha \in \mathbb{N}_0^n, \abs{\alpha}\leq m.
    \label{eq.trick-02}
\end{equation}
Since $(\G(x,M))_{x\in E, M \geq 0}$ is regular, for sufficiently small $\eta$, \eqref{eq.trick-02} implies
\begin{equation}
    \vec{P}' \in \G(x,M+\tilde{\eps}(\eta))\text{, i.e.,} \nabla_v\big|_{v=0}\Phat \in \G(x,M+\tilde{\eps}(\eta)).
    \label{eq.trick-03}
\end{equation}
Here $\tilde{\eps}(\cdot)$ is a decreasing function of $\eta$, depending only on $M$. 

We choose $\eta$ to be sufficiently small (depending only on $\eps$ and $M$), so that \eqref{eq.trick-01}, \eqref{eq.trick-03}, and a similar argument as in Lemma \ref{lemma:application} altogether imply $\Phat' \in \Ghat((x',0),M+\eps)$. Condition \eqref{regularity 2} is satisfied.

For condition \eqref{regularity 3}, let $\eps > 0$, $M \geq 0$, and let $\eta > 0$ be a sufficiently small number to be determined. Let $(x,0) \in E\times\set{0}$, $\Phat \in \Ghat((x,0),M)$, $\Phat'\in \Ghat(x,0)$ with 
\begin{equation}
    \abs{\da(\Phat - \Phat')(x,0)} \leq \eta
    \text{ for }\alpha \in \mathbb{N}_0^{n+d},\,\abs{\alpha}\leq m+1.
    \label{eq.trick-04}
\end{equation}
Set $\vec{P} = \nabla_v\big|_{v=0}\Phat$ and $\vec{P}' = \nabla_v\big|_{v=0}\Phat'$. By the definition of $\Ghat$, $\vec{P}\in \G(x,M)$ and $\vec{P}' \in \G(x')$. In view of \eqref{eq.trick-04}, we have
\begin{equation}
    \abs{\d^\beta(\vec{P}-\vec{P}')(x)} \leq C\eta
    \text{ for }\beta \in \mathbb{N}_0^n,\, \abs{\beta}\leq m.
    \label{eq.trick-05}
\end{equation}
Since $(\G(x,M))_{x\in E, M \geq 0}$ is regular, for sufficiently small $\eta$ (depending only on $\eps$ and $M$), \eqref{eq.trick-04}, \eqref{eq.trick-05}, and a similar argument as in Lemma \ref{lemma:application} imply $\Phat' \in \Ghat((x,0), M + \eps)$. Condition \eqref{regularity 3} is satisfied.

Thus, we have shown that $(\Ghat((x,0),M))_{(x,0)\in E\times\set{0}, M \geq 0}$ is regular. 

We now show that $(\Ghat((x,0),M))_{(x,0)\in E\times\set{0}, M \geq 0}$ is $(C,\dm)$-convex for some $C$ depending only on $m,n,d,C_w$.

\newcommand{\Qhat}{\widehat{Q}}
Let $\delta \in (0,\dm] $, $(x,0)\in E\times\set{0}$, $M \geq 0$, $\Phat_1, \Phat_2, \Qhat_1, \Qhat_2 \in \P^{m+1}(\R^{n+d},\R) $ satisfy
\begin{align}
    &\Phat_1, \Phat_2 \in \Ghat((x,0),M)\label{eq.trick-cd-1}\\
    &\abs{\da(\Phat_1-\Phat_2)(x,0)} \leq M\delta^{m+1-\abs{\alpha}}
    \text{ for }\alpha \in \mathbb{N}_0^{n+d},\,\abs{\alpha}\leq m+1;\label{eq.trick-cd-2}\\
    &\abs{\da \Qhat_i(x,0)}\leq \delta^{-\abs{\alpha}}
    \text{ for }\alpha \in \mathbb{N}_0^{n+d},\,\abs{\alpha}\leq m+1,\, i = 1,2; \text{ and}\label{eq.trick-cd-3}\\
    &\Qhat_1\odot_{(x,0)}^{m+1,n+d}\Qhat_1 + \Qhat_2\odot_{(x,0)}^{m+1,n+d}\Qhat_2 = 1.\label{eq.trick-cd-4}
\end{align}
We want to show that
\begin{equation}
    \Phat = \sum_{i = 1,2}\Qhat_i\odot_{(x,0)}^{m+1,n+d}\Qhat_i\odot_{(x,0)}^{m+1,n+d}\Phat_i \in \Ghat((x,0),CM),\label{eq.trick-cd-5}
\end{equation}
or equivalently, 
\begin{align}
    &\abs{\da\Phat(x,0)} \leq M \text{ for }\alpha \in \mathbb{N}_0^{n+d},\, \abs{\alpha}\leq m+1,\label{eq.trick-cd-6}\\
    &\Phat(\,\cdot\,, 0) \equiv 0\text{, and }\label{eq.trick-cd-7}\\
    & \nabla_v\big|_{v = 0}\Phat \in \Gamma(x,CM).\label{eq.trick-cd-8}
\end{align}

Thanks to \eqref{eq.trick-cd-1}, we have
\begin{align}
    &\abs{\da\Phat_i(x,0)} \leq M \text{ for }\alpha\in\mathbb{N}_0^{n+d},\,\abs{\alpha}\leq m+1,\label{eq.trick-cd-9}\\    
    &\Phat_i(\,\cdot\,, 0)\equiv 0\text{ for }i = 1,2, \text{ and }\label{eq.trick-cd-10}\\
    &\nabla_v\big|_{v=0}\Phat_i \in \G(x,M) \text{ for } i = 1,2. \label{eq.trick-cd-11}.
\end{align}

In view of \eqref{eq.trick-cd-4} and the definition of $\Phat$ in \eqref{eq.trick-cd-5}, we have
\begin{equation}
    \da \Phat(x,0) = \sum_{\beta+\gamma = \alpha} C_{\alpha,\beta,\gamma} \cdot \d^\beta \Qhat_1(x,0) \cdot \d^{\gamma}\Qhat_1(x,0)\cdot  \d^{\alpha-\beta-\gamma}(\Phat_1-\Phat_2)(x,0) 
    .\label{eq.trick-cd-12}
\end{equation}
Using \eqref{eq.trick-cd-2} and \eqref{eq.trick-cd-3} to estimate \eqref{eq.trick-cd-11}, we see that \eqref{eq.trick-cd-6} follows.

Thanks to \eqref{eq.trick-cd-9}, we have
\begin{equation*}
    \Phat(\,\cdot\,,0) = \sum_{i = 1,2}\Qhat_i\big|_{v=0}\odot_{(x,0)}^{m+1,n+d}\Qhat_i\big|_{v=0}\odot_{(x,0)}^{m+1,n+d}\Phat_i\big|_{v = 0} \equiv 0.
\end{equation*}
Thus, \eqref{eq.trick-cd-7} follows.

Let $\pi: \ring_{(x,0)}^{m+1,n+d}\to \ring_{(x,0)}^{m,n+d}$ be the natural projection. Thanks to \eqref{eq.trick-cd-10}, we have
\begin{equation}
    \begin{split}
    \nabla_v\big|_{v=0}\Phat &= \sum_{i = 1,2}
    \pi \Qhat_i\big|_{v=0}\odot_{(x,0)}^{m,n+d}\pi \Phat_i\big|_{v=0}\odot_{(x,0)}^{m,n+d}\nabla_v\big|_{v=0}\Phat_i
    \\
    &\quad\quad\quad+
    2\pi \Qhat_i\big|_{v=0}\odot_{(x,0)}^{m,n+d} \pi\Phat_i\big|_{v=0}\odot_{(x,0)}^{m,n+d}\nabla_v\big|_{v=0}\Qhat_i\\
    &= \sum_{i = 1,2}\pi \Qhat_i\big|_{v=0}\odot_{(x,0)}^{m,n+d}\pi \Qhat_i\big|_{v=0}\odot_{(x,0)}^{m,n+d}\nabla_v\big|_{v=0}\Phat.
    \label{eq.trick-cd-13}
\end{split}
\end{equation}

We write $\da = \d_x^\beta \d_v^\xi$, for $\alpha \in \mathbb{N}_0^{n+d}$, $\beta \in \mathbb{N}_0^n$, $\xi \in \mathbb{N}_{0}^d$. 

If $\abs{\xi} = 1$, i.e., $\d_v = \d_{v_s}$ for some $s \in \set{1, \cdots, d}$, then \eqref{eq.trick-cd-2} implies
\begin{equation*}
    \abs{\da(\Phat_1 - \Phat_2)(x,0)} = \abs{\d_x^\beta(\d_{v_s}\big|_{v=0}\Phat_1 - \d_{v_s}\big|_{v=0}\Phat_2)(x)} \leq M\delta^{m-\abs{\beta}}
    \text{ for }\abs{\beta}\leq m.
\end{equation*}
Therefore,
\begin{equation}
\abs{
\d_x^\beta \left[
\nabla_v\big|_{v=0}(\Phat_1-\Phat_2)
\right] (x)
} \leq CM\delta^{m-\abs{\beta}}
\text{ for }\abs{\beta}\leq m.
\label{eq.trick-cd-14}
\end{equation}

On the other hand, we see from \eqref{eq.trick-cd-3} that
\begin{equation}
    \abs{\d_x^\beta \left[
    \pi \Qhat_i \big|_{v=0}
    \right]  }
    = \abs{
    \abs{\d_x^\beta\Qhat(x,0)} \leq \delta^{-\abs{\beta}}
    \text{ for }\abs{\beta}\leq m.
    }
    \label{eq.trick-cd-15}
\end{equation}

In view of \eqref{eq.trick-cd-13}--\eqref{eq.trick-cd-15} and the $(C _w,\dm)$-convexity of $(\Gamma(x,M))_{x\in E, M \geq 0}$, we can conclude that $\Phat \in \Ghat((x,0),CM)$, so \eqref{eq.trick-cd-5} holds. We have shown that $(\Ghat((x,0),M))_{(x,0)\in E\times \set{0}, M \geq 0}$ is $(C,\dm)$-convex.

This concludes the proof of Lemma \ref{lem:gradient trick}(1).

We turn to the second statement. 

Let $M = 2\norm{\Ghat}$. We will show that $\norm{\G}\leq CM$. 

Let $x_1, \cdots, x_\ksh \in E$ be given. There exist $\Phat_1 \in \Ghat((x_1,0),M), \cdots, \Phat_\ksh \in \Ghat((x_\ksh,0),M)$ such that
\begin{equation}
    \abs{\da(\Phat_i - \Phat_j)(x_i,0)} \leq M\abs{(x_i,0)-(x_j,0)}^{m+1-\abs{\alpha}}
    \text{ for $\alpha\in \mathbb{N}_0^{n+d}$, $\abs{\alpha}\leq m+1$}.
    \label{eq.equi}
\end{equation}
We set
\begin{equation*}
    \vec{P}_i:= \nabla_v\big|_{v=0}\Phat_i \in \P^m(\Rn,\R^d)
    \text{ for } 1 \leq i \leq \ksh.
\end{equation*}
It follows immediately from \eqref{eq.equi} that
\begin{equation*}
    \abs{\d^\beta(\vec{P}_i-\vec{P}_j)(x_i)}\leq CM\abs{x_i-x_j}^{m-\abs{\beta}}
    \text{ for $\beta \in \mathbb{N}_0^{n}$, $\abs{\beta}\leq m$}.
\end{equation*}
Therefore, $\norm{\Gamma}\leq \norm{\Ghat}$.

Now we show the reverse direction. Let $M = 2\norm{\G}$. We will show that $\norm{\Ghat} \leq M$.

Let $x_1, \cdots, x_\ksh \in E$ be given. There exist $\vec{P}_1 \in \G(x_1,M), \cdots, \vec{P}_\ksh \in \G(x_\ksh, M)$ such that
\begin{equation}
    \abs{\d^\beta(\vec{P}_i - \vec{P}_j)(x_i)} \leq M\abs{x_i-x_j}^{m-\abs{\alpha}}\text{ for $\beta \in \mathbb{N}_0^n$, $\abs{\beta}\leq m$}.
    \label{eq.pres-2-1}
\end{equation}
We define
\begin{equation*}
    \widehat{P}_i(\,\cdot\,,v)= v\cdot \vec{P}_i(\,\cdot\,) \in \P^{m+1}(\R^{n+d},\R)
    \text{ for } 1 \leq i \leq \ksh.
    \label{eq.pres-2-2}
\end{equation*}

Observe that for $1 \leq i \leq \ksh$, $\xi \in \mathbb{N}_0^d$,
\begin{equation}
    \d_v^\xi\big|_{v=0}\Phat_i = \begin{cases}
    0 &\text{ if } \abs{\xi} = 0\\
    P_{i,j} \text{ for some $1 \leq j \leq d$} &\text{ if }\abs{\xi}  =1\\
    0 &\text{ if }\abs{\xi} \geq 2
    \end{cases}\,.\label{eq.pres-2-3}
\end{equation}
Therefore, for $\abs{\xi} \neq 1$, we have
\begin{equation}
    \abs{\d_x^\beta \d_v^\xi(\Phat_i - \Phat_j)(x_i,0)} = 0
    \text{ for } \beta \in \mathbb{N}_0^n,\, \xi \in \mathbb{N}_0^d,\, \abs{\beta}+\abs{\xi}\leq m+1.
    \label{eq.pres-2-4}
\end{equation}
Suppose $\abs{\xi} = 1$. Without loss of generality, we may assume $\d_v^\xi = \d_{v_s}$ for some $s \in \set{1,\cdots, d}$. Thanks to \eqref{eq.pres-2-1}, we have
\begin{equation}
    \begin{split}
         \abs{\d_x^\beta\d_{v_s}(\Phat_i - \Phat_j)(x_i,0)}
        = \abs{\d_x^\beta(P_{i,s}-P_{j,s})(x_i)}
        \leq M\abs{x_i-x_j}^{m-\abs{\alpha}}= M\abs{x_i-x_j}^{m+1-\abs{\gamma}}.
    \end{split}
    \label{eq.pres-2-5}
\end{equation}

We see from \eqref{eq.pres-2-4} and \eqref{eq.pres-2-5} that $\norm{\Ghat} \leq M$. This proves the second statement.

\end{proof}

With Lemma \ref{lem:gradient trick} in hand, we are ready to prove Lemma \ref{lemma:dimension reduction}.

\begin{proof}[Proof of Lemma \ref{lemma:dimension reduction}]

\newcommand{\Ehat}{\widehat{E}}
\newcommand{\Rnd}{\R^{n+d}}
\newcommand{\Qhat}{\widehat{Q}_0}

Let $Q_0 \subset \Rn$ be a cube of length $3$ and $E \subset Q_0$ be compact. Let $\Ehat = E\times\set{0}\subset \Rnd$ and $\Qhat = Q_0 \times [0,0+3\delta_{Q_0})^d$. Note that $\Qhat$ is a hypercube of sidelength less than 3. 

Let $(\Gamma(x,M))_{x\in E, M \geq 0}$ be a regular $(C_w,1)$-convex shape field. Write $\Gamma(x) = \bigcup_{M\geq 0}\Gamma(x,M)$. Suppose $\Gamma^*(x) \neq \void$ for all $x \in E$, where $(\Gamma^*(x))_{x\in E}$ is the termination of iterated Glaeser refinements of $(\Gamma(x))_{x\in E}$ guaranteed by Lemma \ref{lemma:dimension terminates}. We write $\Gamma^*(x,M) = \Gamma^*(x)\cap \Gamma(x,M)$ for each $x \in E$ and $M \geq 0$. 

As in Lemma \ref{lem:gradient trick}, we define the following objects.
\begin{equation}
    \Ghat((x,0),M) = \set{
        \widehat{P}\in \P^{m+1}(\R^{n+d},\R) : 
         \,
        \begin{matrix*}[l]
            \abs{\da\widehat{P}(x,0)} \leq M \text{ for }\alpha \in \mathbb{N}_0^{n+d},\, \abs{\alpha}\leq m+1,\\
            \widehat{P}(\,\cdot\,,0)\equiv 0\text{, and }\nabla_v\big|_{v=0}\widehat{P}\in \Gamma(x,M)
        \end{matrix*}
        }.
        \label{eq.closed-0}
\end{equation}
\begin{equation}
    \Ghat^*((x,0),M) = \set{
        \widehat{P}\in \P^{m+1}(\R^{n+d},\R) : 
         \,
        \begin{matrix*}[l]
            \abs{\da\widehat{P}(x,0)} \leq M \text{ for }\alpha \in \mathbb{N}_0^{n+d},\, \abs{\alpha}\leq m+1,\\
            \widehat{P}(\,\cdot\,,0)\equiv 0\text{, and }\nabla_v\big|_{v=0}\widehat{P}\in \Gamma^*(x,M)
        \end{matrix*}
        }.
        \label{eq.red-00}
\end{equation}

Recall that the first part of Theorem \ref{thm:heart of the matter} was established in Lemma \ref{lemma:termination of Glaeser refinement}; thus it suffices to prove the second part. By Taylor's theorem and the definition of Glaeser refinement, we see that if $(\Gamma(x))_{x\in E}$ has a section $F$ satisfying $\|F\|_{C^m(\R^n,\R^d)}\le M$, then $(\Gamma^*(x))_{x\in E}$ is nonempty and $\|\Gamma^*\|\le CM$. Thus, it suffices to suppose $(\Gamma^*(x))_{x\in E}$ is nonempty and determine the existence of a section with the appropriate norm bounds.

Thanks to Lemma \ref{lem:gradient trick} and the assumption that $(\G(x,M))_{x\in E, M \geq 0}$ is a closed regular $(C,1)$-shape field, we see that
\begin{equation}
    \text{$(\Ghat((x,0),M))_{(x,0)\in \Ehat, M \geq 0}$ is a closed regular $(C,1)$-convex shape field.}
    \label{eq.closed-1}
\end{equation}
% Thanks to \eqref{eq.red-0} and Lemma \ref{lem:gradient trick}, 
% \begin{equation}
%     \text{$(\Ghat^*((x,0),M))_{(x,0)\in \Ehat, M \geq 0}$ is a regular $(C,1)$-convex shape field.}
%     \label{eq.red-reg}
% \end{equation}

We write
\begin{equation}
    \Ghat((x,0)) = \bigcup_{M\geq 0}\Ghat((x,0),M)
    \text{ and }
    \Ghat^*((x,0)) = \bigcup_{M\geq 0}\Ghat^*((x,0),M)
    \text{ for }(x,0)\in \Ehat.
    \label{eq.red-000}
\end{equation}

We will show that the hypothesis of Theorem \ref{thm:heart of the matter} is satisfied for $\Ehat$, $m+1$, $n+d$, $\Ghat$, and $\Ghat^*$ in place of $E$, $m$, $n$, $\Gamma$, and $\Gamma^*$. 
In particular, we need to show that
\begin{enumerate}
    \item[\LA{hyp1}] $\Ghat^*((x,0)) \neq \void$ for each $(x,0)\in \Ehat$
    \item[\LA{hyp2}] $\Ghat^*((x,0))$ is its own Glaeser refinement, and
    \item[\LA{hyp3}] $\Ghat^*((x,0))\subset\Ghat((x,0))$ for $x\in E$.
\end{enumerate}

Assuming the above, we see that the shape field $(\Ghat((x,0),M))_{(x,0)\in \Ehat,M\geq 0}$ satisfies the hypotheses of Theorem \ref{thm:heart of the matter} for $C^{m+1}(\R^{n+d},\R)$. While we did not show the iterated Glaeser refinement of $\Ghat$ terminates in $\Ghat^*$, it is clear from definition that is must terminate in a Glaeser stable bundle containing $\Ghat^*$; call this bundle $\Gamma'$. It follows that $\Gamma'((x,0))$ is nonempty for all $(x,0)\in \hat{E}$ from the fact that $\Ghat((x,0))\subset\Gamma'((x,0))$. The appropriate quantitative bounds follow from $\|\Gamma'\|\le\|\Ghat'\|$.

Applying Theorem \ref{thm:heart of the matter}, there exists $G\in C^{m+1}(\R^{n+d},\R)$ with
\begin{equation*}
    \norm{G}_{\dot{C}^{m+1}(\R^{n+d},\R)} \leq \widehat{M}
    \text{ and }
    J_{(x,0)}^{+} \in \Ghat^*((x,0),\widehat{M}) \text{ for all }(x,0) \in \Ehat.
\end{equation*}
Here, $J_{(x,0)}^+$ denotes the $(m+1)$-jet at $(x,0)$, and $\widehat{M} = C\norm{\Gamma'}\le C\norm{\Ghat}$ for some controlled constant $C$.
Thanks to the regularity condition \eqref{regularity 1}, we can further improve the control of the norm
\begin{equation}
    \norm{G}_{C^{m+1}(\R^{n+d},\R)} \leq C\widehat{M}.
    \label{eq.red-11}
\end{equation}

Define
\begin{equation*}
    \vec{F}(x) = \left(
    \d_{v_1}G(x,0), \cdots, \d_{v_d}G(x,0)
    \right).
\end{equation*}
In view of the definition of $\Ghat$ in \eqref{eq.red-00} and \eqref{eq.red-000} and \eqref{eq.red-11}, we have 
\begin{equation}
    \norm{\vec{F}}_{\dot{C}^m(\Rn,\R^d)} \leq C\widehat{M}
    \text{ and }
    J_x\vec{F} \in \G^*(x,C\widehat{M}).
    \label{eq.red-12}
\end{equation}

% Finally, thanks to Lemma \ref{lemma:Glaeser preserved}.(2), we have 
% \begin{equation}
%     \widehat{M}\leq C\norm{\Gamma}.
%     \label{eq.red-13}
% \end{equation}

In view of \eqref{eq.red-12}, we see that Lemma \ref{lemma:dimension reduction} holds.

It now suffices to establish \eqref{hyp1}, \eqref{hyp2}, and \eqref{hyp3}.

We show \eqref{hyp1} as follows. Since we assume that $\Gamma^*(x) \neq \void$ for each $x \in E$, we can pick $\vec{P}^x \in \Gamma^*(x)$ for each $x \in E$. We define
\begin{equation*}
    \widehat{P}^{(x,0)}(y,v)= v\cdot \vec{P}^x(y) \in \P^m(\R^{n+d},\R)
    \text{ for each } (x,0)\in \Ehat.
\end{equation*}
We immediately verify that $\widehat{P}^{(x,0)} \in \Ghat^*((x,0)) $ for each $(x,0) \in \Ehat$. Therefore, $\Ghat^*((x,0))\neq \void$ for each $(x,0) \in \Ehat$, establishing \eqref{hyp1}.

Now, to show \eqref{hyp2}, fix $(x_0,0)\in \Ehat$ and $\widehat{P}_0 \in \Ghat^*((x_0,0))$. We will show that $\widehat{P}_0$ survives the Glaeser refinement procedure. Let $\eps > 0$.

\newcommand{\Phat}{\widehat{P}}

Since $\Phat_0 \in \Ghat^*((x_0,0))$, we can write
\begin{equation}
    \Phat_0(y,v) =  \sum_{1\leq \abs{\gamma}\leq m+1}\frac{1}{\gamma!}v^\gamma P_\gamma(y)
    \text{ for some }
    P_\gamma \in \P^m(\Rn,\R).
    \label{eq.red-1}
\end{equation}
We set
\begin{equation}
    \begin{split}
        \vec{P}_0 &= (P_{0,1}, \cdots, P_{0,d}) = \nabla_v\big|_{v=0}\Phat_0 \\
        &= \left(
        \d_{v_1}\bigg|_{v=0}\left[
        \sum_{1\leq\abs{\gamma}\leq m+1}\frac{1}{\gamma!}v^\gamma P_\gamma(x)
        \right]
        ,\cdots,
        \d_{v_d}\bigg|_{v=0}\left[
        \sum_{1\leq\abs{\gamma}\leq m+1}\frac{1}{\gamma!}v^\gamma P_\gamma(x)
        \right]
        \right).
    \end{split}
    \label{eq.red-2}
\end{equation}

By construction, we have
\begin{equation}
    \vec{P}_0 \in \G^*(x_0).
\end{equation}

Thanks to \eqref{eq.red-1} and \eqref{eq.red-2}, we can write
\begin{equation}
    \Phat_0(y,v) = \sum_{1 \leq j \leq d}v_j P_{0,j}(y) + \sum_{2\leq\abs{\gamma}\leq m+1}\frac{1}{\gamma!}v^\gamma P_\gamma(y).
    \label{eq.red-2-0}
\end{equation}

Since the bundle $(\Gamma^*(x))_{x\in E}$ is Glaeser stable, we know that there exists $\delta > 0$ such that for all $x_1, \cdots, x_{\ksh} \in E \cap B^n(x_0,\delta)$, there exist 
\begin{equation}
    \begin{split}
        \vec{P}_1 &= (P_{1,1}, \cdots, P_{1,d} ) \in \G^*(x_1),
        \\
        &\vdots
        \\
        \vec{P}_{\ksh} &= (P_{\ksh,1},\cdots, P_{\ksh,d})\in \G^*(x_{\ksh}),
    \end{split}
    \label{eq.red-3}
\end{equation}
with
\begin{equation}
    \abs{\da(\vec{P}_i - \vec{P}_j)(x_i)} \leq 
    \eps\abs{x_i - x_j}^{m-\abs{\alpha}}
    \text{ for }\abs{\alpha} \leq m, 0 \leq i,j \leq \ksh.
    \label{eq.red-4}
\end{equation}

For any $(x_1,0), \cdots, (x_\ksh,0) \in \Ehat \cap B^{n+d}((x_0,0),\delta) $, we set
\begin{equation}
    \Phat_k(y,v) = \sum_{j = 1}^d v_j P_{k,j}(y) + \sum_{2\leq \abs{\gamma}\leq m+1}\frac{1}{\gamma!}v^\gamma P_\gamma (y)
    \text{ for } 1 \leq k \leq \ksh.
    \label{eq.red-5}
\end{equation}
Here, $P_\gamma$ are as in \eqref{eq.red-2-0}, and $\vec{P}_1, \cdots, \vec{P}_\ksh$ are as in \eqref{eq.red-3}.

We claim that
\begin{equation}
    \Phat_k \in \Ghat((x_k,0)) \text{ for } 1 \leq k \leq \ksh
    \label{eq.red-6}
\end{equation}
and
\begin{equation}
    \abs{\da(\Phat_i - \Phat_j)(x_i,0)} \leq C\eps\abs{x_i-x_j}^{m+1-\abs{\alpha}}
    \text{ for }\abs{\alpha} \leq m+1, 0 \leq i,j \leq \ksh.
    \label{eq.red-7}
\end{equation}

To verify \eqref{eq.red-6}, we apply $\nabla_v\big|_{v = 0}$ to \eqref{eq.red-5} and see that
\begin{equation}
    \nabla_v\big|_{v=0}\Phat_k = (P_{k,1}, \cdots, P_{k,d}) = \vec{P}_k \in \G^*(x_k)
    \text{ for } 1 \leq k \leq \ksh.
    \label{eq.red-8}
\end{equation}
Therefor, \eqref{eq.red-6} follows from \eqref{eq.red-00} and \eqref{eq.red-8}.

To verify \eqref{eq.red-7}, we write $\d^\alpha = \d_x^\beta \d_v^\xi$. Observe that for $0 \leq k \leq \ksh$, 
\begin{equation}
    \d_v^\xi\big|_{v=0}\Phat_k = \begin{cases}
    0 &\text{ if } \abs{\xi} = 0\\
    P_{k,j} \text{ for some $1 \leq j \leq d$} &\text{ if }\abs{\xi}  =1\\
    P_\xi &\text{ if }\abs{\xi} \geq 2
    \end{cases}\,.
    \label{eq.red-9}
\end{equation}
Thanks to \eqref{eq.red-9}, we see that \eqref{eq.red-7} holds trivially for $\abs{\xi} \neq 1$. Therefore, it suffices to show \eqref{eq.red-7} for the case $\abs{\xi} = 1$. Without loss of generality, we may assume $\d_v^\xi = \d_{v_s}$ for some $s \in \set{1,\cdots, d}$. Then
\begin{equation}
    \begin{split}
        \abs{\da(\Phat_i - \Phat_j)(x_i,0)} &= \abs{\d_x^\beta\d_{v_s}(\Phat_i - \Phat_j)(x_i,0)}
        \\
        &= \abs{\d_x^\beta(P_{i,s}-P_{j,s})(x_i)}
        \\
        &\leq \eps\abs{x_i-x_j}^{m-\abs{\beta}} \quad \text{(thanks to \eqref{eq.red-4})}
        \\
        &= \eps\abs{x_i-x_j}^{m+1-\abs{\alpha}}.
    \end{split}
    \label{eq.red-10}
\end{equation}
We see that \eqref{eq.red-7} follows from \eqref{eq.red-10}. 
We have shown that the bundle $(\Ghat^*((x,0)))_{(x,0)\in \Ehat}$ as defined in \eqref{eq.red-00} and \eqref{eq.red-000} is Glaeser stable, giving us \eqref{hyp2}.

Lastly, we get \eqref{hyp3} by observing that $\Ghat^*((x,0))\subset \Ghat((x,0))$ since $\Gamma^*((x,0))\subset\Gamma((x,0))$ for all $(x,0)\in \hat{E}$.

\end{proof}

\begin{remark}
Thanks to Lemma \ref{lemma:dimension reduction}, to prove Theorem \ref{thm:heart of the matter} for $C^m(\Rn,\R^d)$, it suffices to prove it for the case $d = 1$. 
\end{remark}

For the rest of the paper, we write $C^m(\Rn)$ instead of $C^m(\Rn,\R)$, and $F$, $\P$, $P$ instead of $\vec{F}$, $\vec{\P}$, $\vec{P}$.

\section{Finiteness principle for shape fields}\label{sec:fin princ}

Recall from the previous section that we will be working with real-valued functions for the rest of the paper.

    \begin{definition}\label{def.modulus}
    A function $\omega: [0,1] \to [0,\infty)$ is called a \underline{regular modulus of continuity} if it satisfies the following conditions:
    \begin{enumerate}[label=($\omega$-1)]
        \item $\omega(0) = \lim\limits_{t \downarrow 0}\omega(t) = 0$ and $\omega(1) = 1$;
        \item $\omega(t)$ is increasing on $[0,1]$; and
        \item $\omega(t)/t$ is decreasing on $(0,1]$.
    \end{enumerate}
    \end{definition}
    
    Note that in ($\omega$-2) and ($\omega$-3), we do not demand that $\omega$ be strictly increasing, or that $\omega(t)/t$ be strictly decreasing. 
    
    Let $\omega$ be a regular modulus of continuity. We write $\cmw(\Rn)$ to denote the space of all $C^m$ functions $F$ on $\Rn$ for which the norm
    \begin{equation*}
        \norm{F}_{\cmw(\Rn)}:= \sup_{\substack{x \in \Rn\\\abs{\alpha} \leq m}}\abs{\da F(x)} + \sup_{\substack{\abs{\alpha} = m\\x,x' \in \Rn \\0<\abs{x-x'} \leq 1}}\frac{\abs{\da F(x) - \da F(x')}}{\omega(\abs{x-x'})}
    \end{equation*}
    is finite. 
    
    Similarly, we write $\dot{C}^{m,\omega}(\Rn)$ to denote the space of all $C^m$ functions $F$ on $\Rn$ for which the norm
    \begin{equation*}
        \norm{F}_{\cmw(\Rn)}:=\sup_{\substack{\abs{\alpha} = m\\x,x' \in \Rn \\0<\abs{x-x'} \leq 1}}\frac{\abs{\da F(x) - \da F(x')}}{\omega(\abs{x-x'})}
    \end{equation*}
    is finite. 
    
    % \subsection{Whitney fields on finite sets}

    % Definitions of $W^m(S)$, $\dot{W}^{m}(S)$, $\dot{W}^{m,\omega}(S)$ etc. for $S$ finite as needed. Whitney extension theorem.
    
    % Let $S \subset \Rn$ be a finite set. A \underline{Whitney field} on $S$ is an array of polynomials $\vec{P} = (P^x)_{x \in S}$ parameterized by points $ x\in S$. The collection of Whitney field on $S$ will be denoted by $W^m(S)$. 
    
    % Let $\vec{P} = (P^x)_{x \in S} \in W^m(S) $. Let $\omega$ be a regular modulus of continuity. 
    % \begin{equation*}
    %     \norm{\vec{P}}_{\dot{W}^{m,\omega}(S)} 
    %     := \max_{
    %     \substack{
    %     \abs{\alpha} \leq m\\x,y \in S\\x \neq y
    %     }
    %     } \frac{\abs{\da(P^x - P^y)(x)}}{\omega(\abs{x - y})\abs{x-y}^{m-\abs{\alpha}}}.
    % \end{equation*}
    % Note that $\norm{\cdot}_{\dot{W}^{m,\omega}(S)}$ is a seminorm on the space of Whitney fields over $S$. 

% 	The next two lemmas follows directly from Definition \ref{def.Cd-convex}. 
    
%     \begin{lemma}
%     Let $(\Gamma(x,M))_{x \in E, M\geq 0}$ be a $(C_w,\delta_{\max})$-convex shape field. Then for all $r > 0$, $(\Gamma(x,r M)_{x \in E, M\geq 0}$ is a $(C_w,\delta_{\max})$-convex shape field.
%     \end{lemma}
    
    % \begin{lemma}
    % Let $(\Gamma'(x,M))_{x \in E, M\geq 0}$ and $(\Gamma''(x,M))_{x \in E, M\geq 0}$ be two $(C_w,\delta_{\max})$-convex shape fields. For each $x \in E$ and $M\geq 0$, define $\Gamma(x,M) := \Gamma'(x,M) \cap \Gamma''(x,M)$. Then $(\Gamma(x,M))_{x \in E, M\geq 0}$ is a $(C_w,\delta_{\max})$-convex shape field.
    % \end{lemma}
	
	The following lemma was proven in \cite{FIL16}.
	
	\begin{lemma}\label{lemma:shape field convexity for more polynomials}
	Let $(\Gamma(x,M))_{x \in E, M\geq 0}$ be a $(C_w,\delta_{\max})$-convex shape field. Let $\delta \in (0,\delta_{\max})$, $x \in E$, $M\geq 0$, $A', A'' > 0$, $P_1, \cdots, P_k, Q_1, \cdots, Q_k \in \p$. Assume that
	\begin{itemize}
	    \item $P_i \in \Gamma(x,A' M)$ for $i = 1, \cdots, k$;
	    \item $\abs{\D^\alpha(P_i - P_j)(x)} \leq A' M\delta^{m-\abs{\alpha}}$ for $\abs{\alpha} \leq m$, $1 \leq i,j \leq k$;
	    \item $\abs{\D^\alpha Q_i(x)} \leq A''\delta^{-\abs{\alpha}}$ for $\abs{\beta} \leq m$ and $1 \leq i \leq k$;
	    \item $\sum_{i = 1}^k Q_i\odot_x Q_i = 1$.
	\end{itemize}
	Then $\sum_{i = 1}^k Q_i\odot_x Q_i\odot_x P_i \in \Gamma(x,CM)$, with $C = C(A',A'',C_w,m,n,k)$. 
	\end{lemma}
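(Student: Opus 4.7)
The strategy is induction on $k$, reducing to the two-polynomial case of Definition \ref{def.Cd-convex}. The base case $k=1$ is immediate since $Q_1\odot_x Q_1=1$ forces $Q_1\odot_x Q_1\odot_x P_1 = P_1\in\Gamma(x,A'M)$. For $k=2$, I would first absorb $A'$ into $M$ by setting $M'=A'M$, and then absorb $A''$ by taking a rescaled $\tilde\delta=\delta/\max(1,(A'')^{1/m})$, so that $|\D^\alpha Q_i(x)|\leq\tilde\delta^{-|\alpha|}$ for all $|\alpha|\leq m$ (losing a harmless factor of $A''$ at low-order $|\alpha|$ that is absorbed into the final constant). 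Definition \ref{def.Cd-convex} applied with parameters $M',\tilde\delta$ then yields $\sum Q_i\odot_x Q_i\odot_x P_i\in\Gamma(x,C_wM')\subset\Gamma(x,CM)$ for some $C=C(A',A'',C_w,m,n)$.

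For the inductive step from $k-1$ to $k$, evaluating $\sum_i Q_i\odot_x Q_i=1$ at $x$ yields $\sum_i Q_i(x)^2=1$; pigeonhole then gives some index $i_0$ with $|Q_{i_0}(x)|\geq 1/\sqrt{k}$, and after relabeling we may take $i_0=k$. Set $S=Q_{k-1}\odot_x Q_{k-1}+Q_k\odot_x Q_k$, so that $S(x)\geq Q_k(x)^2\geq 1/k>0$. Since $S(x)>0$, take a formal $m$-jet square root $T\in\mathcal{P}$ of $S$ at $x$ (via the convergent power series for $\sqrt{\,\cdot\,}$ around $S(x)$), giving $T\odot_x T=S$ and $|\D^\alpha T(x)|\leq C(k,m,A'')\delta^{-|\alpha|}$. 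Because $T(x)\neq 0$, the jets $\tilde Q_{k-1}:=Q_{k-1}/T$ and $\tilde Q_k:=Q_k/T$ (division in the jet ring $\mathcal{R}_x$) are well-defined, satisfy $\tilde Q_{k-1}\odot_x\tilde Q_{k-1}+\tilde Q_k\odot_x\tilde Q_k=1$, and obey $|\D^\alpha\tilde Q_j(x)|\leq C(k,m,A'')\delta^{-|\alpha|}$. Apply the $k=2$ case (just established) to $(P_{k-1},P_k)$ with weights $(\tilde Q_{k-1},\tilde Q_k)$ to obtain $P':=\tilde Q_{k-1}\odot_x\tilde Q_{k-1}\odot_x P_{k-1}+\tilde Q_k\odot_x\tilde Q_k\odot_x P_k\in\Gamma(x,C_1M)$; expanding $P'-P_j=\tilde Q_{k-1}\odot_x\tilde Q_{k-1}\odot_x(P_{k-1}-P_j)+\tilde Q_k\odot_x\tilde Q_k\odot_x(P_k-P_j)$ and applying the Leibniz rule to the hypothesized difference bounds gives $|\D^\alpha(P'-P_j)(x)|\leq C_2M\delta^{m-|\alpha|}$ for $j\leq k-2$. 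Finally, observing that $T\odot_x T\odot_x P'+\sum_{i=1}^{k-2}Q_i\odot_x Q_i\odot_x P_i$ equals the original sum and that $\sum_{i=1}^{k-2}Q_i\odot_x Q_i+T\odot_x T=1$, I apply the inductive hypothesis to the $k-1$ polynomials $P_1,\ldots,P_{k-2},P'$ with weights $Q_1,\ldots,Q_{k-2},T$ (with inflated constants) to conclude.

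The principal obstacle is the careful bookkeeping of how the constants compound through the induction: each step picks up polynomial factors in $k,m,A',A''$ from Leibniz expansions, from the derivative bounds on the formal square root $T$, and from inversion $1/T$ in $\mathcal{R}_x$. The inversion step is precisely where the pigeonhole lower bound $|T(x)|\geq 1/\sqrt{k}$ is essential: division by $T$ in the jet ring has norm controlled by powers of $|T(x)|^{-1}$, so the recursive constant grows polynomially in $k$ but remains finite for fixed $k,m,n,A',A'',C_w$, as asserted in the statement of the lemma.
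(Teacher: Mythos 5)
This lemma is stated without proof in the paper --- it is cited directly from \cite{FIL16} --- so there is no in-paper argument to compare against. Judged on its own terms, your induction via pairwise merging with a jet square root is a sound and standard strategy, and the overall architecture (reduce $k\to k-1$ by replacing the pair $(P_{k-1},P_k)$ with a single $P'$ obtained from Definition~\ref{def.Cd-convex}, using $T=\sqrt{S}$ in $\mathcal{R}_x$ and $\tilde Q_j = Q_j/T$, and invoking the pigeonhole bound $S(x)\geq 1/k$ to keep $T$ a unit with controlled derivatives) is correct and closes: the telescoping $T\odot_x T\odot_x P' = Q_{k-1}\odot_x Q_{k-1}\odot_x P_{k-1} + Q_k\odot_x Q_k\odot_x P_k$ and the identity $\sum_{i\leq k-2}Q_i\odot_x Q_i + T\odot_x T = 1$ both check out, and the constants remain finite functions of $(A',A'',C_w,m,n,k)$.

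There is, however, a flaw in your $k=2$ step that needs repair, because your ``lose a harmless factor and absorb it into the final constant'' move is not legitimate there: the hypothesis $|\D^\alpha Q_i(x)|\leq\tilde\delta^{-|\alpha|}$ of Definition~\ref{def.Cd-convex} must hold \emph{exactly}, not up to a constant. With your choice $\tilde\delta = \delta/\max(1,(A'')^{1/m})$, at $|\alpha|=1$ one only gets $\tilde\delta^{-1}=(A'')^{1/m}\delta^{-1}$, which is strictly smaller than the available bound $A''\delta^{-1}$ when $A''>1$, so the hypothesis genuinely fails. The correct fix is to note first that $\sum_i Q_i(x)^2=1$ forces $|Q_i(x)|\leq 1$, so the $|\alpha|=0$ constraint is automatic, and then set $\tilde\delta=\delta/\max(1,A'')$: for $|\alpha|\geq 1$ this gives $\tilde\delta^{-|\alpha|}=\max(1,A'')^{|\alpha|}\delta^{-|\alpha|}\geq A''\delta^{-|\alpha|}\geq|\D^\alpha Q_i(x)|$. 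The companion bound $|\D^\alpha(P_1-P_2)(x)|\leq A'M\delta^{m-|\alpha|}\leq A'M\max(1,A'')^{m}\tilde\delta^{m-|\alpha|}$ then shows the definition applies with $M'=A'M\max(1,A'')^{m}$, yielding $C=C_wA'\max(1,A'')^{m}$ in the $k=2$ case. With this correction, the rest of your induction goes through as written.
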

	
% 	Recall the main theorem from \cite{FIL16}.
	
% 	\begin{theorem}[\cite{FIL16}]\label{thm:FP-SF}
% 	For sufficiently large $\ksh(m,n)$, the following holds. Let $E \subset \Rn$ be a finite set. Let $\vec{\Gamma} = \Gamma(x,M)_{x \in E, M\geq 0}$ be a $(C_w,\delta_{\max})$-convex shape field. Let $Q_0$ be a cube of sidelength $\delta_Q \leq \delta_{\max}$. Let $x_0 \in E \cap 5Q$ and $M_0> 0$ be given. Assume that for each $S \subset E$ with $\#(S) \leq \ksh$, there exists a Whitney field $\vec{P}^S = (P^z)_{z \in S}$ such that 
% 	\begin{itemize}
% 	    \item $\norm{\vec{P}^S}_{\dot{W}^{m+1}(S)} \leq M_0$, and
% 	    \item $P^x \in \Gamma(x,M)$ for all $x \in S$.
% 	\end{itemize}
% 	Then there exists $P^0 \in \Gamma(x_0,M_0)$ and $F \in C^{m+1}(Q_0)$ such that the following hold, with a constant $C_*$ determined by $C_w,m,n$:
% 	\begin{itemize}
% 	    \item $J_x F \in \Gamma(x,C_*M_0)$ for all $x \in E \cap Q_0$.
% 	    \item $\abs{\D^\alpha (F-P^0)(x)} \leq C_*M_0\delta_{Q_0}^{m+1-\abs{\alpha}}$ for all $x \in Q_0$, $\abs{\alpha} \leq m+1$.
% 	    \item In particular, $\abs{\D^\alpha F(x)} \leq C_*M_0$ for all $x \in Q_0$, $\abs{\alpha} = m+1$.
% 	\end{itemize}
% 	\end{theorem}
	
    By adapting the proof of the Finiteness Principle for shape fields from \cite{FIL16} (see also \cite{F05-J,F05-Sh}), we obtain the following result.

	\begin{theorem}[$ \dot{C}^{m,\omega} $-Finiteness Principle for Shape Fields]\label{thm:shape fields fin prin} Let $m,n\in\N$. For sufficiently large $ k^\sharp_{\mathrm{SF}} = k(m,n)$, the following holds. 
	
	Let $E \subset \Rn$ be an arbitrary subset and $\omega$ be a regular modulus of continuity. Let $ \Gamma(x,M)_{x \in E, M\geq 0}$ be a $(C_w,\delta_{\max})$-convex shape field. Let $M_0<\infty$ and $Q_0$ be a cube of sidelength $\delta_{Q_0} \leq \delta_{\max}$. Assume that for each $S \subset E$ with $\#(S) \leq \ksh_{SF}$, there exists $(P^x)_{x \in S}$ such that
	
\begin{equation}
    P^x \in \Gamma(x,M_0)\text{ for }x \in S
\end{equation}
and 

\begin{equation}
    |\da(P^x-P^y)(x)|\le M_0\omega(|x-y|)|x-y|^{m-|\alpha|}\text{ for }|\alpha|\le m, x,y\in S, x\ne y.
\end{equation}
% \begin{equation}
%     \max_{
%         \substack{
%         \abs{\alpha} \leq m\\x,y \in S\\x \neq y
%         }
%         } \frac{\abs{\da(P^x - P^y)(x)}}{\omega(\abs{x - y})\abs{x-y}^{m-\abs{\alpha}}}\le M_0.
% \end{equation}

	Then there exists 
	$F \in \cmw(Q_0)$, 
	with
\begin{equation}
    J_x F \in \Gamma(x,C^\sharp M_0)\text{ for } x \in E \cap Q_0
\end{equation}
and
\begin{equation}
    \norm{F}_{\dot{C}^{m,\omega}(Q_0)} \leq C^\sharp M_0.
\end{equation}

Here, $C^\sharp = C^\sharp(m,n,C_w)$.

\end{theorem}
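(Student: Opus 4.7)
The plan is to adapt the Calderón-Zygmund (CZ) decomposition framework from \cite{FIL16}, introducing the modulus of continuity $\omega$ into the estimates throughout. The output function will be constructed as a partition-of-unity sum of local Whitney-type approximations, whose membership in $\Gamma(x,C^\sharp M_0)$ will be enforced by repeated use of the $(C_w,\delta_{\max})$-convexity hypothesis (Lemma \ref{lemma:shape field convexity for more polynomials}).

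The first step is to produce a CZ decomposition of $Q_0$ into dyadic subcubes $\{Q_\nu\}$. Declare a dyadic cube $Q \subseteq Q_0$ to be \emph{OK} if either $\#(E\cap 3Q)\le 1$, or there exists a family $(P^x)_{x\in E\cap 3Q}$ satisfying a Whitney-type compatibility estimate at scale $\delta_Q$ with constants comparable to $M_0$ and weighted by $\omega(\delta_Q)$. Bisect repeatedly, stopping at OK cubes. The finite-set hypothesis with $\ksh_{\mathrm{SF}}$ sufficiently large (chosen so that the local finiteness arguments of \cite{FIL16} work in all the places where one needs to compare jets at finitely many points) ensures that every sufficiently small cube is OK, so the procedure terminates, yielding a locally finite partition $Q_0 = \bigsqcup Q_\nu$ with good neighbor properties: adjacent cubes have comparable sidelengths, and $3Q_\nu$ has bounded overlap.

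Next, on each OK cube $Q_\nu$ pick a representative point $x_\nu \in E\cap 3Q_\nu$ (or any point in $3Q_\nu$ if $E\cap 3Q_\nu$ is empty), and apply the finite hypothesis to an appropriate small subset of $E\cap 3Q_\nu$ together with finitely many neighboring representatives to obtain a single polynomial $P_\nu \in \Gamma(x_\nu, C M_0)$ that is compatible with all points within distance $O(\delta_{Q_\nu})$ in the sense
\[
|\da(P_\nu - P^x)(x_\nu)| \le CM_0\,\omega(\delta_{Q_\nu})\,\delta_{Q_\nu}^{m-|\alpha|}
\]
for every $x \in E\cap 3Q_\nu$ and every local family witness $P^x$. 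Then fix a partition of unity $\{\theta_\nu\}$ subordinate to $\{1.1\,Q_\nu\}$ satisfying the standard estimates $|\d^\alpha \theta_\nu|\le C\delta_{Q_\nu}^{-|\alpha|}$, and define
\[
F(x) := \sum_\nu \theta_\nu(x) P_\nu(x).
\]

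The bulk of the work, and the main obstacle, is the verification that (a) $\|F\|_{\dot C^{m,\omega}(Q_0)} \le C^\sharp M_0$ and (b) $J_x F \in \Gamma(x, C^\sharp M_0)$ for every $x\in E\cap Q_0$. Statement (a) is the patching estimate from \cite{FIL16} with the refinement that factors of $\delta_{Q_\nu}^{m-|\alpha|}$ get replaced by $\omega(\delta_{Q_\nu})\delta_{Q_\nu}^{m-|\alpha|}$; here one exploits the monotonicity of $\omega$ and the decreasing property of $\omega(t)/t$ to absorb such weights when comparing jets of $P_\nu$ and $P_{\nu'}$ at neighboring points and to pass from discrete bounds over CZ cubes to the continuous $C^{m,\omega}$ seminorm. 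Statement (b) requires that at every $x\in E\cap Q_0$, the jet $J_x F$, which is a sum $\sum_\nu J_x(\theta_\nu P_\nu)$, lies in $\Gamma(x, C^\sharp M_0)$. Because only finitely many $\theta_\nu$ are nonzero near $x$, one groups the sum as a weighted average (after renormalizing $\sum \theta_\nu \equiv 1$ into the $\sum Q_i\odot_x Q_i = 1$ form via a $\sqrt{\phantom{|}}$ trick or via the standard polynomial square-root construction in \cite{FIL16}) and applies Lemma \ref{lemma:shape field convexity for more polynomials} iteratively, pairing up two terms at a time. The compatibility estimates established in the previous step ensure that the hypotheses of that lemma are met with $\delta = C\delta_{Q_\nu}\le \delta_{\max}$, giving $J_x F \in \Gamma(x, C^\sharp M_0)$ as required. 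The choice of $\ksh_{\mathrm{SF}}$ emerges from counting how many polynomials from the finite hypothesis one must compare simultaneously to run the CZ stopping rule and to carry out the convex combinations; a bound depending only on $m,n$ suffices.
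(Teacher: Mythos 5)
The paper does not actually prove Theorem \ref{thm:shape fields fin prin} --- it refers the reader to \cite{FIL16} --- so there is no in-paper argument to compare against. Your sketch follows the general architecture that \cite{FIL16} uses: a Calder\'on--Zygmund stopping-time decomposition, local polynomial approximations on each stopping cube, gluing by a Whitney partition of unity, and membership of the jets in the shape field verified via Whitney convexity. That is the right overall framework.

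However, your verification that $J_x F \in \Gamma(x, C^\sharp M_0)$ has a genuine gap. You invoke Lemma \ref{lemma:shape field convexity for more polynomials} to combine the $P_\nu$ at a fixed $x \in E$, and assert that ``the compatibility estimates ensure the hypotheses of that lemma are met.'' But that lemma requires all the $P_i$ to lie in $\Gamma(x, A'M)$ at the single evaluation point $x$, while your construction only places $P_\nu$ in $\Gamma(x_\nu, CM_0)$ at the representative points $x_\nu$, which are generically different from $x$. Whitney-type closeness of $P_\nu$ to a polynomial in $\Gamma(x, M_0)$ does not, under the hypotheses of this theorem, transfer membership: only $(C_w,\delta_{\max})$-convexity is assumed, not any regularity property of the shape field (cf.\ Definition \ref{def.regular}, which is a separate hypothesis in Theorem \ref{thm:heart of the matter}). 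In the model case $\Gamma(x,M) = \{P : P(x) \in \mathcal{K}(x),\ |\da P(x)| \le M\}$, a polynomial whose value at $x$ is merely close to the convex set $\mathcal{K}(x)$ need not lie in it, so $P_\nu \notin \Gamma(x, CM_0)$ in general. This transfer step is precisely where the argument of \cite{FIL16} does more work than your sketch lets on, and as written your proposal does not address it.
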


\newcommand{\vp}{\vec{P}}
\newcommand{\brac}[1]{\left(#1\right)}

\section{Other Preliminaries}\label{sec:prelim}
	
% 	 Let $(\Gamma(x))_{x\in E}$ be a bundle. Recall that we use $\Gamma(x,A)$ to denote the set $\set{P\in\G(x):\norm{P}_{\ring_x} < A}$.
	
    \begin{lemma}[Finiteness Lemma]\label{lemma:finiteness}
    Let $E\subset\Rn$ be a compact set. Let $(\Gamma(x,M))_{x\in E, M\geq 0}$ be a regular Glaeser stable shape field. Then there exists a finite constant $A^\sharp$ such that:
	
	Given $x_1,...,x_{k^\sharp}\in E$, there exist $P_i\in \Gamma(x_i,A^\sharp)$ such that
	\begin{equation}
	    |\D^\alpha(P_i-P_j)(x_j)|\le A^\sharp|x_i-x_j|^{m-|\alpha|}\text{ for }|\alpha|\le m, 1\le i,j\le k^\sharp.
	\end{equation}
	
	\end{lemma}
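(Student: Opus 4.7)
The plan is to upgrade the purely local, qualitative information provided by Glaeser stability into a uniform quantitative statement, with compactness of $E$ bridging the passage from local to global and regularity \eqref{regularity 2} converting ``nearby Whitney-compatible polynomials'' into a uniform bound on the $M$-level of the fibers $\Gamma(x,M)$.

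\emph{Local step.} For each $y_0\in E$, I would produce a radius $\delta_{y_0}>0$ and a finite constant $A_{y_0}$ such that the conclusion of the lemma holds for any $k^\sharp$ points of $E\cap B(y_0,\delta_{y_0})$. Pick any $P^{y_0}\in\Gamma(y_0,M_{y_0})$, which exists because we may assume $\Gamma(y_0)=\bigcup_M\Gamma(y_0,M)\neq\void$ (otherwise the lemma is vacuous). Applying regularity \eqref{regularity 2} with this $M_{y_0}$ and target increase $1$ produces a threshold $\tau>0$. Then Glaeser stability of $(\Gamma(x))_{x\in E}$ at $y_0$, with tolerance $\epsilon=\min(1,\tau)$, furnishes $\delta_{y_0}\le\tau$ such that for any $x_1,\dots,x_{k^\sharp}\in E\cap B(y_0,\delta_{y_0})$ one can select $P_j\in\Gamma(x_j)$ (for $j=0,1,\dots,k^\sharp$, with $x_0=y_0$ and $P_0=P^{y_0}$) satisfying $|\partial^\alpha(P_i-P_j)(x_j)|\le\epsilon|x_i-x_j|^{m-|\alpha|}$ for $|\alpha|\le m$. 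Feeding the index-$0$ inequalities into regularity \eqref{regularity 2} lifts each $P_j$ to $\Gamma(x_j,M_{y_0}+1)$; discarding $P_0$ yields the local statement with $A_{y_0}:=M_{y_0}+1$.

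\emph{Globalization via compactness.} The open cover $\{B(y_0,\delta_{y_0})\}_{y_0\in E}$ of the compact set $E$ admits a finite subcover $\{B(y_l,\delta_{y_l})\}_{l=1}^N$ with a positive Lebesgue number $\eta^*$. Set $A^*:=\max_l A_{y_l}$. Given an arbitrary configuration $x_1,\dots,x_{k^\sharp}\in E$, I would partition the indices into ``clusters'' by a greedy connected-components construction at distance threshold $\rho:=\eta^*/k^\sharp$: indices $i,j$ belong to the same cluster exactly when they are linked by a chain of consecutive spacings below $\rho$. Each cluster then has diameter strictly less than $\eta^*$, so it is contained in some ball $B(y_l,\delta_{y_l})$ of the finite subcover, while any two indices lying in distinct clusters are separated by at least $\rho$.

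\emph{Assembly.} Within each cluster, padding the index set to a total of exactly $k^\sharp$ by repetition if necessary, the local step at the corresponding $y_l$ produces polynomials $P_i\in\Gamma(x_i,A^*)$ with $|\partial^\alpha(P_i-P_j)(x_j)|\le|x_i-x_j|^{m-|\alpha|}$ for intra-cluster pairs $(i,j)$ (the padded duplicates are consistent because the inequality at coincident points forces equality). For an inter-cluster pair $(i,j)$, regularity \eqref{regularity 1} gives $|\partial^\alpha P_i(x_i)|\le A^*$, so Taylor expansion on the bounded cube $Q_0$ yields $|\partial^\alpha(P_i-P_j)(x_j)|\le CA^*$ for some $C=C(m,n,\diam Q_0)$; combined with $|x_i-x_j|\ge\rho$ this becomes $|\partial^\alpha(P_i-P_j)(x_j)|\le CA^*\rho^{-(m-|\alpha|)}|x_i-x_j|^{m-|\alpha|}$. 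Taking $A^\sharp:=\max(A^*,\,CA^*\rho^{-m})$ closes the argument. The main technical obstacle is the clustering step: one must verify simultaneously that distinct clusters are genuinely separated by $\rho$ and that each cluster fits into a single ball of the finite subcover, which is precisely what the Lebesgue number of the compact set $E$ provides.
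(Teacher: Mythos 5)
Your proof is correct, and it takes a genuinely different route from the paper's. The paper argues by \emph{contradiction}: it assumes a sequence of bad configurations $x_1^{(\nu)},\dots,x_{k^\sharp}^{(\nu)}$ with $A^{(\nu)}\to\infty$, passes to a convergent subsequence in $E^{k^\sharp}$, groups the indices $j$ according to which limit point $z_\mu$ the $x_j^{(\nu)}$ converge to, and then handles intra-group pairs via Glaeser stability at $z_\mu$ plus regularity \eqref{regularity 2}, and inter-group pairs via the eventual separation $|x_i^{(\nu)}-x_j^{(\nu)}|\ge\eta_2$. You argue \emph{directly}: you first prove a local version of the conclusion at each $y_0$ (Glaeser stability with $\epsilon=\min(1,\tau)$ plus regularity \eqref{regularity 2} with increment $1$), then globalize via a finite subcover with Lebesgue number $\eta^*$, and cluster an arbitrary configuration at the uniform scale $\rho=\eta^*/k^\sharp$. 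Both proofs ultimately rest on the same near/far dichotomy — Glaeser stability and \eqref{regularity 2} for near pairs, \eqref{regularity 1} and Taylor expansion for separated pairs — so the difference is structural: sequential compactness and proof by contradiction versus covering compactness and a direct, explicitly constructive bound. Your version buys an explicit (in principle) $A^\sharp$ in terms of the local data $(M_{y_l},\delta_{y_l})$, at the cost of the clustering bookkeeping. Two tiny points of hygiene: when some fiber $\Gamma(y_0)$ is empty the lemma is not vacuous but \emph{false} (take all $x_i=y_0$), so one must invoke the nonempty-fiber assumption built into Definition \eqref{def:norm of a bundle}, exactly as the paper implicitly does when it selects $P_0\in\Gamma(x_0,A_0)$; and you should cap $\eta^*\le 1$ (hence $\rho\le1$) and replace ``$Q_0$'' by ``$\operatorname{diam} E$'' so that $\rho^{-(m-|\alpha|)}\le\rho^{-m}$ and the Taylor-expansion constant are available under the hypotheses of Lemma \ref{lemma:finiteness}, which assumes only that $E$ is compact.
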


The $A^\sharp$ from Lemma \ref{lemma:finiteness} will be fixed and referenced throughout the remainder of the paper.

	\begin{proof}
	\newcommand{\xn}{x^{(\nu)}}
	\newcommand{\an}{A^{(\nu)}}
	We slightly modify the proof given in \cite{F06}.
	
	Suppose towards a contradiction, that for each $\nu = 1, 2, \cdots$ we can find
	\begin{itemize}
	    \item $\xn_1, \cdots, \xn_\ksh \in E$, and
	    \item $\an > 0$, with $\lim\limits_{\nu\to\infty}\an = \infty$,
	\end{itemize}
	such that for each $\nu$, 
	\begin{itemize}
	    \item[\LA{eq.6.1.contra}] there do {\em not} exist polynomials $P_1, \cdots, P_\ksh \in \p$ such that
	\end{itemize}
% \begin{equation}
%     P_j \in \G(x_j)\text{ for }j = 1, \cdots, \ksh;
% \end{equation}
\begin{equation}
    P_j\in\Gamma(x_j^{(\nu)},A^{(\nu)})\text{ for }j = 1, \cdots, \ksh\text{; and}
\end{equation}
\begin{equation}
    \abs{\D^\alpha(P_i- P_j)(\xn_j)} \leq \an\abs{\xn_i - \xn_j}^{\ma}\text{ for }\abs{\alpha} \leq m, 0 \leq i,j \leq \ksh.
\end{equation}

	\newcommand{\xft}{x^{(\infty)}}
	\newcommand{\aft}{A^{(\infty)}}
	Since $E$ is compact, by passing to a subsequence, we may assume that
	\begin{equation*}
	    \xn_j \to \xft_j \in E
	    \text{ as } \nu 
	    \to \infty, \text{ for } j = 1, \cdots, \ksh.
	\end{equation*}

	\newcommand{\mmax}{{\mu_{\max}}}
	
	Let $z_1, \cdots, z_\mmax$ be an enumeration of the distinct elements of $\set{\xft_1, \cdots, \xft_\ksh}$. For each $\mu =1, \cdots, \mmax$, let $S(\mu):= \set{j : \xft_j = z_\mu, \, 1 \leq j \leq \ksh}$. Therefore, for sufficiently large $\nu$, we have
	\begin{equation}
	    \abs{\xn_j - z_\mu} \leq \eta_1 
	    \text{ for all } j \in S(\mu),
	    \label{eq.6.1.1}
	\end{equation}
	and
	\begin{equation}
	    \abs{\xn_j - \xn_{j'}} \geq \eta_2
	    \text{ for } j \in S(\mu),\, j' \in S(\mu')\text{ with }\mu \neq \mu',
	    \label{eq.6.1.2}
	\end{equation}
	
	where $\eta_1$ and $\eta_2>0$ are chosen to be independent of $\nu$. For the rest of the proof, we use $A_0$, $A_1$, etc., to denote constants independent of $\nu$. 
	
	We now apply the hypothesis that $(\Gamma(x))_{x\in E}$ is Glaeser-stable. 
	
	\newcommand{\pn}{P^{(\nu)}}
	Fix $\mu \in \set{1, \cdots, \mmax}$. We set $x_0 = z_\mu$, $P_0 \in \Gamma(x_0, A_0)$, and $\epsilon = 1$ in the definition of Glaeser refinement. Let $\nu$ be sufficiently large. We set $x_j := \xn_j$ for $j \in S(\mu)$, $x_j := z_\mu$ for $j \notin S(\mu)$ ($1 \leq j \leq \ksh$). Since $\G(x)$ is its own Glaeser refinement, it follows from \eqref{eq.6.1.1} that there exist 
	\begin{equation}\label{eq:membership in Gamma}
	    P^{(\nu)}_j \in \Gamma(\xn_j)\text{, for }j \in S(\mu)
	\end{equation}
	$\nu$ sufficiently large, such that
	\begin{equation}\label{eq:random label}
	    \abs{\D^\alpha(\pn_i - \pn_j)(\xn_j)} \leq \abs{\xn_i - \xn_j}^{m-\abs{\alpha}}\text{ for }i,j \in S(\mu), \abs{\alpha} \leq m.
	\end{equation}
Furthermore, it follows from \eqref{eq.6.1.1}, \eqref{eq:membership in Gamma}, \eqref{eq:random label}, and \eqref{regularity 2} (upon taking $\eta_1$ sufficiently small) that
\begin{equation}
    P^{(\nu)}_j \in \Gamma(\xn_j,A_1)\text{, for }j \in S(\mu).
\end{equation}
	
	We repeat the argument above for $\mu = 1, \cdots, \mmax$. Thus, for sufficiently large $\nu$, we can find polynomials $\pn_1, \cdots, \pn_\ksh$ that satisfy the following:
	\begin{equation}
	   \pn_j\in\Gamma(\xn_j, A_2 )\text{ for } j = 1, \cdots, \ksh; \text{ and }
	    \label{eq.6.1.4}
	\end{equation}
	\begin{equation}
	    \abs{\D^\alpha(\pn_i - \pn_j)(\xn_j)} \leq \abs{\xn_i - \xn_j}^{\ma} \text{ for } \abs{\alpha} \leq m, \, i,j \in S(\mu), \, \mu = 1, \cdots, \mmax.
	    \label{eq.6.1.5}
	\end{equation}
	
	Observe that by \ref{regularity 1}, \eqref{eq.6.1.4} implies

\begin{equation}\label{eq:boundedness of P_j}
    |\da P_j^{(\nu)}(x_j^{(\nu)}|\le A_2\text{ for }|\alpha|\le m.
\end{equation}
	
	Combining \eqref{eq.6.1.2} and \eqref{eq:boundedness of P_j}, we see that
	\begin{equation}\label{eq:23.7}
	    \abs{\D^\alpha(\pn_i - \pn_j)(\xn_j)} \leq A_3\abs{\xn_i - \xn_j}^{\ma}
	    \text{ for }
	    \abs{\alpha} \leq m,\, i \in S(\mu),\, j \in S(\mu'),\, \mu \neq \mu'.
	\end{equation}
	Together with \eqref{eq.6.1.5} and \eqref{eq:23.7}, we see that
	\begin{equation}
	    \abs{\D^\alpha(\pn_i - \pn_j)(\xn_j)} \leq A_4\abs{\xn_i - \xn_j}^\ma 
	    \text{ for }
	    \abs{\alpha} \leq m,\, 1 \leq i,j \leq \ksh.
	    \label{eq.6.1.6}
	\end{equation}
	
	For sufficiently large $\nu$, we have $\an > A_2$ and $\an > A_4$, with $A_2$ and $A_4$, respectively, as in and \eqref{eq.6.1.4} \eqref{eq.6.1.6}. Then,  \eqref{eq.6.1.4}, and \eqref{eq.6.1.6} together contradict \hyperref[eq.6.1.contra]{($\star$)}. 
	
	The lemma is proved.
	
	\end{proof}

	\begin{lemma}\label{lemma:clustering}\cite{F06}
		\newcommand{\nmax}{{\nu_{\max}}}
		Let $ S \subset \R^n $ with $ 2 \leq \#(S) \leq \ksh $. Then we may partition $ S $ into subsets $ S_1, \cdots, S_{\nmax} $ with the following properties.
		\begin{enumerate}[label = (\alph*)]
			\item $ \#(S_\ell) \leq \#(S)  $ for each $ \nu= 1, \cdots, \nmax $. 
			\item If $ x \in S_\nu $ and $ y \in S_\mu $ with $ \nu \neq \mu $, then $ \abs{x - y} > c(\ksh) \cdot  {\rm diam}(S) $. Here, $ c(\ksh) $ depends only on $ \ksh $.
		\end{enumerate}
	\end{lemma}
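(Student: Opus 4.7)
The plan is to locate a multiplicative gap in the spectrum of pairwise distances among points of $S$ and then use single-linkage clustering at a scale sitting inside that gap. Write $K := \ksh$ and $d := \diam(S)$, and note $d > 0$ since $\#(S) \geq 2$.

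\textbf{Finding the gap.} Since $\#(S) \leq K$, the number of unordered pairs in $S$ is at most $\binom{K}{2} < K^2$. Set $\epsilon_j := K^{-j}$ and consider the $K^2 + 1$ intervals
\begin{equation*}
I_j := (\epsilon_{j+1} d,\, \epsilon_j d],\qquad j = 0, 1, \ldots, K^2.
\end{equation*}
By pigeonhole, there exists $j_0 \in \{0,1,\ldots,K^2\}$ such that no pairwise distance of $S$ lies in $I_{j_0}$; equivalently, every pairwise distance is either $\leq \epsilon_{j_0+1} d$ or $> \epsilon_{j_0} d$.

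\textbf{The partition.} I would define an equivalence relation $\sim$ on $S$ by $x \sim y$ iff there is a chain $x = z_0, z_1, \ldots, z_\ell = y$ in $S$ with $|z_{i-1} - z_i| \leq \epsilon_{j_0+1} d$ for each $i$, and let $S_1, \ldots, S_{\nu_{\max}}$ be the resulting equivalence classes. For property (b), if $x \in S_\nu$, $y \in S_\mu$ with $\mu \neq \nu$, then $x \not\sim y$; in particular the trivial one-edge chain fails, forcing $|x-y| > \epsilon_{j_0+1} d$. Since $|x-y|$ is a pairwise distance of $S$ and the gap rules out values in $I_{j_0}$, this lower bound jumps up to $|x-y| > \epsilon_{j_0} d \geq K^{-K^2} d =: c(\ksh) \cdot \diam(S)$.

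\textbf{Nontriviality.} To obtain $\nu_{\max} \geq 2$, which implies $\#(S_\nu) \leq \#(S)$ for each $\nu$, consider a diameter-realizing pair $x^*, y^* \in S$ with $|x^*-y^*| = d$. Any chain from $x^*$ to $y^*$ in $S$ has at most $K - 1$ edges. If every edge were $\leq \epsilon_{j_0+1} d = K^{-j_0-1} d$, the triangle inequality would give $d = |x^*-y^*| \leq (K-1) K^{-j_0-1} d < d$, a contradiction. Hence $x^* \not\sim y^*$ and the partition has at least two classes.

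\textbf{Main obstacle.} The delicate point is calibrating the gap factor so that (i) pigeonhole over the pairwise distances forces an empty scale interval, and (ii) the triangle inequality still cannot bridge that gap via a chain of length at most $K-1$. Taking the base of the geometric scale equal to $K = \ksh$ handles both constraints simultaneously; any smaller base would risk failing (ii), and a larger base merely worsens the constant $c(\ksh)$.
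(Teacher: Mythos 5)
Your proof is correct and is the standard gap-finding argument; the paper itself gives no proof of Lemma \ref{lemma:clustering} and simply cites \cite{F06}, where essentially this argument appears. Two small remarks. First, in the nontriviality step you assert that ``any chain from $x^*$ to $y^*$ in $S$ has at most $K-1$ edges''; this is only true for chains without repeated vertices, but since any chain witnessing $x^* \sim y^*$ can be shortened to a simple one by deleting loops, the conclusion $d \leq (K-1)K^{-j_0-1}d$ is still forced and the contradiction stands. Second, note that condition (a) as printed ($\#(S_\ell) \leq \#(S)$) is vacuous and is evidently a typo for the strict inequality $\#(S_\ell) < \#(S)$; your argument correctly proves the strict version by establishing $\nu_{\max} \geq 2$, which is what is actually needed for the inductive applications of the lemma later in the paper.
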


\begin{lemma}\label{lemma:linear perturbation}
Suppose $W\subset \R^n$ is an $r$-dimensional subspace, $w\in W$, and $w_0,...,w_r$ form a nondegenerate affine $r$-simplex containing $B(w,\delta)\cap W$ for some $\delta>0$. Then, there exists $\eta>0$ for which the following holds:

Let $W'\subset\R^n$ be another $r$-dimensional subspace. If $w_0',...,w_r'\in W'$ satisfy
\begin{equation}
    |w_j-w_j'|<\eta\text{ for } j = 0,1,\cdots,r,\label{eq:6.3.2}
\end{equation}
and $w'\in W'$ satisfies
\begin{equation}
    |w-w'|<\eta,
    \label{eq:6.3.1}
\end{equation}
then $w'$ is contained in the convex hull of $w_0',...,w_r'$.
\end{lemma}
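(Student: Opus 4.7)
The plan is a barycentric-coordinate continuity argument. Since $\{w_0,\ldots,w_r\}$ is a nondegenerate $r$-simplex spanning the $r$-dimensional subspace $W$, every $y\in W$ has unique barycentric coordinates $y=\sum_{j=0}^r\lambda_j(y)\, w_j$ with $\sum_j \lambda_j(y)=1$. The hypothesis that $B(w,\delta)\cap W$ is contained in the simplex forces $w$ into the relative interior, so $\lambda_{\min}:=\min_j\lambda_j(w)>0$.

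For $\eta$ sufficiently small, I claim the perturbed simplex is still nondegenerate and its affine hull equals $W'$. Indeed, each $w_i'-w_0'$ lies within $2\eta$ of $w_i-w_0$, and the latter vectors are linearly independent, so by continuity of the Gram determinant the perturbed vectors remain linearly independent once $\eta$ is small. Since they live in the $r$-dimensional space $W'$ and $w_0'\in W'$, their affine span is all of $W'$. In particular $w'\in W'$ admits unique barycentric coordinates $w'=\sum_j\mu_j w_j'$ with $\sum_j\mu_j=1$, and the claim reduces to $\mu_j\ge 0$ for every $j$. To compare $(\mu_j)$ with $(\lambda_j(w))$, I introduce the auxiliary point $p:=\sum_{j=0}^r\lambda_j(w)\, w_j'\in W'$, which lies in $\mathrm{conv}\{w_0',\ldots,w_r'\}$ with barycentric coordinates $\lambda_j(w)\ge\lambda_{\min}$. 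Direct estimation gives $|p-w|\le\sum_j\lambda_j(w)\,|w_j'-w_j|<\eta$, hence $|p-w'|<2\eta$.

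The last step is to bound, independently of $\eta$, the Lipschitz constant of the map sending a point of $W'$ to its barycentric coordinates with respect to $\{w_0',\ldots,w_r'\}$. Choosing an orthonormal basis of $W'$ by Gram-Schmidt on $w_1'-w_0',\ldots,w_r'-w_0'$ yields a basis close to the corresponding Gram-Schmidt basis of $W$; in these coordinates the barycentric-coordinate map amounts to inverting an $(r+1)\times(r+1)$ matrix whose entries depend continuously on $w_0',\ldots,w_r'$. Since the analogous matrix for the unperturbed simplex is invertible, matrix inversion is continuous in a neighborhood, so for $\eta$ small the operator norm of the inverse is bounded by some $L$ depending only on the original simplex. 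Consequently $|\mu_j-\lambda_j(w)|\le L\,|w'-p|<2L\eta$, and choosing $\eta<\lambda_{\min}/(2L)$ gives $\mu_j>0$ for every $j$. The main obstacle is that $w_j'\notin W$ in general, so one cannot substitute the $w_j'$ directly into the barycentric apparatus of the original simplex; performing the linear algebra inside $W'$ (via a basis close to one of $W$) and invoking continuity of matrix inversion handles this cleanly.
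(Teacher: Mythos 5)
Your proof is correct and takes essentially the same approach as the paper: both arguments reduce to the continuity of barycentric coordinates under simultaneous perturbation of the simplex and the point, the paper by writing out and comparing the two constrained linear systems directly, and you by introducing the auxiliary point $p=\sum_j\lambda_j(w)\,w_j'$ so that only the base-point displacement $|w'-p|<2\eta$ needs to be fed through the (uniformly bounded) Lipschitz constant of one barycentric-coordinate map. The anchor point $p$ is a tidy organizational device, but the underlying continuity-of-linear-systems argument is the same as in the paper.
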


% \begin{lemma}
% Let $W, W'\subset\R^N$ be affine subspaces of equal dimension $\tilde{d}$. Let $w_0,...,w_{\tilde{d}}\in W$ form a nondegenerate $\tilde{d}$-dimensional simplex containing $B(w,\delta)\cap W$ for some $w\in W$ and $\delta > 0$. Then there exists $\eta = \eta(\delta)>0$ such that if $w'\in W'$ satisfies
% \begin{equation}
%     |w-w'|<\eta
%     \label{eq:6.3.1}
% \end{equation}
% and $w_0',...,w_{\tilde{d}}'\in W'$ satisfy
% \begin{equation}
%     |w_j-w_j'|<\eta\text{ for } j = 0,1,\cdots,\tilde{d},\label{eq:6.3.2}
% \end{equation}
% then $w'$ is contained in the convex hull of $w_0',...,w_{\tilde{d}}'$.
% \end{lemma}

%Alternate idea for the above lemma: Just Copy Charlie's proof of the linear perturbation lemma, but use different hypotheses at the end. BLACK: Do this.

\begin{proof}
    \newcommand{\dt}{{\tilde{d}}}
    Since $w \in Conv(w_0, w_1, \cdots, w_r)$, there exist $\lambda_0, \lambda_1, \cdots, \lambda_r$ such that $w = \sum_{i = 0}^r \lambda_iw_i$. Let $\xi_1, \cdots, \xi_r$ be an orthonormal basis for $\mathrm{span}(w_0 - w_1, \cdots, w_0 - w_r)$. Then the coefficients $\lambda_0, \lambda_1, \cdots, \lambda_r$ satisfy the following constrained linear system.
    \begin{align}
        &\sum_{i = 0}^r \lambda_i(w_i\cdot\xi_j) = w\cdot \xi_j \text{ for } j = 1, \cdots, r;
        \label{eq:6.3.3}
        \\
        &\sum_{i = 0}^r \lambda_i = 1;
        \label{eq:6.3.4}\\
        &0 \leq \lambda_i \leq 1\text{ for } i = 0, 1, \cdots, r.
        \label{eq:6.3.5}
    \end{align}
    Note that the system \eqref{eq:6.3.3}--\eqref{eq:6.3.4} is nondegenerate, since $w_0, w_1, \cdots, w_r$ form a nondegenerate affine $r$-simplex.
    
    Moreover, since $B(w,\delta)\cap W \subset Conv(w_0,w_1, \cdots, w_r)$, we may replace \eqref{eq:6.3.5} by
    \begin{equation}
    c \leq \lambda_i \leq 1
    \text{ for } i = 0, 1, \cdots, r
        \label{eq:6.3.6}
    \end{equation}
    for some small constant $c>0$. 
    
    If $\eta$ is sufficiently small, \eqref{eq:6.3.2} implies that $w_0', \cdots, w_r'$ also form a nondegenerate $r$-simplex in $W'$. We may find $\lambda_0', \lambda_1', \cdots, \lambda_r'$ such that 
    \begin{align}
        &\sum_{i = 0}^r \lambda_i'(w_i'\cdot\xi_j) = w'\cdot\xi_j
        \text{ for }j = 1, \cdots, r;\label{eq:6.3.7}\\
        &\sum_{i = 0}^r \lambda_i' = 1.\label{eq:6.3.8}
    \end{align}
    
    Note that $w' \in Conv(w_0', \cdots, w_r')$ if and only if $\lambda_0', \lambda_1', \cdots, \lambda_r'$ satisfy the system \eqref{eq:6.3.7} and \eqref{eq:6.3.8} coupled with the constraint
    \begin{equation}
        0 \leq \lambda_i' \leq 1\text{ for } i = 0,1,\cdots, r.
        \label{eq:6.3.9}
    \end{equation}
    
    By choosing $\eta$ to be sufficiently small, \eqref{eq:6.3.1} and \eqref{eq:6.3.2} imply that the coefficients of the linear system \eqref{eq:6.3.3}--\eqref{eq:6.3.4} are sufficiently close to that of \eqref{eq:6.3.7}--\eqref{eq:6.3.8}. This forces $\lambda_i$ to be sufficiently close to to $\lambda_i'$ for $i = 0,1,\cdots,r$. Thanks to \eqref{eq:6.3.6}, we see that $\lambda_0',\lambda_1',\cdots,\lambda_r'$ satisfy the constraint \eqref{eq:6.3.9}. This proves the lemma.
\end{proof}

\begin{lemma}[Helly's Theorem]
Let $(\mathcal{K}_\alpha)_{\alpha\in \mathcal{A}}$ be a family of compact, convex sets in $\R^N$. If any $N+1$ of the $\mathcal{K}_\alpha$ have nonempty intersection, then $\bigcap_{\alpha\in \mathcal{A}}\mathcal{K}_\alpha\neq\emptyset$.
\end{lemma}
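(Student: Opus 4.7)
The plan is to follow the classical two-stage proof of Helly's theorem: first establish the statement for finite families by induction on the number of sets, then deduce the general case from compactness. The central input for the finite version is Radon's partition theorem, which asserts that any $N+2$ points in $\R^N$ can be split into two groups whose convex hulls meet.

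For the finite version with $k$ sets $\mathcal{K}_1,\ldots,\mathcal{K}_k$, the cases $k \leq N+1$ are immediate from the hypothesis. The base case $k = N+2$ is where Radon's theorem enters: for each $i \in \{1,\ldots,N+2\}$ I would use the hypothesis to pick a point $x_i \in \bigcap_{j\neq i}\mathcal{K}_j$, then apply Radon to partition $\{1,\ldots,N+2\}$ into disjoint sets $I$ and $J$ together with a common point $x \in \mathrm{Conv}(\{x_i\}_{i\in I}) \cap \mathrm{Conv}(\{x_j\}_{j\in J})$. For any $j \in J$, each $x_i$ with $i \in I$ satisfies $i \neq j$ and therefore lies in $\mathcal{K}_j$, so by convexity of $\mathcal{K}_j$ the point $x$ lies in $\mathcal{K}_j$; symmetrically $x \in \mathcal{K}_i$ for every $i \in I$, so $x \in \bigcap_{\ell=1}^{N+2}\mathcal{K}_\ell$. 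For the inductive step $k \geq N+3$, I would replace $\mathcal{K}_{k-1}$ and $\mathcal{K}_k$ by the single compact convex set $\mathcal{K}_{k-1}\cap \mathcal{K}_k$ and verify the hypothesis for the resulting family of $k-1$ sets. The only nontrivial check is when the selected $N+1$ sets include $\mathcal{K}_{k-1}\cap \mathcal{K}_k$, in which case their intersection equals a particular $(N+2)$-fold intersection of the original sets — nonempty by the just-established base case.

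To pass from the finite to the infinite version, I would invoke compactness. By the finite version, every finite subcollection of $\{\mathcal{K}_\alpha\}_{\alpha \in \mathcal{A}}$ of size at least $N+1$ has nonempty intersection, while smaller subcollections trivially do (enlarge to any $N+1$ sets, whose intersection is nonempty and thus forces the smaller intersection to be nonempty). Because each $\mathcal{K}_\alpha$ is compact, fixing any $\alpha_0 \in \mathcal{A}$ lets us view the rest of the family as closed subsets of the compact space $\mathcal{K}_{\alpha_0}$, so the finite intersection property yields $\bigcap_{\alpha \in \mathcal{A}} \mathcal{K}_\alpha \neq \emptyset$.

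The only nontrivial external ingredient is Radon's theorem itself, and this is where I would expect the "main obstacle" to appear if one were writing out everything from scratch; however, it is an elementary linear-algebraic fact. Since $N+2$ points in $\R^N$ admit a nontrivial affine dependence $\sum_i \lambda_i x_i = 0$ with $\sum_i \lambda_i = 0$ and not all $\lambda_i = 0$, splitting the indices $I = \{i : \lambda_i > 0\}$, $J = \{i : \lambda_i \leq 0\}$ and normalizing produces a common point in the two convex hulls, completing that ingredient.
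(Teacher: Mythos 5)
The paper does not prove this lemma; it simply cites the reference \cite{RT-c} for a proof. Your argument is the standard textbook proof — Radon's partition theorem gives the base case $k = N+2$, induction by merging two sets into one handles larger finite families, and compactness plus the finite intersection property passes to arbitrary index sets — and it is correct. One small point worth being explicit about in the induction step: the merged set $\mathcal{K}_{k-1}\cap\mathcal{K}_k$ is itself compact and convex (intersection of compact convex sets), and nonempty because any $N+1 \geq 2$ of the original sets intersect; both facts are used implicitly when you apply the inductive hypothesis to the reduced family. Similarly, in the compactness step you should intersect every $\mathcal{K}_\alpha$ with $\mathcal{K}_{\alpha_0}$ to obtain a family of closed subsets of the compact space $\mathcal{K}_{\alpha_0}$ before invoking the finite intersection property; you gesture at this but it is worth writing down. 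Neither of these is a real gap — the proof is sound.
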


For a proof of the above, see \cite{RT-c}, for example.

\begin{lemma}[Whitney Extension Theorem for Finite Sets]\label{lemma:WET}\cite{W34-1}
Let $S\subset\R^n$ be finite, and for each $x\in S$, let $P^x\in\p$. If
\begin{equation}
    |\da P^x(x)|\le M\text{ for }|\alpha|\le m
\end{equation}
and
\begin{equation}
    |\da(P^x-P^y)(x)|\le M|x-y|^{m-|\alpha|}\text{ for }|\alpha|\le m, x,y\in S, x\neq y,
\end{equation}
then there exists $F\in C^m(\R^n)$ such that $\|F\|_{C^m(\R^n)}\le CM$ and $J_x F=P^x$ for all $x\in S$. Here, $C$ depends only on $m,n$.
\end{lemma}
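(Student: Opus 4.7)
The plan is to execute the classical Whitney construction, which goes through unchanged for finite $S$. First, perform a Whitney decomposition of the open set $\R^n \setminus S$ into dyadic cubes $\{Q_\nu\}$ with sidelengths $\delta_\nu$ satisfying $c\delta_\nu \leq \dist(Q_\nu, S) \leq C\delta_\nu$, and build a smooth partition of unity $\{\theta_\nu\}$ subordinate to modest enlargements of the $Q_\nu$ such that $\sum_\nu \theta_\nu \equiv 1$ on $\R^n \setminus S$, $|\da \theta_\nu(x)| \leq C\delta_\nu^{-|\alpha|}$ for $|\alpha| \leq m$, and each $x \in \R^n \setminus S$ lies in the support of at most $C(n)$ of the $\theta_\nu$. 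For each $\nu$, pick $x_\nu \in S$ with $|x - x_\nu| \leq C\delta_\nu$ for every $x \in \supp \theta_\nu$.

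Next, define
\[
F(x) = \begin{cases} \sum_\nu \theta_\nu(x)\, P^{x_\nu}(x), & x \notin S,\\ P^x(x), & x \in S.\end{cases}
\]
Smoothness of $F$ on $\R^n \setminus S$ is automatic, and the hypothesis $|\da P^{x_\nu}(x_\nu)| \leq M$ combined with Taylor expansion gives $|\da F(x)| \leq CM$ uniformly on any bounded region. To control the global $C^m$ norm one also multiplies by a fixed cutoff supported on a large ball containing $S$, which is possible because all compatibility hypotheses are preserved by the cutoff up to a constant.

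The key step is to check $F \in C^m$ at each $x_0 \in S$ with $J_{x_0}F = P^{x_0}$. Fix $x_0 \in S$ and $x$ near $x_0$. For any $\nu$ with $x \in \supp \theta_\nu$ one has $|x - x_\nu| \leq C|x - x_0|$ and hence $|x_\nu - x_0| \leq C|x - x_0|$. The compatibility condition $|\da(P^{x_\nu}-P^{x_0})(x_0)| \leq M|x_\nu - x_0|^{m-|\alpha|}$ combined with Taylor expansion about $x_0$ yields
\[
|\da(P^{x_\nu}-P^{x_0})(x)| \leq CM|x - x_0|^{m-|\alpha|}\quad\text{for }|\alpha|\leq m.
\]
Now, using $\sum_\nu \theta_\nu \equiv 1$ on $\R^n\setminus S$ so that $\sum_\nu \partial^\gamma \theta_\nu \equiv 0$ for $|\gamma|\geq 1$, the Leibniz rule gives
\[
\da F(x) - \da P^{x_0}(x) = \sum_\nu \sum_{\beta\leq\alpha}\binom{\alpha}{\beta}\partial^{\alpha-\beta}\theta_\nu(x)\,\partial^\beta(P^{x_\nu}-P^{x_0})(x).
\]
Bounding $|\partial^{\alpha-\beta}\theta_\nu(x)| \leq C|x-x_0|^{-(|\alpha|-|\beta|)}$ on $\supp \theta_\nu$ and combining with the previous estimate yields
\[
|\da F(x) - \da P^{x_0}(x)| \leq CM|x-x_0|^{m-|\alpha|}.
\]
This implies $\da F$ extends continuously to $x_0$ with value $\da P^{x_0}(x_0)$, so $F\in C^m(\R^n)$ with $J_{x_0}F = P^{x_0}$ and $\|F\|_{C^m(\R^n)} \leq CM$.

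The main obstacle is the jet-matching step: the careful Leibniz bookkeeping to show how the compatibility at $x_0$ propagates to estimates at nearby $x$, and in particular the cancellation $\sum_\nu \partial^\gamma \theta_\nu \equiv 0$ for $|\gamma|\geq 1$, which is precisely what lets the leading term drop out and causes the error to decay like $|x-x_0|^{m-|\alpha|}$ rather than merely remain bounded. Everything else (Whitney decomposition, partition of unity with derivative bounds, bounded overlap) is standard and independent of the fact that $S$ is finite.
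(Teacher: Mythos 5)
Your argument is the classical Whitney construction, which is exactly what the paper has in mind (it cites Whitney's 1934 paper and gives no independent proof), and the core of it — Whitney decomposition of $\R^n\setminus S$, partition of unity with $|\da\theta_\nu|\lesssim\delta_\nu^{-|\alpha|}$, and the jet-matching estimate via the Leibniz rule plus the cancellation $\sum_\nu\partial^\gamma\theta_\nu\equiv 0$ for $|\gamma|\ge 1$ — is carried out correctly.

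One step, however, is stated in a way that would fail. You claim $|\da F(x)|\le CM$ ``uniformly on any bounded region'' and then fix the global norm by multiplying by ``a fixed cutoff supported on a large ball containing $S$.'' That is not enough. Inside such a ball, but far from every point of $S$, the Whitney cubes $Q_\nu$ are large and $|x-x_\nu|$ can be comparable to $\diam S$; the hypothesis $|\da P^{x_\nu}(x_\nu)|\le M$ together with Taylor expansion then only gives $|\da F(x)|\le CM(1+|x-x_\nu|)^{m}$, which can be as large as $M(\diam S)^m$. For instance, with $n=1$, $S=\{0,N\}$, $P^0\equiv 0$, and $P^N(x)=M(x-N)^m/m!$, the hypotheses hold, yet $\sum_\nu\theta_\nu P^{x_\nu}$ is of size $MN^m$ near $x=N/2$; a ball cutoff does nothing there, so the resulting constant $C$ would depend on $N$, contradicting the stated $C=C(m,n)$. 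The fix is standard but needs to be said: either discard the cubes with $\delta_\nu\ge 1$ from the sum (i.e.\ assign them the zero polynomial), or multiply by a cutoff that is $1$ on $\{\dist(\cdot,S)\le 1/2\}$ and supported in $\{\dist(\cdot,S)\le 1\}$ with bounded derivatives. In that unit neighborhood of $S$ the estimate $|\da F(x)|\le CM$ does hold, because then $\delta_\nu\lesssim\dist(x,S)\le 1$ and $|x-x_\nu|\lesssim 1$, and the Leibniz rule gives $\|F\chi\|_{C^m}\le CM$ with $C=C(m,n)$. Everything else in your proposal is sound.
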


\section{Convex Sets and Strata}\label{sec:convex}
    
    In this section, we write $A,A',$ etc., to denote quantities that are strictly greater than $A^\sharp$, with $A^\sharp$ as in Lemma \ref{lemma:finiteness}.
    
	\begin{definition}
	For $x_0 \in E$, $k \in \mathbb{N}_0$, and $A> 0$, we let $\Gamma(x_0,k,A)$ be the set of $P_0\in\Gamma(x)$ such that for any $x_1,...,x_{k}\in E$, there exist $P_1, \cdots, P_{k}\in\p$, with 
	\begin{itemize}
	    \item $P_i \in \Gamma(x,A)$ for $i = 1, \cdots, k$; and
	    \item $|\D^\alpha(P_i-P_j)(x_j)|\le A|x_i-x_j|^{m-|\alpha|}\text{ for }|\alpha|\leq m, 0\le i,j\leq k$.
	\end{itemize}
	
	\end{definition}
	
Note that by the Finiteness Lemma, there exists $A^\sharp<\infty$ such that $\Gamma(x,k,A^\sharp)$ is nonempty for all $x\in E$ and $k\le\ksh$.
  
    %BLACK: Define $\Gbar(x,\kb,A)$ as projection under $\pi_x$. Be careful though since points on the boundary may have lower-dimensional kernel. See Lemma \ref{lemma:same dimension}, which might need to be moved up here in some form.
	
	\newcommand{\ringb}{\overline{\mathcal{R}}}
Let $\ringb_x$ denote the ring of $(m-1)$-jets of functions at $x$ and let $\pi_x : \ring_x \to \ringb_x$ be the natural projection. We identify $\ringb_x$ with the space $\pbar$ of polynomials with degree no greater than $(m-1)$. For $A>0$, we define
	\begin{equation}
	    \Gb(x,k,A) := \pi_x \G(x,k,A).
	\end{equation}
	We note that $\Gb(x,k,A)$ is convex, since $\pi_x$ is linear.
	
	Similarly, let 
	
	\begin{equation}
	    \Gb(x,A) := \pi_x \G(x,A).
	\end{equation}

\subsection{Strata}
	
	The following three lemmas will assist in defining the dimension of a Glaeser stable bundle.

	\begin{lemma}\label{lemma:same dimension 1}
	Let $A\ge 2A^\sharp$, $x_0\in E$, and $k\in\N$. Then,
	\begin{enumerate}
	    \item $\dim\Gamma(x_0)=\dim \Gamma(x_0,A)=\dim\Gamma(x_0,2A^\sharp)$.
	    \item $\dim \Gamma(x_0,k,A)=\dim\Gamma(x_0,k,2A^\sharp)$.
	\end{enumerate}
	\end{lemma}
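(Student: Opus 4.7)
Both equalities reduce to lower bounds on the smaller set's dimension, because monotonicity in $M$ gives $\Gamma(x_0,2A^\sharp)\subseteq \Gamma(x_0,A)\subseteq \Gamma(x_0)$ and $\Gamma(x_0,k,2A^\sharp)\subseteq \Gamma(x_0,k,A)$ for free. In each case the plan is to pick a reference polynomial $P^\sharp$ of small norm (furnished by Lemma \ref{lemma:finiteness}), invoke regularity condition \eqref{regularity 3} to obtain a ``uniform attraction radius'' $\delta>0$ around $P^\sharp$ within which points of $\Gamma(x)$ automatically lie in $\Gamma(x,2A^\sharp)$, and then shrink a given affine simplex slightly toward $P^\sharp$ so that it lands in the smaller set while preserving its dimension.

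\textbf{Part (1).} Since $\Gamma(x_0)=\bigcup_M\Gamma(x_0,M)$ is a union of nested convex sets it is itself convex. Pick $P^\sharp\in\Gamma(x_0,A^\sharp)$, and apply \eqref{regularity 3} with $\epsilon=M=A^\sharp$ to produce $\delta>0$ such that any $P'\in\Gamma(x_0)$ within $\delta$ of $P^\sharp$ in every jet component at $x_0$ lies in $\Gamma(x_0,2A^\sharp)$. Let $r=\dim\Gamma(x_0)$, choose an affine $r$-simplex $Q_0,\dots,Q_r\in\Gamma(x_0)$, and set $Q_j^\sharp:=(1-\theta)P^\sharp+\theta Q_j$. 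These lie in $\Gamma(x_0)$ by convexity, and for $\theta$ small enough (depending on $\delta$ and the jets of the $Q_j$ at $x_0$) they lie in $\Gamma(x_0,2A^\sharp)$. Since $Q_j^\sharp-Q_{j'}^\sharp=\theta(Q_j-Q_{j'})$, they span the same $r$-dimensional affine space, so $\dim\Gamma(x_0,2A^\sharp)\geq r$.

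\textbf{Part (2).} Apply the same idea at all test points simultaneously. Fix $P^\sharp\in\Gamma(x_0,k,A^\sharp)$ (nonempty by Lemma \ref{lemma:finiteness}), choose $P_0^0,\dots,P_0^r\in\Gamma(x_0,k,A)$ spanning an affine $r$-simplex with $r=\dim\Gamma(x_0,k,A)$ (the set is convex since the defining Whitney condition is preserved under convex combinations), and set $\tilde{P}_0^j:=(1-\theta)P^\sharp+\theta P_0^j$. Given test points $x_1,\dots,x_k\in E$, the definitions of $\Gamma(x_0,k,A^\sharp)$ and $\Gamma(x_0,k,A)$ supply companion tuples $(P_i^\sharp)_{i=1}^k$ and $(P_i^j)_{i=1}^k$ in $\Gamma(x_i,A^\sharp)$ and $\Gamma(x_i,A)$ respectively, each Whitney-compatible at the appropriate constant. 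Form $\tilde{P}_i:=(1-\theta)P_i^\sharp+\theta P_i^j$; convexity of $\Gamma(x_i,A)$ gives $\tilde{P}_i\in\Gamma(x_i,A)\subseteq\Gamma(x_i)$, and the triangle inequality yields Whitney compatibility at constant $(1-\theta)A^\sharp+\theta A$, which is $\leq 2A^\sharp$ as soon as $\theta\leq A^\sharp/(A-A^\sharp)$.

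\textbf{Main obstacle.} The crux is the membership $\tilde{P}_i\in\Gamma(x_i,2A^\sharp)$ obtained with a single $\theta$ valid for every configuration $(x_1,\dots,x_k)$. This relies on the $x$-uniformity of $\delta$ in \eqref{regularity 3}: one application with $\epsilon=M=A^\sharp$ produces a single $\delta>0$ that works at every point of $E$, while \eqref{regularity 1} controls $|\partial^\alpha(P_i^\sharp-P_i^j)(x_i)|\leq A+A^\sharp$ uniformly. Taking $\theta\leq\min\{\delta/(A+A^\sharp),\,A^\sharp/(A-A^\sharp)\}$ then forces $\tilde{P}_i\in\Gamma(x_i,2A^\sharp)$ at every $i\in\{0,1,\dots,k\}$ simultaneously, hence $\tilde{P}_0^j\in\Gamma(x_0,k,2A^\sharp)$. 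The identity $\tilde{P}_0^j-\tilde{P}_0^{j'}=\theta(P_0^j-P_0^{j'})$ preserves the simplex's dimension, giving $\dim\Gamma(x_0,k,2A^\sharp)\geq r$ and completing the proof.
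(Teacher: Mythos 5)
Your proposal is correct and uses the same key ingredients as the paper's proof: the small-norm anchor polynomial supplied by Lemma \ref{lemma:finiteness}, the $x$-uniform $\delta$ in regularity condition \eqref{regularity 3}, and convex interpolation toward the anchor. The only difference is presentational — you build the shrunken simplex directly, whereas the paper takes a single polynomial off the supposed smaller affine hull and derives a contradiction — so the mathematical content is essentially identical.
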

	
	\begin{proof}
	Suppose the contrary to the first conclusion, that there exists $A>2A^\sharp$ such that $\dim \Gamma(x_0,A)>\dim\Gamma(x_0,2A^\sharp)=d$ and pick $P\in\Gamma(x_0,A)$ such that $P$ is not contained in the $d$-dimensional hyperplane in $\p$ containing $\Gamma(x_0,2A^\sharp)$.
	
	By Lemma \ref{lemma:finiteness}, $\Gamma(x_0,A^\sharp)$ is nonempty, so take $P'\in\Gamma(x_0,A^\sharp)$. 
	
	Now consider the sequence $P_n=(1/n)P'+(1-1/n)P$. By convexity, $P_n\in\Gamma(x_0,A)$ for all $n$. But by supposition, $P_n\notin \Gamma(x_0,2A^\sharp)$ for all $n$, contradicting \eqref{regularity 3}. Thus, $\dim \Gamma(x_0,A)=\dim\Gamma(x_0,2A^\sharp)$. $\dim\Gamma(x_0)=\dim\Gamma(x_0,2A^\sharp)$ follows by taking unions.
	
	Now suppose the contrary to the second conclusion. Analogous to the previous situation, there exists $A>2A^\sharp$ such that $\dim \Gamma(x_0,k,A)>\dim\Gamma(x_0,k,2A^\sharp)=d'$ and pick $P\in\Gamma(x_0,k,A)$ such that $P$ is not contained in the $d'$-dimensional hyperplane in $\p$ containing $\Gamma(x_0,k,2A^\sharp)$.
	
	Let $P'\in\Gamma(x_0,k,A^\sharp)$ and again consider the sequence $P_n=(1/n)P'+(1-1/n)P$. By our prior reasoning, there exists $N$ such that $n\ge N$ implies $P_n\in\Gamma(x_0,3/2A^\sharp)$.
	
	Now, let $x_1,...,x_{\kb}\in E$. Choose $Q_j\in\Gamma(x_j,1)$ such that 
	\begin{equation}
	    |\d^\alpha(Q_i-Q_j)(x_i)|\le |x_i-x_j|^{m-|\alpha|}\text{ for }|\alpha|\le m, 0\le i,j\le k
	\end{equation}
	with $Q_0=P$. Similarly, choose $Q_j'\in\Gamma(x_j,A)$ such that
	
\begin{equation}
    |\d^\alpha(Q'_i-Q'_j)(x_i)|\le A|x_i-x_j|^{m-|\alpha|}\text{ for }|\alpha|\le m, 0\le i,j\le k
\end{equation}
	with $Q_0'=P'$.
	
	Form sequences $Q_j^{(n)}=(1/n)Q_j'+(1-1/n)Q_j$. By \eqref{regularity 3} (and the independence of $\delta$ of $x$ in this condition), we may take $N$ independent of $x_1,...,x_{k}$ such that $n\ge N$ implies $Q_j^{(n)}\in\Gamma(x_j,2A^\sharp)$. Thus, for $n\in N$, $P'_n\in\Gamma(x_0,k,2A^\sharp)$. This is a contradiction, so $\dim \Gamma(x_0,k,A)=\dim\Gamma(x_0,k,2A^\sharp)$.
	\end{proof}
	
	\begin{lemma}\label{lemma:same dimension 2}
	Let $A\ge 2A^\sharp$, $x_0\in E$, and $k\le \ksh$. Then,
	\begin{equation}
	    \dim \Gamma(x_0,A)=\dim\Gamma(x_0,k,A).
	\end{equation}
	\end{lemma}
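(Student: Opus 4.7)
The inclusion $\Gamma(x_0,k,A)\subset\Gamma(x_0,A)$ is immediate: taking $x_1=x_0$ in the definition forces $P_0=P_1\in\Gamma(x_0,A)$, so $\dim\Gamma(x_0,k,A)\le\dim\Gamma(x_0,A)=:d$. By Lemma~\ref{lemma:same dimension 1} both dimensions are independent of $A\ge 2A^\sharp$, so the reverse inequality reduces to producing some level $A''\ge 2A^\sharp$ with $\dim\Gamma(x_0,k,A'')\ge d$.

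I would proceed by contradiction, in close analogy with the proof of Lemma~\ref{lemma:same dimension 1}(2). Suppose $d':=\dim\Gamma(x_0,k,2A^\sharp)<d$; by Lemma~\ref{lemma:same dimension 1}(2) and monotonicity of the shape field in $A$, every $\Gamma(x_0,k,A')$ with $A'\ge 2A^\sharp$ sits inside a single $d'$-dimensional affine subspace $V\subset\p$. Use the Finiteness Lemma to pick a reference $P^\ast\in\Gamma(x_0,k,A^\sharp)\subset V$, and use the dimension gap to pick $P\in\Gamma(x_0,A)\setminus V$; form $P_n:=(1-1/n)P^\ast+(1/n)P$. Each $P_n$ lies in $\Gamma(x_0,A)$ by convexity and outside $V$ by affine position. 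A contradiction will follow if for some $A''$ and all sufficiently large $n$ I can show $P_n\in\Gamma(x_0,k,A'')$, since then $P_n\in V$.

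To produce a certificate for $P_n$ at a fixed tuple $x_1,\dots,x_k\in E$, start from the certificate $P_1^\ast,\dots,P_k^\ast\in\Gamma(x_i,A^\sharp)$ of $P^\ast$ and propose the translates $R_i^{(n)}:=P_i^\ast+\tfrac{1}{n}(P-P^\ast)$. The Whitney compatibilities for $\{R_i^{(n)}\}_{i=0}^k$ (with $P_0^\ast:=P^\ast$, $R_0^{(n)}:=P_n$) reduce identically to those of $\{P_i^\ast\}_{i=0}^k$, since both sides of every difference are shifted by the same polynomial; and because $P-P^\ast$ has $C^m$-size $O(A)$ uniformly on the compact set $E\subset Q_0$, the perturbation $\tfrac{1}{n}(P-P^\ast)$ is arbitrarily small at every $x_i$ when $n$ is large. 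Condition~\eqref{regularity 3} would then upgrade $P_i^\ast\in\Gamma(x_i,A^\sharp)$ to $R_i^{(n)}\in\Gamma(x_i,A^\sharp+\epsilon)$, \emph{provided} $R_i^{(n)}$ already lies in $\Gamma(x_i)$.

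The main obstacle is precisely this fiber-membership requirement: pure translation need not preserve membership in closed convex fibers such as $\{P:P(x_i)\in\mathcal{K}(x_i)\}$. To overcome this, I plan to replace the naive translate by a partition-of-unity gluing, via Lemma~\ref{lemma:shape field convexity for more polynomials}, of $P_i^\ast$ with a second polynomial furnished by applying the Finiteness Lemma to the enlarged tuple $(x_0,x_1,\dots,x_k)$ (requiring $k+1\le\ksh$). The gluing must be carried out at a cutoff scale $\delta=\delta(\epsilon,A^\sharp)$ chosen so that the resulting polynomial genuinely lies in $\Gamma(x_i,CA^\sharp)$, has $m$-jet within the slack of~\eqref{regularity 3} of $R_i^{(n)}$, and preserves the Whitney compatibilities simultaneously across all $k+1$ indices. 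Securing these three properties jointly, uniformly over the tuple $(x_1,\dots,x_k)$, is the technical core of the argument.
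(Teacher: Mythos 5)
You correctly reduce the lemma to proving $\dim\Gamma(x_0,k,A)\ge d$ and you correctly flag that translating the $P^\ast$-certificate by $\tfrac{1}{n}(P-P^\ast)$ need not preserve membership in the fibers $\Gamma(x_i,\cdot)$. But the proposed fix does not close the gap, and the gap is deeper than fiber membership. Under your contradiction hypothesis, $P$ lies in $\Gamma(x_0,A)$ but in \emph{no} $\Gamma(x_0,k,A')$, so $P$ has no $k$-tuple certificate at any level; there is simply no raw material from which to assemble a certificate for $P_n$. This is precisely what distinguishes the present lemma from Lemma~\ref{lemma:same dimension 1}(2), where both endpoints of the segment already carry certificates, so one can take convex combinations of certificate families pointwise. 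Your gluing idea also cannot recover the certificate's $0$-th entry: the definition of $\Gamma(x_0,k,A'')$ requires the jet at $x_0$ to be $P_n$ itself, while a Whitney-convexity gluing of $P^\ast$ with a Finiteness-Lemma jet $\tilde{P}_0\in\Gamma(x_0,A^\sharp)$ produces a genuine weighted average that has no reason whatsoever to equal $P_n$.

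The paper avoids the problem by an entirely different mechanism. Given $P\in\Gamma(x_0,A)$ lying in no $\Gamma(x_0,k,A')$, it defines a modified shape field $\Gamma'$ that agrees with $\Gamma$ off $x_0$ but has singleton fiber $\{P\}$ at $x_0$ (above the appropriate level). Glaeser stability of $P$ in $\Gamma$ is what makes $\Gamma'$ remain a regular, Glaeser stable shape field, so the Finiteness Lemma applies to $\Gamma'$: there is $\tilde{A}$ such that every $\ksh$-tuple admits compatible jets in $\Gamma'(\cdot,\tilde{A})$. Padding $(x_0,x_1,\dots,x_k)$ out to a $\ksh$-tuple with copies of $x_0$ forces the jet chosen at $x_0$ to equal $P$, yielding exactly a $k$-tuple certificate for $P$, i.e. $P\in\Gamma'(x_0,k,\tilde{A})\subset\Gamma(x_0,k,\tilde{A})$ — contradiction. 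The extra leverage is that Glaeser stability and the Finiteness Lemma hold at the full $\ksh$, which is strictly more information than $k$-tuple certifiability; your argument only invokes the Finiteness Lemma to supply the reference jet $P^\ast$ and never taps this reserve, which is why it stalls.
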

	
	\begin{proof}
	Suppose the contrary. Then by Lemma \ref{lemma:same dimension 1}, $\dim\Gamma(x_0,k,A')\le \dim\Gamma(x_0,A)-1$ for all $A'>A$. In particular, there exists $P\in\Gamma(x_0,A)$ such that for all $A'>0$ $P\notin\Gamma(x_0,k,A')$. 
	
	Define the convex bundle $(\Gamma'(x))_{x\in E}$, where
	\begin{equation}
	    \Gamma'(x,M) = \left\{
     \begin{array}{lr}
       \Gamma(x,M) & \text{if } M\geq 0, x\ne x_0\\
       \{P\} & \text{if } x=x_0, M\ge A\\
       \emptyset & \text{if } x=x_0, M<A.
     \end{array}
   \right.
	\end{equation}
Observe that 

By Lemma \ref{lemma:finiteness} and the fact $k\le \ksh$, there exists $\tilde{A}>0$ such that $\Gamma'(x_0,k,\tilde{A})$ is nonempty (in particular, $\Gamma'(x_0,k,\tilde{A})=\{P\}$), where the $\Gamma'(x,k,A)$ are defined analogously to the original $\Gamma(x,k,A)$ but with $\Gamma'(x,M)$ in place of $\Gamma(x,M)$.
	
Since $\Gamma'(x,A)\subset \Gamma(x,A)$ for all $x\in E$ and $A>0$, $\Gamma'(x_0,k,\tilde{A})\subset \Gamma(x_0,k,\tilde{A})$ for all $x\in E,A>0$, thus $P\in\Gamma(x_0,k,\tilde{A})$, a contradiction.
	\end{proof}

Motivated by the prior two lemmas, we make the following definition. Given a graded holding space $(H(x,M))_{x\in E}$, define $W_H(x)$ as the (unique) $\dim H(x)$-dimensional subspace of $\p$ containing $H(x)$.
	
\begin{lemma}\label{lemma:same dimension merged}
Let $A\ge 2A^\sharp, x\in E,$ and $k\le \ksh$. Then,
\begin{equation}\label{eq:all equal dimensions}
    \dim W_{\Gamma}(x)=\dim\G(x)=\dim\G(x,A)=\dim\G(x,k,A)=\dim\G(x,2A^\sharp)=\dim\G(x,k,2A^\sharp).
\end{equation}
and
\begin{equation}\label{eq:all equal dimensions bar}
    \dim \pi_xW_{\Gamma}(x)=\dim\Gbar(x)=\dim\Gbar(x,A)=\dim\Gbar(x,k,A)=\dim\Gbar(x,2A^\sharp)=\dim\Gbar(x,k,2A^\sharp).
\end{equation}

\end{lemma}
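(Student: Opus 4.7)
The plan is to chain together the conclusions of Lemmas \ref{lemma:same dimension 1} and \ref{lemma:same dimension 2}; essentially all the work has already been done. From Lemma \ref{lemma:same dimension 1}(1), $\dim \Gamma(x) = \dim \Gamma(x,A) = \dim \Gamma(x,2A^\sharp)$, and from (2), $\dim \Gamma(x,k,A) = \dim \Gamma(x,k,2A^\sharp)$. Lemma \ref{lemma:same dimension 2}, which applies because $k \leq \ksh$, provides the bridge $\dim \Gamma(x,A) = \dim \Gamma(x,k,A)$. Concatenating these three equalities yields the five middle equalities of \eqref{eq:all equal dimensions}. The remaining equality $\dim W_\Gamma(x) = \dim \Gamma(x)$ is immediate from the definition of $W_\Gamma(x)$ as the $\dim \Gamma(x)$-dimensional (affine) subspace containing $\Gamma(x)$.

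For \eqref{eq:all equal dimensions bar}, I would exploit the fact that $\pi_x \colon \ring_x \to \overline{\ring}_x$ is linear, hence affine. For any convex $K \subseteq \mathcal{P}$, $\pi_x(K)$ is convex and $\mathrm{aff}(\pi_x(K)) = \pi_x(\mathrm{aff}(K))$, so $\dim \pi_x(K) = \dim \pi_x(\mathrm{aff}(K))$. Combining this with \eqref{eq:all equal dimensions}, observe that whenever $S \subseteq S'$ are convex subsets of $\mathcal{P}$ with $\dim S = \dim S'$, we have $\mathrm{aff}(S) = \mathrm{aff}(S')$. Applying this to the nested inclusions $\Gamma(x,2A^\sharp) \subseteq \Gamma(x,A) \subseteq \Gamma(x)$ and $\Gamma(x,k,2A^\sharp) \subseteq \Gamma(x,k,A) \subseteq \Gamma(x,A)$, all of these convex sets share the common affine hull $W_\Gamma(x)$. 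Hence their images under $\pi_x$ share the common affine hull $\pi_x W_\Gamma(x)$, and consequently all have dimension $\dim \pi_x W_\Gamma(x)$, establishing \eqref{eq:all equal dimensions bar}.

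The main potential subtlety is only notational: the definition of $W_\Gamma(x)$ as a ``subspace'' should be understood in the affine sense (as the affine hull of $\Gamma(x)$), so that it genuinely contains $\Gamma(x)$. Once this is granted, no further estimates are required — the lemma is a formal consequence of the two prior dimension lemmas together with the elementary fact that linear images preserve ``has the same affine hull as'' for nested convex sets.
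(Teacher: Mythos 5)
Your proof is correct and follows essentially the same route as the paper: chain Lemmas \ref{lemma:same dimension 1} and \ref{lemma:same dimension 2} for \eqref{eq:all equal dimensions}, then observe that all the sets involved sit inside $\Gamma(x)$ with full dimension and hence share the affine hull $W_\Gamma(x)$, so their $\pi_x$-images share the affine hull $\pi_x W_\Gamma(x)$, giving \eqref{eq:all equal dimensions bar}. Your added remarks (that nested convex sets of equal dimension have the same affine hull, and that affine hulls commute with the linear map $\pi_x$) simply make explicit what the paper's terse statement relies on, and your note that ``subspace'' in the definition of $W_\Gamma(x)$ must mean affine subspace is a fair clarification, not a gap.
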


\begin{proof}
Lemmas \ref{lemma:same dimension 1} and \ref{lemma:same dimension 2} immediately imply \eqref{eq:all equal dimensions}. Since each of the relevant $\Gamma$'s is contained in $\Gamma(x)$, not only are they all the same dimension, but they are contained in the same $\dim\Gamma(x)$-dimensional hyperplane. Thus, the dimensions of the projections under $\pi_x$ are all equal as well, establishing \eqref{eq:all equal dimensions bar}.
\end{proof}

% \begin{lemma}\label{lemma:same dimension}
% Fix $x\in E_1$, $A\ge2$, and $\overline{P}_0,\overline{P}_1$ in the relative interior of $\Gbar(x,\kb,A)$. The sets $\Gamma^*(x,\overline{P}_0)$ and $\Gamma^*(x,\overline{P}_1)$ have the same dimension.
% \end{lemma}

% We call this dimension $d^*$, which is seen equal to $d-\dbar$.

% \begin{proof}
% Since $\overline{P}_0$ lies in the relative interior of $\Gbar(x,\kb,A)$, choose $\overline{P}_2\in\Gbar(x,\kb,A)$ such that $\overline{P}_0$ lies on the straight line connecting $\overline{P}_1$ and $\overline{P}_2$. Set $0<\lambda<1$ so $\overline{P}_0=\lambda\overline{P}_1+(1-\lambda)\overline{P}_2$

% Fix $P_2^*\in\Gamma^*(x,\overline{P}_2)$ so that $P_2:=\overline{P}_2+P_2^*\in\Gamma(x,\kb,A)$. If $\Delta\subset\Gamma^*(x,\overline{P}_1)$ is a $\dbar$-dimensional simplex, then $\overline{P}_1+\Delta$ is a $\dbar$-dimensional simplex in $\Gamma(x,\kb,A)$. Thus, by the convexity of $\Gamma(x,\kb,A)$,
% \begin{equation}
%     \lambda(\overline{P}_1+\Delta)+(1-\lambda)P_2:=\{\lambda(\overline{P}_1+P^*)+(1-\lambda)P_2:P^*\in\Delta\}
% \end{equation}
% is a $\dbar$-dimensional simplex in $\Gamma(x,\kb,A)$ such that $\pi_x[\lambda(\overline{P}_1+\Delta)+(1-\lambda)P_2]=\overline{P}_0$. Thus, the dimension of $\Gamma^*(x,\overline{P}_0)$ is at least that of $\Gamma^*(x,\overline{P}_1)$. The lemma follows from repeating the above argument, swapping $\overline{P}_0$ and $\overline{P}_1$.
% \end{proof}

	\begin{definition}\label{def.stratum}
		Let $ E \subset \R^n $ be a compact set. Let $ (H(x))_{x \in E} $ be a convex bundle over $ E $. For each $ x \in E $, we define the \underline{signature} of $ x $ to be
		\begin{equation*}
		{\rm sig}(x) := \brac{\dim H(x), \dim [\ker \pi_x\cap H(x)]}.
		\end{equation*}
		For given integers $ k_1, k_2 $, we define
		\begin{equation*}
		E(k_1,k_2) := \{x \in E : {\rm sig}(x) = (k_1,k_2) \}.
		\end{equation*}
		We define the stratum
		\begin{equation*}
		E_1 := E(k_1^*, k_2^*)
		\end{equation*}
		where $ k_1^* $ is as small as possible, and $ k_2^* $ is as large as possible for this given $ k_1^* $. We call $ E_1 $ the \underline{lowest stratum}. 
	\end{definition}

	\subsection{Lowest stratum}

	\begin{lemma}\label{lem:low-strat}
		Let $ E \subset \R^n $ and $(H(x))_{x \in E} $ be a convex, Glaeser stable bundle. Let $ E_1 $ be the lowest stratum as in Definition \ref{def.stratum}. Then $ E_1 $ is compact. 
	\end{lemma}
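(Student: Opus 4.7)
The plan is to show that $E_1$ is closed in $E$; compactness then follows since $E$ is compact. I would take a sequence $(x_\nu)_{\nu \geq 1} \subset E_1$ with $x_\nu \to x \in E$ and verify that $\mathrm{sig}(x) = (k_1^*, k_2^*)$, handling the two coordinates of the signature separately.

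First I would establish $\dim H(x) = k_1^*$. The inequality $\dim H(x) \geq k_1^*$ is immediate from the minimality of $k_1^*$ in the definition of $E_1$. For the reverse, suppose for contradiction that $\dim H(x) \geq k_1^* + 1$ and choose polynomials $P_0, \ldots, P_{k_1^*+1} \in H(x)$ forming a non-degenerate affine $(k_1^*+1)$-simplex with centroid $\bar P$. Glaeser stability at $x$ supplies, for every $\epsilon > 0$ and all $\nu$ sufficiently large, polynomials $P_j^{(\nu)} \in H(x_\nu)$ with $P_j^{(\nu)} \to P_j$ in $\p$ as $\nu \to \infty$. Translating by $P_0$ converts the affine simplex into a linear one, and Lemma \ref{lemma:linear perturbation}, applied with $W$ the $(k_1^*+1)$-dimensional span of $\{P_j - P_0\}$ and $w$ the translated centroid, guarantees that for $\nu$ large the polynomials $P_j^{(\nu)} - P_0^{(\nu)}$ remain affinely independent; hence $H(x_\nu)$ contains a non-degenerate affine $(k_1^*+1)$-simplex, contradicting $\dim H(x_\nu) = k_1^*$.

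Next I would prove $\dim[\ker \pi_x \cap H(x)] = k_2^*$. The upper bound follows from the maximality of $k_2^*$ once $\dim H(x) = k_1^*$ is known. For the lower bound, I proceed in two steps. In the first, I secure a uniformly bounded selection $Q_\nu \in \ker \pi_{x_\nu} \cap \Gamma(x_\nu, 2A^\sharp)$ by combining Lemma \ref{lemma:same dimension merged} with the Finiteness Lemma \ref{lemma:finiteness} applied to an appropriate sub-bundle; passing to a convergent subsequence $Q_\nu \to Q$, regularity of $\Gamma$ yields $Q \in H(x)$, and the identity $\partial^\alpha Q(x) = \lim_\nu \partial^\alpha Q_\nu(x_\nu) = 0$ for $|\alpha| < m$ yields $Q \in \ker \pi_x$. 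In the second step, Grassmannian compactness produces a subsequential limit $\hat V$ of the $k_2^*$-dimensional direction subspaces $\hat V_\nu$ of $\ker \pi_{x_\nu} \cap H(x_\nu)$, with $\hat V \subset \ker \pi_x$ of dimension $k_2^*$. A non-degenerate $k_2^*$-simplex in $Q + \hat V$ is Glaeser-approximated by simplices in $\ker \pi_{x_\nu} \cap H(x_\nu)$, and another application of Lemma \ref{lemma:linear perturbation} together with convexity yields $Q + (B_\eta(0) \cap \hat V) \subset \ker \pi_x \cap H(x)$ for some $\eta > 0$, giving $\dim[\ker \pi_x \cap H(x)] \geq k_2^*$.

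The hardest part will be securing the uniformly bounded choice of $Q_\nu$ in $\ker \pi_{x_\nu} \cap H(x_\nu)$: a naive application of Lemma \ref{lemma:finiteness} only provides selections in $\Gamma(x_\nu, A^\sharp)$ without the $\ker \pi_{x_\nu}$ constraint, so some version of the finiteness principle adapted to the intersected bundle (or a direct convex-analytic argument using regularity and closedness of $\Gamma$) is needed. Once this is in place, the remainder mirrors the structure of the first part, with Lemma \ref{lemma:linear perturbation} doing analogous work for the Grassmannian limit of directions.
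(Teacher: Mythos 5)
Your treatment of the first coordinate of the signature matches the paper's: both arguments push a non-degenerate $(k_1^*+1)$-simplex from $H(x)$ into the nearby fibers $H(x_\nu)$ via Glaeser stability and conclude from the resulting contradiction that the set $\{y : \dim H(y) < d\}$ is closed. (The paper phrases this via openness of nondegeneracy rather than via Lemma~\ref{lemma:linear perturbation}, but the content is the same.) For the second coordinate, however, your route departs substantially from the paper's. The paper never tries to exhibit a point of $\ker\pi_x\cap H(x)$ or a ball inside it. Instead, it first notes that $y\mapsto W_H(y)$ is continuous on $E(k_1^*)$ (so $W_H(x_\nu)\to W_H(x)$), then passes the $k_2^*$-dimensional \emph{affine subspaces} $\ker\pi_{x_\nu}\cap W_H(x_\nu)$ to a Grassmannian limit $W\subset \ker\pi_x\cap W_H(x)$, and finally transfers from $W_H$ back to $H$ via the identity $\dim(\ker\pi_y\cap W_H(y))=\dim(\ker\pi_y\cap H(y))$. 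Because these are affine subspaces of fixed dimension (rather than convex bodies), there is no ``anchor point'' to find and nothing that can shrink, so the compactness argument closes without a finiteness lemma.

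This is precisely where your proposal has a genuine gap beyond the one you flagged. Your acknowledged difficulty --- producing a uniformly bounded $Q_\nu\in\ker\pi_{x_\nu}\cap H(x_\nu)$ --- is indeed real, and the paper simply sidesteps it by never needing a base point. But the later step is also problematic and you do not flag it: you claim that a non-degenerate $k_2^*$-simplex in $Q+\hat V$ is ``Glaeser-approximated by simplices in $\ker\pi_{x_\nu}\cap H(x_\nu)$,'' and deduce $Q+(B_\eta(0)\cap\hat V)\subset \ker\pi_x\cap H(x)$. Glaeser stability runs in the opposite direction: given $P\in H(x)$, it produces nearby elements of $H(x_\nu)$; it does \emph{not} say that limits of elements of $H(x_\nu)$ land in $H(x)$, and your target simplex is not yet known to sit in $H(x)$. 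Moreover, even granting a convergent sequence of $k_2^*$-simplices inside the convex sets $\ker\pi_{x_\nu}\cap H(x_\nu)$, those sets (unlike the paper's affine subspaces $\ker\pi_{x_\nu}\cap W_H(x_\nu)$) can shrink to a point while retaining dimension $k_2^*$ for each $\nu$, so the limiting simplex may be degenerate. The remark that ``the remainder mirrors the structure of the first part'' therefore does not hold: the first part has Glaeser stability pointing in the useful direction (from $x$ outward), whereas here you need to transport information inward from $x_\nu$ to $x$, and that is exactly what the detour through $W_H$ accomplishes in the paper's proof.
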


	\begin{proof}
		\newcommand{\tx}{\tilde{x}}
		\newcommand{\tp}{\tilde{P}}
		Let $ x \in E $. Suppose $ \dim H(x) = d $. Let $ P_1, \cdots, P_d $ be the vertices of a nondegenerate affine $ d $-simplex in $\text{int} H(x) $. Thus, small perturbation of $ \{P_1, \cdots, P_d\} $ remain the vertices of a nondegenerate affine $ d $-simplex. 
		
		By the Glaeser stability of $(H(x))_{x\in E}$, for any $ \tx \in E $ sufficiently close to $ x $, we may find $ \tp_1, \cdots, \tp_d \in H(\tx) $, such that they are the vertices of a nondegenerate affine $ d $-simplex. Hence,
		\begin{equation*}
		\dim (H(\tx)) \geq d
		\text{ for any $ \tx $ sufficiently close to $ x $.}
		\end{equation*}
		It follows that
		\begin{equation*}
		\{x \in E : \dim H(x)  < d\}
		\end{equation*}
		is a closed set for any $ d \in \mathbb{N}_0 $. In particular, for $ k_1 = \min_{y \in E}\dim H(y) $, the set
		\begin{equation}
		E(k_1):= \bigcup_{k_2 \in \mathbb{N}_0}E(k_1,k_2)
		\text{ is closed.}
		\label{eq.Ek1closed}
		\end{equation}

		By hypothesis of Theorem \ref{thm:heart of the matter} again, we see that the map
		\begin{equation*}
		x \mapsto W_H(x)
		\end{equation*}
		is continuous from $ E(k_1) $ to $\mathcal{G}$, the Grassmannian of $k_1$-planes in $\p$.
		
		Set $ k_2 = \max_{y \in E(k_1)}\dim(\ker \pi_y \cap W_H(y)) $, noting that $\dim(\ker \pi_y \cap W_H(y))=\dim(\ker \pi_y \cap H(y))$. By Definition \ref{def.stratum}, we see that
		\begin{equation*}
		E_1 = \{x \in E(k_1,k_2) : \dim (\ker \pi_x \cap H(x)) = k_2\}.
		\end{equation*}
		We will show that $ E_1 $ is closed.
		
		Let $ (x_\nu)_{\nu = 1}^\infty $ be a sequence in $ E_1 $ converging to $ x \in \R^n $. By \eqref{eq.Ek1closed}, we know that $ x \in E(k_1) $, and thus, $ W_H(x_\nu) $ converges to $ W_H(x) $ in $ \mathcal{G}$. Passing onto a subsequence, we can further assume that $ \ker \pi_{x_\nu} \cap W_H(x_\nu) $ converges to some (not necessarily unique) $ W \in Conv(\P) $ with $ \dim W = k_2 $. 
		
		Now, $ W \in H(x) $ and $ \pi_x|_{W} \equiv 0 $. Therefore, 
		\begin{equation*}
		\dim (\ker \pi_x \cap W_H(x)) \geq k_2.
		\end{equation*}
		By our choice of $ k_2 $, we must have
		\begin{equation*}
		\dim (\ker \pi_x \cap W_H(x)) = k_2.
		\end{equation*}
		Hence, $ x \in E_1 $ as claimed.
		
		Since $ E $ is compact and $ E_1 \subset E $ is closed, the lemma follows.
		
	\end{proof}
	
		\newcommand{\db}{{\bar{d}}}
	Recall from Definition \ref{def.stratum} that $ E_1 $ denotes the lowest stratum of $ E $. By Lemma \ref{lem:low-strat}, $ E_1 $ is compact. Furthermore, for a convex bundle $(H(x))_{x\in E}$, $ \dim H(x) $ and $ \dim(\ker \pi_x \cap H(x)) $ are constant for all $ x \in E_1 $.
	
	Limiting our attention to the bundle $(\G(x,M))_{x\in E}$ from the hypotheses of Theorem \ref{thm:heart of the matter}, we set
	\begin{equation*}
	\begin{split}
	d &:= \dim \G(x) \text{ for all } x \in E_1, \text{ and }\\
	\db &:= \dim \Gb(x) \text{ for all } x\in E_1.
	\end{split}
	\end{equation*}
	
% 	\subsection{Linear Algebra Perturbation Lemma}
% 	Might be needed?
	
% 	From Charlie's paper:
	
% 	\begin{lemma}
% 	Copy directly
% 	\end{lemma}
	
	\subsection{Glaeser Stability}
	
% 	Let $(\Gamma(x))_{x \in E}$ be a bundle. For each $x \in E$ and $A > 0$, we define
% 	\begin{equation}
% 	    \Gamma(x,A):= \set{P \in \Gamma(x) : \abs{\da P(x)} < A \text{ for } \abs{\alpha} \leq m}.
% 	    \label{eq.7.3.1}
% 	\end{equation}
% 	Note that in \eqref{eq.7.3.1}, we require the derivatives to be strictly less than $A$. 
	
Given a shape field $(\Gamma(x,M))_{x\in E, M\geq 0}, x_0\in E,$ and $A>0$, define
\begin{equation}
    \Gamma^o(x,A):=\bigcup_{M<A}\Gamma(x,M).
\end{equation}
	
	\begin{lemma}\label{lemma:GS for Gamma(x,A)}
	Let $(\G(x,M))_{x \in E}$ be a regular, convex, Glaeser-stable, shape field and fix $A>0$. The bundle $(\Gamma^o(x,A))_{x\in E}$ is Glaeser stable.
% 	\begin{enumerate}[label = (\Alph*)]
% 	    \item  
% 	    \item Let $\pi_x : \ring_x \to \bar{\ring}_x$ be the natural projection. Then the bundle $(\pi_x\G(x))_{x\in E}$ is $C^m$-Glaeser stable. In particular, $(\Gbar(x,A))_{x\in E}$ is $C^m$-Glaeser stable. 
% 	\end{enumerate}
	\end{lemma}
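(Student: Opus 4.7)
The plan is to reduce this to Glaeser stability of the ambient shape field $(\Gamma(x,M))$ combined with the regularity property \eqref{regularity 2}. The inclusion $\widetilde{\Gamma^o}(x,A)\subset \Gamma^o(x,A)$ is immediate from the definition of Glaeser refinement (which forces $P_0\in H(x_0)$ from the outset), so all the work lies in proving the reverse inclusion: if $P_0\in \Gamma^o(x_0,A)$, then $P_0$ survives one Glaeser refinement of the bundle $(\Gamma^o(x,A))_{x\in E}$.

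Fix $P_0\in \Gamma^o(x_0,A)$, so $P_0\in \Gamma(x_0,M_0)$ for some $M_0<A$. The key idea is to exploit the strict gap $A-M_0>0$: pick $\epsilon'>0$ with $M_0+\epsilon'<A$, and apply condition \eqref{regularity 2} (with this $\epsilon'$ and $M=M_0$) to obtain $\delta_0>0$. That $\delta_0$ has the property that whenever $y\in E$, $|x_0-y|\le\delta_0$, and $P'\in \Gamma(y)$ satisfies the two-sided Whitney-type bound
\begin{equation*}
|\partial^\alpha(P_0-P')(x_0)|,\,|\partial^\alpha(P_0-P')(y)|\le\delta_0\,|x_0-y|^{m-|\alpha|}\quad (|\alpha|\le m),
\end{equation*}
one automatically upgrades $P'\in \Gamma(y,M_0+\epsilon')\subset \Gamma^o(y,A)$.

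To produce such $P'$'s, I invoke the Glaeser stability of $(\Gamma(x))_{x\in E}$: since $P_0\in\Gamma(x_0)=\widetilde{\Gamma}(x_0)$, for any prescribed tolerance---in particular $\min(\epsilon,\delta_0)$---there is $\delta_1>0$ so that for every $x_1,\dots,x_{k^\sharp}\in E\cap B(x_0,\delta_1)$ one can select $P_j\in\Gamma(x_j)$ with $|\partial^\alpha(P_i-P_j)(x_j)|\le\min(\epsilon,\delta_0)|x_i-x_j|^{m-|\alpha|}$ for all $0\le i,j\le k^\sharp$. Setting $\delta:=\min(\delta_0,\delta_1)$, the two Whitney inequalities for the index pairs $(i,j)=(0,j)$ and $(i,j)=(j,0)$ supply exactly the hypotheses of \eqref{regularity 2}, so each $P_j$ lies in $\Gamma(x_j,M_0+\epsilon')\subset\Gamma^o(x_j,A)$. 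The $\epsilon$-Whitney bounds are inherited from the stronger $\min(\epsilon,\delta_0)$-bounds, so $P_0\in\widetilde{\Gamma^o}(x_0,A)$.

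I do not anticipate a real obstacle here; the only subtle point is recognizing that the Glaeser bound is stated at $x_j$, but by symmetrizing over the pairs $(0,j)$ and $(j,0)$ one recovers the two-sided bound that \eqref{regularity 2} requires. The strict inequality $M_0<A$ is what lets us absorb the $\epsilon'$ loss from regularity without leaving $\Gamma^o(\cdot,A)$, and this is the whole reason for introducing the open super-level sets $\Gamma^o$ in the first place.
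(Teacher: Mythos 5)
Your proof is correct and follows essentially the same route as the paper's: use the Glaeser stability of the ambient bundle $(\Gamma(x))_{x\in E}$ to select nearby jets, then upgrade each selected $P_j$ to membership in $\Gamma^o(x_j,A)$ via \eqref{regularity 2}, exploiting the strict gap $A-M_0>0$. If anything, your explicit use of $\min(\epsilon,\delta_0)$ as the tolerance is slightly cleaner than the paper's presentation, which is a bit loose about which of $\epsilon$, $\eta$ appears in the final Whitney bound; your remark that the two-sided bound needed for \eqref{regularity 2} is recovered from the index pairs $(0,j)$ and $(j,0)$ is exactly the point both arguments rely on.
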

	
% 	We note that in Lemma \ref{lemma:GS for Gamma(x,A)}, the bundle is $C^m$-Glaeser stable, despite the fact that the fibers consist of $(m-1)$-jets.
	
% 	This proof needs to be fixed to use the regularity hypothesis, not $\max_{|\alpha|\le m}|\da P(x)|$.

\begin{proof}
Fix $x_0\in E$ and $P_0\in \Gamma^o(x,A)$. Then, there exists $M<A$ such that $P_0\in\Gamma(x,M)$. By \ref{regularity 2}, there exists $\eta>0$ such that for any $x'\in E\cap B(x,\eta)$, if $P'\in \Gamma(x')$ with 

\begin{equation}\label{eq:application of regularity 1}
    |\partial^\alpha(P-P')(x)|, |\partial^\alpha(P-P')(x')|\le \eta |x-x'|^{m-|\alpha|}\text{ for all }|\alpha|\le m,
\end{equation}
then $P'\in \Gamma(x',\frac{M+A}{2})\subset\G^o(x',A)$.

Let $0<\eps<\eta$. By the Glaeser stability of $(\G(x))_{x\in E}$, there exists $0<\delta<\eta$ such that if $x_1,...,x_{\ksh}\in E\cap(x,\delta)$, then there exist $P_1,...,P_{\ksh}\in\p$ such that
\begin{equation}
    P_j\in \G(x_j)\text{ for }j=1,...,\ksh
\end{equation}
and
\begin{equation}\label{eq:another closeness of Pi and Pj}
    |\da(P_i-P_j)(x_i)|< \eta|x_i-x_j|^{m-|\alpha|}\text{ for }|\alpha|\le m, 0\le i,j\le\ksh.
\end{equation}

Since \eqref{eq:another closeness of Pi and Pj} implies \eqref{eq:application of regularity 1}, we have $P_j\in\G^o(x_j,A)$ for $j=1,...,\ksh$, which is precisely what we wanted to show.

\end{proof}

In the sequel, we use $\Gamma(x,A)$ to denote $\G^o(x,A)$, simplifying notation. Note that $\Gamma^o(x)$ may not be a closed subset of $\p$, but since $\G^o(x,A)\subset \G(x,A)\subset \G^o(x,A')$ for $A<A'$, we see
\begin{equation}
    \G(x)=\bigcup_{M\geq 0}\G^o(x,M)
\end{equation}
and $(\G^o(x,M))_{x\in E}$ will share most relevant properties with $(\G(x,M))_{x\in E}$, in particular, $(C,\dmax)$-convexity and regularity.

	\newcommand{\kt}{{\tilde{k}}}

	\newcommand{\namedtheorem}[3][2]{
		\theoremstyle{plain}
		\newtheorem*{#1}{#1}
		\begin{#1}[#3]
			#2
		\end{#1}
	}
	
\section{More on Convex Sets}\label{sec:more convex}

In Section \ref{sec:uniform}, we will establish a uniform choice of $\delta$ in the definition of Glaeser stability which will allow us to choose polynomials from the $\Gamma(x,A)$. The purpose of this section is to show that we may take polynomials from the smaller $\Gamma(x,k,A)$ (where $k$ and $A$ will be chosen appropriately).

\begin{lemma}\label{lemma:5.2 equivalent}
Let $x_0\in E, A>0$, and $\kb\in\N$. Suppose $P_0\in\Gamma(x_0,\kb,A)$ and $P_0'\in\Gamma(x_0,A)$ such that $\pi_{x_0}(P_0-P_0')=0$. Then, there exists $C$, depending solely on $m,n$ such that $P_0'\in\Gamma(x_0,\kb,CA)$.
\end{lemma}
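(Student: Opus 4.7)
The plan is to reuse the witnesses provided by $P_0 \in \Gamma(x_0, \kb, A)$ as witnesses for $P_0' \in \Gamma(x_0, \kb, CA)$, with no modification. The key observation is that the hypothesis $\pi_{x_0}(P_0 - P_0') = 0$ says $Q := P_0' - P_0$ is a homogeneous polynomial of degree $m$ in $(x - x_0)$, so $\partial^\alpha Q(x_0) = 0$ for $|\alpha| < m$. Since $P_0, P_0' \in \Gamma(x_0, A)$, regularity condition \eqref{regularity 1} forces $|\partial^\alpha P_0(x_0)|, |\partial^\alpha P_0'(x_0)| \le A$ for $|\alpha| \le m$, so writing $Q(x) = \sum_{|\gamma| = m} c_\gamma (x - x_0)^\gamma$ we have $|c_\gamma| \le CA$ with $C = C(m,n)$. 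A direct expansion then yields the Whitney-type pointwise bound
$$|\partial^\alpha Q(y)| \le C A |y - x_0|^{m - |\alpha|} \quad \text{for all } y \in \R^n, \ |\alpha| \le m.$$

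Given $x_1, \ldots, x_\kb \in E$, I would apply the hypothesis $P_0 \in \Gamma(x_0, \kb, A)$ to obtain polynomials $P_1, \ldots, P_\kb$ with $P_i \in \Gamma(x_i, A)$ and $|\partial^\alpha(P_i - P_j)(x_j)| \le A |x_i - x_j|^{m - |\alpha|}$ for $0 \le i, j \le \kb$ (with $P_0$ the given polynomial). My claim is that these same polynomials witness $P_0' \in \Gamma(x_0, \kb, CA)$. The memberships $P_i \in \Gamma(x_i, A) \subset \Gamma(x_i, CA)$ and the Whitney estimates among pairs $i, j \ge 1$ are inherited for free. For pairs involving the base index $0$, I would write $P_0' - P_j = (P_0 - P_j) + Q$ (evaluated at $x_j$) or $P_i - P_0' = (P_i - P_0) - Q$ (evaluated at $x_0$) and apply the triangle inequality, combining the original Whitney bound with the pointwise bound on $\partial^\alpha Q$ above. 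The only mildly subtle point is at the base point $y = x_0$ with $|\alpha| = m$: there $\partial^\alpha Q(x_0) = \alpha! c_\alpha$ is nonzero but bounded by $CA$, which is absorbed into $CA |x_i - x_0|^{m-|\alpha|}$ since $m - |\alpha| = 0$.

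I do not foresee any real obstacle; the content of the lemma amounts to the observation that adding a pure degree-$m$ polynomial in $(x - x_0)$ to a Whitney-compatible family based at $x_0$ worsens the Whitney constants only by a multiplicative factor depending on $m$ and $n$, which is immediate from Taylor expansion.
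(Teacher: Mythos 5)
Your proposal is correct and essentially matches the paper's proof: both reuse the witnesses supplied by $P_0 \in \Gamma(x_0,\kb,A)$ and absorb the discrepancy $Q = P_0' - P_0$ (a degree-$m$ homogeneous polynomial in $x - x_0$ with coefficients $O(A)$ by \eqref{regularity 1}) via the triangle inequality in the Whitney estimates. Your unified pointwise bound $|\partial^\alpha Q(y)| \le CA|y-x_0|^{m-|\alpha|}$ packages into one estimate what the paper handles by splitting into the $|\alpha|=m$ and $|\alpha|\le m-1$ cases, but the substance is the same.
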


\begin{proof}
Let $x_1,...,x_{\kb}\in E$. By the fact that $P_0\in\Gamma(x_0,\kb,A)$, there exist $P_j\in\Gamma(x,A)$ ($1\le j\le \kb$) such that
\begin{equation}\label{eq:from definition of gamma(x,kb,A)}
    |\da(P_i-P_j)(x_j)|\le A|x_i-x_j|^{m-|\alpha|}\text{ for }|\alpha|\le m, 0\le i,j,\le \kb.
\end{equation}

The conclusion will follow from establishing
\begin{equation}\label{eq:compare at x_0}
    |\da(P_0'-P_j)(x_0)|\le A'|x_0-x_j|^{m-|\alpha|}\text{ for }|\alpha|\le m, 1\le j\le \kb
\end{equation}
and
\begin{equation}\label{eq:compare at x_j}
    |\da(P_0'-P_j)(x_j)|\le A'|x_0-x_j|^{m-|\alpha|}\text{ for }|\alpha|\le m, 1\le j\le \kb.
\end{equation}

Since $P_0,P_0'\in\Gamma(x_0,A)$, by \ref{regularity 1} we have
\begin{equation}\label{eq:use of reg 1}
    |\da P_0(x_0)|,|\da P_0'(x_0)|\le A \text{ for }|\alpha|\le m.
\end{equation}

Therefore, since $\da P_0, \da P_0'$ are constant functions for $|\alpha|=m$, we have
\begin{equation}\label{eq:alpha=m case}
    |\da(P_0-P_0')(y)|\le A_1, y\in\R^n, |\alpha|=m,
\end{equation}
where $A_1$ depends solely on $A, m, n$, and $\diam E$. We are able to ignore the dependence on $\diam E$ in the future, as $E\subset Q_0$, a cube of fixed length.

By \eqref{eq:from definition of gamma(x,kb,A)} and \eqref{eq:alpha=m case}, we see for $1\le j\le \kb$ and $|\alpha|=m$
\begin{align}
    |\da(P_0'-P_j)(x_0)|&\le|\da(P_0'-P_0)(x_0)|+|\da(P_0-P_j)(x_0)|\\
    &\le A_1+A|x_0-x_j|^{m-|\alpha|}\\
    &=(A_1+A)|x_0-x_j|^{m-|\alpha|},
\end{align}
establishing \eqref{eq:compare at x_0} with $A'=A_1+A$. \eqref{eq:compare at x_j} follows similarly.

% This, \eqref{eq:from definition of gamma(x,kb,A)}, and the Triangle Inequality imply the $|\alpha|=m$ cases of \eqref{eq:compare at x_0} with $A'=A_1+A$.

It remains to check the cases of $|\alpha|\le m-1$. Here, we see that \eqref{eq:compare at x_0} follows immediately since $\pi_{x_0}(P_0-P_0')=0$ implies $\da P_0(x_0)=\da P_0'(x_0)$ for $|\alpha|\le m-1$.

To establish \eqref{eq:compare at x_j}, we observe that for $1\le j\le \kb$ and $|\alpha|\le m-1$, by \eqref{eq:from definition of gamma(x,kb,A)} and \eqref{eq:use of reg 1}
\begin{align}
    |\da(P_0'-P_j)(x_j)|&\le|\da(P_0'-P_0)(x_j)|+|\da(P_0-P_j)(x_j)|\\
    &\le A_2|x_0-x_j|^{m-|\alpha|}+A|x_0-x_j|^{m-|\alpha|},
\end{align}
where $A_2$ depends solely on $A, m, n$. This gives \eqref{eq:compare at x_j} with $A'=A_2+A$.

\end{proof}

\begin{lemma}\label{lemma:5.6 equivalent}
Let $A>0$ and $x\in E_1$. Suppose $ 1 + (D+1)\cdot \kt \leq \kb $. If $ \oP\in\text{int}\Gbar(x,\kb,A) $, then there exists $0<\delta<1$ such that if $x'\in E_1\cap B(x,\delta)$, there exists $ \oP' \in \Gbar(x',\kt,A) $ such that
		\begin{equation}
		\abs{\d^\alpha (\oP - \oP')(x)}, \abs{\d^\alpha (\oP - \oP')(x')} \leq A\abs{x - x'}^{m-\abs{\alpha}}
		\text{ for } \abs{\alpha} \leq m-1.
		\end{equation}
\end{lemma}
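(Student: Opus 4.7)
The plan is to transfer a simplex around $\bar{P}$ from $\bar{\Gamma}(x,\bar{k},A)$ to $\bar{\Gamma}(x',\tilde{k},A)$ vertex by vertex using Helly's theorem, then assemble $\bar{P}'$ by a convex combination. Since $\dim\bar{\Gamma}(x,\bar{k},A)=\bar{d}$ on $E_1$ by Lemma \ref{lemma:same dimension merged} and $\bar{P}$ lies in the relative interior, I first choose vertices $\bar{P}_0,\dots,\bar{P}_{\bar{d}}\in\bar{\Gamma}(x,\bar{k},A)$ of a nondegenerate affine $\bar{d}$-simplex whose convex hull contains a small ball around $\bar{P}$, write $\bar{P}=\sum_j \lambda_j\bar{P}_j$ with $\lambda_j>0$ summing to $1$, and lift each $\bar{P}_j$ to some $P_j\in\Gamma(x,\bar{k},A)$.

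Fix a vertex $j$. For each $\tilde{k}$-tuple $\vec{y}=(y_1,\dots,y_{\tilde{k}})\in E^{\tilde{k}}$, define
\begin{equation*}
K_j(\vec{y}):=\{P'\in\Gamma(x',A):\exists\,Q_i\in\Gamma(y_i,A)\text{ with full rate-}A\text{ Whitney compatibility among }P_j,P',Q_1,\dots,Q_{\tilde{k}}\}.
\end{equation*}
Each $K_j(\vec{y})$ is convex (convex combinations of $(P',Q_i)$ preserve compatibility, using convexity of the fibers $\Gamma(\cdot,A)$) and compact (by \eqref{regularity 1} and the closedness of the shape field). To verify that every $D+1$ of these sets intersect, I take tuples $\vec{y}^{(1)},\dots,\vec{y}^{(D+1)}$, concatenate them with $x'$ to obtain a list of $1+(D+1)\tilde{k}\leq\bar{k}$ test points, and invoke $P_j\in\Gamma(x,\bar{k},A)$ on this list: the resulting $x'$-companion $P_j^\#$ lies in each $K_j(\vec{y}^{(\ell)})$, with the $y_i^{(\ell)}$-companions from the same call serving as the requisite $Q_i$. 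Helly's theorem in $\mathcal{P}$ (dimension $D$) then yields $P_j'\in\bigcap_{\vec{y}}K_j(\vec{y})=\Gamma(x',\tilde{k},A)$, and the built-in rate-$A$ compatibility between $P_j$ and $P_j'$ gives $|\partial^\alpha(P_j-P_j')(x)|,|\partial^\alpha(P_j-P_j')(x')|\leq A|x-x'|^{m-|\alpha|}$ for $|\alpha|\leq m$.

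Finally, set $\bar{P}_j':=\pi_{x'}P_j'\in\bar{\Gamma}(x',\tilde{k},A)$ and $\bar{P}':=\sum_{j=0}^{\bar{d}}\lambda_j\bar{P}_j'$; by convexity of $\bar{\Gamma}(x',\tilde{k},A)$, we have $\bar{P}'\in\bar{\Gamma}(x',\tilde{k},A)$. The required Taylor bounds for $|\alpha|\leq m-1$ follow from the triangle inequality together with the same Taylor-remainder bookkeeping used in the proof of Lemma \ref{lemma:5.2 equivalent} (comparing $\pi_x P_j$ to $\pi_{x'}P_j'$ via the $m$-jet-level closeness of $P_j$ and $P_j'$), after absorbing a dimensional constant into the generic $A$. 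The main obstacle is the Helly step, namely showing that a single invocation of $P_j\in\Gamma(x,\bar{k},A)$ on the concatenated test set simultaneously witnesses membership in $D+1$ of the sets $K_j(\vec{y}^{(\ell)})$; this is exactly where the hypothesis $1+(D+1)\tilde{k}\leq\bar{k}$ and the ambient dimension $D$ of $\mathcal{P}$ are consumed. The choice of $\delta$ is not delicate and is dictated by keeping the simplex vertices safely inside $\bar{\Gamma}(x,\bar{k},A)$, which the interior hypothesis on $\bar{P}$ makes possible.
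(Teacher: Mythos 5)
Your proof is correct, but it takes a noticeably different route from the paper's, and the differences are worth noting.

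The paper runs a single Helly argument in the quotient space $\overline{\mathcal P}$: it defines compact convex sets $\mathcal K(S)\subset\overline\Lambda(x')$, where $\overline\Lambda(x')$ is the convex hull of the projected vertices $\overline P_0',\dots,\overline P_{\bar d}'$ obtained from Glaeser stability, and needs Lemma \ref{lemma:linear perturbation} precisely to certify that the Whitney-compatible candidate $\pi_{x'}\tilde P$ actually lands in $\overline\Lambda(x')$ so that $\mathcal K(S^+)\neq\varnothing$; the interiority hypothesis on $\overline P$ is what makes the simplex-perturbation lemma applicable. You instead run Helly in $\mathcal P$, once per vertex, on sets $K_j(\vec y)$ of \emph{full} $m$-jets $P'$ compatible with the lift $P_j$, and then project and convex-combine at the very end. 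This bypasses Lemma \ref{lemma:linear perturbation} entirely and also sidesteps a genuine subtlety in the paper's version (an $\overline P'$ lying in every $\mathcal K(S)$ a priori admits $S$-dependent lifts $P^{x'}_S$, whereas your argument produces a single $P_j'\in\Gamma(x',\tilde k,A)$ directly). The trade-off is that your Helly is in the higher-dimensional ambient space $\mathcal P$ rather than $\overline{\mathcal P}$, but since both arguments use $D+1$ sets and the hypothesis is $1+(D+1)\tilde k\le\bar k$, this costs nothing here.

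Two small remarks. First, $\bigcap_{\vec y}K_j(\vec y)$ is a subset of $\Gamma(x',\tilde k,A)$, not equal to it (the $K_j(\vec y)$ carry the extra compatibility-with-$P_j$ constraint); the containment is what you actually use. Second, in your version the simplex and the weights $\lambda_j$ do no real work: once you run the Helly argument at the $\mathcal P$ level you could take a single lift $P$ of $\overline P$ as the root, get $P'\in\Gamma(x',\tilde k,A)$ compatible with $P$, and set $\overline P'=\pi_{x'}P'$, never invoking the interiority of $\overline P$. The simplex is needed in the paper only because its Helly argument is confined to $\overline\Lambda(x')$; your organization makes it redundant. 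This is not an error — just an unnecessary layer inherited from the paper's structure.
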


\begin{proof}
Choose $\oP_0,...,\oP_{\dbar}\in\Gb(x,\kb,A)$ such that $Conv(\oP_0,...,\oP_{\dbar})$ is a $\db$-dimensional simplex containing $B(\eta,\oP)\cap \pi_x W_{\G}(x)$ for some $\eta>0$. Write $\oP_j=\pi_x P_j$ and $\oP=\pi_x P$ for some $P_j,P\in\Gamma(x,\kb,A), 0\le j\le\db.$
		
By definition of the $\Gamma(x,\kb,A)$, for any $x'\in E_1$, there exists $P_j'\in\Gamma(x',A)$ ($0\le j\le d$) such that
	\begin{equation}
		|\da(P_j-P_j')(x')|\le A|x-x'|^{m-|\alpha|}\text{ for }|\alpha|\le m.
	\end{equation}
		
In particular, since $|\da P_j(x)|\le A$ for $|\alpha|\le m$ by \ref{regularity 1},

\begin{equation}\label{eq:close to vertices}
		|\da(\oP_j-\oP_j')(x')|\le A_1|x-x'|^{m-|\alpha|}\text{ for }|\alpha|\le m-1,
	\end{equation}
where $\oP_j'=\pi_{x'}P_j'$ and $A_1$ is a constant depending solely on $A,m,n$.

Applying Lemma \ref{lemma:linear perturbation} to $\oP_0,...,\oP_{\dbar}$, we see that by taking $|x-x'|$ sufficiently small, there exists $0<\delta<1$ such that if $x'\in E_1\cap B(x,\delta)$ and $\oP'\in \pi_{x'}W_{\G}(x')$ satisfies
		\begin{equation}\label{eq:close to base point}
		    |\da(\oP-\oP')(x)|\le A_1|x-x'|^{m-|\alpha|}\text{ for }|\alpha|\le m-1,
		\end{equation}
	    then $\oP'\in \overline{\Lambda}(x'):=Conv(\oP_0',...,\oP_d')\subset\Gbar(x',A)$. $A_1$ will be chosen later and depend solely on $A,m,n$. Here, we implicitly used Lemma \ref{lemma:same dimension merged} to show $\pi_{x'}W_{\G}$ was $\db$-dimensional, allowing us to apply Lemma \ref{lemma:linear perturbation}.
	    
	   % For each $\oP=\sum_{j=0}^d\lambda_j\oP_j'$ (with $0\le\lambda_j\le 1$ for all $0\le j\le d$ and $\sum_{j=0}^d\lambda_j=1$) choose $P=\sum_{j=0}^d\lambda_jP_j\in \Gamma(x',A)$ so that $\pi_{x'}P=\oP$. Call the simplex of such $P$ $\Lambda(x')$. Note that $\Lambda(x')$ is compact and convex for each $x'\in E_1$.
		
		Fix $x'\in E_1\cap B(x,\delta)$. Given a finite set $S \subset E$, define $S^+:= \set{x,x'} \cup S$, and define
		\newcommand{\K}{\mathcal{K}}
		\begin{equation*}
		    \K(S) := \set{\oP' \in \overline{\Lambda}(x') : \begin{matrix}
		  \text{ There exists } P^y \in \Gamma(y,A)\text{ for all } y \in S^+ \text{ such that}\\
		  P^x = P,\, \pi_{x'}P^{x'} = \oP',\, \text{ and }\\
		  \abs{\da(P^y - P^z)(z)} \leq A\abs{y-z}^{\ma} \text{ for } \abs{\alpha} \leq m,\, y,z \in S^+.
		    \end{matrix}}
		\end{equation*}
		
		Each $\K(S)$ is compact, convex subset of $\pbar$, which has dimension $\le D$. Given $S \subset S'$, we have $\K(S') \subset \K(S)$. Given $S^+ \subset E$ with $\#(S^+) \leq \kb + 1$, we see from $P \in \G(x,\kb,A)$ there exists $\tilde{P}\in\P$ such that $\pi_{x'}\tilde{P}$ satisfies the conditions for membership in $\K(S^+)$, except it is only known so far that $\pi_{x'}\tilde{P}\in\overline{\p}$, rather than $\pi_{x'}\tilde{P}\in\overline{\Lambda}(x')$. However, $|\da(P-P^{x'})(x')|\le A|x-x'|^{m-|\alpha|}$ implies \eqref{eq:close to base point} with $A_1$ chosen accordingly; thus, it is required that  $\pi_{x'}\tilde{P}\in\overline{\Lambda}(x')$ thus $\pi_{x'}\tilde{P}\in \K(S^+)$ and $\K(S^+)$ is nonempty.
		
		Suppose $S_1, \cdots, S_{D+1} \subset E$ with $\#(S_i) \leq \tilde{k}$ for $i = 1, \cdots, D+1$. Then $S := S_1 \cup \cdots \cup S_{D+1}$ satisfies $\#(S) \leq \kb + 1$. Hence, $\K(S) \neq \void$, and $\K(S) \subset \K(S_i)$ for each $i$. Thus, $\bigcap_{i = 1}^{D+1}\K(S_i) \neq \void$. By Helly's theorem, there exists $\oP' \in \K(S)$ for every $S \subset E$ with $\#(S) \leq \tilde{k}$. It follows that $\oP' \in \Gb(x',\tilde{k},A)$. Taking $S =\void$ and using the fact that $\oP' \in \K(S)$, we see that the first conclusion follows.

% 2) If $P$ is in the relative interior of $\G(x,\kb,A)$, then $\oP$ is in the relative interior of $\Gb(x,\kb,A)$. Thus, by the first statement, there exists $0<\delta<1$ such that if $x'\in E_1\cap B(x,\delta)$, there exists $ \oP' \in \Gbar(x',\kt,A') $ such that
% 		\begin{equation}\label{eq:conclusion from first part}
% 		\abs{\d^\alpha (\oP - \oP')(x)}, \abs{\d^\alpha (\oP - \oP')(x')} \leq A'\abs{x - x'}^{m-\abs{\alpha}}
% 		\text{ for } \abs{\alpha} \leq m.
% 		\end{equation}
		
% Choose $P'\in\G(x',\kb,A)$ such that $\pi_{x'}P'=\oP'$. Then the desired bounds follow from \eqref{eq:conclusion from first part}.

\end{proof}

\begin{lemma}\label{lemma:5.7 equivalent}
Suppose $A>0$ and $1 + (D+1)\tilde{k} \leq \kb$. Given $x\in E_1$, and $\overline{Q}\in\text{int}\Gbar(x,\kb,A)$, there exist $\eps_0, \delta_0>0$ such that for any $x\in E_1\cap B(x,\delta_0)$ and any $\overline{Q}'\in\Gbar(x',A)$, if $|\da(\overline{Q}-\overline{Q}')(x)|\le \eps_0$ for $|\alpha|\le m-1$, then $\overline{Q}'\in\Gbar(x',\tilde{k},A)$.
\end{lemma}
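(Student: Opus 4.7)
The plan is to reduce Lemma~\ref{lemma:5.7 equivalent} to Lemma~\ref{lemma:5.6 equivalent} by a convexity argument, with Lemma~\ref{lemma:linear perturbation} serving as the key geometric ingredient. In effect, the proof will be a reversal of Lemma~\ref{lemma:5.6 equivalent}: rather than produce a single polynomial in $\Gbar(x',\kt,A)$, I surround $\overline{Q}$ by a full nondegenerate simplex of such polynomials, and then invoke Lemma~\ref{lemma:linear perturbation} to certify that any $\overline{Q}'\in\Gbar(x',A)$ sufficiently close to $\overline{Q}$ lies inside that simplex, so $\overline{Q}'\in\Gbar(x',\kt,A)$ follows from convexity.

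First, since $\overline{Q}\in\mathrm{int}\,\Gbar(x,\kb,A)$ and $\dim\Gbar(y)=\db$ is constant on $E_1$ by Lemma~\ref{lemma:same dimension merged}, I would select vertices $\overline{Q}_0,\ldots,\overline{Q}_{\db}\in\mathrm{int}\,\Gbar(x,\kb,A)$ of a nondegenerate affine $\db$-simplex whose relative interior inside $\pi_xW_{\Gamma}(x)$ contains a ball around $\overline{Q}$. Applying Lemma~\ref{lemma:5.6 equivalent} to each $\overline{Q}_j$ produces $\delta_j>0$ such that, for any $x'\in E_1\cap B(x,\delta_j)$, there exists $\overline{Q}_j'\in\Gbar(x',\kt,A)$ with
\begin{equation*}
|\da(\overline{Q}_j-\overline{Q}_j')(x)|,\ |\da(\overline{Q}_j-\overline{Q}_j')(x')|\le A|x-x'|^{m-|\alpha|}\text{ for }|\alpha|\le m-1.
\end{equation*}
Setting $\delta_0<\min_j\delta_j$ and using the upper semicontinuity of $y\mapsto W_\Gamma(y)$ on $E_1$ (as in the proof of Lemma~\ref{lem:low-strat}), for $\delta_0$ small enough the points $\overline{Q}_0',\ldots,\overline{Q}_{\db}'$ form a nondegenerate affine $\db$-simplex inside $\pi_{x'}W_\Gamma(x')$.

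Next, I would set $\eps_0>0$ to be (a controlled constant times) the perturbation tolerance $\eta$ supplied by Lemma~\ref{lemma:linear perturbation} for the simplex $\overline{Q}_0,\ldots,\overline{Q}_{\db}\subset \pi_xW_\Gamma(x)$; by shrinking $\delta_0$ further one can secure a uniform such $\eta$ across $x'\in E_1\cap B(x,\delta_0)$. Suppose now that $\overline{Q}'\in\Gbar(x',A)$ satisfies $|\da(\overline{Q}-\overline{Q}')(x)|\le\eps_0$ for $|\alpha|\le m-1$. Because $\overline{Q}'\in\Gbar(x')\subset \pi_{x'}W_\Gamma(x')$ by Lemma~\ref{lemma:same dimension merged}, Lemma~\ref{lemma:linear perturbation} yields $\overline{Q}'\in\mathrm{Conv}(\overline{Q}_0',\ldots,\overline{Q}_{\db}')$. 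A direct check (averaging the auxiliary polynomial families of two members, using convexity of each $\Gamma(y,A)$) shows that $\Gamma(x',\kt,A)$ is convex, so its linear image $\Gbar(x',\kt,A)$ is convex as well; this convex set contains each $\overline{Q}_j'$ and hence their convex hull, giving $\overline{Q}'\in\Gbar(x',\kt,A)$, as desired.

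The main obstacle is the mismatch between the two ambient affine subspaces $\pi_xW_\Gamma(x)$ and $\pi_{x'}W_\Gamma(x')$: Lemma~\ref{lemma:linear perturbation} places the reference point in the first and the perturbed point in the second, and requires both subspaces to have the same dimension and to be close enough in the Grassmannian to transport a simplex between them. This is precisely where the hypothesis $x'\in E_1$ is used, guaranteeing $\dim\pi_{x'}W_\Gamma(x')=\db$, and why $\delta_0$ must be shrunk in a manner dependent on the geometry of the chosen simplex so that the closeness hypotheses of Lemma~\ref{lemma:linear perturbation} can be verified uniformly over $x'\in E_1\cap B(x,\delta_0)$.
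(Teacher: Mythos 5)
Your proposal is correct and follows essentially the same path as the paper's own proof: pick a nondegenerate $\db$-simplex in $\mathrm{int}\,\Gbar(x,\kb,A)$ containing $\overline{Q}$ in its relative interior, apply Lemma~\ref{lemma:5.6 equivalent} vertex-by-vertex to land $\overline{Q}_j'\in\Gbar(x',\kt,A)$, invoke Lemma~\ref{lemma:linear perturbation} to place $\overline{Q}'$ in the convex hull of the $\overline{Q}_j'$, and finish by convexity of $\Gbar(x',\kt,A)$. (Note the paper's displayed equation has a typo, writing $\overline{Q}_j'\in\Gb(x',\kb,A)$ where it should read $\kt$, exactly as you state.)
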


\begin{proof}
Choose $\overline{Q}_0,...,\overline{Q}_{\db}\in\text{int}\Gb(x,\kb,A)$ which form a $\db$-dimensional simplex with $\overline{Q}$ lying in its relative interior. Let $\eta>0$ be as in Lemma \ref{lemma:linear perturbation}.

Applying Lemma \ref{lemma:5.6 equivalent} to each of the $\overline{Q}_j$ and taking $\delta_0$ to be the minimum of the associated $\delta$'s, we see that for any $x'\in E_1\cap B(x,\delta_0)$, there exist $\overline{Q}_0',...,\overline{Q}_{\db}'\in\pbar$ such that
\begin{equation}
    \overline{Q}_j'\in\Gb(x',\kb,A)
\end{equation}
and
\begin{equation}\label{eq:the overline Qj are close}
		\abs{\d^\alpha (\overline{Q}_j - \overline{Q}_j')(x)}, \abs{\d^\alpha (\overline{Q}_j - \overline{Q}_j')(x')} \leq A\abs{x - x'}^{m-\abs{\alpha}}
		\text{ for } \abs{\alpha} \leq m-1.
\end{equation}
		
Taking $\delta_0>0$ small enough, by \eqref{eq:the overline Qj are close}, Lemma \ref{lemma:linear perturbation} applies. (By Lemma \ref{lemma:same dimension merged} and the definition of strata, $\dim\Gb(x',A)=\dim\Gb(x,A)=\db$.) Thus, there exists $\eps_0>0$ such that if $\overline{Q}'\in\Gbar(x',A)$, if $|\da(\overline{Q}-\overline{Q}')(x)|\le \eps_0$ for $|\alpha|\le m-1$, then $\overline{Q}'\in Conv(\overline{Q}_0',...,\overline{Q}_{\db}')\subset\Gb(x',\tilde{k},A)$.
\end{proof}

\begin{lemma}\label{lemma:5.8 equivalent}
Suppose $A,A_1 > 0$ and $1 + (D+1)\tilde{k} \leq \kb$.
Given $x\in E_1$, and $Q\in\text{int}\Gamma(x,\kb,A)$, there exist $\eps, \delta>0$ such that for any $x'\in E_1\cap B(x,\delta)$ and any $Q'\in\Gamma(x',A)$, if
\begin{equation}
    |\da(Q-Q')(x)|\le \eps \text{ for }|\alpha|\le m-1,
    \label{eq.8.4.1}
\end{equation}
and
\begin{equation}
    |\da Q'(x)|\le A\text{ for }|\alpha|=m,
    \label{eq.8.4.2}
\end{equation}
then $Q'\in\Gamma(x',\tilde{k},CA)$, with $C$ depending only on $m,n$.
\end{lemma}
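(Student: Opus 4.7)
The plan is to derive this lemma by composing Lemma \ref{lemma:5.7 equivalent} (the analogous statement after projecting with $\pi_{x'}$) with Lemma \ref{lemma:5.2 equivalent} (which upgrades membership in $\Gamma(x',A)$ to membership in $\Gamma(x',\tilde{k},CA)$ when two polynomials agree after projection). All the substantive geometric work has already been done in those two lemmas; here we only need to lift from $\overline{\Gamma}$ back to $\Gamma$.

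First, I would set $\overline{Q} := \pi_x Q$. Since $Q \in \operatorname{int}\Gamma(x,\bar{k},A)$ and $\pi_x : \Gamma(x,\bar{k},A) \to \overline{\Gamma}(x,\bar{k},A)$ is the restriction of a linear surjection to a convex set, the relative interior maps into the relative interior, so $\overline{Q} \in \operatorname{int}\overline{\Gamma}(x,\bar{k},A)$. Apply Lemma \ref{lemma:5.7 equivalent} to $\overline{Q}$ to obtain constants $\eps_0, \delta_0 > 0$ such that whenever $x' \in E_1 \cap B(x,\delta_0)$ and $\overline{Q}' \in \overline{\Gamma}(x',A)$ satisfies $|\da(\overline{Q}-\overline{Q}')(x)| \leq \eps_0$ for $|\alpha| \leq m-1$, we have $\overline{Q}' \in \overline{\Gamma}(x',\tilde{k},A)$. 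Choose $\eps := \eps_0$ and $\delta := \delta_0$.

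Now let $x' \in E_1 \cap B(x,\delta)$ and $Q' \in \Gamma(x',A)$ satisfy \eqref{eq.8.4.1} and \eqref{eq.8.4.2}. Set $\overline{Q}' := \pi_{x'} Q' \in \overline{\Gamma}(x',A)$. Since projection under $\pi_{x'}$ only drops the top-degree coefficients, and since for $|\alpha| \leq m-1$ the value $\da R(x)$ depends only on $\pi_{x'} R$ (via Taylor expansion around $x'$), condition \eqref{eq.8.4.1} translates directly to $|\da(\overline{Q}-\overline{Q}')(x)| \leq \eps$ for $|\alpha| \leq m-1$. By Lemma \ref{lemma:5.7 equivalent}, $\overline{Q}' \in \overline{\Gamma}(x',\tilde{k},A)$, so there exists $\tilde{Q}' \in \Gamma(x',\tilde{k},A)$ with $\pi_{x'}\tilde{Q}' = \overline{Q}' = \pi_{x'} Q'$, i.e., $\pi_{x'}(\tilde{Q}' - Q') = 0$.

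Finally, apply Lemma \ref{lemma:5.2 equivalent} with $x_0 = x'$, $P_0 = \tilde{Q}' \in \Gamma(x',\tilde{k},A)$, and $P_0' = Q' \in \Gamma(x',A)$; the hypothesis $\pi_{x'}(P_0 - P_0') = 0$ is precisely what we just produced. The conclusion gives $Q' \in \Gamma(x',\tilde{k},CA)$ with $C = C(m,n)$, which is the desired statement. Hypothesis \eqref{eq.8.4.2} is in fact automatic, since for $|\alpha|=m$ the derivative $\da Q'$ is a constant polynomial and $|\da Q'(x')| \leq A$ holds by \eqref{regularity 1} applied to $Q' \in \Gamma(x',A)$; I would remark on this but carry the hypothesis through for uniformity of presentation. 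The main (mild) subtlety to verify carefully is the passage from $\operatorname{int}\Gamma(x,\bar{k},A)$ to $\operatorname{int}\overline{\Gamma}(x,\bar{k},A)$ under $\pi_x$, which uses that these relative interiors are taken with respect to the affine hulls $W_\Gamma(x)$ and $\pi_x W_\Gamma(x)$ whose dimensions coincide by Lemma \ref{lemma:same dimension merged} on the stratum $E_1$.
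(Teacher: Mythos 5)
There is a genuine gap in the key step of your proof, the passage from condition \eqref{eq.8.4.1} (an inequality between unbarred jets $Q, Q'$ evaluated at $x$) to the hypothesis of Lemma~\ref{lemma:5.7 equivalent} (an inequality between barred jets $\overline Q = \pi_x Q$, $\overline Q' = \pi_{x'}Q'$ evaluated at $x$). You assert that ``for $|\alpha|\le m-1$ the value $\partial^\alpha R(x)$ depends only on $\pi_{x'}R$,'' but this is false when $x\neq x'$. Indeed, $\pi_{x'}Q'$ discards the degree-$m$ homogeneous part of $Q'$ \emph{centered at $x'$}: writing $R := Q' - \pi_{x'}Q' = \sum_{|\beta|=m}\frac{\partial^\beta Q'(x')}{\beta!}(y-x')^\beta$, one has for $|\alpha|\le m-1$
\begin{equation*}
\partial^\alpha R(x) = \sum_{\substack{|\beta|=m\\ \beta\ge\alpha}} \frac{\partial^\beta Q'(x')}{(\beta-\alpha)!}\,(x-x')^{\beta-\alpha},
\end{equation*}
which is generically nonzero and of size comparable to $A\,|x-x'|^{m-|\alpha|}$. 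Thus $\partial^\alpha(\overline Q - \overline Q')(x)$ differs from $\partial^\alpha(Q-Q')(x)$ by a term of this size, and you cannot simply take $\eps := \eps_0$, $\delta := \delta_0$ as you do. This error term is exactly why hypothesis \eqref{eq.8.4.2} appears in the statement: it (together with \eqref{eq.8.4.1} and $Q\in\Gamma(x,\kb,A)$, via regularity) furnishes a bound $|\partial^\alpha Q'(x')|\le A_3$ for all $|\alpha|\le m$, from which $|\partial^\alpha(\overline Q'-Q')(x)|\le A_3\delta^{m-|\alpha|}\le A_3\delta$. One must then shrink $\eps$ and $\delta$ further so that $\eps + A_3\delta\le\eps_0$ before Lemma~\ref{lemma:5.7 equivalent} applies. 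This is precisely what the paper's proof does, and it is the substantive content of the lemma; your route via Lemma~\ref{lemma:5.7 equivalent} followed by Lemma~\ref{lemma:5.2 equivalent} is the same architecture as the paper, but the missing error estimate is not a cosmetic point.

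Two smaller remarks. First, your observation that $\eqref{eq.8.4.2}$ is a consequence of $Q'\in\Gamma(x',A)$ via \eqref{regularity 1} (since $\partial^\alpha Q'$ is constant for $|\alpha|=m$) is correct, but note that the paper nevertheless uses the $|\alpha|=m$ bound substantively in the Taylor estimate sketched above, so the hypothesis is not vacuous even if it is redundant. Second, your discussion of why $\pi_x$ carries $\operatorname{int}\Gamma(x,\kb,A)$ into $\operatorname{int}\overline\Gamma(x,\kb,A)$ via Lemma~\ref{lemma:same dimension merged} is a nice elaboration of a step the paper dispatches in one phrase, and it is correct.
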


\begin{proof}
\newcommand{\alm}{{\abs{\alpha}\leq m}}
%Same as in Charlie's paper, need to work in Lemma 5.2 equivalent. Good one for Black to do?

Let $x \in E_1$. Let $\eps_0,\delta_0$ be as in Lemma \ref{lemma:5.7 equivalent} with $A = A_1$. Let $\eps$ and $\delta$ be small numbers to be determined, depending only on the parameters appeared above. 

Fix $Q \in \G(x,\kb,A_1)$, $x' \in E_1 \cap B(x,\delta)$, and $Q' \in \G(x',A)$ satisfying \eqref{eq.8.4.1} and \eqref{eq.8.4.2}. 

Since $Q \in \G(x,\kb,A_1)$, we have $\abs{\da Q(x)} \leq A_1$ for $\abs{\alpha} \leq m$. Therefore,
\eqref{eq.8.4.1} and \eqref{eq.8.4.2} imply that $\abs{\da Q'(x)} \leq A_3$ for $\abs{\alpha}\leq m$. Taking $\delta \leq 1$, we have
\begin{equation}
    \abs{\da Q'(x')} \leq A_4 \text{ for }\alm.
    \label{eq.8.4.3}
\end{equation}

\newcommand{\Qb}{\overline{Q}}
We set $\Qb := \pi_x Q$ and $\Qb' := \pi_{x'}Q'$, immediately seeing that by taking $\delta \leq \delta_0$, we have 
\begin{equation}
    \Qb \in \mathrm{int}\,\Gb(x,\kb,A_1),\,x' \in E_1\cap B(x,\delta_0),\text{ and }\Qb' \in \Gb(x',A).
    \label{eq.8.4.6}
\end{equation}

By Taylor's theorem and \eqref{eq.8.4.3}, we have
\begin{equation}
    \abs{\da(\Qb' - Q')(x)} \leq A_3\abs{x-x'}^\ma \leq A_3\delta^\ma \leq A_3\delta\text{ for} \alm.
    \label{eq:8.4.4}
\end{equation}

Taking $\eps$ and $\delta$ to be sufficiently small, we see from \eqref{eq.8.4.1} and \eqref{eq:8.4.4} that
\begin{equation}
    \abs{\da(\Qb' - \Qb)(x)} \leq \eps_0\text{ for }\abs{\alpha} \leq m-1.
    \label{eq.8.4.5}
\end{equation}

In view of \eqref{eq.8.4.6} and \eqref{eq.8.4.5}, we see that Lemma \ref{lemma:5.7 equivalent} applies. Hence, $\Qb' \in \Gb(x',\tilde{k},A_4)$. This means that there exists some $\tilde{Q} \in \G(x',\tilde{k},A_4)$ such that $\Qb' = \pi_{x'}\tilde{Q}$. Now, since $\pi_{x'}(\tilde{Q}-Q') \equiv 0$, we see from Lemma \ref{lemma:5.2 equivalent} that $Q' \in \G(x',\tilde{k},A_5)$.
\end{proof}

% This extends the prior lemma to a uniform $\eps_0, \delta_0$ for a neighborhood of $(x,P)$.

% \begin{lemma}
% Let $x\in E_1$ and $Q\in \Gamma(x,\kb,A)$. Then, there exists $\eta>0$ such that whenever $x'\in E_1\cap B(x,\eta)$ and $Q'\in \Gamma(x,\kb,A)$ with $\dist((x,Q),(x',Q'))<\eta$, then

% for any $x''\in E_1\cap B(x',\eta)$ and any $Q''\in\Gamma(x'',A)$, if
% \begin{equation}
%     |\da(Q'-Q'')(x)|\le A_2\eta^{m-|\alpha|} \text{ for }|\alpha|\le m,
% \end{equation}
% then $Q''\in\Gamma(x'',\kb,A')$, with $A'$ depending only on $A,m,n$.
% \end{lemma}

% \begin{proof}
% Create simplex around base $Q$ and apply Lemma 5.8 equivalent to each vertex, take convex hulls, etc.

% Prove like Lemma 5.9 original 
% \end{proof}

% Note: No Lemma 5.10 equivalent (yet). That will follow from compactness in next section.

\section{Uniform delta and modulus of continuity}\label{sec:uniform}

The purpose of this section is to show that, given $\epsilon>0$, we may take the $\delta$ in the definition of Glaeser stable to be uniform in $x$ and $P_0$ (with some change in $\ksh$), then use this fact and the results of Section \ref{sec:more convex} to construct a modulus of continuity that will let us apply Theorem \ref{thm:shape fields fin prin}.

Given $(x,P),(x',P')\in E\times\P$, we let
\begin{equation}\label{eq:define distance}
    \dist\left((x,P),(x',P')\right):=|x-x'|+\max_\alpha|\da(P-P')(x)|+\max_\alpha|\da(P-P')(x')|.
\end{equation}

Note that the above turns $E\times\P$ into a metric space. While the above description of $\dist(\cdot,\cdot)$ will be most useful, the metric defined by

\begin{equation}
    \rho((x,P),(x',P')):=|x-x'|+\max_\alpha|\da(P-P')(0)|.
\end{equation}
will be useful for establishing a crucial property of $\dist(\cdot,\cdot)$.

\begin{lemma}\label{lemma:for Heine-Borel}
The metrics $\dist(\cdot,\cdot)$ and $\rho(\cdot,\cdot)$ are equivalent, that is, given compact $E\subset\R^n$, there exists $C>0$ such that
\begin{equation}
    C^{-1}\rho((x,P),(x',P'))\le \dist((x,P),(x',P'))\le C\rho((x,P),(x',P'))
\end{equation}
whenever $x,x'\in E, P,P'\in\P$.

In particular, a set is compact in the $\dist$ metric if and only if it is closed and bounded in the $\dist$ metric.
\end{lemma}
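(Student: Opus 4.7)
The key point is that for any polynomial $R \in \mathcal{P}$ of degree at most $m$ and any $x \in \mathbb{R}^n$, the values $\{\partial^\alpha R(x)\}_{|\alpha| \leq m}$ and $\{\partial^\alpha R(0)\}_{|\alpha| \leq m}$ determine each other linearly via Taylor expansion. Applied to $R = P - P'$, this will give the equivalence of the two metrics.

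\textbf{Step 1.} Expand via Taylor's theorem. For any $|\alpha| \leq m$,
\begin{equation*}
\partial^\alpha(P-P')(x) = \sum_{|\beta| \leq m - |\alpha|} \frac{x^\beta}{\beta!}\,\partial^{\alpha+\beta}(P-P')(0),
\end{equation*}
and symmetrically
\begin{equation*}
\partial^\alpha(P-P')(0) = \sum_{|\beta| \leq m - |\alpha|} \frac{(-x)^\beta}{\beta!}\,\partial^{\alpha+\beta}(P-P')(x).
\end{equation*}
Since $E$ is compact, the set $\{x : x \in E\}$ is bounded by some $R_E$; hence, taking the max over $|\alpha| \leq m$ on each side, there is a constant $C = C(m, n, R_E)$ such that
\begin{equation*}
\max_\alpha |\partial^\alpha(P-P')(x)| \leq C \max_\alpha |\partial^\alpha(P-P')(0)|
\end{equation*}
and vice versa, for every $x \in E$ and every $P, P' \in \mathcal{P}$. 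The same inequality holds with $x'$ in place of $x$.

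\textbf{Step 2.} Combining Step 1 with the trivial bound $|x - x'| \leq |x - x'|$, both directions of the metric equivalence follow: the term $\max_\alpha |\partial^\alpha(P-P')(0)|$ in $\rho$ controls, and is controlled by, the sum $\max_\alpha |\partial^\alpha(P-P')(x)| + \max_\alpha |\partial^\alpha(P-P')(x')|$ appearing in $\dist$, up to constants depending only on $m$, $n$, and $\operatorname{diam} E$.

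\textbf{Step 3 (Heine--Borel consequence).} Under $\rho$, the space $E \times \mathcal{P}$ is the metric product of the compact set $E$ with the finite-dimensional normed space $(\mathcal{P}, \|\cdot\|_0)$ where $\|P\|_0 = \max_\alpha |\partial^\alpha P(0)|$. A closed, bounded subset $K$ of $(E \times \mathcal{P}, \rho)$ is contained in $E \times \overline{B_r(0)} \subset \mathcal{P}$ for some $r$, which is compact by the Heine--Borel theorem in finite dimensions; being closed in a compact set, $K$ itself is compact. By the equivalence of $\rho$ and $\dist$ established above, the same characterization of compactness holds for $(E \times \mathcal{P}, \dist)$. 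No serious obstacle is expected; the only thing to be careful about is tracking the dependence of $C$ on $\operatorname{diam} E$, which will be harmless since $E \subset Q_0$ is of bounded diameter throughout the paper.
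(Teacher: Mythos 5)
Your proof is correct and takes essentially the same approach as the paper's: both exploit the linear change of basis between the derivatives of $P-P'$ at a point $x\in E$ and its derivatives (or coefficients) at $0$, bounded uniformly by the compactness of $E$, then invoke Heine--Borel in the finite-dimensional space $\mathcal{P}$. If anything, your version is slightly more complete, since you write out the inverse Taylor relation explicitly, whereas the paper dispatches the reverse inequality with a brief ``analogously.''
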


\begin{proof}

Write 
\begin{equation}
    P-P'(y)=\sum_\beta c_\beta x^\beta,
\end{equation}
so that
\begin{equation}
    \rho((x,P),(x',P'))=|x-x'|+\max_\alpha |c_\alpha|
\end{equation}
and
\begin{equation}
    \da(P-P')(y)=\sum_{\substack{|\beta|\le m\\\beta\ge\alpha}}c_\beta\beta! y^{\beta-\alpha}.
\end{equation}

Then, letting $M=\sup_{y\in E}|y|<\infty$,
\begin{align*}
    \dist((x,P),(x',P'))&=|x-x'|+\max_\alpha|\da(P-P')(x)|+\max_\alpha|\da(P-P')(x')|\\
    &=|x-x'|+\max_\alpha\left|\sum_{\substack{|\beta|\le m\\\beta\ge\alpha}}c_\beta\beta! x^{\beta-\alpha}\right|+\max_\alpha\left|\sum_{\substack{|\beta|\le m\\\beta\ge\alpha}}c_\beta\beta! (x')^{\beta-\alpha}\right|\\
    &\le|x-x'|+2DM^m\max_{|\beta|\le m}\beta!\max_{|\beta|\le m}|c_\beta|\\
    &\le \left[2DM^m\max_{|\beta|\le m}\beta!\right]\rho((x,P),(x',P')).
\end{align*}

The reverse direction may be proven analogously.
 
The metric $\rho(\cdot,\cdot)$ is by definition equivalent to a Euclidean metric on $E\times\P$ identified in the natural way with $E\times\R^D$. By the above, so is the metric $\dist(\cdot,\cdot)$; thus, the Heine-Borel theorem applies, which is the form of our conclusion.
\end{proof}

For Lemmas \ref{lemma:8.1} through \ref{lemma:uniform delta}, fix a Glaeser stable, convex bundle $(H(x))_{x\in E}$ with lowest stratum $E_1$.

\begin{lemma}\label{lemma:8.1}
Suppose $1+(D+1)\overline{k}\le k^\sharp$. Let $x\in E,$ $P\in\mathrm{int}H(x)$, and $\epsilon>0$ be given. Then there exists $0<\delta<1$ (depending on $P$) such that for every $x'\in E_1\cap B(x,\delta)$, there exists $P'\in H'(x')$ such that
\begin{equation}
    |\partial^\alpha(P-P')(x)|\le\eps|x-x'|^{m-|\alpha|}
    \label{eq.8.1.1}
\end{equation}
and given $x_j'\in E\cap B(x,\delta)$ ($1\le j\le \overline{k}$), there exist $P_j'\in H(x_j)$ such that
\begin{equation}
    |\partial^\alpha(P_i'-P_j')(x_j')|\le\epsilon|x_i'-x_j'|^{m-|\alpha|}, 0\le i,j,\le \kb.
    \label{eq.8.1.2}
\end{equation}
\end{lemma}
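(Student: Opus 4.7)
The plan is to apply Helly's theorem in $\P \cong \R^D$ to a carefully chosen family of compact convex sets, leveraging the $\ksh$-Glaeser stability of $H$ with the crucial budget $\ksh \ge 1 + (D+1)\kb$.

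To set up, I would invoke the Glaeser stability of $H$ at $(x, P)$ (valid since $P \in H(x) = \tilde{H}(x)$) to extract $\delta \in (0,1)$ such that, for any $y_1, \dots, y_{\ksh} \in E \cap B(x, \delta)$, there exist $P^{y_j} \in H(y_j)$ (taking $y_0 := x$, $P^x := P$) with the pairwise Whitney estimates at precision $\eps$. Then, for each fixed $x' \in E_1 \cap B(x, \delta)$ and each $S \subset E \cap B(x, \delta)$ of cardinality at most $\kb$, I would define $\mathcal{K}(S)$ to be the set of $P' \in H(x')$ satisfying \eqref{eq.8.1.1} and for which there exist $P^y \in H(y)$ ($y \in S$) with $|\da(P^y - P^z)(z)| \le \eps |y-z|^{\ma}$ for all $|\alpha| \le m$ and $y, z \in \{x, x'\} \cup S$ (using $P^{x'} := P'$, $P^x := P$). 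Each $\mathcal{K}(S)$ is convex (linear inequalities intersected with convex fibers), closed, and bounded (the constraint \eqref{eq.8.1.1} with $\delta \le 1$ confines $P'$ to a fixed region of $\P$), hence compact. Nonemptiness of $\mathcal{K}(S)$ comes from applying Glaeser stability to the $\#S + 1 \le \kb + 1 \le \ksh$ points $\{x'\} \cup S$, padding with repeats to reach $\ksh$ entries.

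For any $D+1$ such sets $S_1, \dots, S_{D+1}$, setting $S := S_1 \cup \cdots \cup S_{D+1}$ gives $\#(\{x'\} \cup S) \le 1 + (D+1)\kb \le \ksh$, so $\mathcal{K}(S) \ne \emptyset$ by the same mechanism; since $S_i \subset S$ forces $\mathcal{K}(S) \subset \mathcal{K}(S_i)$, this yields $\bigcap_{i = 1}^{D+1} \mathcal{K}(S_i) \ne \emptyset$. Helly's theorem in the $D$-dimensional space $\P$ then produces $P' \in \bigcap_S \mathcal{K}(S)$; this $P'$ satisfies \eqref{eq.8.1.1}, and for every choice of $x_1', \dots, x_{\kb}' \in E \cap B(x, \delta)$ the corresponding witnessing polynomials $P^{x_j'}$ verify \eqref{eq.8.1.2}. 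The main obstacle is the bookkeeping on $\ksh$: the single hypothesis $\ksh \ge 1 + (D+1)\kb$ must be calibrated so that both the individual nonemptiness of $\mathcal{K}(S)$ and the nonemptiness of $(D+1)$-fold intersections follow from one application of Glaeser stability at $(x, P)$, with $P^x$ pinned to $P$ throughout.
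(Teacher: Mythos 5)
Your proposal correctly identifies the skeleton of the paper's argument: define a family of convex sets $\mathcal{K}(S)$ indexed by finite subsets $S$, establish nonemptiness of $(D+1)$-fold intersections by feeding $\{x,x'\}\cup S_1\cup\cdots\cup S_{D+1}$ (with $\le 1+(D+1)\kb \le \ksh$ points besides $x$) into the Glaeser stability of $H$ at $(x,P)$, and then invoke Helly in $\P\cong\R^D$. The budget arithmetic and the mechanism by which nonemptiness propagates are exactly right.

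However, there is a genuine gap in your claim that each $\mathcal{K}(S)$ is \emph{compact}. You justify boundedness correctly via \eqref{eq.8.1.1}, but closedness does not follow from what is given. The bundle $(H(y))_{y\in E}$ fixed at the start of Section \ref{sec:uniform} is only assumed Glaeser stable and convex; its fibers need not be closed, and indeed in the applications (e.g., $\Gamma^o(\cdot,2A^\sharp) = \bigcup_{M<2A^\sharp}\Gamma(\cdot,M)$ after Lemma \ref{lemma:GS for Gamma(x,A)}) they are not. Your $\mathcal{K}(S)$ requires $P'\in H(x')$, a potentially non-closed constraint, so it may fail to be closed, and Helly as stated (Section \ref{sec:prelim}) requires compact sets. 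If one repairs this by replacing $\mathcal{K}(S)$ with its closure, the Helly point $P'$ may land in $\overline{H(x')}\setminus H(x')$, and the conclusion $P'\in H(x')$ is lost.

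This is precisely what the paper's simplex construction is for, and it is the step your proposal omits. The paper chooses $P_0,\dots,P_d\in H(x)$ forming a nondegenerate $d$-simplex around $P$ (possible because $P\in\mathrm{int}\,H(x)$), uses Glaeser stability to find $P_0',\dots,P_d'\in H(x')$ perturbing them, and applies Lemma \ref{lemma:linear perturbation} to show that any $P'\in H(x')$ satisfying \eqref{eq:P P' close with eta} must lie in $\Lambda(x'):=\mathrm{Conv}(P_0',\dots,P_d')$. The set $\Lambda(x')$ is a \emph{compact} convex subset of $H(x')$ (finite convex hull, contained in $H(x')$ by convexity), so defining $\K(S)\subset\Lambda(x')$ gives a bounded family inside a closed set whose Helly point is automatically in $H(x')$. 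Note also that Lemma \ref{lemma:linear perturbation} quietly uses $\dim H(x') = \dim H(x)$ (from $x,x'$ both lying in the lowest stratum), which your proposal does not invoke; this dimension equality is what lets the perturbed simplex span $W_H(x')$ and absorb nearby points of $H(x')$. Without the simplex and Lemma \ref{lemma:linear perturbation} your argument stalls at the compactness step.
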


We mimic the proof of Lemma 6.1 in \cite{F06}, the most significant change being the introduction of simplices so the compactness hypothesis of Helly's theorem is satisfied.

\begin{proof}

Recall that the bundle $(H(x))_{x \in E}$ is assumed to be Glaeser stable.

Suppose $x' = x$, then we may take $P'  \equiv P$ and \eqref{eq.8.1.1} follows. \eqref{eq.8.1.2} follows from Glaeser stability.

Suppose $x' \neq x$. Since $P\in\text{int}H(x)$, choose $P_0,...,P_d\in H(x)$ which form a $d$-dimensional simplex containing $B(P,\delta)\cap H(x)$. Let $\eta$ be as in the conclusion of Lemma \ref{lemma:linear perturbation}, applied to $P_0,...,P_d$.

Let $0<\eps'<\min\{\eps,\eta\}$ and by the Glaeser stability of $(H(x))_{x\in E}$ pick $0<\delta<\eta$ such that for $j=0,...,d$, if $x'\in E\cap B(x',\delta)$ there exists $P_j'\in H(x')$ such that
\begin{equation}
    |\da(P_j-P_j')(x)|,|\da(P_j-P_j')(x')|\le\eps'|x-x'|^{m-|\alpha|}\le\eta\text{ for }|\alpha|\le m.
\end{equation}

Thus, if $P'\in H(x')$ such that 
\begin{equation}\label{eq:P P' close with eta}
    |\da(P-P')(x)|,|\da(P-P')(x')|\le\eps|x-x'|^{m-|\alpha|}\le\eta\text{ for }|\alpha|\le m,
\end{equation}
then $P'\in\Lambda(x'):=Conv(P_0',...,P_j')$ (as the definition of lowest stratum implies $\dim H(x')=\dim H(x)$).

\newcommand{\K}{\mathcal{K}}
For any finite set $S \subset E \cap B(x,\delta)$ with $S \supset \set{x,x'}$, define
\begin{equation*}
    \K(S):= \set{P' \in \Lambda(x') : \begin{matrix}
    \text{ There exists a map }y\mapsto P^y\text{ from }S\text{ to }\p\text{ such that}\\
    P^x = P,\, P^{x'} = P',\, P^y \in H(y) \text{ for } y \in S, \text{ and }\\
    \abs{\da(P^y - P^z)(z)} \leq \eps'\abs{y-z}^\ma \text{ for }\abs{\alpha} \leq m,\, y,z \in S.
    \end{matrix}}
\end{equation*}
Each $\K(S)$ is a compact, convex subset of $\P$, which has dimension $D$. 

Suppose we are given $S_1, \cdots, S_{D+1} \subset E \cap B(x,\delta)$ , each containing $x$ and $x'$, with $\#(S_i) \leq \kb + 2$ for each $i$. Then $S:= \bigcup_{i = 1}^D S_i \subset E \cap B(x,\delta)$, with $x, x' \in S$, and $\#(S) \leq 2 + (D+1)\kb \leq 1 + \ksh$. Therefore, by the Glaeser stability of $(H(x))_{x\in E}$, there exists $y\mapsto P^y$, with $P^x = P$, $P^y \in H(y)$ for each $y \in S$, and
\begin{equation}
    \abs{\da(P^y - P^z)(z)} \leq \eps'\abs{y-z}^{\ma}\text{ for }\abs{\alpha} \leq m, y,z \in S.
\end{equation}
\eqref{eq:P P' close with eta} forces $P^{x'}\in\Lambda(x')$.

It is clear that $P^{x'} \in \K(S_i)$ for each $i = 1, \cdots, D+1$. Therefore, $\K(S_1), \cdots, \K(S_{D+1})$ have nonempty intersection. By Helly's theorem, there exists $P' \in \K(S') \subset H(x')$, whenever $S' \subset E\cap B(x,\delta)$, with $x,x' \in S'$ and $\#(S') \leq \kb + 2$. \eqref{eq.8.1.1} and \eqref{eq.8.1.2} then follow from the definition of $\K(S')$.
\end{proof}

% Main Lemma (6.2 Replacement) below:
% \begin{lemma}\label{lemma:6.2 equivalent}
% Suppose $1+(D+1)\overline{k}\le k^\sharp$. Let $x\in E, \epsilon>0$ be given. Then, there exists $\delta>0$ such that for any $x_j\in E\cap B(x,\delta)$ ($1\le j\le \overline{k}$) and any $P_0\in \Gamma(x_0,\overline{k},A)$, there exist $P_j\in\Gamma(x_j,\overline{k},2A)$ such that
% \begin{equation}
%     |\partial^\alpha(P_i-P_j)(x_j)|<\epsilon|x_i-x_j|^{m-|\alpha|}.
% \end{equation}
% \end{lemma}

% Note that this lemma is basically the prior lemma, except instead of each $P_0$ getting its own $\delta$, we are allowed to choose $\delta$ independently of $P_0$.

% The prior lemma gives us a function $P_0\mapsto \delta(P_0)\in(0,\infty)$ by choosing $\delta(P_0)$ to be the largest $\delta$ for which the conclusion holds. (One may check that the supremum of such $\delta$ does indeed satisfy the conclusion itself.) To show we may pick $\delta>0$ uniformly, it suffices to show that $\delta(\cdot)$ is continuous on the compact set $\Gamma(x,\overline{k},A)$. While we do not establish continuity precisely, we apply a similar strategy with the help of the following lemmas.

\begin{definition}
Let $(H(x))_{x\in E}$ be a Glaeser stable bundle. Given $x_0\in E$, $P_0\in H(x_0)$, $\eps>0$ and $\kb\in\N$, we define $\Delta(x_0,P_0;\eps,\kb)$ to be the supremum over all $\delta>0$ such that the following holds:

If $x_1,...,x_k\in E\cap B(x_0,\delta)$, there exist $P_1,...,P_k\in\p$ such that
\begin{equation}
    P_j\in H(x_j), 1\le j\le \kb
\end{equation}
and
\begin{equation}
    |\d^\alpha(P_i-P_j)(x_j)|\le\eps|x_i-x_j|^{m-|\alpha|}\text{ for }|\alpha|\le m, 0\le i,j\le \kb.
\end{equation}
\end{definition}

Note that since the $x_i$ are taken to be in the \textit{open} ball of radius $\delta$, the above holds with $\delta=\Delta(x_0,P_0;\eps,\kb)$.

% In particular, Lemma \ref{lemma:8.1} may be rephrased as.... Well, if it helps, we can define $\Delta$ before Lemma \ref{lemma:8.1}.

\begin{lemma}\label{lemma:delta on lines}
Let $x_0\in E$ and $\kb\le \ksh$. Suppose $P_0^{(0)},...,P_0^{(L)}\in H(x)$. Let $P_0 = \sum_{l=0}^L \lambda_l P_0^{(l)}$ for $\lambda_l \in [0,1]$. Then, $\Delta(x,P_0;\kb,\eps)\ge\min_{0\le l\le L}\Delta(x,P_0^{(l)};\kb,\eps)$.
\end{lemma}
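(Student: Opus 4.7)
The plan is to exploit the convexity of the fibers $H(x_j)$ along with the linearity of $\d^\alpha$ and the triangle inequality. The key observation is that since the weights $\lambda_l$ are fixed, once I pick competitor polynomials for each $P_0^{(l)}$ using the \emph{same} probe points $x_1,\dots,x_{\kb}$, I can take the same convex combination at each probe point.

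First, I would fix an arbitrary $\delta<\min_{0\le l\le L}\Delta(x_0,P_0^{(l)};\kb,\eps)$ and choose any $x_1,\dots,x_{\kb}\in E\cap B(x_0,\delta)$. Because $\delta$ lies below each threshold $\Delta(x_0,P_0^{(l)};\kb,\eps)$, the definition of $\Delta$ gives, for each index $l\in\{0,\dots,L\}$, polynomials $P_1^{(l)},\dots,P_{\kb}^{(l)}\in\P$ with $P_j^{(l)}\in H(x_j)$ for $1\le j\le\kb$ satisfying
\begin{equation*}
  |\d^\alpha(P_i^{(l)}-P_j^{(l)})(x_j)|\le\eps|x_i-x_j|^{m-|\alpha|}\quad\text{for }|\alpha|\le m,\ 0\le i,j\le\kb.
\end{equation*}

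Next, I would set $P_j:=\sum_{l=0}^L\lambda_l P_j^{(l)}$ for $1\le j\le\kb$, using the same weights that produced $P_0$. Since the hypothesis that $P_0\in H(x_0)$ is a convex combination forces $\sum_l\lambda_l=1$, the convexity of each fiber $H(x_j)$ gives $P_j\in H(x_j)$. Applying linearity of $\d^\alpha$, the triangle inequality, and $\sum_l\lambda_l=1$ yields
\begin{equation*}
  |\d^\alpha(P_i-P_j)(x_j)|\le\sum_{l=0}^L\lambda_l\cdot\eps|x_i-x_j|^{m-|\alpha|}=\eps|x_i-x_j|^{m-|\alpha|}
\end{equation*}
for $|\alpha|\le m$ and $0\le i,j\le\kb$. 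Since the $x_j$ were arbitrary in $E\cap B(x_0,\delta)$, this shows that $\delta$ is an admissible radius in the definition of $\Delta(x_0,P_0;\kb,\eps)$, so $\Delta(x_0,P_0;\kb,\eps)\ge\delta$. Letting $\delta\uparrow\min_{0\le l\le L}\Delta(x_0,P_0^{(l)};\kb,\eps)$ finishes the proof.

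There is essentially no obstacle: the proof is a direct instance of ``convex combinations respect $\eps$-compatibility.'' The only point that requires any care is recognizing that the competitor polynomials $P_j^{(l)}$ must be selected over a \emph{common} choice of $x_1,\dots,x_{\kb}$, which is precisely why we must pick $\delta$ beneath the \emph{minimum} of the $\Delta(x_0,P_0^{(l)};\kb,\eps)$ rather than handling each $l$ with its own probe points.
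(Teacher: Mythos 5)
Your proof is correct and is essentially identical to the paper's: both proofs fix a common tuple of probe points $x_1,\dots,x_{\kb}$, select competitor polynomials $P_j^{(l)}$ for each $l$ at those same points, form the convex combination $P_j=\sum_l\lambda_l P_j^{(l)}$ at each probe point, and invoke fiber convexity plus the triangle inequality. The paper works directly with $\delta_0=\min_l\Delta(x,P_0^{(l)};\kb,\eps)$ (relying on the remark that $\Delta$ itself is an admissible radius since the balls are open) whereas you take $\delta<\delta_0$ and pass to the limit; this is a cosmetic difference.
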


\begin{proof}

Let $\eps > 0$. Set $\delta_0:=\min_{0\le l\le L}\Delta(x,P_0^{(l)};\kb,\eps)$.
% It suffices to show that condition (GR) in Definition \ref{def.Glaeser} is satisfied for such $\delta_0$. 

Let $x_1,...,x_{\overline{k}}\in E\cap B(x,\delta_0)$. By the definition of Glaeser stability, for each $0\le l\le L$ there exist $P_1^{(l)},...,P_{\kb}^{(l)}\in H(x_j)$, and 
\begin{equation}
    \abs{\da(P_i^{(l)} - P_j^{(l)})(x_j)} \leq \eps\abs{x_i - x_j}^\ma
    \text{ for }
    \abs{\alpha} \leq m,\,
    0 \leq i,j \leq \kb, 0\le l\le L.
    \label{eq.8.2.1}
\end{equation}

For $1 \leq j \leq \kb$ and $0\le l\le L$, we set
\begin{equation*}
    P_j := \sum_{l=0}^L \lambda_l P_j^{(l)}.
\end{equation*}
Since $H(x_j)$ is convex for each $j$, we have $P_j \in H(x_j)$. Furthermore, thanks to \eqref{eq.8.2.1}, we have
\begin{equation*}
    \begin{split}
        \abs{\da(P_i - P_j)(x_j)} &\leq \sum_{0\le l\le L}\lambda_j|\da(P_i^{(l)}-P_j^{(l)})(x_j)|\leq \eps\abs{x_i - x_j}^\ma
    \end{split}
\end{equation*}
for $\abs{\alpha} \leq m$, $0 \leq i,j \leq \kb$.

\end{proof}

\begin{lemma}\label{lemma:open sets for Glaeser}
Suppose $1+(D+1)\overline{k}\le k^\sharp$. Let $x\in E_1$ and $P\in\mathrm{int}H(x)$. There exists $\eps_0>0$ such that when $0<\eps<\eps_0$, there exist $\delta,\eta>0$ such that whenever
$x'\in E_1$, $P'\in W_H(x')$ and
\begin{equation}
    \dist((x,P),(x,P'))<\eta,
\end{equation}
we have $P'\in H(x')$ and
\begin{equation}
    \Delta(x',P';\kb,\eps)\ge \delta.
\end{equation}
\end{lemma}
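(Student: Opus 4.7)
The plan is to reduce this lemma to Lemma~\ref{lemma:8.1} and Lemma~\ref{lemma:delta on lines} by writing the perturbed polynomial $P'$ as a convex combination of controlled vertices lying in $H(x')$. Since $P\in \mathrm{int}\,H(x)$ and $d:=\dim H(x)$ is common to every $H(y)$ with $y\in E_1$, I would first choose $P^{(0)},\ldots,P^{(d)}\in \mathrm{int}\,H(x)\subset W_H(x)$ forming a nondegenerate affine $d$-simplex whose relative interior contains a ball about $P$ in $W_H(x)$. Let $\eta_{\mathrm{LP}}>0$ be the radius produced by Lemma~\ref{lemma:linear perturbation} applied to this simplex.

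Next, I would apply Lemma~\ref{lemma:8.1} to each vertex $P^{(l)}$ with the parameter $\eps$. For each $l$ this yields a radius $\delta_l>0$ such that for every $x'\in E_1\cap B(x,\delta_l)$ there exists $P'^{(l)}\in H(x')$ satisfying $|\partial^\alpha(P^{(l)}-P'^{(l)})(x)|\le \eps|x-x'|^{m-|\alpha|}$ for $|\alpha|\le m$, together with compatible Glaeser witnesses at every $\kb$-tuple in $E\cap B(x,\delta_l)$. Setting $\delta := \min_l \delta_l/2$ and restricting to $x'\in E_1\cap B(x,\delta)$, the inclusion $B(x',\delta)\subset B(x,\delta_l)$ immediately gives $\Delta(x',P'^{(l)};\kb,\eps)\ge \delta$ for every $l$.

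Then I would invoke Taylor's theorem to convert the bound $|\partial^\alpha(P^{(l)}-P'^{(l)})(x)|\le \eps|x-x'|^{m-|\alpha|}$ into $\rho$-closeness of $P'^{(l)}$ to $P^{(l)}$, shrinking $\delta$ if necessary so that $\rho(P^{(l)},P'^{(l)})<\eta_{\mathrm{LP}}/2$ uniformly in $l$. Using the equivalence of $\mathrm{dist}$ and $\rho$ from Lemma~\ref{lemma:for Heine-Borel}, I would then pick $\eta>0$ so small that $\mathrm{dist}((x,P),(x',P'))<\eta$ forces both $|x-x'|<\delta$ and $\rho(P,P')<\eta_{\mathrm{LP}}/2$. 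Since $x'\in E_1$ guarantees $\dim W_H(x')=d$, Lemma~\ref{lemma:linear perturbation} places $P'$ inside $\mathrm{conv}(P'^{(0)},\ldots,P'^{(d)})\subset H(x')$, proving $P'\in H(x')$ and producing an explicit convex representation $P'=\sum_l \lambda_l P'^{(l)}$. Lemma~\ref{lemma:delta on lines} then finishes the job: $\Delta(x',P';\kb,\eps)\ge \min_l \Delta(x',P'^{(l)};\kb,\eps)\ge \delta$.

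The main obstacle I expect is making the simplex argument quantitatively uniform in $x'$: the perturbed vertices $P'^{(l)}$ live in the possibly shifted subspace $W_H(x')$ rather than $W_H(x)$, so the Grassmannian continuity of $x\mapsto W_H(x)$ on $E_1$ (implicit in the proof of Lemma~\ref{lem:low-strat}) must be coordinated with the Taylor control from Lemma~\ref{lemma:8.1} so that Lemma~\ref{lemma:linear perturbation} applies with a single $\eta_{\mathrm{LP}}$ valid for every sufficiently nearby $x'$. Without this uniformity the convex-combination representation could degenerate and the lower bound on $\Delta(x',P';\kb,\eps)$ would collapse as $x'\to x$.
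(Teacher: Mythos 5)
Your proposal follows essentially the same route as the paper's proof: choose a nondegenerate affine $d$-simplex with vertices in $\mathrm{int}\,H(x)$ containing $P$ in its relative interior, apply Lemma~\ref{lemma:8.1} to each vertex to obtain controlled perturbed vertices $P'^{(l)}\in H(x')$ together with the lower bound on $\Delta$, transport $P'$ into the convex hull via Lemma~\ref{lemma:linear perturbation} (which already accommodates the shifted subspace $W_H(x')$, so the ``obstacle'' you flagged is absorbed by that lemma's statement rather than requiring separate Grassmannian-continuity bookkeeping), and finish with Lemma~\ref{lemma:delta on lines}. The only cosmetic differences are that the paper sets $\eps_0:=\eta_0$ explicitly and works with $\dist$ directly rather than passing through $\rho$ via Lemma~\ref{lemma:for Heine-Borel}; your ball-inclusion remark $B(x',\delta)\subset B(x,\delta_l)$ correctly supplies the justification for $\Delta(x',P'^{(l)};\kb,\eps)\ge\delta$ that the paper asserts without comment.
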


\begin{proof}
Note that if $d=0$, then $H(x)$ consists of a single polynomial for any $x\in E_1$ and the conclusion follows from Lemma \ref{lemma:8.1}.

Suppose $d\ge 1$ and let $P_0,...,P_d\in\text{int}H(x)$ which form a nondegenerate affine simplex with $P$ in its relative interior. Let $\eta_0>0$ be as in the conclusion of Lemma \ref{lemma:linear perturbation}. Noting that $\eta_0$ is dependent solely on $P$, set $\eps_0=\eta_0$.

For each $P_j$, let $\delta_j>0$ be as in the conclusion of Lemma \ref{lemma:8.1} and take $\delta=\frac{1}{2}\min\{\delta_0,...,\delta_d,1\}$. Now take $\eta=\min\{\delta,\eta_0\}$.

Now, suppose $x'\in E_1\cap B(x,\delta)$, $P'\in H(x')$ and $\dist((x,P),(x',P'))<\eta$. Then,
\begin{equation}\label{eq:x x ' close}
    |x-x'|\le \delta
\end{equation}
and
\begin{equation}\label{eq:inequality with eta}
    |\da(P-P')(x)|,|\da(P-P')(x')|<\eta\text{ for }|\alpha|\le m
\end{equation}

By \eqref{eq:x x ' close} and Lemma \ref{lemma:8.1}, there exist $P_0',...,P_d'\in H(x')$ such that
\begin{equation}
    |\da(P_j-P_j')(x_0)|,|\da(P_j-P_j')(x')|\le\eps|x'-x_0|^{m-|\alpha|}\le \eta_0\text{ for }|\alpha|\le m
\end{equation}
and
\begin{equation}
    \Delta(x',P_j';\kb,\eps)\ge\delta, 0\le j\le d.
\end{equation}

By Lemma \ref{lemma:linear perturbation}, the definition of lowest stratum, and \eqref{eq:inequality with eta}, $P'$ lies in the convex hull of $P_0',...,P_d'$. Thus, by Lemma \ref{lemma:delta on lines},
\begin{equation}
    \Delta(x',P';\kb,\eps)\ge \delta.
\end{equation}

\end{proof}

\begin{lemma}\label{lemma:existence of nearby P}
Let $x\in E_1$ and $P\in\mathrm{int}H(x)$. If $\eps>0$, then there exists $\tilde{\eta}>0$ such that whenever $x'\in E_1\cap B(x,\tilde{\eta})$, there exists $P'\in H(x')$ such that
\begin{equation}
    \dist((x,P),(x',P'))<\eps.
\end{equation}
\end{lemma}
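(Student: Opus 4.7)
The strategy is to invoke Lemma \ref{lemma:8.1} directly with a carefully chosen small parameter and then convert the one-sided Glaeser-type estimate at $x$ into the symmetric estimate (at both $x$ and $x'$) demanded by the metric $\dist$. Specifically, given $\eps>0$, I will pick an auxiliary parameter $\eps'=\eps'(\eps,m,n)$ to be determined, and apply Lemma \ref{lemma:8.1} with $\eps'$ in place of $\eps$ (together with the standing hypothesis $1+(D+1)\overline{k}\le k^\sharp$). This yields a $\delta=\delta(\eps',P)>0$ such that for every $x'\in E_1\cap B(x,\delta)$ there exists $P'\in H(x')$ with
\begin{equation*}
|\partial^\alpha(P-P')(x)|\le \eps'|x-x'|^{m-|\alpha|}\quad\text{for }|\alpha|\le m.
\end{equation*}

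Next, since $P-P'\in\P$ is a polynomial of degree at most $m$, Taylor's theorem yields the exact identity
\begin{equation*}
\partial^\alpha(P-P')(x')=\sum_{|\beta|\le m-|\alpha|}\frac{1}{\beta!}\partial^{\alpha+\beta}(P-P')(x)\,(x'-x)^\beta.
\end{equation*}
Plugging the bound from Lemma \ref{lemma:8.1} into this identity gives $|\partial^\alpha(P-P')(x')|\le C_m\eps'|x-x'|^{m-|\alpha|}$ for $|\alpha|\le m$, where $C_m$ is a combinatorial constant depending only on $m$ and $n$. In particular, for $|\alpha|=m$ the factor $|x-x'|^{m-|\alpha|}=1$ and the bound reads $|\partial^\alpha(P-P')(x)|,|\partial^\alpha(P-P')(x')|\le C_m\eps'$; for $|\alpha|<m$, the additional factor $|x-x'|^{m-|\alpha|}$ only helps once $|x-x'|\le 1$.

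Finally, assembling the pieces, for $|x-x'|\le 1$ I obtain
\begin{equation*}
\dist\bigl((x,P),(x',P')\bigr)=|x-x'|+\max_\alpha|\partial^\alpha(P-P')(x)|+\max_\alpha|\partial^\alpha(P-P')(x')|\le |x-x'|+(1+C_m)\eps'.
\end{equation*}
Therefore, choosing $\eps':=\eps/(3(1+C_m))$ in the appeal to Lemma \ref{lemma:8.1} and then setting $\tilde\eta:=\min\{\delta,\,\eps/3,\,1\}$ gives $\dist((x,P),(x',P'))<\eps$ for all $x'\in E_1\cap B(x,\tilde\eta)$, which is the desired conclusion. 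There is no serious obstacle here: the lemma is essentially a repackaging of Lemma \ref{lemma:8.1} in the language of the metric $\dist$, and the only mild point to keep track of is that the top-order derivatives ($|\alpha|=m$) do not gain any power of $|x-x'|$, which forces us to drive $\eps'$ (not just $|x-x'|$) to zero.
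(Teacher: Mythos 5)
Your argument is correct, but it is considerably heavier than the paper's and it quietly imports an extra hypothesis. The paper's proof of this lemma is a one-liner: the very definition of Glaeser stability already produces, for a given $\eps'$, a $\delta>0$ so that for any $x'\in E\cap B(x,\delta)$ there is $P'\in H(x')$ with \emph{both} $|\partial^\alpha(P-P')(x)|\le\eps'|x-x'|^{m-|\alpha|}$ and $|\partial^\alpha(P-P')(x')|\le\eps'|x-x'|^{m-|\alpha|}$ for $|\alpha|\le m$ (the defining inequality $|\partial^\alpha(\vec{P}_i-\vec{P}_j)(x_j)|\le\eps'|x_i-x_j|^{m-|\alpha|}$ runs over all $0\le i,j\le k^\sharp$, so taking $(i,j)=(0,1)$ and $(i,j)=(1,0)$ gives the estimate at both base points for free). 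Shrinking $\tilde\eta$ below $\min\{1,\eps/2\}$ then gives $\dist((x,P),(x',P'))<\eps$ directly, with no Taylor step needed. By contrast, you go through Lemma~\ref{lemma:8.1}, which is itself proved via a simplex construction and Helly's theorem, and whose stated conclusion only bounds $|\partial^\alpha(P-P')(x)|$; this is why you are forced to bring in Taylor's theorem to transfer the estimate to $x'$. That transfer is legitimate (the identity for polynomials is exact, and you correctly note that at top order no power of $|x-x'|$ is gained so $\eps'$, not just $\tilde\eta$, must be driven to zero), so your proof is sound.

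Two caveats worth flagging: first, Lemma~\ref{lemma:8.1} carries the hypothesis $1+(D+1)\overline{k}\le k^\sharp$ and the assumption $P\in\mathrm{int}\,H(x)$; the paper's proof of this lemma uses neither, so your route establishes a slightly weaker statement than the paper's direct argument (in particular the interior assumption in the lemma statement is, as the paper's proof shows, actually superfluous). Second, you don't need the second conclusion \eqref{eq.8.1.2} of Lemma~\ref{lemma:8.1} at all, so invoking that lemma is overkill — you are using a derived consequence of Glaeser stability where the raw definition already hands you more than you need. None of this makes your proof wrong, but recognizing that the two-sided estimate is built into the Glaeser condition would have saved both the appeal to Lemma~\ref{lemma:8.1} and the Taylor computation.
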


\begin{proof}
By the definition of Gleaser stability, there exists $0<\tilde{\eta}<\min\{1,\eps/2\}$ such that whenever $x'\in E_1\cap B(x,\tilde{\eta})$, there exists $P'\in H(x')$ such that
\begin{equation}\label{eq:blank space}
    |\da(P-P')(x)|,|\da(P-P')(x')|\le\eps/2|x-x'|^{m-|\alpha|}\le\eps/2\text{ for }|\alpha|\le m.
\end{equation}

Combining \eqref{eq:blank space} with the fact $|x-x'|<\tilde{\eta}<\eps/2$ we obtain the desired conclusion.
\end{proof}

\begin{lemma}\label{lemma:uniform delta}
Suppose $1+(D+1)\overline{k}\le k^\sharp$. Let $\eps>0$ and $K_0$ be a compact subset of $\R^n\times\p$ such that whenever $(x,P)\in K_0$, $P\in \mathrm{int} H(x)$. Then, there exists $\delta>0$ such that whenever $(x_0,P_0)\in K_0$ and $x_1,...,x_k\in E_1\cap B(x_0,\delta)$, there exists $P_1,...,P_{\kb}\in H(x)$ such that
\begin{equation}
    |\da(P_i-P_j)(x_j)|\le\eps|x_i-x_j|^{m-|\alpha|}\text{ for }|\alpha|\le m, 0\le i,j,\le \kb.
\end{equation}
\end{lemma}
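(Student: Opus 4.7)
The plan is to run a standard compactness argument: Lemma \ref{lemma:open sets for Glaeser} supplies, at each base point, a neighborhood on which $\Delta(\,\cdot\,;\kb,\eps)$ admits a uniform positive lower bound, and then I cover the compact set $K_0$ by finitely many such neighborhoods and take the minimum of the resulting bounds.

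First, I would fix $\eps > 0$ as in the statement. For each $(x,P) \in K_0$ we have $P \in \mathrm{int}\,H(x)$, so Lemma \ref{lemma:open sets for Glaeser} applies, producing a point-dependent threshold $\eps_0(x,P) > 0$. Setting $\eps'_{x,P} := \min(\eps,\tfrac{1}{2}\eps_0(x,P))$ and feeding this into the lemma yields positive $\delta_{x,P}, \eta_{x,P}$ such that every $(x',P') \in E_1 \times \P$ with $P' \in W_H(x')$ and $\dist((x,P),(x',P')) < \eta_{x,P}$ satisfies $P' \in H(x')$ and $\Delta(x',P';\kb,\eps'_{x,P}) \geq \delta_{x,P}$. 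Because the constraint in the definition of $\Delta$ only loosens as $\eps$ grows, $\Delta(x',P';\kb,\,\cdot\,)$ is non-decreasing in its last argument, so the same lower bound $\delta_{x,P}$ holds for $\Delta(x',P';\kb,\eps)$.

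Next, the family $\{B_{\dist}((x,P),\eta_{x,P})\}_{(x,P)\in K_0}$ is an open cover of $K_0$ in the $\dist$-metric. By Lemma \ref{lemma:for Heine-Borel}, the $\dist$- and $\rho$-metrics are equivalent on the relevant piece of $E \times \P$, so $K_0$ is also $\dist$-compact and admits a finite subcover indexed by points $(x^{(i)},P^{(i)})$, $i=1,\ldots,N$. Setting $\delta := \min_{1\leq i \leq N}\delta_{x^{(i)},P^{(i)}} > 0$, for any $(x_0,P_0) \in K_0$ I select an index $i$ with $(x_0,P_0) \in B_{\dist}((x^{(i)},P^{(i)}),\eta_{x^{(i)},P^{(i)}})$; since $P_0 \in \mathrm{int}\,H(x_0) \subset W_H(x_0)$, the conclusion of Lemma \ref{lemma:open sets for Glaeser} gives $\Delta(x_0,P_0;\kb,\eps) \geq \delta_{x^{(i)},P^{(i)}} \geq \delta$. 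Unwinding the definition of $\Delta$ then produces the claimed polynomials $P_1,\ldots,P_{\kb}$ for any $x_1, \ldots, x_{\kb} \in E \cap B(x_0, \delta)$, and in particular for $x_j \in E_1 \cap B(x_0, \delta)$.

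No single step is genuinely hard, but the principal subtlety is the interplay between the ambient Euclidean topology on $\R^n\times\P$ (in which $K_0$ is compact) and the $\dist$-metric (in which Lemma \ref{lemma:open sets for Glaeser} naturally delivers its neighborhoods); Lemma \ref{lemma:for Heine-Borel} is what bridges these. A secondary wrinkle is that the threshold $\eps_0$ in Lemma \ref{lemma:open sets for Glaeser} depends on the base point, which I handle by shrinking $\eps$ pointwise and invoking the monotonicity of $\Delta$ in its $\eps$-slot.
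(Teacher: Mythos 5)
Your proof is correct and follows essentially the same compactness argument as the paper: apply Lemma~\ref{lemma:open sets for Glaeser} pointwise, cover $K_0$ by the resulting $\dist$-balls via Lemma~\ref{lemma:for Heine-Borel}, extract a finite subcover, and take the minimum $\delta$. You are in fact a bit more careful than the paper's own write-up about the dependence of $\delta,\eta$ on the chosen $\eps$ in Lemma~\ref{lemma:open sets for Glaeser}—by shrinking to $\eps'_{x,P}:=\min(\eps,\tfrac12\eps_0(x,P))$ at each base point and explicitly invoking the monotonicity of $\Delta$ in its $\eps$-slot—whereas the paper quietly folds the same monotonicity observation into a short remark at the end.
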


\begin{proof}
Let $(x,P)\in K_0$. Then, since $P$ lies in the relative interior of $H(x)$, we may apply Lemma \ref{lemma:open sets for Glaeser} to obtain $\eta_x,\delta_x,(\eps_0)_x>0$ such that whenever $(x',P')$ lies in
\begin{equation}
    B_{K_0}((x,P),\eta_x):\{(x',P')\in K_0:\dist((x,P),(x',P'))<\eta_x\},
\end{equation}
and $0<\eps<(\eps_0)_x$ we have
\begin{equation}
    \Delta(x',P';\kb,\eps)\ge \delta_x.
\end{equation}

Thus, $\{B_{K_0}((x,P),\eta_x)\}_{(x,P)\in K}$ forms an open cover of $K_0$. By compactness of $K_0$, we obtain a finite subcover $\{B_{K_0}((x_1,P_1),\eta_{x_1}),...,B_{K_0}((x_l,P_l),\eta_{x_l})\}$. Taking $\delta=\min\{\delta_{x_1},...,\delta_{x_l}\}>0$ and requiring $0<\eps<\min\{(\eps_0)_{x_1},...,(\eps_0)_{x_l}\}$, we see that
\begin{equation}
    \Delta(x',P';\kb,\eps)\ge \delta
\end{equation}
for all $(x',P')\in K_0$, which is equivalent to our desired conclusion.

For $\eps\ge\min\{(\eps_0)_{x_1},...,(\eps_0)_{x_l}\},$ apply the conclusion for $\frac{1}{2}\min\{(\eps_0)_{x_1},...,(\eps_0)_{x_l}\}$ and observe
\begin{equation}
    \Delta(x',P';\kb,\eps)\ge \Delta\left(x',P';\kb,\frac{1}{2}\min\{(\eps_0)_{x_1},...,(\eps_0)_{x_l}\}\right)\ge \delta.
\end{equation}
\end{proof}

We now consider the case where $(H(x))_{x\in E}$ is the shape field $(\Gamma(x,M))_{x\in E}$. Let $x\in E_1$ and $P\in\text{int}\G(x,\ksh,2A^\sharp)$.

Let $\eps_0=(\eps_0)_{(x,P)}$ be as in Lemma \ref{lemma:open sets for Glaeser} applied to the Glaeser stable bundle $(\G(x,2A^\sharp))_{x\in E}$. (See Lemma \ref{lemma:GS for Gamma(x,A)}.) Let $\eta_{(x,P)}$ and $\delta_{(x,P)}$ be the corresponding $\eta$ and $\delta$ from the conclusion of Lemma \ref{lemma:open sets for Glaeser} with $\eps$ taken as $(\eps_0)_{(x,P)}/2$.

By Lemma \ref{lemma:existence of nearby P}, there exists $0<\tilde{\eta}_{(x,P)}<\eta_{(x,P)}$ such that whenever $x'\in E_1\cap B(x,\tilde{\eta})$, there exists $P'\in \G(x',A^\sharp)$ such that
\begin{equation}
    \dist((x,P),(x',P'))<\eta_{(x,P)}/4.
\end{equation}

Furthermore, by Lemma \ref{lemma:5.8 equivalent}, we may take $\eta_{(x,P)}$ and $\tilde{\eta}_{(x,P)}$ small enough that if $x'\in E_1$ and $P\in \Gamma(x')$ satisfies
\begin{equation}
    \dist((x,P),(x',P'))\le\eta_{(x,P)},
\end{equation}
% \begin{equation}
%     |\da(P-P')(x)|\le \eta_{(x,P)} \text{ for }|\alpha|\le m-1,
%     \label{eq:5.8 transfer 1}
% \end{equation}
% and
% \begin{equation}
%     |\da P'(x)|\le A_2\text{ for }|\alpha|=m,
%     \label{eq:5.8 transfer 2}
% \end{equation}
then $P'\in\Gamma(x',\tilde{k},CA^\sharp)$, where $C$ is as in the conclusion of Lemma \ref{lemma:5.8 equivalent} and $\tilde{k}$ satisfying
\begin{equation}\label{eq:k tilde}
    1 + (D+1)\tilde{k} \leq \ksh.
\end{equation}

Consider the collection of balls $B(x,\tilde{\eta}_{(x,P)}/4)\subset\R^n$, with $x$ ranging over $E_1$ and given $x$, $P$ ranging over $\text{int}\Gamma(x,\ksh,2A^\sharp)$. This forms an open cover of $E_1$. $E_1$ is compact, so consider a finite subcover
\begin{equation}
    \left\{B(x_1,\tilde{\eta}_{(x_1,P_1)}/4),...,B(x_J,\tilde{\eta}_{(x_J,P_J)}/4)\right\}.
\end{equation}

Define
\begin{equation}
    K_j=\{(x,P)\in \overline{B}((x_j,P_j),{\eta}_{(x_j,P_j)}/2):x\in E_1\cap\overline{B}(x,\tilde{\eta}/2), P\in \G(x,2A^\sharp)\},
\end{equation}
where $\overline{B}$ is used to refer to the closed ball, and
\begin{equation}
    K=\bigcup_{j=1}^J K_j\subset E_1\times\P.
\end{equation}

Note that Lemma \ref{lemma:5.8 equivalent} implies that
\begin{equation}\label{eq:implication}
    \text{if }(x,P)\in K,\text{ then }P\in \G(x,\tilde{k},CA^\sharp).
\end{equation}
with $C$ depending only on $m,n$.

\begin{lemma}\label{lemma:K is compact}
$K$ is compact under the topology induced by the $\dist$ metric defined in \eqref{eq:define distance}.
\end{lemma}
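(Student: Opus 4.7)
The plan is to invoke Lemma \ref{lemma:for Heine-Borel}, which reduces compactness in the $\dist$ metric to the Heine--Borel property. Since $K = \bigcup_{j=1}^J K_j$ is a finite union, it suffices to establish that each $K_j$ is closed and bounded in the $\dist$ metric; compactness of $K$ then follows by taking finite unions.

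Boundedness is immediate from the definition: $K_j \subset \overline{B}((x_j, P_j), \eta_{(x_j, P_j)}/2)$, so the $\dist$-diameter of $K_j$ is at most $\eta_{(x_j, P_j)} < \infty$.

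For closedness, I would take a sequence $(y_n, Q_n) \in K_j$ converging in $\dist$ to some $(y, Q) \in \R^n \times \P$ and verify each defining constraint in the limit. The constraint $(y,Q) \in \overline{B}((x_j, P_j), \eta_{(x_j, P_j)}/2)$ is preserved because closed $\dist$-balls are closed. The constraint $y \in E_1 \cap \overline{B}(x_j, \tilde{\eta}_{(x_j, P_j)}/2)$ is preserved because $E_1$ is compact (Lemma \ref{lem:low-strat}), hence closed in $\R^n$, and closed Euclidean balls are closed.

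The main obstacle is the remaining constraint $Q \in \Gamma(y, 2A^\sharp)$. Recalling that $\Gamma$ has been redefined as $\Gamma^o$ at the end of Section \ref{sec:convex}, we have $Q_n \in \Gamma(y_n, M_n)$ for some $M_n < 2A^\sharp$. I plan to combine three ingredients to push this membership across to $(y, Q)$: the fiberwise closedness of the original shape field, the regularity condition \eqref{regularity 2} which transfers membership from $y_n$ to $y$ with a small inflation of the level $M$, and the strict-inequality buffer built into $\Gamma^o$. The subtlety is that $\dist$-convergence only yields $|\partial^\alpha(Q_n - Q)(y_n)| \to 0$, not automatically the H\"older-type rate $O(|y_n - y|^{m-|\alpha|})$ required by \eqref{regularity 2}. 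I expect to resolve this by passing to a subsequence with $M_n \to M_\infty \le 2A^\sharp$, approximating $Q$ by intermediate polynomials in $\Gamma(y_n)$, and exploiting the Glaeser stability of $(\Gamma^o(\cdot, 2A^\sharp))_{x \in E}$ established in Lemma \ref{lemma:GS for Gamma(x,A)} to transport the membership continuously in $x$, finally using the closedness of the fibers $\Gamma(y, M)$ to conclude $Q \in \Gamma^o(y, 2A^\sharp)$.
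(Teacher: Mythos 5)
Your setup is correct and matches the paper: reduce to each $K_j$ via Lemma \ref{lemma:for Heine-Borel}, boundedness is immediate, the constraints $(y,Q)\in\overline{B}((x_j,P_j),\eta_{(x_j,P_j)}/2)$, $y\in\overline{B}(x_j,\tilde\eta/2)$, and $y\in E_1$ all pass to the limit by closedness of balls and Lemma \ref{lem:low-strat}. You also correctly identify the nontrivial constraint $Q\in\Gamma(y,2A^\sharp)$ as the crux and correctly flag that $\dist$-convergence does not supply the H\"older-rate closeness that condition \eqref{regularity 2} requires. The problem is that your proposed resolution does not close the gap.

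Neither \eqref{regularity 2} nor Glaeser stability can ``transport'' membership in the direction you need. Condition \eqref{regularity 2} only upgrades the level $M$ of a polynomial $\vec P'$ that is \emph{already known} to lie in $\Gamma(x')$; it cannot establish $Q\in\Gamma(y)$ in the first place, which is circular with what you are trying to prove. Glaeser stability (Lemma \ref{lemma:GS for Gamma(x,A)}) runs in the opposite direction: starting from $P_0\in\Gamma(x_0,2A^\sharp)$, it produces nearby polynomials at nearby points, so applying it at $y_n$ with base $Q_n$ yields a $\delta_n$ that depends on $(y_n,Q_n)$, and there is no guarantee that $|y-y_n|<\delta_n$ eventually. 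Lemma \ref{lemma:uniform delta} would give a uniform $\delta$, but only over compact subsets of the \emph{interior} of the bundle, and the $Q_n$ need not be interior points. There is also a boundary problem at the end: even if you produced $R_n\in\Gamma(y,M_n)$ with $R_n\to Q$ and $M_n\to M_\infty\le 2A^\sharp$, closedness of the fibers only yields $Q\in\Gamma(y,2A^\sharp)$, not the strict bound $Q\in\Gamma^o(y,2A^\sharp)$ needed for membership in $K_j$.

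The paper's argument sidesteps all of this by using the specific structure built into $K$. Since $(y,Q)$ lies in $\overline{B}((x_j,P_j),\eta_{(x_j,P_j)}/2)$ and $\eta_{(x_j,P_j)}$ was chosen via Lemma \ref{lemma:open sets for Glaeser} (applied at the interior point $P_j\in\mathrm{int}\,\Gamma(x_j,2A^\sharp)$), it suffices to show $Q\in W_\Gamma(y)$, the affine hull of the fiber. This is where the lowest-stratum hypothesis is essential: $\dim W_\Gamma(\cdot)$ is constant on $E_1$, so by compactness of the (affine) Grassmannian one extracts $W_\Gamma(y_n)\to W$, and $W=W_\Gamma(y)$ because otherwise Glaeser stability of $(\Gamma(\cdot,2A^\sharp))_{x\in E}$ at $y$ would be violated by some $\tilde P\in\Gamma(y,2A^\sharp)\setminus W$ that no sequence $P_i\in W_\Gamma(y_i)$ could approximate. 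Since $Q_n\in W_\Gamma(y_n)$, the limit $Q\in W=W_\Gamma(y)$ and Lemma \ref{lemma:open sets for Glaeser} finishes. Your outline never invokes Lemma \ref{lemma:open sets for Glaeser}, the affine hulls $W_\Gamma$, or the constancy of dimension on $E_1$, and without those the membership $Q\in\Gamma(y,2A^\sharp)$ does not follow.
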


\begin{proof}
It suffices to show $K_j$ is compact for arbitrary $1\le j\le J$. Let $1\leq j_0\le J$. By Lemma \ref{lemma:for Heine-Borel}, it suffices to show $K_{j_0}$ is closed and bounded; boundedness follows immediately from \ref{regularity 1}.

To show $K_{j_0}$ is closed, let $x\in\R^n,P\in\p$, and $(x_1,P_1),(x_2,P_2),...\in K_{j_0}$ such that
\begin{equation}
    |x_i-x|,\dist((x_i,P_i),(x,P))\to 0\text{ as }i\to\infty.
\end{equation}
It follows that $(x,P)\in \overline{B}((x_{j_0},P_{j_0}),{\eta}_{(x_{j_0},P_{j_0})}/2)$ and $x\in \overline{B}(x_{j_0},\tilde{\eta}_{(x_{j_0},P_{j_0})}/2)$ since the same is true for each $(x_i,P_i)$.

By compactness of $E_1$ established by Lemma \ref{lem:low-strat}, we have $x\in E_1$. Our goal is to show $P\in\G(x,2A^\sharp)$. By Lemma \ref{lemma:open sets for Glaeser} and the definition of $K$, to show $P\in\Gamma(x,2A^\sharp)$, it suffices to show $P\in W_{\G}(x)$.

By the compactness of the Grassmanian of $d$-dimensional hyperplanes of $\p$ having nontrivial intersection with $\{|\da P(y)|\le 2A^\sharp\text{ for }|\alpha|\le m\}$ for some $y\in E$, we may suppose by passing to a subsequence that $W_{\G}(x_i)$ converges to some $d$-dimensional subspace $W\subset\p$.

If $W=W(x)$ we are done, so suppose for the sake of contradiction that $W\neq W_{\G}(x)$. Then there exists $\tilde{P}\in \Gamma(x,2A^\sharp)$ such that $\tilde{P}\notin W_{\G}(x)$, hence there cannot exist a sequence $P_i\in W_{\G}(x_i)$ satisfying $P_i\to \tilde{P}$. This contradicts the Glaeser-stable property of $(\Gamma(x,2A^{\sharp}))_{x\in E}$ established in Lemma \ref{lemma:GS for Gamma(x,A)}. Thus, $W=W_{\G}(x)$.
\end{proof}

% \begin{lemma}
% Suppose stuff about $k$ and $A$.

% There exists $\eta>0$ 

% Same as above lemma but this is Lemma 5.10 equivalent, i.e., uniform $\delta$ for applying Lemma 5.8 equivalent.
% \end{lemma}

% \begin{proof}
% Compactness as in prior lemma.
% \end{proof}

\begin{lemma}\label{lemma:keep in K}
Let $\tilde{k}$ be as in \eqref{eq:k tilde}. There exists $\gamma>0$ such that:

For all $x\in E_1$, there exists $P\in \G(x,\tilde{k},CA^\sharp)$ such that $(x,P)\in K$ and if $x'\in E_1,P'\in \G(x',2A^\sharp)$ such that $\dist((x,P),(x',P'))<\gamma$, then $(x',P')\in K$.
\end{lemma}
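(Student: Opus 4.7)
The plan is a covering/compactness argument exploiting the factor-of-2 margin built into the radii defining $K_j$ (namely $\tilde{\eta}_{(x_j,P_j)}/2$ and $\eta_{(x_j,P_j)}/2$) relative to those used in the finite subcover of $E_1$ (namely $\tilde{\eta}_{(x_j,P_j)}/4$).

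First I produce the required $P$. Given $x \in E_1$, I pick an index $j \in \{1,\dots,J\}$ with $x \in B(x_j, \tilde{\eta}_{(x_j,P_j)}/4)$. Since $\tilde{\eta}_{(x_j,P_j)}/4 < \tilde{\eta}_{(x_j,P_j)}$, the defining property of $\tilde{\eta}_{(x_j,P_j)}$ (obtained from Lemma \ref{lemma:existence of nearby P}) supplies $P \in \Gamma(x,A^\sharp) \subset \Gamma(x,2A^\sharp)$ with $\dist((x_j,P_j),(x,P)) < \eta_{(x_j,P_j)}/4$. The three defining conditions of $K_j$ then hold with slack: $x \in E_1 \cap \overline{B}(x_j, \tilde{\eta}_{(x_j,P_j)}/2)$ (since $\tilde{\eta}/4 < \tilde{\eta}/2$); $(x,P) \in \overline{B}((x_j,P_j), \eta_{(x_j,P_j)}/2)$ (since $\eta/4 < \eta/2$); and $P \in \Gamma(x,2A^\sharp)$ by construction. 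Thus $(x,P) \in K_j \subset K$, and $P \in \Gamma(x,\tilde{k},CA^\sharp)$ follows from \eqref{eq:implication}.

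Next I set $\gamma := \tfrac{1}{4}\min_{1\leq j\leq J}\tilde{\eta}_{(x_j,P_j)} > 0$, which is positive as the minimum of finitely many positive reals. Suppose $x'\in E_1$, $P'\in \Gamma(x',2A^\sharp)$, and $\dist((x,P),(x',P')) < \gamma$. Since $|x'-x| \leq \dist((x,P),(x',P')) < \gamma$, the triangle inequality in $\R^n$ yields
\[
    |x'-x_j| \leq |x'-x| + |x-x_j| < \gamma + \tilde{\eta}_{(x_j,P_j)}/4 \leq \tilde{\eta}_{(x_j,P_j)}/2.
\]
Applying the triangle inequality for $\dist$,
\[
    \dist((x_j,P_j),(x',P')) \leq \dist((x_j,P_j),(x,P)) + \dist((x,P),(x',P')) < \eta_{(x_j,P_j)}/4 + \gamma \leq \eta_{(x_j,P_j)}/2,
\]
where the last step uses $\tilde{\eta}_{(x_j,P_j)} \leq \eta_{(x_j,P_j)}$. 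These bounds, combined with $P' \in \Gamma(x',2A^\sharp)$, place $(x',P') \in K_j \subset K$.

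The principal subtlety is the triangle inequality for $\dist$: the excerpt asserts $\dist$ is a metric, and I use it directly. If instead $\dist$ only satisfies a quasi-triangle inequality (via its equivalence with the genuine metric $\rho$ from Lemma \ref{lemma:for Heine-Borel}), one simply tightens $\gamma$ by a multiplicative constant depending only on $m$, $n$, and $\diam Q_0$, which preserves both positivity and uniformity across $j$. All remaining steps are direct verifications against the definitions of $K_j$, $\tilde{\eta}_{(x_j,P_j)}$, and $\eta_{(x_j,P_j)}$, together with the finiteness of the subcover.
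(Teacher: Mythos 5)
Your argument is correct and essentially the same as the paper's: choose $j$ with $x\in B(x_j,\tilde{\eta}_{(x_j,P_j)}/4)$, obtain $P$ from the defining property of $\tilde{\eta}_{(x_j,P_j)}$ via Lemma \ref{lemma:existence of nearby P}, chain the triangle inequality, and set $\gamma=\min_{1\le j\le J}\tilde{\eta}_{(x_j,P_j)}/4$. Your bookkeeping is in fact a touch cleaner than the paper's (whose proof body works under the assumption $\dist((x,P),(x',P'))<\eta_{(x_j,P_j)}/2$ yet concludes with $\gamma=\tilde{\eta}_{(x_j,P_j)}/4$), and your closing caveat is well placed: $\dist$ as defined in \eqref{eq:define distance} need not literally satisfy the triangle inequality, but the quasi-triangle inequality inherited from its equivalence with $\rho$ (Lemma \ref{lemma:for Heine-Borel}) suffices after shrinking $\gamma$ by a controlled constant, exactly as you indicate.
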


\begin{proof}
Returning to the notation of the construction of $K$, choose $1\le j\le J$ such that $x\in B(x_j,\tilde{\eta}_{(x_j,P_j)}/4)$. By Lemma \ref{lemma:existence of nearby P} and the construction of $K$, there exists $P\in \G(x,2A^\sharp)$ such that
\begin{equation}
    \dist((x_j,P_j),(x,P))<\eta_{(x_j,P_j)}/4,
\end{equation}
thus $(x,P)\in K$ and by \eqref{eq:implication}, $P\in\G(x,\tilde{k},CA^\sharp)$.

Now suppose $x'\in E_1\cap B(x,\tilde{\eta}_{(x_j,P_j)}/4),P'\in \G(x')$ such that $\dist((x,P),(x',P'))<{\eta}_{(x_j,P_j)}/2$. Then by the triangle inequality, we have
\begin{equation}
    |x_j-x'|<\tilde{\eta}_{(x_j,P_j)}
\end{equation}
and
\begin{equation}
    \dist((x_j,P_j),(x',P'))<\eta_{(x_j,P_j)}/2.
\end{equation}

Thus, by definition, $(x',P')\in K_j$ and the conclusion follows setting $\gamma=\min_{1\le j\le J}\tilde{\eta}_{(x_j,P_j)}/4$.
% *****

% By Lemma \ref{lemma:existence of nearby P} and the construction of $K$, there exists $P\in H(x)$ such that
% \begin{equation}
%     \dist((x_j,P_j),(x,P))<\eta_{(x_j,P_j)}/2.
% \end{equation}

% Now suppose $x'\in E_1\cap B(x,\tilde{\eta}_{(x_j,P_j)}/4),P'\in H(x')$ such that $\dist((x,P),(x',P'))<\eta_{(x_j,P_j)}/2$. Then by the triangle inequality,
% \begin{equation}
%     \dist((x',P'),(x_j,P_j))<\eta_{(x_j,P_j)}
% \end{equation}
% and
% \begin{equation}
%     |-
% \end{equation}

% Thus, by Lemma

% **********

% We take closed balls of radius $99\eta/100$ from Lemma 9.4. By compactness in $x$, we can cover $E$ with finitely many. Given any $x$, look at which one it lies in, and you have some $P\in H(x)$ which has a closed ball of radius $\eta/2$ around it. Union of finitely many compact sets is compact, so we get uniform $\delta$'s everywhere.
\end{proof}

\begin{lemma}\label{lemma:modulus of continuity for 2 points}

There exists $0<\delta_0<1$ and a regular modulus of continuity ${\omega}$ for which the following holds:

Given $x,x'\in E_1$ with $|x-x'|\le \delta_0$ and given $(x,P)\in K$, there exists $P'\in\G(x',2A^\sharp)$ such that 

\begin{equation}
    |\d^\alpha(P'-P)(x)|\le{\omega}(|x-x'|)\cdot|x-x'|^{m-|\alpha|}\text{ for }|\alpha|\le m.
\end{equation}
\end{lemma}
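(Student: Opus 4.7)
The plan is to deduce a modulus of continuity by inverting and regularizing a function $\epsilon \mapsto \delta(\epsilon)$ obtained from a uniform Glaeser-stability statement over the compact set $K$. Throughout I work with the Glaeser stable bundle $(\Gamma(x, 2A^\sharp))_{x \in E}$, whose stability is guaranteed by Lemma \ref{lemma:GS for Gamma(x,A)}.

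First, I would produce uniform-in-$(x,P)\in K$ Glaeser stability. For each center $(x_j, P_j)$, $1 \le j \le J$, used in the construction of $K$, the polynomial $P_j$ lies in $\mathrm{int}\,\Gamma(x_j, 2A^\sharp)$ (since $\Gamma(x_j, \ksh, 2A^\sharp)$ and $\Gamma(x_j, 2A^\sharp)$ share the same affine span by Lemma \ref{lemma:same dimension merged}). Applying Lemma \ref{lemma:open sets for Glaeser} to the bundle $(\Gamma(x, 2A^\sharp))_{x \in E}$ with $\kb = 1$ at each $(x_j, P_j)$ yields constants $(\epsilon_0)_j, \eta_j, \delta_j(\epsilon) > 0$; the neighborhoods $K_j$ were designed around precisely these $\eta_j$, so every $(x, P) \in K_j$ lies within the hypotheses of that lemma (with $P \in W_\Gamma(x)$ automatic from $P \in \Gamma(x, 2A^\sharp)$). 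Setting $\delta(\epsilon) := \min_j \delta_j(\epsilon)$ for $\epsilon$ below $\min_j (\epsilon_0)_j$ and unpacking $\Delta(x, P; 1, \epsilon) \ge \delta(\epsilon)$ yields: for every $(x, P) \in K$ and every $x' \in E_1 \cap B(x, \delta(\epsilon))$, there exists $P' \in \Gamma(x', 2A^\sharp)$ with
\[
|\partial^\alpha(P - P')(x)| \le \epsilon |x-x'|^{m-|\alpha|}, \quad |\alpha| \le m.
\]

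Second, I would convert the map $\epsilon \mapsto \delta(\epsilon)$ into a regular modulus. After replacing $\delta(\cdot)$ by $\epsilon \mapsto \sup_{\epsilon' \le \epsilon}\delta(\epsilon')$ we may assume it is non-decreasing, and we set
\[
\omega_0(t) := \inf\{\epsilon > 0 : \delta(\epsilon) > t\} \text{ for } t \in (0, 1], \qquad \omega_0(0) := 0.
\]
Since $\delta(\epsilon) > 0$ for every $\epsilon > 0$, one has $\omega_0(t) \to 0$ as $t \to 0^+$. Let $\omega$ be the least concave majorant of $\omega_0$ on $[0,1]$, rescaled so that $\omega(1) = 1$. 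A concave function through the origin is automatically non-decreasing and has $\omega(t)/t$ non-increasing; since the majoration of a function vanishing at $0$ still vanishes at $0$, $\omega$ is a regular modulus of continuity in the sense of Definition \ref{def.modulus}. Finally, take $\delta_0 := \delta(\epsilon_\star) \in (0,1)$ for some fixed $\epsilon_\star$ below the usable threshold. For $x, x' \in E_1$ with $|x-x'| \le \delta_0$ and $(x, P) \in K$, setting $\epsilon := \omega(|x-x'|) \ge \omega_0(|x-x'|)$ gives $\delta(\epsilon) \ge |x-x'|$, and the uniform statement supplies the desired $P'$.

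The main obstacle is the first step: one must verify that the intricate open-cover construction of $K$ carried out in Lemmas \ref{lemma:K is compact}--\ref{lemma:keep in K} genuinely matches the hypotheses of Lemma \ref{lemma:open sets for Glaeser}, so that a single $\delta(\epsilon)$ serves every $(x, P) \in K$. In particular, one must check that $P \in W_\Gamma(x)$ is available for all $(x,P)\in K$, and that the membership $P' \in \Gamma(x', 2A^\sharp)$ (rather than merely $P'\in \Gamma(x')$) comes out of applying the lemma to the truncated bundle $(\Gamma(x, 2A^\sharp))_{x \in E}$. The regularization step producing a bona fide regular modulus of continuity from $\omega_0$ is standard and will not present difficulty.
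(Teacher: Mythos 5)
Your second step (inverting $\epsilon\mapsto\delta(\epsilon)$ and passing to a least concave majorant to produce a regular modulus of continuity) is a perfectly valid alternative to the paper's dyadic/piecewise-linear construction with $\eps_\nu = 2^{-\nu}$, $\omega(\delta_\nu)=\eps_\nu$. The substantive issue is in the first step.

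You propose to extract a uniform $\delta(\epsilon)$ directly from the \emph{fixed} finite cover $\{K_j\}_{1\le j\le J}$ used in the construction of $K$, by applying Lemma \ref{lemma:open sets for Glaeser} once at each center $(x_j, P_j)$ to obtain a fixed radius $\eta_j$ together with an $\epsilon$-dependent $\delta_j(\epsilon)$. But Lemma \ref{lemma:open sets for Glaeser} does not deliver a fixed $\eta_j$ valid for every $\epsilon<(\epsilon_0)_j$. Tracing its proof, the radius of validity is $\eta=\min\{\delta,\eta_0\}$ where $\eta_0$ comes from Lemma \ref{lemma:linear perturbation} and is independent of $\epsilon$, but $\delta$ comes from Lemma \ref{lemma:8.1} and shrinks as $\epsilon\to 0$. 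So $\eta=\eta(\epsilon)\to 0$. The sets $K_j$ were built using a \emph{single} value $\epsilon=(\epsilon_0)_j/2$, hence have fixed radius $\eta_j/2$; for smaller $\epsilon$, the constraint $|x-x_j|<\delta_j(\epsilon)$ (equivalently, $\dist((x_j,P_j),(x,P))<\eta_j(\epsilon)$) needed to invoke the conclusion of Lemma \ref{lemma:open sets for Glaeser} is \emph{not} implied by $(x,P)\in K_j$, and the bound $\Delta(x,P;1,\epsilon)\ge\delta_j(\epsilon)$ does not follow. Thus $\delta(\epsilon):=\min_j\delta_j(\epsilon)$ does not serve as a uniform Glaeser radius over all of $K$ for small $\epsilon$.

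The paper avoids this by invoking Lemma \ref{lemma:uniform delta} separately at each scale $\eps_\nu$: the compactness argument in that lemma implicitly re-covers $K$ with $\epsilon$-adapted neighborhoods before taking a finite subcover, which is exactly the step your fixed cover omits. If you replace the first paragraph of your argument by a direct appeal to Lemma \ref{lemma:uniform delta} (with $\kb=1$, $K_0=K$, and the bundle $(\Gamma(x,2A^\sharp))_{x\in E}$) to obtain $\delta(\epsilon)$ for each $\epsilon$, your construction of $\omega$ then goes through.
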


\begin{proof}

\renewcommand{\pbar}{\overline{P}}

We mimic the proof of Lemma 6.4 in \cite{F06}.

For $\nu = 0,1,2,\cdots$, we set $\eps_\nu := 2^{-\nu}$. By Lemma \ref{lemma:K is compact}, $K$ is compact and we may apply Lemma \ref{lemma:uniform delta} to successively pick $\delta_0, \delta_1, \delta_2, \cdots$, such that the following hold.
\begin{enumerate}[label=($\delta$-\arabic*)]
    \item $\delta_0 = 1$.
    \item $0 < \delta_{\nu+1} < \frac{1}{2}\delta_\nu$.
    \item If $\nu \geq 1$, then given $x, x' \in E_1$ with $\abs{x - x'} \leq \delta_\nu$, and given $(x,P)\in K$, there exists $P' \in \G(x',2A^\sharp)$ with
    \begin{equation*}
        \abs{\da(P' -P)(x)} \leq \frac{1}{2}\eps_\nu \abs{x' - x}^\ma
        \text{ for } \abs{\alpha} \leq m.
    \end{equation*}
\end{enumerate}

We define a regular modulus of continuity ${\omega}(t)$ on $[0,1]$ by setting
\begin{equation*}
    {\omega}(0) = 0,\,
    {\omega}(\delta_\nu) = \eps_\nu,\,
    \text{and }
    {\omega}(t) \text{ linear on each }[\delta_{\nu+1},\delta_\nu]
    \text{ for }\nu \geq 0.
\end{equation*}

Now suppose $x, x' \in E_1$. 

Suppose $x = x'$, we may simply take $P = P'$. 

Suppose $x \neq x'$. Furthermore, assume that $0 < \abs{x - x'} < \delta_1$. Let $(x,P)\in K$. Pick $\nu \geq 1$ such that $\delta_{\nu+1} < \abs{x - x'} \leq \delta_\nu$. By ($\delta$-3), there exists $P' \in \G(x',2A^\sharp)$ such that
\begin{equation}
    \abs{\da(P' - P)(x)} \leq \frac{1}{2}\eps_\nu\abs{x'-x}^{\ma}
    \text{ for }\abs{\alpha} \leq m.
    \label{eq.10.2.1}
\end{equation}

On the other hand, since $\abs{x'- x} > \delta_{\nu+1}$, we have
\begin{equation}
{\omega}(\abs{x'-x}) \geq {\omega}(\delta_{\nu+1}) = \eps_{\nu+1} = \frac{1}{2}\eps_\nu.
    \label{eq.10.2.2}
\end{equation}

Combining \eqref{eq.10.2.1} and \eqref{eq.10.2.2}, we have
\begin{equation*}
    \abs{\da(P' - P)(x)} \leq {\omega}(\abs{x' - x})\abs{x - x'}^{\ma}
    \text{ for }
    \abs{\alpha} \leq m.
\end{equation*}
\end{proof}

\begin{lemma}[Existence of Modulus of Continuity]\label{lemma:modulus of continuity}

Suppose
\begin{equation}
    \ksh\ge D+2, 1+(D+1)\kt\le k^\sharp
\end{equation}
and let $\omega$ be as in Lemma \ref{lemma:modulus of continuity for 2 points}. Then, given any $\kb\ge1$, there exists a controlled constant $\hat{C}_{\kb}$ and $\delta'>0$ such that the following holds:

Let $x_0\in S\subset E_1$ with $\diam(S)\le \delta'$ and $|S|\le\overline{k}$. Then,
% given $\overline{P}_0\in K(x_0)$ such that
% \begin{equation}\label{eq:distance from boundary'}
%     \dist(\overline{P},\partial\Gbar(x_0,\kb,A))\ge 2\eta,
% \end{equation}
there exists a map $x\to P^x$ from $S$ into $\p$ such that
\begin{equation}\label{eq:something about being in K}
    P^x\in\G(x,\kt,CA^\sharp), (x,P^x)\in K \text{ for each }x\in S; \text{ and}
\end{equation}
\begin{equation}
    |\D^\alpha(P^x-P^y)(y)|\le \hat{C}_{\kb}\omega(|x-y|)|x-y|^{m-|\alpha|}\text{ for }x,y\in S, |x-y|\le \delta', |\alpha|\le m.
\end{equation}
\end{lemma}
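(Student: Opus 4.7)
The plan is to proceed by induction on $\kb\ge 1$, combining the clustering lemma (Lemma~\ref{lemma:clustering}), a polynomial partition of unity exploiting the $(C_w,\dmax)$-convexity hypothesis (Lemma~\ref{lemma:shape field convexity for more polynomials}), and the two-point bridging lemma (Lemma~\ref{lemma:modulus of continuity for 2 points}). The base case $\kb=1$ is immediate: $S$ consists of a single point $x_0$, and Lemma~\ref{lemma:keep in K} supplies $P^{x_0}\in\Gamma(x_0,\kt,CA^\sharp)$ with $(x_0,P^{x_0})\in K$, while the modulus condition is vacuous.

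For the inductive step, I would assume the conclusion for all sizes strictly less than $\kb$ with constant $\hat C_{\kb-1}$ and consider $|S|=\kb\ge 2$ with $\diam(S)\le\delta'$, where $\delta'$ is to be chosen at the end. First, I would apply Lemma~\ref{lemma:clustering} to obtain a partition $S=S_1\cup\cdots\cup S_{\nu_{\max}}$ with $\nu_{\max}\ge 2$, $\#(S_\nu)<\#(S)$, and pairwise separation at least $\delta:=c(\ksh)\diam(S)$ between distinct clusters. Applying the induction hypothesis within each $S_\nu$ produces families $\{P^x_\nu\}_{x\in S_\nu}$ with $P^x_\nu\in\Gamma(x,\kt,CA^\sharp)$, $(x,P^x_\nu)\in K$, and the within-cluster modulus estimate with constant $\hat C_{\kb-1}$. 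Next, I would pick a representative $y_\nu\in S_\nu$ in each cluster and invoke Lemma~\ref{lemma:modulus of continuity for 2 points} starting from $(y_\nu,P^{y_\nu}_\nu)\in K$ to produce bridging polynomials $\tilde P^x_\nu\in\Gamma(x,2A^\sharp)$ at each $x\in S$ satisfying
\[
|\da(\tilde P^x_\nu-P^{y_\nu}_\nu)(y_\nu)|\le \omega(|x-y_\nu|)|x-y_\nu|^{\ma},
\]
setting $\tilde P^x_\nu:=P^x_\nu$ when $x\in S_\nu$. I would also construct, at each $x\in S$, a polynomial partition of unity $\theta_1,\dots,\theta_{\nu_{\max}}$ with $\sum_\nu\theta_\nu\odot_x\theta_\nu=1$ at the jet level, $|\da\theta_\nu(x)|\le C\delta^{-|\alpha|}$, and $\theta_\nu$ essentially equal to $1$ on $S_\nu$ and $0$ on the remaining clusters (obtained by taking $m$-jets of smooth bumps at scale $\delta$). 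The glued polynomial is
\[
P^x:=\sum_{\nu=1}^{\nu_{\max}}\theta_\nu\odot_x\theta_\nu\odot_x\tilde P^x_\nu\quad\text{for each }x\in S.
\]

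Finally, I would verify the three conclusions. Membership $P^x\in\Gamma(x,\kt,CA^\sharp)$ follows from Lemma~\ref{lemma:shape field convexity for more polynomials}, whose pairwise-closeness hypothesis on the $\tilde P^x_\nu$ is secured by combining the bridging estimate with the within-cluster induction hypothesis and shrinking $\delta'$. The companion condition $(x,P^x)\in K$ is then supplied by Lemma~\ref{lemma:keep in K}, since $P^x$ lies within distance $\gamma$ in the $\dist$-metric of the canonical polynomial provided by that lemma. The modulus of continuity estimate would be verified separately for same-cluster and cross-cluster pairs: in the first case it reduces to the induction hypothesis plus lower-order corrections coming from the ``wrong-cluster'' cutoffs $\theta_\mu$, which are essentially zero on $S_\nu$; in the second case $|x-y|\gtrsim\delta$, and the estimate is obtained by comparing both $P^x$ and $P^y$ to the common bridging polynomials $\tilde P^{\,\cdot}_\nu$, whose values at $x$ and $y$ differ by amounts controlled by $\omega$ via Lemma~\ref{lemma:modulus of continuity for 2 points}. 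The chief obstacle is the bookkeeping of constants in the cross-cluster case: the partition of unity produces factors of $\delta^{-|\alpha|}$ that must be absorbed into $|x-y|^{\ma}$ using the separation bound $|x-y|\gtrsim\delta$, and $\delta'$ must be chosen small enough that Lemmas~\ref{lemma:modulus of continuity for 2 points}, \ref{lemma:shape field convexity for more polynomials}, and \ref{lemma:5.8 equivalent} are simultaneously applicable so that the bridging polynomials themselves lie in $\Gamma(x,\kt,CA^\sharp)$.
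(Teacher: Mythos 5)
Your plan opens exactly as the paper does (induction on $\kb$, clustering via Lemma~\ref{lemma:clustering}, and the two-point bridge of Lemma~\ref{lemma:modulus of continuity for 2 points}), but then departs in a substantial way: you blend the per-cluster polynomials with a polynomial partition of unity, $P^x:=\sum_\nu\theta_\nu\odot_x\theta_\nu\odot_x\tilde P^x_\nu$, and invoke Lemma~\ref{lemma:shape field convexity for more polynomials} to place the blend back in a holding space. The paper does no gluing at all. It first bridges the cluster representatives $x_l$ from the base point $x_0$ using Lemma~\ref{lemma:modulus of continuity for 2 points}, then applies the induction hypothesis within each $S_l$ anchored at the bridged $P_l$, so that each $x\in S_l$ is simply assigned the polynomial $P^x_l$ produced by that one sub-call; the cross-cluster modulus estimate is then a triangle-inequality calculation through the representatives. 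No Whitney convexity is used in this lemma.

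There is a genuine gap in the gluing step. Lemma~\ref{lemma:shape field convexity for more polynomials} outputs $P^x\in\Gamma(x,CM)$ with $C=C(A',A'',C_w,m,n,\nu_{\max})$, and there is no reason $C\le 2$. But to re-certify $(x,P^x)\in K$ via Lemma~\ref{lemma:keep in K}, as you propose, you need the hypothesis $P^x\in\Gamma(x,2A^\sharp)$; the blended $P^x$ lives a priori only in $\Gamma(x,CA^\sharp)$, so the lemma does not apply. And the condition $(x,P^x)\in K$ is precisely what delivers both conclusions of the lemma at once (via the implication that membership in $K$ forces $P^x\in\Gamma(x,\tilde k, CA^\sharp)$), so you cannot just drop it. The paper avoids the problem because every $P^x$ it constructs is, recursively, an output of Lemma~\ref{lemma:modulus of continuity for 2 points} and therefore already sits in $\Gamma(x,2A^\sharp)$; what must be checked is only the $\dist$-closeness to $(x_0,P_0)$, which is arranged by shrinking $\delta'$. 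The deeper point is that the gluing is not needed: you are only assigning polynomials at the finitely many points of $S$, each belonging to exactly one cluster, and the clusters are $c(\ksh)\diam(S)$-separated; there is no seam between clusters that requires a smooth blend. If you choose your bumps with disjoint supports the blend degenerates to the paper's direct assignment $P^x=P^x_\nu$; if you let them overlap you introduce the constant inflation above for no benefit. Replacing the gluing by the direct assignment, and moving the bridging step before the induction sub-calls as the paper does (so that the induction hypothesis is invoked with the bridged $P_l$ as its base-point datum), closes the gap.
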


\begin{proof}
In what follows, we first construct the mapping $x\mapsto P^x$ via Lemma \ref{lemma:clustering} and repeated applications of Lemma \ref{lemma:modulus of continuity for 2 points} as in \cite{F06}. Then, we will show that each step of the construction is valid in the sense that it produces $P^x$ such that $(x,P^x)\in K$, justifying the uses of Lemma \ref{lemma:modulus of continuity for 2 points} and satisfying \ref{eq:something about being in K}.

The proof is by induction on $\kb$, the base case $\kb=1$ following from simply taking $P^{x_0}=P_0$ as guaranteed by Lemma \ref{lemma:keep in K}.

% \begin{equation}\label{eq:distance from boundary'}
%     \dist(P,\partial\G(x_0,\kb,A))\ge 2\eta,
% \end{equation}
% the existence of which is guaranteed by Lemma \ref{lemma:existence of K-eta}. 

Now suppose Lemma \ref{lemma:modulus of continuity} holds for all positive integers less than $\kb$. 

First require that $\delta_1<\delta_0$, where $\delta_0$ is as in Lemma \ref{lemma:modulus of continuity for 2 points}.

By Lemma \ref{lemma:clustering}, we may partition $S$ into sets $S_0,...,S_M$ such that
\begin{equation}
    \#(S_{l})\le \kb-1\text{ for each }l, (0\le l\le M), \text{ and}
\end{equation}
\begin{equation}\label{eq:use of clustering}
    \dist(S_{l},S_{l'})>c_{\kb}\cdot\diam(S) \text{ for }l\ne l'.
\end{equation}
Without loss of generality, take $x_0=x_0\in S_0$ and suppose each $S_l$ is nonempty, fixing $x_{l}\in S_{l}$ ($l\ge1$).

% For each $0\le l_1\le M$, again apply the Clustering Lemma to $S_{l_1}$, partitioning it into sets $S_{l_1,1},...,S_{l_1,M_{l_1}}$ with selected members $x_{l_1,l_2}\in S_{l_1,l_2}$ for $0\le l_2\le M_{l_1}$. Continue this process inductively so that each element of $S$ may be written in the form $x_{l_1,...,l_{n_x}}$ for some $n_x\le \kb$.

Again referring to Lemma \ref{lemma:keep in K}, choose $\gamma>0$ and $(x_0,P_0)\in K$ such that if $x'\in E_1,P'\in \G(x',2A^\sharp)$ such that $\dist((x_0,P_0),(x',P'))<\gamma$, then $(x',P')\in K$.

By Lemma \ref{lemma:modulus of continuity for 2 points}, there exist $P_{l_1}\in\G(x_{l_1},2A^\sharp)$ for $1\le l_1\le M_1$ such that
\begin{equation}
    |\d^\alpha(P_0-P_{l}(x_0)|<\omega(|x_0-x_{l}|)|x_0-x_{l}|^{m-|\alpha|}\text{ for }|\alpha|\le m, 1\le l\le M.
\end{equation}

Letting $\delta=\diam(S)$, we have in particular that
\begin{equation}
    |\d^\alpha(P_0-P_{l}(x_0)|<\omega(|x_0-x_{l}|)\delta^{m-|\alpha|}\text{ for }|\alpha|\le m, 1\le l\le M.
\end{equation}

For each $0\le l\le M$, we apply the induction hypothesis to each $x_l,S_l$ in place of $x_0,S$. Thus we obtain a map $x\mapsto P^x$ such that
\begin{equation}
    P^{x_l}=P_l,
\end{equation}
\begin{equation}
    (x,P^x)\in K,
\end{equation}
and
\begin{equation}\label{eq:sprouted P close}
    |\d^\alpha(P^x-P^y)(y)|\le \hat{C}_{\kb-1}\omega(|x-y|)|x-y|^{m-|\alpha|}\text{ for }|\alpha|\le m
\end{equation}
whenever $x,y\in S_l$ (same $l$).

It remains to show that \eqref{eq:sprouted P close} holds in the case where $x\in S_l, y\in S_{l'}$ ($l\ne l'$).

From \eqref{eq:sprouted P close} in the case already established, we have
\begin{equation}\label{eq:IH for x}
    |\d^\alpha(P^x-P_l)(x)|\le \hat{C}_{\kb-1}\omega(\delta)\delta^{m-|\alpha|}\text{ for }|\alpha|\le m
\end{equation}
and
\begin{equation}\label{eq:IH for y}
    |\d^\alpha(P^y-P_{l'})(y)|\le \hat{C}_{\kb-1}\omega(\delta)\delta^{m-|\alpha|}\text{ for }|\alpha|\le m.
\end{equation}

By \eqref{eq:IH for x} and the fact $|x-y|\le \delta$, we have
\begin{equation}\label{eq:comp Px Pl}
    |\d^\alpha(P^x-P_l)(y)|\le C''\hat{C}_{\kb-1}\omega(\delta)\delta^{m-|\alpha|} \text{ for }|\alpha|\le m
\end{equation}
and by similar reasoning
\begin{equation}\label{eq:comp Pl Pl'}
    |\d^\alpha(P_l-P_{l'})(y)|\le C''\omega(\delta)\delta^{m-|\alpha|} \text{ for }|\alpha|\le m.
\end{equation}

Summing \eqref{eq:IH for y}, \eqref{eq:comp Px Pl}, and \eqref{eq:comp Pl Pl'} gives
\begin{equation}
    |\d^\alpha(P^x-P^y)(y)|\le C'''\hat{C}_{\kb-1}\omega(\delta)\delta^{m-|\alpha|}\text{ for }|\alpha|\le m.
\end{equation}

Observing that \eqref{eq:use of clustering} implies $|x-y|\ge c_{\kb}\delta$ and therefore $\omega(|x-y|)\ge\omega(c_{\kb}\delta)\ge c_{\kb}\omega(\delta)$, we have
\begin{equation}
    |\d^\alpha(P^x-P^y)(y)|\le \tilde{C}\hat{C}_{\kb-1}\omega(|x-y|)|x-y|^{m-|\alpha|}\text{ for }|\alpha|\le m.
\end{equation}

In the above, we have implicitly described a recursive method for determining $P^x$ from repeated applications of Lemma \ref{lemma:modulus of continuity for 2 points} giving us $P^x\in\G(x,2A^\sharp)$ for $x\in S$. Therefore, we must check that at each step we have produced $(x,P^x)\in K$ in order to satisfy the hypotheses of Lemma \ref{lemma:modulus of continuity for 2 points}.

At each stage, we have the inequality
\begin{equation}\label{eq:end of modulus proof}
    |\d^\alpha(P^x-P^{x_0})(x_0)|<C_{\kb}\omega(|x-x_0|)|x-x_0|^{m-|\alpha|}\text{ for }|\alpha|\le m.
\end{equation}

By Lemma \ref{lemma:keep in K}, we may establish that $(x,P^x)\in K$ by choosing $\delta'$ small enough such that \eqref{eq:end of modulus proof} and $|x-x_0|<\delta'$ implies $\dist((x,P^x),(x_0,P_0))<\gamma$. This is allowed since the constant $C_{\kb}$ is dependent only on $\kb$. Since $(x,P^x)\in K$, $P^x\in \G(x,\tilde{k},CA^\sharp)$, so our map has the desired properties.
\end{proof}

\section{Main Proof}\label{sec:main proof}

% Rescaled induction hypothesis:

% \begin{lemma}
% Let $\tilde{\delta}>0$ and $E\subset$ be compact. Suppose that for each $x\in E$ we are given a $(C_w,\delta_{\max})$ convex, regular, closed shape field $(\Gamma(x,M))_{M\geq 0}$ such that
% \begin{equation}
%     \Gamma(x,M)\subset\{P:|\d^\alpha P(x)|\le \tilde{\delta}^{m-|\alpha|}M.
% \end{equation}
% Suppose also that

% \begin{enumerate}
%     \item the Glaeser refinement of $(\Gamma(x)=\cup_{M\geq 0})_{x\in E}$ terminates in a holding space $(\Gamma^*(x))_{x\in E}$ such that $\Gamma^*(x)$ is nonempty for all $x\in E$ and
%     \item Given $x_1,...,x_{k^\sharp_\text{old}}\in E$, there exists polynomials $P_j\in\Gamma^*(x,1)$ satisfying
%     \begin{equation}
%         |\d^\alpha(P_i-P_j)(x_j)|\le|x_i-x_j|^{m-|\alpha|}\text{ for }|\alpha|\le m, 1\le i,j,\le k^\sharp_\text{old}.
%     \end{equation}
% \end{enumerate}

% Assume also that $E$ has fewer than $\lambda$ strata. Then there exists $F\in C^m(\R^n)$ such that
% \begin{enumerate}
%     \item $\|F\|_{\dot{C}^m}\le C$
%     \item $J_xF\in\Gamma(x,C)$ for all $x\in E$.
% \end{enumerate}

% \end{lemma}

In this section, we complete the proof of Theorem \ref{thm:heart of the matter} in the case $d=1$.

Let $\Gamma'(x,M)$ now refer to the unrefined shape field in the hypothesis of Theorem \ref{thm:heart of the matter}; we will use the hypothesis that $\Gamma'(x,M)$ is closed for fixed $x,M$.

Given $x_0\in E_1,\kb\in\N, A>0$, define $\Gamma'(x_0,\kb,A)\subset\Gamma'(x_0,A)$ to be the closure of the set of $P_0\in\Gamma'(x_0,A)$ such that for all $x_1,...,x_{\kb}\in E$, there exist $P_j\in\Gamma(x_j,A)$ (\textit{not} $\Gamma'(x,A)$) such that
\begin{equation}\label{eq:Pi close again}
    |\d^\alpha(P_i-P_j)(x_j)|\le A|x_i-x_j|^{m-|\alpha|}\text{ for }|\alpha|\le m, 0\le i,j\le \kb.
\end{equation}

By definition, $(\Gamma'(x,\kb,M))_{x\in E_1,M\geq 0}$ is closed, is convex, and satisfies $\Gamma(x,\kb,M)\subset\Gamma'(x,\kb,M)$ for all $M>1, x\in E_1,\kb\in\N$.

First, we show that a property similar to that described in \eqref{eq:Pi close again} applies to all $P_0\in \G'(x_0,\kb,M)$ (even after the closure is taken). Then, we will show that the shape fields finiteness principle (Theorem \ref{thm:shape fields fin prin}) applies to the $\G'(x_0,\kb,M)$.

\begin{lemma}\label{lemma:9.1}
Let $X_0\in E_1, M\geq 0$. If $P_0\in\Gamma'(x_0,\overline{k},M)$ and $x_1,...,x_{\kb}$, then there exist $P_j\in\Gamma(x_j,M)\subset\Gamma(x_j,2M)$ such that
\begin{equation}
    |\d^\alpha(P_i-P_j)(x_j)|\le 2M|x_i-x_j|^{m-|\alpha|}\text{ for }|\alpha|\le m, 0\le i,j\le \kb.
\end{equation}
\end{lemma}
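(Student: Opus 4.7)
The plan is to unpack the closure definition of $\Gamma'(x_0, \kb, M)$, extract a convergent subsequence of witnesses by compactness, and pass to the limit. Since $\Gamma'(x_0, \kb, M)$ is by definition the topological closure in $\P$ of an explicit set, I may choose a sequence $\{P_0^{(\nu)}\}_{\nu \ge 1}$ in the pre-closure set with $P_0^{(\nu)} \to P_0$. Fixing the tuple $x_1, \ldots, x_{\kb}$, the defining property of the pre-closure set supplies polynomials $P_j^{(\nu)} \in \Gamma(x_j, M)$ for $1 \le j \le \kb$ such that
\begin{equation*}
|\d^\alpha(P_i^{(\nu)} - P_j^{(\nu)})(x_j)| \le M|x_i - x_j|^{m - |\alpha|} \text{ for } |\alpha| \le m,\ 0 \le i, j \le \kb.
\end{equation*}

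Next, I would invoke compactness. Condition \eqref{regularity 1} forces $|\d^\alpha P_j^{(\nu)}(x_j)| \le M$ for $|\alpha| \le m$, so each family $\{P_j^{(\nu)}\}_\nu$ is bounded in the finite-dimensional space $\P$. By a standard diagonal argument I pass to a subsequence (still indexed by $\nu$) so that $P_j^{(\nu)} \to P_j$ in $\P$ for every $j = 0, 1, \ldots, \kb$. Passing to the limit in the displayed inequality yields
\begin{equation*}
|\d^\alpha(P_i - P_j)(x_j)| \le M|x_i - x_j|^{m - |\alpha|} \le 2M|x_i - x_j|^{m - |\alpha|},
\end{equation*}
which is the inequality asserted in the conclusion.

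It remains to verify the membership $P_j \in \Gamma(x_j, 2M)$. Here I recall from Section \ref{sec:convex} that $\Gamma(x, A)$ now abbreviates $\Gamma^o(x, A) = \bigcup_{M' < A} \Gamma_{\mathrm{ref}}(x, M')$, where $\Gamma_{\mathrm{ref}}$ denotes the (closed) Glaeser-refined shape field inheriting closedness from the hypothesis on $\Gamma'$. Since $P_j^{(\nu)} \in \Gamma^o(x_j, M) \subset \Gamma_{\mathrm{ref}}(x_j, M)$ and $\Gamma_{\mathrm{ref}}(x_j, M)$ is closed in $\P$, the limit $P_j$ lies in $\Gamma_{\mathrm{ref}}(x_j, M) \subset \Gamma^o(x_j, 2M) = \Gamma(x_j, 2M)$, at least when $M > 0$; the degenerate case $M = 0$ forces all the $P_j$ to share derivatives through order $m$ at the relevant points and is handled by direct verification. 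The principal technical obstacle is precisely this mismatch between the closed shape field and the open union $\Gamma^o(x, M)$: a limit of elements of $\Gamma^o(x_j, M)$ need not lie in $\Gamma^o(x_j, M)$ itself but only in its closure, which is what forces the enlargement from $M$ to $2M$ in the assertion.
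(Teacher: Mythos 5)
Your argument reaches the same conclusion by a genuinely different route. Both proofs unwind the closure defining $\Gamma'(x_0,\kb,M)$ to obtain a sequence $P_0^{(\nu)}\to P_0$ of pre-closure elements together with witnesses $P_j^{(\nu)}\in\Gamma(x_j,M)$. The paper then fixes a single large $N$ and takes $P_j := P_j^{(N)}$ outright: membership in $\Gamma(x_j,M)$ is then immediate, but comparing $P_0$ with $P_j^{(N)}$ via the triangle inequality through the intermediate $P_0^{(N)}$ degrades the constant in the inequality from $M$ to $2M$. You instead pass to a limit of the witnesses, extracting a convergent subsequence using \eqref{regularity 1} and the finite dimensionality of $\P$; this preserves the constant $M$ in the inequality but only lands in $\Gamma(x_j,2M)$ rather than $\Gamma(x_j,M)$, since, as you observe, $\Gamma(x_j,M)=\Gamma^o(x_j,M)$ is not closed. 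That weaker membership is precisely what Lemma \ref{lemma:9.2} uses, so your version is adequate for the paper's purposes --- and is arguably better suited in one respect: when some $x_j$ coincides with $x_0$, the paper's step \eqref{eq:just need random name 2} would demand $P_0^{(N)}=P_0$ for a finite $N$, whereas your limit argument handles this degenerate case automatically (the $M$-inequality forces $P_j^{(\nu)}=P_0^{(\nu)}\to P_0$). The one point that deserves scrutiny in your write-up is the appeal to closedness of the Glaeser-refined shape field (your $\Gamma_{\mathrm{ref}}(x_j,M)$, as distinct from $\Gamma^o$): this is plausible under the hypotheses of Theorem \ref{thm:heart of the matter}, but the paper's fixed-$N$ route deliberately sidesteps needing it, so if you adopt your version you should make that closedness explicit.
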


\begin{proof}
Fix $x_0\in E_1$ and $M\geq 0$. 

Let $P_0\in\Gamma'(x_0,\overline{k},M)$ and $x_1,...,x_{\kb}\in E_1$. If there do not exist $P_j\in\Gamma(x_j,M)$ such that
\begin{equation}
    |\d^\alpha(P_i-P_j)(x_j)|\le M|x_i-x_j|^{m-|\alpha|}\text{ for }|\alpha|\le m, 0\le i,j\le \kb,
\end{equation}
then there exist $P^{(1)},P^{(2)},...\in \Gamma'(x,\kb,M)$ such that $P^{(\nu)}\to P_0$ and $P_j^{(\nu)}\in\Gamma(x_j,M)$ ($\nu=1,2,3...$) such that
\begin{equation}\label{eq:just need random name}
    |\d^\alpha(P^{(\nu)}_i-P^{(\nu)}_j)(x_j)|\le M|x_i-x_j|^{m-|\alpha|}\text{ for }|\alpha|\le m, 0\le i,j\le \kb.
\end{equation}

By taking $\nu$ sufficiently large and applying the triangle inequality, we find $N\in\N$ such that
\begin{equation}\label{eq:just need random name 2}
    |\d^\alpha(P_0-P^{(N)}_j)(x_j)|\le 2M|x_0-x_j|^{m-|\alpha|}\text{ for }|\alpha|\le m, 0\le j\le \kb.
\end{equation}

Combining \eqref{eq:just need random name} and \eqref{eq:just need random name 2} gives the desired conclusion.

% we see that for any $P_0\in\Gamma'(x_0,\kb,M)$ $x_1,...,x_{\kb}\in E_1$, there exist $P_j\in\Gamma(x_j,M)\subset\Gamma(x_j,2M)$ such that
% \begin{equation}
%     |\d^\alpha(P_i-P_j)(x_j)|\le 2M|x_i-x_j|^{m-|\alpha|}\text{ for }|\alpha|\le m, 0\le i,j\le \kb.
% \end{equation}
\end{proof}

\begin{lemma}\label{lemma:9.2}
$(\Gamma'(x,\kb,M))_{x\in E_1,M\geq 0}$ is $(C,1)$-Whitney convex, where $C$ depends only on $C_w, m, n$.
\end{lemma}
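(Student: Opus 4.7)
My plan is to verify the conditions of Definition \ref{def.Cd-convex} by approximating elements of the closure $\Gamma'(x,\kb,M)$ by pre-closure elements, applying the $(C_w,1)$-convexity of the ambient shape field $\Gamma'$ at the base point, and constructing compatible witnesses at each witness point $x_j$ via renormalized cutoffs. Fix $x \in E_1$, $\delta \in (0,1]$, $M \geq 0$, and let $P_1, P_2 \in \Gamma'(x,\kb,M)$ with $\abs{\partial^\alpha(P_1-P_2)(x)} \leq M\delta^{m-\abs{\alpha}}$, together with cutoffs $Q_1, Q_2 \in \P$ satisfying $\abs{\partial^\alpha Q_i(x)} \leq \delta^{-\abs{\alpha}}$ and $Q_1 \odot_x Q_1 + Q_2 \odot_x Q_2 = 1$. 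Set $P := \sum_i Q_i \odot_x Q_i \odot_x P_i$; the target is to show $P \in \Gamma'(x,\kb,CM)$. Since $\Gamma'(x,\kb,M)$ is defined as a closure, I will approximate $P_i$ by sequences $P_i^{(\nu)} \to P_i$ in the pre-closure set, each $P_i^{(\nu)} \in \Gamma'(x,M)$ equipped with Glaeser-style witnesses in $\Gamma$, and work with $P^{(\nu)} := \sum_i Q_i \odot_x Q_i \odot_x P_i^{(\nu)}$. Up to an $\eps_\nu \to 0$ error absorbable into $M$, the $(C_w,1)$-convexity of $\Gamma'$ yields $P^{(\nu)} \in \Gamma'(x, C_w M + \eps_\nu)$.

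The main work is to produce Glaeser witnesses for $P^{(\nu)}$ at arbitrary points $x_1,\ldots, x_\kb \in E$. The witness property of $P_i^{(\nu)}$ supplies $P_{i,j}^{(\nu)} \in \Gamma(x_j, M)$ satisfying $\abs{\partial^\alpha(P_{i,a}^{(\nu)} - P_{i,b}^{(\nu)})(x_b)} \leq M\abs{x_a-x_b}^{m-\abs{\alpha}}$ for $0\leq a,b\leq \kb$, where $x_0 := x$ and $P_{i,0}^{(\nu)} := P_i^{(\nu)}$. At each $x_j$ I will form
$$P_j^{(\nu)} := \sum_i Q_{i,j} \odot_{x_j} Q_{i,j} \odot_{x_j} P_{i,j}^{(\nu)},$$
where the $Q_{i,j}$ are renormalizations of $Q_i$ arranged so that $Q_{1,j} \odot_{x_j} Q_{1,j} + Q_{2,j} \odot_{x_j} Q_{2,j} = 1$ at $x_j$. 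In the near regime $\abs{x_j-x} \leq c\delta$ for a small controlled $c$, Taylor expansion of the polynomial $Q_1 Q_1 + Q_2 Q_2$ around $x$ shows that $J_{x_j}(Q_1 Q_1 + Q_2 Q_2)$ lies within $1/2$ of $1$ and obeys $\delta$-scale derivative bounds, so one takes $Q_{i,j} := J_{x_j}\bigl((Q_1 Q_1 + Q_2 Q_2)^{-1/2}\bigr) \odot_{x_j} Q_i$, preserving $\abs{\partial^\alpha Q_{i,j}(x_j)} \leq C\delta^{-\abs{\alpha}}$. In the far regime $\abs{x_j-x} > c\delta$, I will shift to the coarser scale $\tilde\delta := \abs{x_j-x}$, at which $P_{1,j}^{(\nu)}$ and $P_{2,j}^{(\nu)}$ are automatically Whitney close at $x_j$ by propagation of the scale-$\delta$ closeness at $x$, and simply take the trivial partition $Q_{1,j} \equiv 1, Q_{2,j} \equiv 0$, giving $P_j^{(\nu)} = P_{1,j}^{(\nu)}$. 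In either regime, applying the $(C_w,1)$-convexity of the refined shape field $\Gamma$ at $x_j$ (a property that descends from $\Gamma'$ to its Glaeser refinement, invoked at scale $\delta$ or $\tilde\delta$ respectively) places $P_j^{(\nu)} \in \Gamma(x_j, CM)$.

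The required mutual closeness $\abs{\partial^\alpha(P_i^{(\nu)} - P_j^{(\nu)})(x_j)} \leq CM\abs{x_i-x_j}^{m-\abs{\alpha}}$ for $0\leq i,j\leq \kb$ then follows from the triangle inequality, the cutoff bounds, and the witness closeness of the $P_{i,j}^{(\nu)}$; in particular, the comparison of $P^{(\nu)}$ to $P_j^{(\nu)}$ at $x_j$ exploits the identity $P^{(\nu)} - P_1^{(\nu)} = Q_2 \odot_x Q_2 \odot_x (P_2^{(\nu)} - P_1^{(\nu)})$ at $x$ together with Taylor propagation to $x_j$. Since each $P^{(\nu)}$ then lies in the pre-closure set defining $\Gamma'(x, \kb, CM + \eps_\nu)$ and $P^{(\nu)} \to P$, I conclude $P \in \Gamma'(x, \kb, CM)$.

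The main obstacle I anticipate is scale management in the witness construction: the renormalization via $(Q_1 Q_1 + Q_2 Q_2)^{-1/2}$ requires careful estimates to preserve the $\delta^{-\abs{\alpha}}$ bounds on the $Q_{i,j}$, and the transition between the near and far regimes must be made with constants that are uniform in $x_j$. A secondary care point is ensuring that the $(C_w,1)$-convexity is indeed inherited by the Glaeser refinement $\Gamma$ from the original shape field $\Gamma'$, which may require a separate verification or citation.
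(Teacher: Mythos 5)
Your plan shares the paper's core idea---combine the witnesses $P^{(i)}_1,P^{(i)}_2$ at each remote point $x_i$ via a partition of unity and invoke $(C_w,1)$-convexity of the refined shape field there---and your handling of the closure by approximation is essentially the content of Lemma~\ref{lemma:9.1}, so that part is sound. But there is a genuine gap in the way you construct the local partitions.

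Your construction switches abruptly between the renormalized partition $Q_{i,j}=J_{x_j}\bigl((Q_1Q_1+Q_2Q_2)^{-1/2}\bigr)\odot_{x_j}Q_i$ in the near regime $|x_j-x|\le c\delta$ and the trivial partition $(1,0)$ in the far regime. Consider $x_i$ with $|x_i-x|$ just below $c\delta$ and $x_j$ with $|x_j-x|$ just above $c\delta$, so that $|x_i-x_j|$ is tiny compared to $\delta$. Then $P^{(\nu)}_j=P^{(\nu)}_{1,j}$, while $P^{(\nu)}_i$ is a nontrivial blend of $P^{(\nu)}_{1,i}$ and $P^{(\nu)}_{2,i}$; writing $P^{(\nu)}_i - P^{(\nu)}_{1,i} = Q_{2,i}\odot_{x_i}Q_{2,i}\odot_{x_i}(P^{(\nu)}_{2,i}-P^{(\nu)}_{1,i})$, the derivatives of this error at $x_i$ are of size $O(M\delta^{m-|\alpha|})$, and Taylor propagation to $x_j$ cannot improve that to $O(M|x_i-x_j|^{m-|\alpha|})$ when $|x_i-x_j|\ll\delta$. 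So the mutual Whitney closeness you need,
\[
|\partial^\alpha(P^{(\nu)}_i-P^{(\nu)}_j)(x_j)|\le CM|x_i-x_j|^{m-|\alpha|},
\]
fails for such transition-zone pairs. A hard cutoff at radius $c\delta$ is not compatible with the $|x_i-x_j|^{m-|\alpha|}$ scaling.

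The paper sidesteps this by not working pointwise at all. From \eqref{eq:C-delta 3}--\eqref{eq:C-delta 4} it builds a genuine smooth partition $\theta_1,\theta_2=1-\theta_1\in C^m(\R^n)$ with $J_{x_0}\theta_l=Q_l^2$, $|\partial^\alpha\theta_l|\le C\delta^{-|\alpha|}$, and $\supp\theta_1\subset B(x_0,\delta)$; it extends the witnesses to functions $F_1,F_2\in C^m(\R^n)$ with $\|F_l\|_{C^m}\le CM$ via the finite Whitney extension theorem (Lemma~\ref{lemma:WET}); and it sets $F=\theta_1F_1+\theta_2F_2$. Taking $P^{(i)}_0:=J_{x_i}F$ then gives the mutual Whitney closeness for free from $\|F\|_{C^m}\le CM$, the near/far dichotomy is handled automatically by $\supp\theta_1\subset B(x_0,\delta)$ (outside the ball $J_{x_i}F=P^{(i)}_2$ and no convexity is needed), and the fiber membership at near points follows from the convexity of $\Gamma$ exactly as you proposed. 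To repair your version you would need to replace the hard cutoff with a smooth one---at which point you would essentially be reconstructing the paper's $\theta_1$---so I would recommend adopting the global-function route rather than estimating the transition by hand.
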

\begin{proof}
Let $0 < \delta\leq 1$, $M\geq 0$, and $x_0\in E_1$. Choose $P^{(0)}_1, P^{(0)}_2\in\G'(x_0,\kb,M)$ which satisfy
\begin{equation}\label{eq:C-delta 2'}
    \abs{\da (P^{(0)}_1 - P^{(0)}_2)(x_0)} \leq M\delta^{m-\abs{\alpha}} \text{ for } \abs{\alpha} \leq m;
\end{equation}	
\begin{equation}\label{eq:C-delta 3'}
    \abs{\da Q_i(x_0)} \leq \delta^{-\abs{\alpha}} \text{ for } \abs{\alpha} \leq m, i = 1,2; \text{ and}
\end{equation}	
\begin{equation}\label{eq:C-delta 4'}
    Q_1 \odot_{x_0} Q_1 + Q_2 \odot_{x_0} Q_2 = 1
\end{equation}	

We want to show that
\begin{equation}
    P^{(0)}_0:=\sum_{l = 1,2}Q_l\odot_{x_0} Q_l\odot_{x_0} P^{(0)}_l \in \G'(x_0,\kb,CM).
    \label{eq.6.4.1}
\end{equation}

By \eqref{eq:C-delta 3'} and \eqref{eq:C-delta 4'}, there exists $\theta_1,\theta_2=1-\theta_1\in C^m(\R^n)$ such that
\begin{equation}
    0\le \theta_l(x)\le 1, x\in \R^n, l=1,2;
\end{equation}
\begin{equation}
    J_{x_0}\theta_l=Q_l^2, l=1,2;
\end{equation}
\begin{equation}\label{eq:derivatives bound on theta_l}
    |\da\theta_l(x)|\le C\delta^{-|\alpha|}\text{ for }|\alpha|\le m, x\in\R^n, l=1,2\text{; and}
\end{equation}
\begin{equation}
    \text{supp}(\theta_1)\subset B(x_0,\delta).
\end{equation}

Let $x_1,...,x_{\kb}\in E$. Since $P^{(0)}_1,P^{(0)}_2\in\G'(x_0,\kb,A)$, for $l=1,2$ there exist $P_l^{(i)}\in\Gamma(x_j,2M)$ ($1\le j\le\kb$) such that
\begin{equation}\label{eq:another thing with P_l close}
    |\d^\alpha(P^{(i)}_l-P^{(j)}_l)(x_j)|\le 2M|x_i-x_j|^{m-|\alpha|}\text{ for }|\alpha|\le m, 0\le i,j\le \kb.
\end{equation}

Thus by the classical Whitney extension theorem for finite sets (Lemma \ref{lemma:WET}), for $l=1,2$ there exist $F_l\in C^m(\R^n)$ such that
\begin{equation}\label{eq:norm bound on F_l}
    \|F_l\|_{C^m(\R^n)}\le CM, l=1,2\text{; and}
\end{equation}
\begin{equation}
    J_{x_i}F_l=P_l^{(i)}, 0\le i\le \kb, l=1,2.
\end{equation}

Let $F=\theta_1F_1+\theta_2F_2$, so that \eqref{eq:C-delta 2'}, \eqref{eq:derivatives bound on theta_l}, and \eqref{eq:norm bound on F_l} imply
\begin{equation}\label{eq:norm bound on F}
    \|F\|_{C^m(\R^n}\le CM.
\end{equation}

Now for $1\le i \le \kb$ define
\begin{equation}
    P_0^{(i)}:=J_{x_i}(F)=J_{x_i}(\theta_1F_1+\theta_2F_2)=J_{x_i}\theta_1\odot_{x_i}P_1^{(i)}+J_{x_i}\theta_2\odot_{x_i}P_2^{(i)}.
\end{equation}

If $x_i\notin B(x_0,\delta)$, then $J_{x_i}\theta_1=0$, so $P_0^{(i)}=P_2^{(i)}\in\Gamma(x_i,2M)$. If $x_i\in B(x_0,\delta)$, then \eqref{eq:C-delta 2'} and \eqref{eq:another thing with P_l close} imply
\begin{equation}
    |\da(P_1^{(i)}-P_2^{(i)})(x_i)|\le CM\delta^{m-|\alpha|}\text{ for }|\alpha|\le m,
\end{equation}
so $P_0^{(i)}\in\G(x_i,CM)$ by the $(C_w,1)$ convexity of $(\G(x,M))_{M\geq 0,x\in E}$.

By \eqref{eq:norm bound on F}, we see
\begin{equation}
    |\d^\alpha(P^{(i)}_0-P^{(j)}_0)(x_j)|\le CM|x_i-x_j|^{m-|\alpha|}\text{ for }|\alpha|\le m, 0\le i,j\le \kb,
\end{equation}
establishing \eqref{eq.6.4.1}.

\end{proof}

Recall that our proof of Theorem \ref{thm:heart of the matter} is by induction on the number of strata. The following lemma will address the base case.

\begin{lemma}\label{lemma:Step 1 complete}
Let $A^\sharp$ be as in Lemma \ref{lemma:finiteness} and $\tilde{k}\ge \ksh_{SF}$ satisfy \eqref{eq:k tilde}. Then, there exists $\tilde{F}\in \dot{C}^{m}(\R^n,\R^d)$ such that
\begin{equation}\label{eq:first quantitative bound}
    \|\tilde{F}\|_{\dot{C}^m(\R^n,\R^d)}\le CA^\sharp
\end{equation}
and
\begin{equation}\label{eq:control over correction}
    J_x(\tilde{F})\in \Gamma(x,\tilde{k},CA^\sharp) \text{ for all }x\in E_1,
\end{equation}
where $C$ depends solely on $m,n$.
\end{lemma}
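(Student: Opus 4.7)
The plan is to invoke Theorem \ref{thm:shape fields fin prin} applied to the auxiliary shape field $(\Gamma'(x,\tilde{k},M))_{x\in E_1, M\geq 0}$ on a cover of $Q_0$ by small subcubes, using Lemma \ref{lemma:modulus of continuity} to supply both the compatible polynomials and a regular modulus of continuity, and then patch the resulting local sections into a single global $\tilde{F}$ via a Whitney partition of unity.

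First, I fix $\overline{k}=\ksh_{\mathrm{SF}}$ in Lemma \ref{lemma:modulus of continuity} to obtain a regular modulus $\omega$, a controlled constant $\hat{C}$, and a diameter threshold $\delta'>0$, and I subdivide $Q_0$ into finitely many subcubes $\{Q_\nu\}$ of sidelength at most $\min(\delta'/\sqrt{n},1)$, so that $\diam Q_\nu\leq\delta'$ and $\delta_{Q_\nu}\leq 1=\delta_{\max}$. On each $Q_\nu$ the shape field $(\Gamma'(x,\tilde{k},M))_{x\in E_1}$ is $(C,1)$-convex by Lemma \ref{lemma:9.2}, and the hypothesis of the Finiteness Principle is fulfilled by Lemma \ref{lemma:modulus of continuity}: any $S\subset E_1\cap Q_\nu$ with $\#(S)\leq\ksh_{\mathrm{SF}}$ satisfies $\diam S\leq\delta'$, so there is a map $x\mapsto P^x$ with $P^x\in\Gamma(x,\tilde{k},CA^\sharp)\subseteq\Gamma'(x,\tilde{k},CA^\sharp)$ obeying the $\omega$-Whitney bound at scale $M_0\sim A^\sharp$. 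Theorem \ref{thm:shape fields fin prin} then produces $F_\nu\in\dot{C}^{m,\omega}(Q_\nu)$ with $\|F_\nu\|_{\dot{C}^{m,\omega}(Q_\nu)}\leq C'A^\sharp$ and $J_xF_\nu\in\Gamma'(x,\tilde{k},C'A^\sharp)$ for each $x\in E_1\cap Q_\nu$.

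Next, I assemble a Whitney partition of unity $\{\theta_\nu\}$ subordinate to a slight thickening of $\{Q_\nu\}$ and set $\tilde{F}=\sum_\nu\theta_\nu F_\nu\in\dot{C}^m(\R^n)$; the estimate $\|\tilde{F}\|_{\dot{C}^m(\R^n)}\leq C''A^\sharp$ follows from $\omega\leq 1$ together with standard partition-of-unity bookkeeping, while the $(C,1)$-convexity of $\Gamma'(x,\tilde{k},\cdot)$ ensures that for every $x\in E_1$ the jet $J_x\tilde{F}$—a Whitney-type convex combination of the $J_xF_\nu$—remains in $\Gamma'(x,\tilde{k},C''A^\sharp)$.

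Finally, I must upgrade $\Gamma'(x,\tilde{k},\cdot)$ to $\Gamma(x,\tilde{k},\cdot)$. Lemma \ref{lemma:9.1} applied to $J_{x_0}\tilde{F}\in\Gamma'(x_0,\tilde{k},C''A^\sharp)$ already supplies, for any $x_1,\dots,x_{\tilde{k}}\in E$, polynomials $P_j\in\Gamma(x_j,2C''A^\sharp)$ satisfying the Whitney bound against $P_0:=J_{x_0}\tilde{F}$, which is the multi-point datum demanded by the definition of $\Gamma(x_0,\tilde{k},\cdot)$. What remains is to verify $P_0\in\Gamma(x_0,CA^\sharp)$; this is extracted by tracing the explicit partition-of-unity construction inside the Finiteness Principle, whose output $J_{x_0}F_\nu$ is a Whitney-type combination of refined polynomials $P^y\in\Gamma(y,\tilde{k},CA^\sharp)\subseteq\Gamma(y,CA^\sharp)$, and then invoking the $(C_w,1)$-convexity of the refined shape field together with the regularity property \eqref{regularity 3} and Lemma \ref{lemma:5.2 equivalent} to transport these combinations back into $\Gamma(x_0,CA^\sharp)$. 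The principal obstacle is precisely this refinement step: passing from membership in the closure $\Gamma'(x,\tilde{k},\cdot)$ to membership in the smaller, a priori not closed, $\Gamma(x,\tilde{k},\cdot)$; everything else is a fairly standard localize-and-patch application of the shape-fields Finiteness Principle.
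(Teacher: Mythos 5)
You depart from the paper's proof at two junctures, and each departure introduces a genuine gap. The paper applies Theorem \ref{thm:shape fields fin prin} once, globally, to produce a single $\tilde{F}$ with $J_x\tilde{F}\in\Gamma'(x,\tilde{k},C'A^\sharp)$ for all $x\in E_1$; you instead subdivide $Q_0$ into small cubes $Q_\nu$, construct $F_\nu$ independently on each, and patch with a partition of unity. The patching fails: on an overlap $Q_\nu^*\cap Q_{\nu'}^*$ nothing forces $F_\nu$ and $F_{\nu'}$ to agree to order $\ell^{m-|\gamma|}$ (with $\ell=\delta_{Q_\nu}$). The ``standard bookkeeping'' for $\|\sum_\nu\theta_\nu F_\nu\|_{\dot{C}^m}$ needs exactly that compatibility (one rewrites $\sum_\nu\d^\beta\theta_\nu\,\d^{\alpha-\beta}F_\nu=\sum_\nu\d^\beta\theta_\nu\,\d^{\alpha-\beta}(F_\nu-F_{\nu_0})$ for $\beta\ne 0$), and invoking $(C,1)$-convexity to keep $J_x\tilde{F}$ inside $\Gamma'(x,\tilde{k},\cdot)$ requires $|\da(J_xF_\nu-J_xF_{\nu'})(x)|\le M\ell^{m-|\alpha|}$ per Definition \ref{def.Cd-convex} --- neither of which the independently constructed $F_\nu$ provide (regularity \eqref{regularity 1} gives only an $O(A^\sharp)$ bound, far too weak when $\ell$ is small). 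Your underlying worry, that the restriction $\diam(S)\le\delta'$ in Lemma \ref{lemma:modulus of continuity} might not cover every finite subset the Finiteness Principle demands, is legitimate; but the remedy is to extend the multi-point datum to large-diameter $S$ by the clustering device already used inside the proof of Lemma \ref{lemma:modulus of continuity}, not to localize the cube.

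The second gap is the upgrade $\Gamma'(x_0,\tilde{k},\cdot)\to\Gamma(x_0,\tilde{k},\cdot)$. Your plan is to ``trace the explicit partition-of-unity construction inside the Finiteness Principle,'' but Theorem \ref{thm:shape fields fin prin} is cited here as a black box; its internals are not in evidence and you do not actually carry out the trace. The paper's argument is shorter and different in kind: because the multi-point datum in the definition of $\Gamma'(x_0,\kb,A)$ is required to lie in the \emph{refined} fibers $\Gamma(x_j,A)$, and because $\tilde{F}$ is an honest $C^m$ function on $\R^n$, the jet $J_{x_0}\tilde{F}$ survives every Glaeser refinement of $\Gamma'$ and hence lies in $\Gamma(x_0)$; combined with Lemma \ref{lemma:9.1} this places $J_{x_0}\tilde{F}$ in $\Gamma(x_0,\tilde{k},CA^\sharp)$. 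You correctly flagged this as the principal obstacle, but the route you describe does not close it.
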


\begin{proof}

By Lemma \ref{lemma:modulus of continuity}, there exists a modulus of continuity $\omega$ such that the following holds. If $S\subset E_1$ and $|S|\le\kt$, then there exists a map $x\mapsto P^x$ satisfying

\begin{equation}
    P^x\in \Gamma(x,\kt,CA^\sharp) \text{ for each }x\in S; \text{ and}
\end{equation}
\begin{equation}
    |\D^\alpha(P^x-P^y)(y)|\le C\omega(|x-y|)|x-y|^{m-|\alpha|}\text{ for }x,y\in S, |x-y|\le 1, |\alpha|\le m.
\end{equation}

In particular, by the classical Whitney extension theorem for finite sets (Lemma \ref{lemma:WET}) there exists $F^S$ such that

\begin{equation}
    \|F^S\|_{\dot{C}^{m,\omega}(\R^n,\R^d)}\le CA^\sharp
\end{equation}
and
\begin{equation}
    J_xF^S\in \Gamma(x,\kb,CA^\sharp)\text{ for all }x\in S.
\end{equation}

By Lemmas \ref{lemma:9.1} and \ref{lemma:9.2}  and the fact that $\kt$ has been chosen to be greater than $k^\sharp_{SF}$, by Theorem \ref{thm:shape fields fin prin}, there exists $\tilde{F}\in \dot{C}^{m,\omega}(\R^n,\R^d)$ such that
\begin{equation}\label{eq:bound 1 from SFFP}
    \|\tilde{F}\|_{\dot{C}^{m,\omega}(\R^n,\R^d)}\le C'A^\sharp
\end{equation}
and
\begin{equation}\label{eq:bound 2 from SFFP}
    J_x(\tilde{F})\in \Gamma'(x,\kt,C'A^\sharp) \text{ for all }x\in E_1.
\end{equation}

% If $E_1$ is not contained in a cube of side length $\le \dmax$, then we may cover it with finitely many, overlapping such cubes, apply Theorem \ref{thm:shape fields fin prin} and paste together the solutions via a smooth partition of unity.

The first conclusion \eqref{eq:first quantitative bound} follows from \eqref{eq:bound 1 from SFFP}.

To obtain \eqref{eq:control over correction}, we observe that if for $x_0\in E_1$, $J_{x_0}\tilde{F}\in \Gamma'(x_0,M)$, then $J_{x_0}\tilde{F}\in\Gamma(x_0,M)$ since $F\in C^m(\R^n)$ and $(\Gamma(x,M))_{x\in E}$ is obtained through multiple Glaeser refinements of $(\Gamma'(x,M))_{x\in E}$. Thus, \eqref{eq:control over correction} follows from \eqref{eq:bound 2 from SFFP} and our careful definition of the $\Gamma'(x,\kb,A)$.

\end{proof}

Now we move on to the induction step. Suppose we are given a compact set $E$ with a Glaeser stable bundle $(\G(x,M))_{x\in E,M\geq 0}$ which has $\Lambda$ associated strata, and that Theorem \ref{thm:heart of the matter} is known to hold in the case of $\Lambda-1$ strata with $\ksh=\ksh_{\text{old}}$.

Given a cube $Q$, let $rQ$ denote the $r$ times concentric dilation of $Q$. $Q^*$ will be used to denote $1.01 Q$.

By Lemma \ref{lem:low-strat}, $E_1$ is compact; thus $\R^n\setminus E_1$ is open and admits a Whitney decomposition as in \cite{F06} and \cite{W34-1}. That is, there exist closed cubes $Q_\nu$ such that

\begin{subequations}\label{eq:Whitney decomposition}
\begin{equation}
    \delta_\nu:=diam(Q_\nu)\le1.
\end{equation}
\begin{equation}
    \R^n\setminus E_1=\bigcup_\nu Q_\nu.
\end{equation}
\begin{equation}
    3Q_\nu\cap E_1=\emptyset.
\end{equation}
\begin{equation}\label{eq:distance from boundary}
    \text{If }\delta_\nu<1,\text{ then there exists }x_0^{(\nu)}\in E_1\text{ such that }\dist(x_0^{(\nu)},Q_\nu)<C\delta_\nu.
\end{equation}
\begin{equation}
    \text{If the boundaries of }Q_\mu\text{ and }Q_\nu\text{ intersect, then }c\delta_\mu<\delta_\nu<C\delta_\mu.
\end{equation}
\end{subequations}

% Typically, one typically sees the number 1 in place of $\dmax$ above. However, this choice of $\dmax$ will be needed to apply the $(C_w,\dmax)$-convexity of our shape fields on upcoming steps and the construction of the Whitney cubes may be done essentially the same with any given largest scale.

Furthermore, there exists a Whitney partition of unity $\{\theta_\nu\}_\nu$ subordinate to $\{Q_\nu\}_\nu$ satisfying

\begin{subequations}\label{eq:Whitney POU}
\begin{equation}
    1=\sum_\nu\theta_\nu\text{ on }\R^n\setminus E_1.
\end{equation}
\begin{equation}
    \supp(\theta_\nu)\subset Q^*_\nu.
\end{equation}
\begin{equation}
    |\D^\alpha\theta_\nu|\le C\delta_\nu^{-|\alpha|}\text{ on }\R^n\text{ for }|\alpha|\le m+1.
\end{equation}
\end{subequations}

Our current goal is to show that we may apply the induction hypothesis on each of the $E\cap Q_\nu$, but only after subtracting off the function $\tilde{F}$ from Lemma \ref{lemma:Step 1 complete}.

To apply the induction hypothesis, first we must establish Glaeser stability for the desired bundle. To do so, consider the bundle $(H(x))_{x\in E}$ such that
\begin{equation}
    H(x)=\{J_x\tilde{F}\}\text{ if }x\in E_1
\end{equation}
and
\begin{equation}
    H(x)=\Gamma(x,C_0A^\sharp)\text{ if }x\notin E_1,
\end{equation}
where $C=C_0$ is as in the conclusion of Lemma \ref{lemma:Step 1 complete}.

Since $E_1$ is compact and $J_x\tilde{F}\in \Gamma(x,C_0A^\sharp)$ for $x\in E_1$, we see that the Glaeser stability of $(\Gamma(x,C_0A^\sharp))_{x\in E}$ implies the Glaeser stability of $(H(x))_{x\in E}$. Furthermore, $\{(x_0,J_{x_0}\tilde{F}):x_0\in E_1\}$ is compact. Thus, by Lemma \ref{lemma:uniform delta}, we may choose $\delta$ to be uniform in $x_0$ in the following.

For any $\epsilon>0$, there exists $\delta>0$ for which the following holds: If $x_0\in E_1$ and $x_1,...,x_{\kt}\in E\cap B(x_0,\delta)$, then there exist
\begin{subequations}
\begin{equation}
    P_i\in \Gamma(x_i,C_0A^\sharp)
\end{equation}
such that
\begin{equation}
    P_0=J_{x_0}\tilde{F}
\end{equation}
and
\begin{equation}
    |\D^\alpha(P_i-P_j)(x_j)|\le \epsilon|x_i-x_j|^{m-|\alpha|}\text{ for }|\alpha|\le m, 0\le i,j,\le \kt.
\end{equation}
\end{subequations}

Subtracting off $J_{x_i}\tilde{F}$ from each of the above polynomials, we obtain the following:

If $x_0\in E_1$ and $x_1,...,x_{\kt}\in E\cap B(x_0,\delta)$, then there exist
\begin{subequations}
\begin{equation}\label{eq:for Glaeser stable 1}
    P_i\in -J_{x_i}\tilde{F}+\Gamma(x_i,C_0A^\sharp)
\end{equation}
such that
\begin{equation}
    P_0=0
\end{equation}
and
\begin{equation}\label{eq:for Glaeser stable 2}
    |\D^\alpha(P_i-P_j)(x_j)|\le \epsilon|x_i-x_j|^{m-|\alpha|}\text{ for }|\alpha|\le m, 0\le i,j,\le \kt.
\end{equation}
\end{subequations}

In particular, by\eqref{eq:distance from boundary}, for any $\epsilon>0$ there exists $\delta>0$ such that if $x_i\in E\cap Q^*_\nu$ and $\delta_\nu<\delta$, then
\begin{equation}\label{eq:epsilon control of polynomials}
    |D^\alpha(P_i)(x_i)|\le \epsilon\delta_\nu^{m-|\alpha|}.
\end{equation}

Second, we must determine bounds for the norm of the appropriate holding space. By \eqref{eq:control over correction}, for any $x_0\in E_1$ and $x_1,...,x_{\kt}\in E$, there exist

\begin{subequations}
\begin{equation}
P_i\in \Gamma(x_i,C_0A^\sharp)
\end{equation}
such that
\begin{equation}
P_0=J_{x_0}\tilde{F}
\end{equation}
and
\begin{equation}
    |\D^\alpha(P_i-P_j)(x_j)|\le C_0A^\sharp|x_i-x_j|^{m-|\alpha|}\text{ for }|\alpha|\le m, 0\le i,j,\le \kt.
\end{equation}
\end{subequations}

Again subtracting off $J_{x_i}\tilde{F}$ from each of the above polynomials, we obtain the following:

If $x_0\in E_1$ and $x_1,...,x_{\kt}\in E$, there exist
\begin{subequations}
\begin{equation}\label{eq:in Gamma C}
    P_i\in -J_{x_i}\tilde{F}+\Gamma(x_i,C_0A^\sharp)
\end{equation}
such that
\begin{equation}
    P_0=0
\end{equation}
and
\begin{equation}\label{eq:homog C control of polynomials}
|\D^\alpha(P_i-P_j)(x_j)|\le C_0A^\sharp|x_i-x_j|^{m-|\alpha|}\text{ for }|\alpha|\le m, 0\le i,j,\le \kt.
\end{equation}
\end{subequations}

In particular, if $x_i\in Q_\nu^*$ and $\delta_\nu<1$, then by \eqref{eq:distance from boundary}
\begin{equation}\label{eq:C control of polynomials}
    |\D^\alpha(P_i)(x_i)|\le C_0A^\sharp\delta_\nu^{m-|\alpha|}\text{ for }|\alpha|\le m
\end{equation}
when $x_1,...,x_{\kt}$ are taken from $E\cap Q_\nu$.

Observe that for each scale $\delta_\nu<1$ there are only finitely many $Q_\nu$ such that $\delta_\nu=\diam(Q_\nu)$. Thus, by \eqref{eq:for Glaeser stable 1}, \eqref{eq:for Glaeser stable 2}, \eqref{eq:epsilon control of polynomials}, \eqref{eq:in Gamma C}, \eqref{eq:homog C control of polynomials}, and \eqref{eq:C control of polynomials}, one may produce a function $A:(0,1]\to(0,C_0A^\sharp]$ such that $\lim_{t\downarrow0}A(t)=0$ and:

For any $Q_\nu$ satisfying $\delta_\nu<1$ and $x_1,...,x_{\kt}\in E\cap Q^*_\nu$, there exist
\begin{subequations}\label{eqs:norm of new SF}
\begin{equation}\label{eq:being in Gamma}
    P_i\in -J_{x_i}\tilde{F}+\Gamma(x_i,C_0A^\sharp)
\end{equation}
such that 
\begin{equation}\label{eq:control of polynomials norm}
    |\D^\alpha(P_i)(x_i)|\le A(\delta_\nu)\delta_\nu^{m-|\alpha|}\text{ for }|\alpha|\le m
\end{equation}
and
\begin{equation}
    |\D^\alpha(P_i-P_j)(x_j)|\le A(\delta_\nu)|x_i-x_j|^{m-|\alpha|}\text{ for }|\alpha|\le m, 0\le i,j,\le \kt.
\end{equation}
\end{subequations}

% After applying the induction hypothesis on each $E\cap Q_\nu$, we will paste together the resulting sections using the partition of unity found in \eqref{eq:Whitney POU}. At this stage, it will be advantageous to maintain the control over the inhomogeneous $C^m$ norm for convergence purposes. Theorem \ref{thm:heart of the matter} does not give this directly, but it will be attainable through appropriate choice of shape field.

% Let
% \begin{equation}
%   \Gamma^{[\delta]}(x,M)=\{P\in\p:|\D^\alpha(P)(x)|\le M \delta^{m-|\alpha|}\text{ for }|\alpha|\le m\}.  
% \end{equation}
% One may check by definition that $\Gamma^{[\delta]}(x,M)$ is a $(C,\delta)$-convex shape field.

We will apply the following rescaled version of the induction hypothesis on each $E\cap Q_\nu^*$.

Before its formal statement, we define a shape field $(H(x,M))_{x\in E}$ to be \underline{$\delta$-regular} for $\delta>0$ if it satisfies \eqref{regularity 2} and \eqref{regularity 3}, but
\begin{equation}
    H(x,M)\subset\{P\in\p:|\da P(x)|\le \delta^{m-|\alpha|}M\text{ for }|\alpha|\le m\}
\end{equation}
in place of \eqref{regularity 1}. (The usual notion of being regular is equivalent to being 1-regular.)

\begin{lemma}\label{lemma:rescaled induction hypothsis}
Let $\tilde{\delta}>0$, $Q_0\subset\R^n$ be a cube of length $3\tilde{\delta}$ and $E\subset Q_0$ be compact. Suppose that for each $x\in E$ we are given a $(C_w,\tilde{\delta})$ convex, $\tilde{\delta}$-regular, closed shape field $(\Gamma(x,M))_{x\in E,M\geq 0}$ such that
\begin{enumerate}
    \item The Glaeser refinement of $(\Gamma(x))_{x\in E}$ terminates in a bundle $(\Gamma^*(x))_{x\in E}$ such that $\Gamma^*(x)$ is nonempty for all $x\in E$ and
    \item There exists $A>0$, such that given $x_1,...,x_{k^\sharp_\text{old}}\in E$, there exists polynomials $P_j\in\Gamma^*(x,A)$ satisfying
    \begin{equation}
        |\d^\alpha(P_i-P_j)(x_j)|\le A|x_i-x_j|^{m-|\alpha|}\text{ for }|\alpha|\le m, 1\le i,j,\le k^\sharp_\text{old}.
    \end{equation}
\end{enumerate}

Assume also that $E$ has fewer than $\lambda$ strata. Then there exists $F\in C^m(\R^n)$ such that
\begin{enumerate}
    \item $\|F\|_{\dot{C}^m}\le CA$ and
    \item $J_xF\in\Gamma(x,CA)$ for all $x\in E$,
\end{enumerate}
where $C$ depends solely on $m,n$.

\end{lemma}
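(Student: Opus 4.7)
The plan is to reduce Lemma \ref{lemma:rescaled induction hypothsis} to the induction hypothesis (the $<\lambda$-strata version of Theorem \ref{thm:heart of the matter}) by rescaling the cube of side length $3\tilde\delta$ to a cube of side length $3$. After translating so that $Q_0 = [0,3\tilde\delta]^n$, define the affine bijection $\Phi:\R^n\to\R^n$ by $\Phi(y)=\tilde\delta y$. Set $\tilde E = \Phi^{-1}(E)\subset[0,3]^n$ and, for each $\tilde x\in\tilde E$ and $M\ge 0$, transport the shape field by
\begin{equation*}
    \tilde\Gamma(\tilde x,M) \;:=\; \bigl\{\, P\circ\Phi \,:\, P\in \Gamma(\Phi(\tilde x),\,\tilde\delta^{-m}M)\,\bigr\}.
\end{equation*}
Thus the rescaled ``mass'' $M$ at $\tilde x$ corresponds to the original mass $\tilde\delta^{-m}M$ at $x=\Phi(\tilde x)$.

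Next I would verify, one at a time, that $\tilde\Gamma$ satisfies the hypotheses of Theorem \ref{thm:heart of the matter} on $Q_0'=[0,3]^n$. For $\tilde P = P\circ\Phi$ one has $\partial^\alpha\tilde P(\tilde x) = \tilde\delta^{\abs{\alpha}}\partial^\alpha P(x)$, so the $\tilde\delta$-regularity bound $\abs{\partial^\alpha P(x)}\le \tilde\delta^{m-\abs{\alpha}}M$ pulls back to $\abs{\partial^\alpha\tilde P(\tilde x)}\le \tilde\delta^m M$, i.e.\ to the standard \eqref{regularity 1} bound after our scaling of $M$; the two other regularity conditions scale in the same way. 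The $(C_w,\tilde\delta)$-convexity condition for $0<\delta\le\tilde\delta$ and the rescaled quantities $\tilde\delta_{\mathrm{new}}=\delta/\tilde\delta\in(0,1]$, $\tilde Q_i = Q_i\circ\Phi$ becomes exactly $(C_w,1)$-convexity of $\tilde\Gamma$, using that $Q_1\odot_x Q_1+Q_2\odot_xQ_2=1$ pulls back unchanged. Closedness is preserved since $\Phi$ is a homeomorphism. Since the map $P\mapsto P\circ\Phi$ is a linear bijection of $\vec{\mathcal P}$ that intertwines $\pi_x$ and $\pi_{\tilde x}$ (after the natural identification), the signatures at $x$ and $\Phi^{-1}(x)$ coincide, so $\tilde E$ has the same (hence fewer than $\lambda$) strata. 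The Glaeser refinement commutes with the rescaling (the defining inequalities $\abs{\partial^\alpha(\vec P_i-\vec P_j)(x_j)}\le\epsilon\abs{x_i-x_j}^{m-\abs\alpha}$ rescale to $\abs{\partial^\alpha(\tilde P_i-\tilde P_j)(\tilde x_j)}\le\tilde\delta^m\epsilon\abs{\tilde x_i-\tilde x_j}^{m-\abs\alpha}$, with $\epsilon$ arbitrary), so $\tilde\Gamma^\ast(\tilde x)\ne\varnothing$ for all $\tilde x$, and the same rescaling of the finiteness condition gives $\|\tilde\Gamma^\ast\|\le C\tilde\delta^m A$.

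Applying the induction hypothesis to $\tilde\Gamma$, I obtain $\tilde F\in C^m(\R^n)$ with $\|\tilde F\|_{\dot C^m}\le C'\tilde\delta^m A$ and $J_{\tilde x}\tilde F\in\tilde\Gamma(\tilde x,C'\tilde\delta^m A)$ for every $\tilde x\in\tilde E$. Define $F:=\tilde F\circ\Phi^{-1}$. Since $\partial^\alpha F(x)=\tilde\delta^{-\abs\alpha}\partial^\alpha\tilde F(\Phi^{-1}x)$, for $\abs\alpha=m$ we get $\abs{\partial^\alpha F(x)}\le\tilde\delta^{-m}\cdot C'\tilde\delta^m A = C'A$, which is the desired $\dot C^m$ bound. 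A direct computation also shows $(J_xF)\circ\Phi = J_{\Phi^{-1}x}\tilde F$, so the jet membership $J_{\tilde x}\tilde F\in\tilde\Gamma(\tilde x,C'\tilde\delta^m A)$ unrescales to $J_xF\in\Gamma(x,\tilde\delta^{-m}\cdot C'\tilde\delta^m A)=\Gamma(x,C'A)$.

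The only subtle part is bookkeeping: making sure that each defining inequality (regularity, Whitney convexity, Glaeser refinement, the norm $\|\cdot\|$ in \eqref{def:norm of a bundle}) transforms under $\Phi$ with exactly the power of $\tilde\delta$ that is absorbed by the rescaling $M\leftrightarrow\tilde\delta^{-m}M$. No new analytic content is needed beyond what was proved on the unit-scale cube; this lemma is purely a reduction-by-rescaling statement that lets us feed the outputs on Whitney cubes $Q_\nu$ of side length $\tilde\delta=\delta_\nu<1$ into the induction.
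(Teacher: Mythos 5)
Your proposal is correct, and it supplies a proof the paper actually omits: after stating Lemma \ref{lemma:rescaled induction hypothsis} the text moves straight to constructing the shape field on each Whitney cube without ever verifying the rescaling, so your argument is precisely the implicit justification the authors rely on. Your bookkeeping checks out. With $\Phi(y)=\tilde\delta y$ one has $\partial^\alpha(P\circ\Phi)(\tilde x)=\tilde\delta^{\abs{\alpha}}\partial^\alpha P(x)$ and $\abs{\tilde x_i-\tilde x_j}=\tilde\delta^{-1}\abs{x_i-x_j}$, and these two identities make every defining inequality (the $\tilde\delta$-regularity bound $\abs{\partial^\alpha P(x)}\le\tilde\delta^{m-\abs\alpha}M$, the $(C_w,\tilde\delta)$-convexity hypotheses with $\delta\leftrightarrow\tilde\delta\cdot\delta_{\mathrm{new}}$, the Glaeser refinement inequalities, and the norm $\norm{\cdot}$ from \eqref{def:norm of a bundle}) transform by exactly the factor $\tilde\delta^m$, which your choice $\tilde\Gamma(\tilde x,M)=\{P\circ\Phi:P\in\Gamma(\Phi(\tilde x),\tilde\delta^{-m}M)\}$ absorbs. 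You also correctly observe that $P\mapsto P\circ\Phi$ intertwines $\pi_x$ and $\pi_{\tilde x}$ (so signatures, hence strata, are unchanged) and commutes with taking closures, so the hypotheses of the $<\lambda$-strata induction hypothesis are all met. The final unscaling $F=\tilde F\circ\Phi^{-1}$ gives $\norm{F}_{\dot C^m}\le C A$ and $J_x F\in\Gamma(x,CA)$ exactly as required. One minor remark: for the $\tilde\delta$-regularity conditions \eqref{regularity 2} and \eqref{regularity 3} the $\delta$ in the quantifier rescales as $\delta\mapsto\delta\tilde\delta^m$ (which is still positive since $\tilde\delta\le 1$), and the constant $C$ one obtains actually depends on $m,n,C_w$ rather than just $m,n$, but that is a slip in the paper's statement, not in your argument.
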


Given $x\in E, M\geq 0$, define
\begin{equation}
    \tilde{\Gamma}(x,M):=(-J_x\tilde{F}+\G'(x,C_0A^\sharp M/A(\delta_\nu)))\cap\{P\in\p:|\da P(x)|\le \delta^{m-|\alpha|}M\text{ for }|\alpha|\le m\},
\end{equation}
where again, $(\G'(x,M))_{x\in E}$ refers to the unrefined bundle mentioned in Theorem \ref{thm:main theorem}.

Then, it is clear from the relevant definitions that $(\tilde{\G}(x,M))_{x\in E\cap Q_\nu^*, M\geq 0}$ is a $(C_w,\tilde{\delta})$ convex, $\tilde{\delta}$-regular, closed shape field.

Furthermore, the Glaeser refinement of $(\tilde{\G}(x))_{x\in E}$ terminates in $(\G^*(x))_{x\in E}$, the Glaeser stable bundle obtained through refinement of $(\G(x))_{x\in E}$. This is because
\begin{equation}
    \bigcup_{M\geq 0}\{P\in\p:|\da P(x)|\le \delta^{m-|\alpha|}M\text{ for }|\alpha|\le m\}=\p,
\end{equation}
so $\G(x)=\tilde{\Gamma}(x)$ for all $x\in E$. As a further corollary, we see that $\dim\G(x)=\dim\tilde{G}(x)$ and $\dim\pi_x\G(x)=\dim\pi_x\tilde{\Gamma}(x)$ for all $x\in E$, so both bundles have the same number of strata on $E\cap Q_\nu$.

Lastly, \eqref{eqs:norm of new SF} says that given $x_1,...,x_{k^\sharp_\text{old}}\in E$, there exist polynomials $P_j\in\Gamma^*(x,A(\delta_\nu))$ satisfying
    \begin{equation}
        |\d^\alpha(P_i-P_j)(x_j)|\le A(\delta_\nu)|x_i-x_j|^{m-|\alpha|}\text{ for }|\alpha|\le m, 1\le i,j,\le k^\sharp_\text{old}.
    \end{equation}

Thus, the hypotheses of Lemma \ref{lemma:rescaled induction hypothsis} are fully satisfied. We conclude that there exists $F_\nu\in C^m(\R^n)$ such that
\begin{equation}\label{eq:controlled homogeneous norm induction step}
    \|F_\nu\|_{\dot{C}^m(\R^n)}\le CA(\delta_\nu)
\end{equation}
and
\begin{equation}\label{eq:which SF it's in}
    J_x F_\nu\in \tilde{\Gamma}(x,CA(\delta_\nu)), x\in E\cap Q_\nu^*.
\end{equation}

By definition of $\tilde{\Gamma}(x,M)$, \eqref{eq:which SF it's in} implies

\begin{equation}
    J_x F_\nu\in-J_{x_i}\tilde{F}+\Gamma(x,CA^\sharp)
\end{equation}
and
\begin{equation}\label{eq:control on jets induction step 1}
    |\D^\alpha(J_xF_\nu)(x)|\le CA(\delta_\nu)^{m-|\alpha|}\text{ for }|\alpha|\le m, x\in E\cap Q_\nu^*.
\end{equation}

Combining \eqref{eq:controlled homogeneous norm induction step} with \eqref{eq:control on jets induction step 1} gives

\begin{equation}\label{eq:control on jets induction step 2}
    |\D^\alpha(J_xF_\nu)(x)|\le CA(\delta_\nu)^{m-|\alpha|}\text{ for }|\alpha|\le m, x\in Q_\nu^*.
\end{equation}

For $\delta>0$, define

\begin{equation}
    F^{[\delta]}(x)=\sum_{\delta_{\nu}>\delta}\theta_\nu(x)F_\nu(x).
\end{equation}

Following \cite{F06}, one may easily show, using the properties of the Whitney decomposition and associated partition of unity, along with
\eqref{eq:control on jets induction step 2}
that $F^{[\delta]}$ converges in $C^m$ to a function $F^{[0]}$ satisfying
\begin{equation}
    \|F^{[0]}\|_{C^m}\le CA^\sharp
\end{equation}
and
\begin{equation}
    J_x F^{[0]}=0, x\in E_1.
\end{equation}

For $x\in E\setminus E_1$, $x\in \supp(\theta_\nu)$ for finitely many $\nu$; thus, there exists a $\delta(x)>0$ such that $F^{[0]}\equiv F^{[\delta(x)]}$ in a neighborhood of $x$. As a result,
\begin{align*}
    J_xF^{[0]}&=J_xF^{[\delta(x)]}\\
    &=\sum_{supp(\theta_\nu)\ni x} J_x\theta_{\nu}\odot J_xF_\nu\\
    &\in -J_x\tilde{F} + \Gamma(x,CA^\sharp)
\end{align*}
by the $(C_w,\tilde{\delta})$-convexity of the shape field, specifically Lemma \ref{lemma:shape field convexity for more polynomials}.

We now set $F=F^{[0]}+\tilde{F}$ and see that $F$ is the desired section, as
\begin{equation}
    J_x(F)\in \Gamma(x,CA^\sharp)\text{ for all }x\in E
\end{equation}
and
\begin{equation}
    \|F\|_{\dot{C}^m}\le CA^\sharp.
\end{equation}

\section{Improvement of $k^{\sharp}$}\label{sec:improvement}

\begin{theorem}\label{thm:k sharp improvement}
Theorem \ref{thm:heart of the matter} holds with $k^\sharp=2^{\dim\vp}$ in \eqref{GR} and \eqref{def:norm of a bundle}.
\end{theorem}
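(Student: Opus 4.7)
The plan is to rerun the proof of Theorem \ref{thm:heart of the matter} with the single choice $k^\sharp = 2^D$, where $D = \dim \vec{\mathcal{P}}$, rather than the nested constant $(D+1)^{3\cdot 2^D}$ that arose from stacking Helly's theorem applications on top of each other. The essential observation is that the sharp finiteness constant $k^\sharp_{\mathrm{SF}}$ in the shape-fields finiteness principle (Theorem \ref{thm:shape fields fin prin}), when applied to convex shape fields, can itself be taken to equal $2^D$: this is precisely the optimal finiteness constant for $C^{m,\omega}$ convex selections proved in \cite{FIL16}. Once this sharper constant is in hand at the level of the finiteness principle, the remaining task is to propagate it backwards through the proof chain of Sections \ref{sec:prelim}--\ref{sec:main proof} and confirm that every combinatorial constraint can be met with $k^\sharp = 2^D$.

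First, I would re-examine the Finiteness Lemma (Lemma \ref{lemma:finiteness}). Its proof uses Glaeser stability with $k^\sharp$ test points around an accumulation point and an application of the regularity axiom \eqref{regularity 2}; once Glaeser stability is built in for $k^\sharp = 2^D$, the argument yields $A^\sharp$ with no change. Next, I would revisit the Helly-theoretic arguments in Section \ref{sec:more convex} (Lemmas \ref{lemma:5.6 equivalent}--\ref{lemma:5.8 equivalent}) and the uniform-$\delta$ constructions in Section \ref{sec:uniform} (Lemmas \ref{lemma:8.1} and \ref{lemma:modulus of continuity}), which currently impose the chain $1+(D+1)\tilde{k} \le k^\sharp$. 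The idea is to replace each Helly invocation on convex subsets of $\vec{\mathcal{P}}$ by the sharp convex-selection finiteness principle, which detects global compatibility by examining only $2^D$-tuples. This collapses the nested bound $\tilde{k} \leadsto (D+1)\tilde{k} \leadsto \cdots$ into a single requirement $\tilde{k} \le k^\sharp = 2^D$.

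With the Helly steps replaced by sharp convex-selection arguments, the clustering step (Lemma \ref{lemma:clustering}) and the open-cover/compactness argument that produces the set $K$ in Section \ref{sec:uniform} go through verbatim, as they only depend on the abstract Glaeser stability of $(\Gamma(x,2A^\sharp))_{x\in E}$ after the replacement. Similarly, the partition-of-unity induction on strata in Section \ref{sec:main proof} invokes $k^\sharp$ only through the finiteness principle (now available with $k^\sharp_{\mathrm{SF}} = 2^D$) and through the $\tilde{k}$ satisfying \eqref{eq:k tilde}, which is now accommodated by $2^D$ itself. The gradient trick of Section \ref{sec:gradient trick} respects the $\ksh = 2^D$ bound since it increases $m$ and $n$ while $D$ transforms consistently.

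The principal obstacle is the careful bookkeeping: one must verify that the \emph{single} choice $k^\sharp = 2^D$ simultaneously handles (a) the clustering requirement in Lemma \ref{lemma:modulus of continuity}, (b) every Helly-to-convex-selection replacement in Section \ref{sec:more convex}, and (c) the finiteness constant $k^\sharp_{\mathrm{SF}}$ of Theorem \ref{thm:shape fields fin prin}. The secondary, more substantive, difficulty is establishing the sharp $k^\sharp_{\mathrm{SF}} = 2^D$ version of the $C^{m,\omega}$ finiteness principle for $(C_w, \delta_{\max})$-convex shape fields, which requires tracing the constant in \cite{FIL16} through the present regularity setup and confirming that no intermediate step inflates it. Once these verifications are complete, Theorem \ref{thm:k sharp improvement} follows immediately from the reworked proof of Theorem \ref{thm:heart of the matter}.
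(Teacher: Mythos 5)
Your proposal takes a fundamentally different route from the paper's, and as sketched it has two substantive gaps.

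First, the paper does \emph{not} rerun the proof of Theorem \ref{thm:heart of the matter} with $k^\sharp = 2^D$. Instead it proves Proposition \ref{prop:same refinement}: for any bundle $(H(x))_{x\in E}$, the Glaeser refinement computed with $k^\sharp = 2^{\dim\vec{\P}}$ coincides with the Glaeser refinement computed with the (much larger) $k^\sharp = k_1$ used in Sections \ref{sec:prelim}--\ref{sec:main proof}. The one-directional inclusion $\tilde H(x)\subset H'(x)$ is trivial; the reverse inclusion is proved by a short argument: after translating so that $P_0 = 0$, one uses the Glaeser-$k_0$ condition on each $(k_0+1)$-subset of $\{y_0,\dots,y_{k_1}\}$, promotes each local family of jets to a genuine $C^m$ function via Whitney's finite extension theorem (Lemma \ref{lemma:WET}), and then applies the sharp convex-selection finiteness principle (Theorem \ref{thm:US k sharp}, cited from \cite{US2020}) to glue these into a single function $F$ with controlled norm; the jets $J_{y_j}F$ then certify the Glaeser-$k_1$ condition. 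This makes the improvement of $k^\sharp$ a \emph{post hoc} observation about the Glaeser operator itself, and requires no modification of the quantitative machinery whatsoever.

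Second, the two key claims in your plan do not hold up. (i) You assert that $k^\sharp_{\mathrm{SF}}$ in Theorem \ref{thm:shape fields fin prin} ``can itself be taken to equal $2^D$ \ldots proved in \cite{FIL16}.'' The shape-fields finiteness constant in \cite{FIL16} is not $2^D$; the sharp $2^{\dim\vec\P}$ bound comes from a different finiteness statement for $C^m$ convex selection, which is exactly Theorem \ref{thm:US k sharp} from \cite{US2020}, and that result is about the existence of a single interpolating function, not about a graded, $(C_w,\delta_{\max})$-convex shape field with a modulus of continuity. (ii) You propose to ``replace each Helly invocation by the sharp convex-selection finiteness principle.'' These two tools do different jobs: Helly's theorem in Lemmas \ref{lemma:5.6 equivalent}--\ref{lemma:5.8 equivalent} and \ref{lemma:8.1} is used to prove that certain intersections of compact convex sets $\mathcal{K}(S)$ inside $\overline{\P}$ (or $\P$) are nonempty, giving a polynomial compatible with \emph{all} nearby points at once; this is an intrinsic $\R^D$ combinatorial fact, not a selection-of-functions statement, and there is no obvious way to substitute a selection finiteness principle for it without losing the very compatibility you need. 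The chain $1+(D+1)\tilde k \le \kb$, $1+(D+1)\kb\le k^\sharp$ is thus not merely an artifact you can collapse by fiat; it reflects the dimension of the ambient space in which Helly is applied. The paper's Proposition \ref{prop:same refinement} sidesteps all of this cleanly.
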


Given a holding space $H(x)$, we will now let $\tilde{H}(x)$ denote its Glaeser refinement when $k^\sharp$ is taken to be the value in \eqref{GR} initially used to prove Theorem \ref{thm:main theorem}. Call this value $k_1$. We will use $H'(x)$ to denote the Glaeser refinement when $k^\sharp$ is taken to be $k_0:=2^{\dim\vp}$. Theorem \ref{thm:k sharp improvement} is easily seen to be implied by the following proposition.

\begin{proposition}\label{prop:same refinement}
Let $H(x)$ be a holding space. Then $\tilde{H}(x)=H'(x)$.
\end{proposition}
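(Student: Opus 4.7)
The plan is to prove the two inclusions $\tilde H(x)\subseteq H'(x)$ and $H'(x)\subseteq \tilde H(x)$ separately.

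The inclusion $\tilde H(x)\subseteq H'(x)$ is immediate and comes essentially for free. Given $\vec P_0\in\tilde H(x_0)$ and a $k_0$-tuple $x_1,\dots,x_{k_0}\in E\cap B(x_0,\delta)$, one pads it to a $k_1$-tuple $x_1,\dots,x_{k_1}$ by repeating $x_1$ in the extra slots. The $k_1$-Glaeser refinement then supplies a compatible family $\vec P_1,\dots,\vec P_{k_1}$, and the first $k_0$ of these (one may take the repeated slots to carry $\vec P_1$) give the family required by the $k_0$-refinement. So $\vec P_0\in H'(x_0)$.

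The substance of the proposition is the reverse inclusion. Fix $\vec P_0\in H'(x_0)$ and $\eps>0$. Using that the $k_0$-refinement condition holds at $\vec P_0$, with tolerance $\eps'=\eps/C$ for a constant $C$ to be chosen, select $\delta>0$ accordingly. The plan is, given any $x_1,\dots,x_{k_1}\in E\cap B(x_0,\delta)$, to build compatible polynomials $\vec P_j\in H(x_j)$ by assembling them out of the many ``local'' compatible families furnished by the $k_0$-refinement. Concretely, for every subset $T\subseteq\{1,\dots,k_1\}$ with $|T|\leq k_0-1$, the $k_0$-refinement property applied to the point set $\{x_0\}\cup\{x_j\}_{j\in T}$ yields a compatible family $\{\vec P_j^T\}_{j\in T\cup\{0\}}$ with $\vec P_0^T=\vec P_0$. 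For each $i\in\{1,\dots,k_1\}$, set $\mathcal K_i$ equal to the closed convex hull of the reachable polynomials $\{\vec P_i^T: i\in T,\;|T|\leq k_0-1\}$; convexity of $H(x_i)$ gives $\mathcal K_i\subseteq H(x_i)$. The goal is to exhibit a single selection $\vec P_i\in\mathcal K_i$, for each $i$, realizing the pairwise Taylor estimates. This combinatorial selection is where $k_0=2^{\dim\vec{\mathcal P}}$ enters: each Taylor-compatibility constraint is a convex condition in the $D$-dimensional space $\vec{\mathcal P}$, and a Helly/Carath\'eodory-style argument adapted to the convex fibers identifies a consistent selection, absorbing the constant into $\eps$ via the choice of $\eps'$.

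The main obstacle I anticipate is the nonlocal coupling of the pairwise Taylor conditions. The estimate $|\partial^\alpha(\vec P_i-\vec P_j)(x_j)|\leq\eps|x_i-x_j|^{m-|\alpha|}$ is governed by the scale $|x_i-x_j|$, which may be much smaller than $|x_i-x_0|$ or $|x_j-x_0|$; closeness of each $\vec P_i$ to $\vec P_0$ then does not transfer to pairwise closeness by a na\"ive triangle inequality. Overcoming this requires combining the selection argument with a multiscale clustering in the spirit of Lemma \ref{lemma:clustering}: partition $\{x_1,\dots,x_{k_1}\}$ into well-separated clusters, treat each cluster recursively relative to a ``cluster anchor,'' and at every stage use the $k_0$-refinement to pass between scales. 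The bound $k_0=2^{\dim\vec{\mathcal P}}$ reflects the dyadic depth of this recursion, in which each step collapses one dimension of residual freedom in $\vec{\mathcal P}$; after $\dim\vec{\mathcal P}$ doublings one obtains the $2^{\dim\vec{\mathcal P}}$-sized witness set needed to drive the combinatorial selection.
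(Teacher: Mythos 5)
Your forward inclusion $\tilde H(x)\subseteq H'(x)$ is correct, and you have put your finger on the real difficulty in the reverse inclusion: the pairwise Taylor constraints live at scales $|x_i-x_j|$ that can be far smaller than $|x_i-x_0|$, so closeness to $\vec P_0$ alone does not yield pairwise closeness. But the resolution you sketch does not constitute a proof. Your Helly/Carath\'eodory idea is applied to sets $\mathcal K_i\subset\vec{\mathcal P}$ defined at each point $x_i$ separately, whereas the conditions you must satisfy are joint constraints on the tuple $(\vec P_1,\dots,\vec P_{k_1})\in\vec{\mathcal P}^{k_1}$ — they do not decouple into per-point convex conditions, so Helly in the $D$-dimensional space $\vec{\mathcal P}$ does not apply. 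The remark that ``after $\dim\vec{\mathcal P}$ doublings one obtains the $2^{\dim\vec{\mathcal P}}$-sized witness set'' is a heuristic for why $k_0=2^D$ might suffice, not an argument; nothing in the sketch actually performs the recursive multiscale selection or controls the constants across dyadic levels.

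The paper avoids re-deriving this combinatorics. It first applies Whitney's extension theorem for finite sets (Lemma \ref{lemma:WET}) to convert each $(k_0+1)$-point compatible jet family into a $C^m$ function of norm $O(\eps/M)$, and then invokes the finiteness principle for $C^m$ selection on finite sets (Theorem \ref{thm:US k sharp}, from \cite{US2020}, scalar case in \cite{Shv08}) applied to the degenerate bundle $\vec G(y_0)=\{\vec P_0\}$, $\vec G(y_j)=H(y_j)$ for $j\ge 1$. That theorem is exactly the black box encoding the multiscale/clustering argument you are gesturing at, and it is where the value $k_0=2^{\dim\vec{\mathcal P}}$ comes from. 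In short: the right move is to translate the problem to the function level and cite the known finite-set finiteness principle rather than attempt the selection directly on jets. Without that (or an equivalent self-contained proof of the finiteness principle), your outline has a genuine gap at precisely the step you flagged as the obstacle.
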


The containment $\tilde{H}(x)\subset H'(x)$ follows trivially from the definitions, as increasing $k^\sharp$ increases the number of $y_j$ near $x$ for which we must find polynomials in $H(y_j)$. Thus, the essence of the proof will be showing that $\tilde{H}(x)\supset H'(x)$.

A key to proving Proposition \ref{prop:same refinement} is the following result proven in \cite{US2020}.

\begin{theorem}\label{thm:US k sharp}
Let $S\subset \R^n$ be a finite set of diameter at most 1. For each $x\in S$, let $\vec{G}(x)\subset\vec{\p}$ be convex. Suppose that for every subset $S' \subset S$ with $\abs{S} \leq 2^{\dim \vec{\p}}$, there exists $F^{S'}\in C^m(\R^n,\R^d)$ such that $\|F^{S'}\|_{C^m(\R^n,\R^d)}\le 1$ and $J_x F^{S'}\in \vec{G}(x)$ for all $x\in S'$.
		
Then, there exists $F\in C^m(\R^n,\R^d)$ such that $\|F\|_{C^m(\R^n,\R^d)}\le \gamma$ and $J_x F\in \vec{G}(x)$ for all $x\in S$.
		
Here, $\gamma$ depends only on $m,n,d$, and $|S|$.
% There exists $\gamma<\infty$ and depending only on $m,n$, and $N$ such that the following holds.

% Suppose $S\subset\R^n$ contains at most $N$ points. Let $G$ be a mapping defined on $S$ which assigns a convex set of polynomials $G(x)\subset\p$ of dimension at most $l$ to every point $x$ of $S$. Suppose that, for every subset $S'$ of $S$ consisting of at most $2^{\min\{l+1,\dim\p\}}$ points, there exists a function $F_{S'}\in C^{k,\omega}(\R^n)$ such that $\|F_{S'}\|_{C^{m,\omega}(\R^n)}\le1$ and $J_x(F_{S'})\in G(x)$ for all $x\in S'$. Then there is a function $F\in C^{m,\omega}(\R^n)$ satisfying $\|F\|_{C^{m,\omega}(\R^n)}\le\gamma$ and

% \begin{equation*}
%     J_x(F)\in G(x) \text{ for all } x\in S.
% \end{equation*}
\end{theorem}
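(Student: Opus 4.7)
The plan is to prove this sharp-finiteness-principle as follows. First I would pass from the functional statement to a statement about jets: by Taylor's theorem, the hypothesis that $\|F^{S'}\|_{C^m(\R^n,\R^d)} \leq 1$ and $J_x F^{S'} \in \vec{G}(x)$ for $x \in S'$ yields a Whitney field $\{P^x\}_{x \in S'}$ with $P^x \in \vec{G}(x)$, $|\d^\alpha P^x(x)| \leq C$, and $|\d^\alpha(P^x - P^y)(y)| \leq C|x-y|^{m-|\alpha|}$ for some $C = C(m,n,d)$. Conversely, by Whitney's extension theorem for finite sets (Lemma \ref{lemma:WET}), any Whitney field on all of $S$ satisfying analogous bounds with constant $M$ yields an $F \in C^m(\R^n, \R^d)$ with $\|F\|_{C^m} \leq C(m,n,d) M$ and $J_x F = P^x$. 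So the theorem reduces to: find a single Whitney-compatible field $\{P^x\}_{x\in S}$, with controlled constant $\gamma$, taking values in the convex bundle $(\vec{G}(x))_{x \in S}$.

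Having made this reduction, the task is now a convex-selection problem in the jet space $\vec{\p}$ of dimension $D := \dim \vec{\p}$. For each $x \in S$, let $\vec{K}(x) \subset \vec{\p}$ denote the convex set of allowable Taylor polynomials (the intersection of $\vec{G}(x)$ with an appropriately scaled ball of jets). The hypothesis, restated, says that for every $S' \subset S$ with $|S'| \leq 2^D$, the set
\[
    \mathcal{W}(S') := \bigl\{(P^x)_{x\in S'} : P^x \in \vec{K}(x),\ |\d^\alpha(P^x - P^y)(y)| \leq C|x-y|^{m-|\alpha|}\bigr\}
\]
is nonempty. These are compact convex subsets of $\vec{\p}^{|S'|}$. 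Building on this, I would invoke a Shvartsman-type finiteness principle for set-valued maps with closed convex values in $\R^D$, which replaces the classical Helly constant $D+1$ by the sharp constant $2^D$: if every restriction to a subset of cardinality $\leq 2^D$ admits a Lipschitz-1 selection in the appropriate metric, then the full map admits a Lipschitz-$\gamma(D)$ selection.

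The step I expect to be the main obstacle is encoding the $C^m$ Whitney-compatibility condition into the hypotheses of a Shvartsman-style Helly theorem. The Whitney differences $|\d^\alpha(P^x - P^y)(y)|$ are not given by a single metric: they involve base-point–dependent derivatives and weight $|x-y|^{m-|\alpha|}$ depending on $|\alpha|$. The resolution I have in mind is to view $(P^x)_{x\in S}$ as a single point in $\vec{\p}^{|S|}$, equipped with the "Whitney norm", and to check that the Shvartsman finiteness principle, applied coordinate-by-coordinate through repeated Helly arguments in $\vec{\p}$, gives nonemptiness of $\mathcal{W}(S)$ once it is known for all $2^D$-subsets. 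Since the constant $\gamma$ is allowed to depend on $|S|$, one may proceed by induction on $|S|$, absorbing the accumulated constants at each step, rather than aiming for a universal constant.

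Once nonemptiness of $\mathcal{W}(S)$ (with a controlled Whitney bound $\gamma = \gamma(m,n,d,|S|)$) is established, a final application of Lemma \ref{lemma:WET} yields $F \in C^m(\R^n,\R^d)$ with $\|F\|_{C^m(\R^n,\R^d)} \leq C \gamma$ and $J_x F = P^x \in \vec{G}(x)$ for all $x \in S$, completing the proof.
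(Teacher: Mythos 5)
The outer reduction in your proposal is fine: passing from functions to Whitney fields via Taylor's theorem and back via Lemma \ref{lemma:WET} is standard, and the observation that $\gamma$ may depend on $|S|$ is correctly noted. But the heart of your argument is the middle step, and there it collapses. Be aware first that the paper does not prove Theorem \ref{thm:US k sharp} at all: it is imported from \cite{US2020} (with the scalar-valued case going back to \cite{Shv08}), so the comparison is with that external proof, which is a substantial piece of work. Your plan is to ``invoke a Shvartsman-type finiteness principle for set-valued maps with closed convex values in $\R^D$'' with finiteness number $2^D$, and then to transfer it to the Whitney-field setting ``coordinate-by-coordinate through repeated Helly arguments.'' Once you have reduced to Whitney fields, the statement you need --- every $2^{\dim\vec{\p}}$-point subfamily admits a compatible selection from the convex fibers with constant $1$, hence the whole family admits one with controlled constant --- \emph{is} Theorem \ref{thm:US k sharp} restated in jet form. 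Invoking it is circular; it is not the classical Shvartsman Lipschitz-selection theorem, because the Whitney conditions $|\d^\alpha(P^x-P^y)(y)|\le C|x-y|^{m-|\alpha|}$ are base-point-dependent and multi-scale, and no single metric on $\vec{\p}$ encodes them. You identify exactly this as ``the main obstacle,'' but the proposed resolution is not an argument.

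Concretely, ``repeated Helly arguments'' cannot deliver the conclusion. The natural Helly setup views the constraint imposed by a subset $S'$ as a convex cylinder in $\vec{\p}^{|S|}\cong\R^{D|S|}$; Helly's theorem there requires nonempty intersections of $D|S|+1$ such cylinders, i.e. consistency over subsets of size up to $(D|S|+1)\cdot 2^D$, which is precisely what you do not have. Working coordinate-by-coordinate (point-by-point) in $\vec{\p}$ runs into the same problem: Helly in $\vec{\p}$ produces finiteness numbers of the shape $D+1$ (or products thereof, as in the $1+(D+1)\tilde{k}\le k^\sharp$ bookkeeping elsewhere in this paper), and, more importantly, gluing the locally chosen jets into a single Whitney-compatible field across all scales is exactly the hard content of the finiteness principle --- in the known proofs it requires the full machinery of \cite{FIL16}/\cite{US2020} (Whitney/Calder\'on--Zygmund decompositions, induction on strata or label sets, and Shvartsman-type combinatorial geometry to reach the sharp number $2^D$). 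Allowing $\gamma$ to depend on $|S|$ does not rescue the sketch: the existence of \emph{any} globally compatible selection from knowledge of $2^D$-point subsets is already the nontrivial assertion, and no step of your outline produces it. As written, the proposal proves the easy equivalences on either side of the theorem and assumes the theorem in the middle.
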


% While the original statement of Theorem \ref{thm:US k sharp} has $C^m$ in place of $C^{m,\omega}$, the $C^{m,\omega}$ version easily follows.

A scalar-valued version of Theorem \ref{thm:US k sharp} was originally proven in \cite{Shv08}. However, we state the vector-valued version for full generality, as this is particularly useful for the selection problem.

\begin{proof}[Proof of Proposition \ref{prop:same refinement}]
Pick $y_0\in E$ and let $P_0\in H'(y_0)$. Replacing each $H(y)$ with $H(y)-P_0$, we may assume $P_0$ is the zero polynomial.

Fix $\epsilon>0$ and let $M\geq 0$ be a constant depending only on $m,n,d$ and to be determined later. Choose $0<\delta<1/2$ so that for any $y_1,...,y_{k_0}\in B(y_0,\delta)\cap E$ there exist $P_1,...,P_{k_0}\in\vp$ such that
\begin{equation}
    P_j\in H(y_j)
\end{equation}
and
\begin{equation}
    |\D^\alpha(P_i-P_j)(y_j)|\le (\epsilon/M)|y_i-y_j|^{m-|\alpha|}\text{ for }|\alpha|\le m, 0\le i,j\le k_0.
\end{equation}

Let $y_1,...,y_{k_1}\in B(x,\delta)\cap E$ and write $S=\{y_0,y_1,...,y_{k_1}\}$. For $y_j\in S$, define $\vec{G}(y_j)$ to be $H(y_j)$ if $j\ge1$ and $\{P\}$ if $j=0$. Since $P\in H'(y_0)$, for any $S'\subset S$ satisfying $|S'|=k_0+1$ and $y_0\in S$, there exist polynomials $(P_z)_{z\in S'}$ such that
\begin{equation}\label{eq:z z'}
    |\D^\alpha(P_z-P_{z'})(z)|\le (\epsilon/M)|z-z'|^{m-\alpha}\text{ for }|\alpha|\le m, z,z'\in S'.
\end{equation}
In particular, taking $z'=y_0$ in \eqref{eq:z z'}, we see that
\begin{equation}
    |\D^\alpha P_z(z)|\le (\epsilon/M) \delta^{m-|\alpha|}\le(\epsilon/M).
\end{equation}

Therefore, by the classical Whitney extension theorem for finite sets (Lemma \ref{lemma:WET}), there exists $F_{S'}$ such that
\begin{equation}
    J_{y_j}F=P_j\in \vec{G}(y_j)\text{ for }y_j\in S'
\end{equation}
and
\begin{equation}
    \|F_{S'}\|_{C^m}\leq C(\epsilon/M),
\end{equation}
where $C$ depends only on $m,n,d$.

By adding $y_0$ if necessary, we see that for \textit{any} $S'\subset S$ with $|S'|=k_0+1$ there exists $F_{S'}$ such that
\begin{equation}
    J_{y_j}F_{S'}=P_j\in \vec{G}(y_j)\text{ for all }y_j\in S'
\end{equation}
and
\begin{equation}
    \|F_{S'}\|_{C^m}\leq C(\epsilon/M).
\end{equation}

Taking $\gamma=\gamma(m,n,d,k_1)$ as in Theorem \ref{thm:US k sharp}, there exists $F$ such that
\begin{equation}
    J_{y_j}F\in \vec{G}(y_j)\subset H(y_j)
\end{equation}
and
\begin{equation}
    \|F\|_{C^m(\R^n)}\le\gamma C(\epsilon/M).
\end{equation}

Setting $Q_j=J_{y_j}F$ and $Q_0=P_0$, we see that for all $0\le i,j\le k_1$, there exist $Q_1,...,Q_{k_1}\in\vp$ with
\begin{equation}
    Q_j\in H(y_j), 1\le j\le k_1
\end{equation}
and
\begin{equation}
    |\D^\alpha(Q_i-Q_j)(y_j)|\le \gamma C(\epsilon/M)|y_i-y_j|^{m-|\alpha|}\text{ for }|\alpha|\le m.
\end{equation}
Picking $M=C\gamma$, we have
\begin{equation}
    |\D^\alpha(Q_i-Q_j)(y_j)|\le\epsilon|y_i-y_j|^{m-|\alpha|}\text{ for }|\alpha|\le m, 0\le i,j,\le k_1.
\end{equation}

Since $y_1,...,y_{k_1}\in B(x,\delta)\cap E$ were arbitrary, $Q_0=P_0\in \tilde{H}(y_0)$. Thus, $\tilde{H}(x)\supset H'(x)$.

\end{proof}

\bibliographystyle{plain}
\bibliography{Whitney-bib}

%\begin{thebibliography}{}

%\bibitem{BMP03} Bierstone, Edward; Milman, Pierre D.(3-TRNT); Pawłucki, Wiesław
%Differentiable functions defined in closed sets. A problem of Whitney.
%Invent. Math. 151 (2003), no. 2, 329–352.

%\bibitem{F06} Fefferman, Charles.
%Whitney's extension problem for Cm.
%Ann. of Math. (2) 164 (2006), no. 1, 313–359.

%\bibitem{PS08} Shvartsman, Pavel. The Whitney extension problem and Lipschitz selections of set-valued mappings in jet-spaces.
%Trans. Amer. Math. Soc. 360 (2008), no. 10, 5529–5550.
%\end{thebibliography}

\end{document}